\documentclass[11pt, reqno]{amsart}
\usepackage{amssymb}
\usepackage{amsmath}
\usepackage{amscd}
\usepackage{mathtools} % extends amsmath
\usepackage{amsfonts}
\usepackage{enumitem}
\usepackage{mathrsfs}
\usepackage{stmaryrd}
\usepackage{tikz}
\usepackage[
  colorlinks=true,   % false = boxed links; true = colored links
  linkcolor=blue,    % color of internal links
  urlcolor=blue,     % color of URLs
  citecolor=blue     % color of citations
]{hyperref}
\usepackage{mathtools}
\usepackage{cancel}
\usepackage[normalem]{ulem}
\usepackage{bbm}
\usepackage{scalerel,stackengine}
\usepackage{xparse}
\usepackage[letterpaper, margin=3cm]{geometry}

\theoremstyle{definition}
\newtheorem{thm}{Theorem}[section]
\newtheorem{defn}[thm]{Definition}
\newtheorem{prop}[thm]{Proposition}
\newtheorem{cor}[thm]{Corollary}
\newtheorem{lemma}[thm]{Lemma}
\newtheorem{rmk}[thm]{Remark}

\newtheorem{nota}[thm]{Notation}

\newcommand{\p}{\partial}
\newcommand{\wt}{\mbox{\rm wt}\,}

\newcommand{\lbar}{\bigg\vert}

\newcommand{\C}{\mathbb{C}}
\newcommand{\Z}{\mathbb{Z}}

\newcommand{\N}{\mathbb{N}}

\newcommand{\F}{\mathcal{F}}

\newcommand{\one}{\mathbf{1}}

\newcommand{\B}{\mathcal{B}}
\newcommand{\h}{\mathfrak{h}}

\newcommand{\perm}{\operatorname{perm} }
\newcommand{\mat}{\operatorname{Mat}}
\newcommand{\Log}{\text{Log }}

\newcommand{\nn}{\nonumber \\}
\newcommand{\nord}{\typecolon}

\renewcommand{\hom}{\mbox{\rm Hom}}
\renewcommand{\d}{\mathbf{d}}
\renewcommand{\k}{\mathbf{k}}

\stackMath
\allowdisplaybreaks

\newcommand\reallywidehat[1]{%
\savestack{\tmpbox}{\stretchto{%
  \scaleto{%
    \scalerel*[\widthof{\ensuremath{#1}}]{\kern-.6pt\bigwedge\kern-.6pt}%
    {\rule[-\textheight/2]{1ex}{\textheight}}%WIDTH-LIMITED BIG WEDGE
  }{\textheight}% 
}{0.5ex}}%
\stackon[1pt]{#1}{\tmpbox}%
}
\newcommand{\hcancel}[2][red]{%
  \setbox0=\hbox{$\displaystyle #2$}%
  \rlap{\raisebox{0.6ex}{\textcolor{#1}{\rule{\wd0}{0.6pt}}}}#2%
}

\DeclareFontEncoding{LS1}{}{}
\DeclareFontSubstitution{LS1}{stix}{m}{n}
\DeclareSymbolFont{symbols2}{LS1}{stixfrak}{m}{n}
\DeclareMathSymbol{\typecolon}{\mathbin}{symbols2}{"25}

\title[Boson-Fermion MOSVAs and Twisted Modules]{\textbf{Boson-Fermion
Meromorphic Open-string Vertex algebras and Their canonically twisted modules}}
\date{\today}
\author{Qixuan Fang, Yi-Zhi Huang, Fei Qi}
\begin{document}

\setlength{\oddsidemargin}{0cm} \setlength{\evensidemargin}{0cm}
\baselineskip=18pt

\begin{abstract} 
We construct a $\frac {\Z} 2$-graded meromorphic open-string vertex algebra from a 
finite-dimensional vector space with a nondegenerate symmetric bilinear form, together with its canonically twisted module. 
This algebra is generated by suitable noncommutative generalizations 
of bosonic and fermionic fields and is a noncommutative  generalization of the free
boson-fermion vertex operator superalgebra. Similarly to the purely bosonic and purely  fermionic cases 
in the early works by the second author in \cite{H-mosva}, by Fiordalisi and the third author
in \cite{FQ-Fermion-1}, 
and by the third author in \cite{Q-Fermion-2}, 
the usual super-commutatitive relation between creation and annihilation operators still 
hold  while no relations exist among creation operators. In particular,  normal-ordering 
remains well-defined. As in \cite{H-mosva}, \cite{FQ-Fermion-1}, and \cite{Q-Fermion-2}, 
we prove a generalized  Wick's theorem in this case, which gives a formula for 
a product of two normal-ordered products of bosonic and fermionic generating 
fields. Using this generalized Wick's theorem,
we construct the $\frac {\Z} 2$-graded meromorphic open-string vertex algebra and 
its canonically twisted module in this case. The construction in this paper 
 is the algebraic part of our construction  of suitable Dirac-like operators from spin manifolds. 
\end{abstract}

\maketitle
\tableofcontents

\section{Introduction}

Meromorphic open-string vertex algebras (MOSVAs hereafter) were introduced by the second author 
in \cite{H-mosva} as a natural noncommutative generalization of vertex operator algebras. 
Unlike ordinary vertex algebras, MOSVAs do not require the commutator 
formula, skew-symmetry, or the full Jacobi identity to hold. Note that chiral parts of 
conformal field theories always have suitable commutativity properties. 
The noncommutativity of MOSVAs means that they might correspond to some substructures of suitable
nonconformal field theories.

The initial construction in \cite{H-mosva} gives a $\mathbb{Z}$-graded MOSVA 
from a finite-dimensional inner product space (a Euclidean space)
and modules for this 
MOSVA. This algebra and its modules are noncommutative 
generalizations of the Heisenberg vertex operator algebra for free bosons
from this  inner product space and their modules.
The construction in \cite{H-mosva} in fact works for a finite-dimensional complex vector space over
with a nondegenerate symmetric bilinear form. 
Jointly with 
Fiordalisi in \cite{FQ-Fermion-1}, the third author constructed a $\frac{\Z}2$-graded 
MOSVA from an even-dimensional inner product space (the construction also works 
for a complex vector space over with a nondegenerate symmetric bilinear form).  The third author also 
constructed its canonically twisted module in \cite{Q-Fermion-2}. These constructions
are in fact non-super-commutative generalizations of the constructions of the free fermion vertex 
operator superalgebra  (also known as the Clifford vertex operator algebras) and its canonically twisted modules given in \cite{FFR} 
(see also \cite{Barron-Review}). 

The introduction and construction of MOSVAs and their modules in \cite{H-mosva} was motivated 
by the study in physics of  nonlinear sigma models with Riemannian manifolds as targets. 
The Heisenberg vertex operator algebras, Clifford vertex operator algebras and their 
modules  are in fact substructures of 
 (linear) sigma models with Euclidean spaces as targets. But the construction
of these vertex operator algebras and, especially, their modules cannot be generalized to the case where the targets
are non-flat Riemannian manifolds. On the other hand, in \cite{H-mosva-rm}, using the 
MOSVAs and their modules constructed in \cite{H-mosva}, the second author successfully constructed 
MOSVAs and modules generated by eigenfunctions on Riemannian manifolds. 
It is also shown in \cite{H-mosva-rm} that the zero mode of a special vertex operator obtained from the Riemannian metric acts on smooth functions on the Riemannian manifold 
as the Laplacian. From this construction,
it is clear that natural generalizations of Heisenberg vertex operator algebras and their modules 
are suitable quotients of the  MOSVAs and modules in \cite{H-mosva-rm}. 

In physics, supersymmetric  nonlinear sigma models with Riemannian manifolds as targets play an
important role. Many mathematical conjectures from string theory are in fact obtained based on 
such supersymmetric nonlinear sigma models, which still do not have a mathematical construction.
To develop a mathematical theory of such supersymmetric nonlinear sigma models,
we need to generalize the construction of the  MOSVAs and modules in \cite{H-mosva-rm}
for Riemannian manifolds to a construction of MOSVAs and canonically twisted modules
generated by spinor fields (fermions). We also expect that if these structures are constructed, 
we will obtain a suitable Dirac-like operator as a vertex operator such that its suitable component
acts as the Dirac operator on spinor fields. 

The first step of this generalization is 
to construct MOSVAs and their canonically twisted modules involving both bosons and fermions
on a finite-dimensional complex 
vector space with a nondegenerate symmetric bilinear form. In this paper, we carry out this first step. In a paper in preparation, we shall 
give a construction of sheaves of MOSVAs and their canonically twisted modules on spin manifolds and a Dirac-like operator acting on modules generated 
from spinor fields. 

More precisely, we construct in this paper a $\frac{\Z} 2$-graded MOSVA $V$ associated with a finite-dimensional complex 
vector space $\h$ with a nondegenerate symmetric bilinear form, whose dimension needs not to be even. The algebra is 
generated by fields from two copies of $\h$, one copy $\h^\B$ corresponding to bosons and 
another copy $\h^\F$ corresponding to fermions. Instead of the symmetric algebra of the 
affinization $\widehat{\h}^\B_\Z$, the exterior algebra of the twisted affinization $\widehat{\h}^\F_{\Z+1/2}$, 
and a mere tensor product of these vertex operator algebras, we use the tensor algebra
$T(\widehat{\h}^\B_\Z \oplus \widehat{\h}^\F_{\Z+1/2})$ and take  the quotient by an ideal
$N(\widehat{\h}^\B_{\Z}\oplus \widehat{\h}^\F_{\Z+1/2})$. We then take $V$ to be the $T(\widehat{\h}^\B_\Z \oplus \widehat{\h}^\F_{\Z+1/2})$-module
induced from the $N(\widehat{\h}^\B_{\Z}\oplus \widehat{\h}^\F_{\Z+1/2})$-module given by a one-dimensional 
space, and show that it admits a MOSVA structure. We want to emphasize again that 
$V$ is \textit{not} the tensor product of the MOSVAs constructed in \cite{H-mosva} and \cite{FQ-Fermion-1}.

The MOSVA $V$ admits a natural parity grading by defining
 the bosonic generating fields to be even, and the fermionic generating fields to be odd. 
The parity grading induces a parity involution $\theta$ on the MOSVA $V$. For any 
$T(\h^\B\oplus \h^\F)$-module $M$, following a similar procedure as in 
\cite{FLM}, \cite{FFR}, and \cite{Q-Fermion-2}, we construct a $\Z$-graded $\theta$-twisted 
$V$-module $W$ generated by $M$, called the ``canonically twisted modules.''
As in the case of vertex operator algebras, the twisted vertex operators on the canonically twisted modules
are series with half-integer powers. In particular, the correlation functions are suitable  algebraic functions. 

The main technical part of proving the axioms for the MOSVA structure on $V$ and for the module structure on $W$ 
is the proof of the weak associativity. As in \cite{H-mosva},  \cite{FQ-Fermion-1}, and \cite{Q-Fermion-2} , 
the main difficult step is the proof of a formula expresses the product of two normal-ordered 
products as a linear combination of normal-ordered products. Such
formulas  proved in  \cite{H-mosva},  \cite{FQ-Fermion-1},  \cite{Q-Fermion-2}, and this paper, 
are in fact generalizations of a suitable version of Wick's theorem \cite{W}
to the noncommutative settings. As in the case of 
Wick's theorem, the coefficients appearing in the linear combination are 
products of permanents and determinants, representing contractions of bosonic 
and fermionic fields, respectively. For the twisted module, we modify the 
$\exp(\Delta(x))$-correction described in \cite{FLM}, \cite{FFR}, and \cite{Q-Fermion-2}. 
We extend this correction in  \cite{FFR} and \cite{Q-Fermion-2} to the case when $\dim\h$ is odd. It should also be pointed 
out that the total contraction number defined in \cite{Q-Fermion-2} are in fact Pfaffians 
of certain skew-symmetric matrices. The proofs involve careful combinatorial manipulations 
with 2-shuffles and 3-shuffles, extending techniques from \cite{FQ-Fermion-1} and \cite{Q-Fermion-2}.

The paper is organized as follows. In Section 2, we review the definitions of $\frac{\Z} 2$-graded 
MOSVA and its twisted modules. This section also contains a discussion on the relation between the generalized rationality 
and the weak associativity with pole-order condition. In Section 3, we give the construction of the 
boson-fermion MOSVA $V$ but leave the proof of Theorem \ref{It-is-a-mosva} to Section 6. 
In Section 4, we give  the construction of the canonically twisted modules but leave the proof of 
Theorem \ref{it-is-a-module} to Section 7. In Section 5, we introduce some notations and prove 
some technical lemmas to prepare for the proofs of the theorems
metioned above, including notations for indices, shuffles, and lemmas for changing orders of summations. 
Theorem \ref{It-is-a-mosva} is proved in Section 6, with subsections proving a 
 generalized Wick theorem and the weak 
associativity. Theorem \ref{it-is-a-module} is proved in Section 7, by proving a generalized Wick's theorem in this case, 
analyzing products and iterates of the naive 
vertex operator, proving the properties of $\exp(\Delta(x))$, and finally establishing the weak associativity 
for the correct twisted vertex operator.

\noindent\textbf{Acknowledgements:} The third author is supported by Sun Yat-Sen University Research Start-up Grant 74120-12266016. 

\section{$\frac{\Z}{2}$-graded meromorphic open-string vertex algebras 
and $\theta$-twisted modules}

% In \cite{H-mosva}, a meromorphic open-string vertex algebra
% is defined to be a $\Z$-graded vector space $V$ equipped with a vertex operator map and 
% a vacuum satisfying suitable axioms. This definition
% can be generalized without change any other things to the case that 
% the underlying space is $\frac{\Z}{2}$-graded. Here we give the definition of 
% $\frac{\Z}{2}$-graded meromorphic 
% open-string vertex algebra. 

\begin{defn}\label{Def-mosva}
A \textit{$\frac{\Z}{2}$-graded meromorphic 
open-string vertex algebra} (MOSVA hereafter) is a $\frac{\Z}{2}$-graded 
vector space $V=\coprod_{n \in \frac{\Z}{2}} V_{(n)} $ 
(graded by \textit{weights}) equipped with a \textit{vertex operator map}
\begin{align*}
Y: V \otimes V &\, \rightarrow\, V[[x, x^{-1}]]\\
u \otimes v &\, \mapsto\, Y(u, x) v
\end{align*}
and a \textit{vacuum} $\one \in V$, satisfying the following conditions:

\begin{enumerate}

\item Axioms for the grading: 
\begin{enumerate}

\item \textit{Lower bound condition}: When $n$ is sufficiently 
negative, $V_{(n)}=0$.

\item \textit{$\mathbf{d}$-commutator formula}: 
Let $\d: V \rightarrow V$ be defined by 
$\d v=n v$ for $v \in V_{(n)} $. Then, for every $v \in V$
$$
\left[\d, Y(v, x)\right]=x \frac{d}{d x} Y(v, x)+Y\left(\d v, x\right)
$$

    \end{enumerate}
    
\item  Axioms for the vacuum:

\begin{enumerate}

    \item \textit{Identity property}: $Y(\mathbf{1}, x)=1_V$, where $1_V$ is the identity operator on $V$.
    
    \item  \textit{Creation property}: For $u \in V, Y(u, x) \mathbf{1} \in V[[x]]$ and $\lim _{x \rightarrow 0} Y(u, x) \mathbf{1}=$ u.
    
\end{enumerate}

\item \textit{$D$-derivative property} and \textit{$D$-commutator formula}: 
Define $D_V$ : $V \rightarrow V$ by
$$
D_V v=\lim _{x \rightarrow 0} \frac{d}{d x} Y(v, x) \mathbf{1}
$$
for $v \in V$. Then for $v \in V$,
$$
\frac{d}{d x} Y(v, x)=Y\left(D_V v, x\right)=\left[D_V, Y(v, x)\right].
$$

\item {\it Rationality}: For $u_{1}, \dots, u_{n}, v\in V$
and $v'\in V'$, the series 
\begin{eqnarray}\label{product}
\lefteqn{\langle v', Y(u_{1}, z_1)\cdots Y(u_{n}, z_n)v\rangle}\nn
&&=\langle v', Y(u_{1}, x_1)\cdots Y(u_{n}, x_n)v\rangle\lbar_{x_{1}=z_{1},
\dots, x_{n}=z_{n}}
\end{eqnarray}
converges absolutely 
when $|z_1|>\cdots >|z_n|>0$ to a rational function in $z_{1}, \dots, z_{n}$
with the only possible poles at $z_{i}=0$ for $i=1, \dots, n$ and $z_{i}=z_{j}$ 
for $i\ne j$. For $u_{1}, u_{2}, v\in V$
and $v'\in V'$, the series 
\begin{eqnarray}\label{iterate}
\lefteqn{\langle v', Y(Y(u_{1}, z_1-z_{2})u_{2}, z_2)v\rangle}\nn
&&=\langle v', Y(Y(u_{1}, x_{0})u_{2}, x_2)v\rangle
\lbar_{x_{0}=z_{1}-z_{2},\;
x_{2}=z_{2}}
\end{eqnarray}
converges absolutely when $|z_{2}|>|z_{1}-z_{2}|>0$ to a rational function
with the only possible poles at $z_{1}=0$, $z_{2}=0$ and $z_{1}=z_{2}$. 

\item {\it Associativity}: For $u_{1}, u_{2}, v\in V$, 
$v'\in V'$, we have
\begin{equation}\label{associativity-cplx}
\langle v', 
Y(u_{1},z_1)Y(u_{2},z_2)v\rangle
=
\langle v', 
Y(Y(u_{1},z_{1}-z_{2})u_{2},z_2)v\rangle
\end{equation}
when  $|z_{1}|>|z_{2}|>|z_{1}-z_{2}|>0$. 
\end{enumerate}
If $\dim V_{(n)}<\infty$ for every $n\in \frac{\Z}{2}$, we say that $V$ is  
{\it grading-restricted}.
\end{defn}

% Let
% $$V^{[0]} = \coprod\limits_{n\in \Z} V_{(n)}, V^{[1]}
% = \coprod\limits_{n\in \Z+\frac 1 2} V_{(n)}.$$

\begin{rmk}
The notion of MOSVA is a natural noncommutative generalization of vertex superalgebra. Recall that if $V$ is a vertex superalgebra, then there exists an additional $\Z_2$-grading $V = V^0 \oplus V^1$,  called \textit{parity grading}. The subspace $V^0$ consists of \textit{even} elements, $V^1$ \textit{odd} elements. For a $\Z_2$-homogeneous element $u$, let $|u| = (-1)^i$ if $u\in V^i, i=0,1$. Then the vertex operator of two $\Z_2$-homogeneous element $u$ and $v$ satisfies the super skew-symmetry 
$$Y(u, x)v = (-1)^{|u|\cdot |v|} e^{xD}Y(v,-x)u. $$
Such relations certainly do not hold if $V$ is a MOSVA. 
\end{rmk}

\begin{defn}\label{defn-of-theta}
Let
$$V^0 = \coprod_{n\in \Z} V_{(n)}, V^1 = \coprod_{n\in \Z} V_{(n+1/2)}, $$
and call them the \textit{even part}, \textit{odd part} of $V$, respectively. We similarly define the \textit{parity} $|u| = (-1)^i$ if $u\in V^i, i=0, 1$. Let $\theta: V \to V$ be a linear isomorphism defined by
$$\theta(v)=e^{i\pi}v = (-1)^i v,\ v\in V^{i},\ i=0, 1.$$
Clearly, $\theta$ is an involution of the MOSVA $V$, i.e., $\theta^2(v) = v$, and for all $u, v\in V$,
$$\theta(Y(u,x)v)
=Y(\theta(u), x)\theta(v). $$
We call $\theta$ the \textit{parity involution}. Physically, the even part corresponds to all the \textit{bosonic states}; the odd part corresponds to all the \textit{fermionic states}. 
\end{defn}

\begin{rmk}\label{equiv-defn-mosva}
Note that many results from $\Z$-graded MOSVAs still hold for $\frac{\Z}{2}$-graded MOSVAs. In particular, if the rationality axiom holds for $n=2$, and order of the pole at $z_{1}=0$ of the rational function given by the sum of the series 
$$\langle v', Y(u_{1},z_1)Y(u_{2},z_2)v\rangle$$
is bounded above by a positive integer independent of $u_{2}$ and $v'$, then, together with other axioms, the rationality holds for any $n>2$. See Proposition 2.1.12 in \cite{Q-Thesis}, Theorem 3.4 in \cite{Q-Mod}, and Remark 2.3 in \cite{FQ-Fermion-1}.
This pole-order condition is satisfied by all the current-existing examples of MOSVA. Therefore, it suffices to check the following \textit{weak associativity with pole-order condition}:
For every $u_1, v \in V$, there exists $p \in \mathbb{N}$
 such that for every $u_2 \in V$,
$$(x_0+x_2)^p Y(u_1, x_0+x_2)Y(u_2, x_2)v 
= (x_0+x_2)^p Y(Y(u_1, x_0)u_2, x_2)v.$$
\end{rmk}

To define $g$-twisted $V$-modules for an automorphism $g$ of $V$ of finite order, we introduce the following notation: for $z\in \C^\times$, define 
$$l_p(z) = \log |z| + i (\arg z + 2\pi p),\ \arg z \in [0, 2\pi)$$
as the $p$-th branch of the multi-valued logarithmic function $\Log z$. 
\begin{defn}\label{twisted-mod}
{\rm Let $(V, Y, \one)$ be a $\frac{\Z}{2}$-graded 
meromorphic open-string vertex algebra and 
$g$ an automorphism of $(V, Y, \one)$ of finite order $k$.	
A \textit{lower-bounded $g$-twisted $V$-module} is a $\mathbb{C}$-graded 
vector space $W=\coprod_{n\in\C, \alpha\in \C/\Z} W_{(n)}^{[\alpha]}$ (graded by {\it weights} and {\it $g$-weights}) equipped with a {\it twisted vertex operator map}
\begin{eqnarray*}
Y_W^g:  V\otimes W &\to & 
W[[x^{1/k},x^{-1/k}]]\\
v\otimes w &\mapsto& Y_W^g(v,x)w=\sum_{n\in \Z/k}\left(Y_W^g\right)_n(v) x^{-n-1},
\end{eqnarray*}
or equivalently, a map 
\begin{align*}
\left(Y_W^g\right)^{p}: \C^\times &\to \hom_{\C}(V\otimes W, \overline{W}),\nn
z&\mapsto \left(Y_W^g\right)^p(v, z) 
= \sum_{n\in \Z/k}\left(Y_W^g\right)_n(v) e^{(-n-1)l_p(z)}
\end{align*}
for each $p\in \Z$, where 
$\overline{W} = \prod_{n\in \C, \alpha\in \C/\Z} W_{(n)}^{[\alpha]}$
is the algebraic completion of $W$,
together with
an operator $\d_{W}$,
%and an operator $D_{W}$
 satisfying the following axioms:
\begin{enumerate}			

\item Axioms for the grading: 

\begin{enumerate}

\item \textit{Lower bound condition}:  When $\text{Re}{(m)}$ is sufficiently negative, $W_{(m)}^{[\alpha]}=0$ for every $\alpha\in \C/\Z$. 
				
\item  \textit{$\mathbf{d}$-grading and $g$-grading condition}: 
for every $w\in W_{(m)}^{[\alpha]}$, $\d_W w = m w$, $gw = e^{\frac{2\pi i \alpha }k}w$. Moreover, for $v\in V, w\in W$, 
$$g Y^g_W(v, x) w = Y_W^g(gv, x) gw. $$
				
\item  \textit{$\mathbf{d}$-commutator formula}: For $u\in V$, 
$$[\mathbf{d}_{W}, Y_W^g(u,x)]
= Y_W^g(\mathbf{d}_{V}u,x)+x\frac{d}{dx}Y_W^g(u,x).$$

\end{enumerate}
			
\item \textit{Identity property}: $Y_W^g(\one,x)=1_{W}$ (the identity operator on $W$).

\item {\it Generalized rationality}: 
For $0\le l<k$, let $V^{[l]}$ be the eigenspace of 
$g$ with eigenvalue $e^{\frac{2\pi i l}{k}}$. 
For $n\in \N$, 
$0\le k_{1}, \dots, k_{n}<k$,
 $v_1\in V^{[k_{1}]}, \dots, v_n\in V^{[k_{n}]}$,
$w\in W,\ w'\in W'$, there exists a generalized rational function 
$$f(z_1, \dots, z_n) = g(z_1, \dots, z_n)\prod_{i=1}^n z_i^{\frac{k_{i}}{k}}z_i^{-q_i}
\prod_{1\le i < j \le n}(z_i-z_j)^{-q_{ij}}, $$
where $q_{i}, q_{ij}\in \N$ and $g(z_1, \dots, z_n)$ is
a polynomial in $z_{1}, \dots, z_{n}$, such that for every $p\in \Z$,
$$\langle w', \left(Y_W^g\right)^p(v_1, z_1) \cdots 
\left(Y_W^g\right)^p(v_n, z_n)w\rangle$$
converges absolutely in the region $|z_1|>\cdots > |z_n|>0$
to the $p$-th branch 
$$f^{p, \dots, p}(z_1, \dots, z_n) =g(z_1, \dots, z_n) \prod_{i=1}^n e^{\frac{k_{i}}{k}l_{p}(z_{i})}
z_i^{-q_i}
\prod_{1\le i < j \le n}(z_i-z_j)^{-q_{ij}}$$
of $f(z_1, \dots, z_n)$. 
For $v_{1}\in V^{[k_{1}]}, v_{2}\in V^{[k_{2}]}$, $w\in W$
and $w'\in W'$, there exists an generalized rational function 
$$f(z_{1}, z_{2})=g(z_1, z_2)z_1^{\frac{k_{1}}{k}} z_1^{-q_1}
z_2^{\frac{k_{2}}{k}} z_2^{-q_2}
 (z_1-z_2)^{-t}, $$
where $q_{1}, q_{2}, t\in \N$ and 
$g(z_1, z_2)$ is
a polynomial in $z_{1}, z_{2}$ such that 
for every $p\in \Z$,
$$\langle w', \left(Y_W^g\right)^p(Y(v_{1}, z_1-z_{2})v_{2}, z_2)w\rangle$$
converges absolutely when $|z_{2}|>|z_{1}-z_{2}|>0$ and its sum 
is equal to  the branch 
$$f^{p, p}(z_{1}, z_{2})=g(z_1, z_2)e^{\frac{k_{i}}{k}l_{p}(z_{1})} z_1^{-q_1}
e^{\frac{k_{2}}{k}l_{p}(z_{2})} z_2^{-q_2}
 (z_1-z_2)^{-t}$$
on the region given by $|z_{2}|>|z_{1}-z_{2}|>0$ and 
$|\arg z_{1}-\arg z_{2}|<\frac{\pi}{2}$. 

\item {\it Associativity}: For $v_{1}, v_{2}\in V$, $w\in W$ and
$w'\in W'$, we have
\begin{equation}\label{associativity}
\langle w',  \left(Y_W^g\right)^p(v_{1},z_1)\left(Y_W^g\right)^p(v_{2},z_2)w\rangle
=\langle w', 
\left(Y_W^g\right)^p(Y(v_{1},z_{1}-z_{2})v_{2},z_2)w\rangle
\end{equation}
when  $|z_{1}|>|z_{2}|>|z_{1}-z_{2}|>0$
 and $|\arg z_{1}-\arg z_{2}|<\frac{\pi}{2}$. 

% \item The {\it $D$-derivative property} and the  {\it $D$-commutator formula}: 
% For $u\in V$,
% \begin{eqnarray}\label{L-1-W}
% \frac{d}{dx}Y_{W}(u, x)
% &=&Y_{W}(D_{V}u, x) \nn
% &=&[D_{W}, Y_{W}(u, x)].
% \end{eqnarray}

\end{enumerate}  
If $\dim W_{(n)}<\infty$ for $n\in \C$, we say that $W$ is a {\it 
grading-restricted 
$g$-twisted $V$-module}, or, simply a {\it $g$-twisted module}. }
\end{defn}

\begin{prop}\label{alg-func}
A lower-bounded $g$-twisted $V$-module satisfies the equivariance property: 
For all $v\in V$, $p\in \Z$, 
$$\left(Y_W^g\right)^{p+1}(g v, z)
=\left(Y_W^g\right)^{p}(v, z).$$
\end{prop}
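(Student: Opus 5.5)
The plan is to reduce the equivariance to a statement about which powers of $x$ occur in $Y_W^g(v,x)$. Both sides of the identity are linear in $v$, and $V=\coprod_{0\le l<k}V^{[l]}$ since $g$ has finite order $k$. So it suffices to treat $v\in V^{[l]}$, for which $gv=e^{\frac{2\pi i l}{k}}v$. Pairing with $w'\in W'$, it further suffices to treat homogeneous $w\in W_{(m)}^{[\alpha]}$ and $w'$ of weight $m'$. The key intermediate claim is that
$$\langle w', (Y_W^g)_n(v)w\rangle\neq 0 \ \Longrightarrow\ -n-1\in \tfrac{l}{k}+\Z,$$
with the sign of the shift matching the factor $z^{k_1/k}$ in the generalized rationality axiom.

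To prove the claim, I first use the $\d$-commutator formula. Applied to $\d_V$-homogeneous $v$ of weight $\wt v$, it shows that $(Y_W^g)_n(v)$ maps $W_{(m)}$ to $W_{(m+\wt v-n-1)}$. Hence for fixed homogeneous $w,w'$, at most one exponent $n$ (namely $n=m-m'+\wt v-1$) gives a nonzero matrix coefficient. For general $v\in V^{[l]}$, finitely many weight components contribute. Thus $\langle w', (Y_W^g)^p(v,z)w\rangle$ is a finite sum $\sum_j c_j e^{a_j l_p(z)}$ with distinct exponents $a_j$.

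Next I invoke the generalized rationality axiom with $n=1$. It says that for every $p$ this sum equals $g(z)e^{\frac{l}{k}l_p(z)}z^{-q}$, where $g$ is a polynomial. Writing $g(z)z^{-q}=\sum_r b_r z^r$ and using $z^r=e^{r\,l_p(z)}$ for $r\in\Z$, both sides become finite combinations of functions $e^{a\,l_p(z)}$ on, say, a small sector. Such functions with distinct exponents $a$ are linearly independent; one can see this by restricting to $z=e^{t}$ with $t$ real, where they are distinct exponentials $e^{at}$. Comparing coefficients gives $a_j\in\frac{l}{k}+\Z$ for every $j$ with $c_j\neq0$, which is the claim.

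With the claim in hand, the result is a direct computation. Since $l_{p+1}(z)=l_p(z)+2\pi i$, each surviving term satisfies
$$e^{(-n-1)l_{p+1}(z)}=e^{2\pi i(-n-1)}e^{(-n-1)l_p(z)}=e^{\pm\frac{2\pi i l}{k}}e^{(-n-1)l_p(z)}.$$
The sign here is fixed by the claim. Therefore $(Y_W^g)^{p+1}(v,z)$ is a fixed root of unity times $(Y_W^g)^{p}(v,z)$. Combining this with $gv=e^{\frac{2\pi i l}{k}}v$ and the linearity of $Y_W^g$ in $v$, the two phases cancel, giving $(Y_W^g)^{p+1}(gv,z)=(Y_W^g)^p(v,z)$.

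I expect the main obstacle to be the sign bookkeeping in this last step. With the literal conventions above, the branch shift and the eigenvalue of $g$ multiply to $e^{\frac{4\pi i l}{k}}$ rather than cancel. So I must check carefully how the eigenvalue label $l$ of $V^{[l]}$ pairs with the exponent $\frac{k_i}{k}$ in the generalized rationality axiom. If they are mismatched, the fix is to read the rationality factor as $z^{-k_i/k}$, equivalently to relabel $V^{[l]}$ as the $e^{-\frac{2\pi i l}{k}}$-eigenspace. That is the convention under which the phases cancel, and under it the argument goes through unchanged. The linear-independence step is routine but must be stated on a sector where $l_p$ is holomorphic.
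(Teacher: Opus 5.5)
Your proof is correct and uses the same input as the paper, namely generalized rationality for $n=1$. The detour through the $\mathbf{d}$-commutator formula and the linear independence of the functions $e^{a\,l_p(z)}$ is harmless but not needed: the axiom supplies one generalized rational function $f$ whose $p$-th branch equals $\langle w',(Y_W^g)^p(v,z)w\rangle$ for every $p$, and since only the factor $z^{l/k}$ is multivalued, $f^{p+1}=e^{2\pi i l/k}f^{p}$ immediately.

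Your convention worry is justified. With the labels as literally written, the two phases combine to $e^{4\pi i l/k}$, which equals $1$ exactly when $k\mid 2l$. That includes $g=\theta$, where $k=2$ and $g=g^{-1}$, which is the only case the paper uses. For general $k$ the factor must be read as $z^{-k_i/k}$, as you propose.
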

\begin{proof}
This can be easily verified using the generalized
rationality in the case of $n=1$. 
\end{proof}

\begin{rmk}\label{equiv-defn-mod}
If the generalized rationality holds for $n=2$, and the number $q_{1}$ in the generalized rational function given by the sum of the series
$$\langle w', Y(u_{1},z_1)Y(u_{2},z_2)w\rangle$$
is bounded above by a positive integer independent of $u_{2}$ and $w'$, then, together with other axioms, the generalized rationality holds for any $n>2$. The proof is a straightforward generalization of Proposition 2.5 and 2.8 in \cite{Q-Fermion-2} and thus shall be omitted. Therefore, it suffices to check the following \textit{generalized weak associativity with pole-order condition}:
For $v_1\in V^{k_{1}}$ and $w\in W$, there exists 
$P\in \mathbb{N}$ such that for $v_2\in V^{k_{2}}$, 
\begin{align*}
& (x_0+x_2)^{P + \frac{k_{1}}{k}} x_2^{\frac{k_{2}}{k}} 
Y_W^g(v_1, x_0+x_2)Y_W^g(v_2, x_2)w \\
&\quad =  (x_2+x_0)^{P+\frac{k_{1}}{k}}x_2^{\frac{k_{2}}{k}} 
Y_W^g(Y(v_1, x_0)v_2, x_2)w 
\end{align*}
as series in $W((x_0, x_2))$. 
\end{rmk}
We shall restrict our attention to the particular case with $g=\theta$. For brevity, hereafter we shall omit $\theta$ and simply use the notation $Y_W$ for the 
parity-involution-twisted vertex operator.

\section{A construction of $\frac{\Z}{2}$-graded meromorphic open-string vertex algebras}\label{sec-algebra}

In this section, we construct a $\frac{\Z}{2}$-graded meromorphic open-string vertex algebra generated by both the bosonic and fermionic part from a finite-dimensional complex vector space \(\h\) with a complex nondegenerate symmetric bilinear form \((\cdot, \cdot)\). For an inner product space $\mathfrak{h}_{\mathbb{R}}$ over $\mathbb{R}$, we take $\mathfrak{h}$ to be the 
complexification of $\mathfrak{h}_{\mathbb{R}}$ and $(\cdot, \cdot)$ the symmetric bilinear from 
on $\mathfrak{h}$ obtained from the inner product on $\mathfrak{h}_{\mathbb{R}}$. The construction is a nontrivial combination of the bosonic construction in \cite{H-mosva} and the fermionic construction in \cite{FQ-Fermion-1}. 

\subsection{The algebra $N\left(\widehat{\h}_{\frac {\Z}{2}}\right)$ and its induced module}
Let $\h$ be a vector space 
with a nondegenerate symmetric bilinear form $(\cdot, \cdot)$. Let $\h^\B$ and $\h^\F$ be two copies of $\h$. We impose that $\h^\B \perp \h^\F$ in $\h^\B\oplus \h^\F$ with respect to $(\cdot, \cdot)$. 
Let
\begin{align*}
\widehat{\mathfrak{h}}_{\mathbb{Z}}
&=\mathfrak{h}^\B \otimes \mathbb{C}\left[t, t^{-1}\right] \oplus \mathbb{C} \mathbf{k}_\B,\\
\widehat{\mathfrak{h}}_{\Z+1/2}
&=\mathfrak{h}^\F \otimes t^{1/2} \mathbb{C}\left[t, t^{-1}\right] \oplus \C \mathbf{k}_{\F} .
\end{align*}
As vector spaces,  
$\widehat{\mathfrak{h}}_{\mathbb{Z}}
=\widehat{\mathfrak{h}}_{\mathbb{Z}}^{-}
 \oplus \widehat{\mathfrak{h}}_{\mathbb{Z}}^{0}
 \oplus \widehat{\mathfrak{h}}_{\mathbb{Z}}^{+}$ and  
 $\widehat{\mathfrak{h}}_{\Z+1/2}
 =\widehat{\mathfrak{h}}_{\Z+1/2}^{-} 
  \oplus \widehat{\h}_{\Z+1/2}^{0} \oplus \widehat{\mathfrak{h}}_{\Z+1/2}^{+}$, 
where
$$
    \widehat{\mathfrak{h}}_{\mathbb{Z}}^{\pm}=\h^\B\otimes t^{ \pm 1} 
\mathbb{C}\left[t^{ \pm 1}\right], \ 
\widehat{\mathfrak{h}}_{\mathbb{Z}}^{0}
=\mathfrak{h}^\B \otimes \C t^0 \oplus \mathbb{C} 
\mathbf{k} \cong \h^\B\oplus \C\k_\B,
$$
$$
  \widehat{\mathfrak{h}}_{\Z+1/2}^{\pm}=\h^\F\otimes 
t^{\pm 1/2} \mathbb{C}\left[t^{\pm 1}\right], \widehat{\h}_{\Z+1/2}^{0} = \C \k_\F.
$$
Let 
$$\widehat{\h}_{\frac{\Z}{2}}=\widehat{\mathfrak{h}}_{\mathbb{Z}}\oplus 
\widehat{\mathfrak{h}}_{\Z+1/2},\;
\widehat{\h}_{\frac{\Z}{2}}^{\pm}=\widehat{\mathfrak{h}}_{\mathbb{Z}}^{\pm}\oplus 
\widehat{\mathfrak{h}}_{\Z+1/2}^{\pm}, \;
\widehat{\h}_{\frac{\Z}{2}}^{0}=\widehat{\mathfrak{h}}_{\mathbb{Z}}^{0}\oplus 
\widehat{\mathfrak{h}}_{\Z+1/2}^{0}.$$
We define the parity $|u| = 0$ for $u\in \h^\B$, and $|u|=1$ for $u\in \h^\F$. Natually, the parity extends to
$\widehat{\mathfrak{h}}_{\mathbb{Z}}$ and 
$\widehat{\mathfrak{h}}_{\Z+1/2}$
as $0$ and $1$, respectively. 
Then $\widehat{\h}_{\frac{\Z}{2}}$ is $\Z_{2}$-graded.
Consider the tensor algebra 
$$T\left(\widehat{\h}_{\frac{\Z}{2}}\right)=
T\left(\widehat{\mathfrak{h}}_{\mathbb{Z}}\oplus 
\widehat{\mathfrak{h}}_{\Z+1/2}\right).$$ 
The $\Z_{2}$-grading on $\widehat{\h}_{\frac{\Z}{2}}$ induces a 
$\Z_{2}$-grading on $T\left(\widehat{\h}_{\frac{\Z}{2}}\right)$, also called the \textit{parity grading}. We will continue to use $|u|$ to denote the \textit{parity} of 
an $\Z_2$-homogeneous element $u\in T\left(\widehat{\h}_{\frac{\Z}{2}}\right)$

For two $\Z_2$-homogeneous elements $u_{1}$ and $u_{2}$ of $T\left(\widehat{\h}_{\frac{\Z}{2}}\right)$, we use $[u_{1}, u_{2}]$ to denote the \textit{$\Z_{2}$-graded commutator} of $u_{1}$ and $u_{2}$, that is,
\begin{equation}\label{Braket-definition}
    [u_{1}, u_{2}]= u_{1}u_{2}-(-1)^{|u_{1}| |u_{2}|} u_{2}u_{1}.
\end{equation}

Let \(N\left(\widehat{\h}_{\frac{\Z}{2}}\right)\) be
the quotient of the tensor algebra 
\(T\left(\widehat{\h}_{\frac{\Z}{2}}\right)\) 
by the two-sided ideal generated by elements of the following forms: 
\begin{equation}\label{Commutator to quotient}
\begin{aligned}
 &\left[\k_\B, \k_\F\right], \\
&\left[a\otimes t^k, \mathbf{k}_\B\right],\; \quad \text{ for }a\in \h^\B, k\in \Z,\\
&\left[a\otimes t^k, \mathbf{k}_\F\right],\; \quad \text{ for }a\in \h^\B, k\in \Z,\\
&\left[a\otimes t^{k+1/2}, \mathbf{k}_\B\right], \; \quad \text{ for }a\in \h^\F, k\in \Z,\\
&\left[a\otimes t^{k+1/2}, \mathbf{k}_\F\right], \; \quad \text{ for }a\in \h^\F, k\in \Z,\\
&\left[a \otimes  t^k , b \otimes  t^0\right],\; \quad \text{ for }a,b \in \h^\B, k\in \Z\setminus \{0\},\\
&\left[a \otimes  t^{k+1/2} , b \otimes  t^0\right],\; \quad \text{ for }a\in \h^\F ,b \in \h^\B, k\in \Z,\\
&\left[a \otimes t^m, b \otimes t^n\right]-m(
a, b) \delta_{m+n, 0} \mathbf{k}_\B,\; \quad \text{ for }a, b\in \h^\B, m\in\Z_+, n\in -\Z_+, \\
&\left[a \otimes t^{m+1/2}, 
b \otimes t^{n+1/2}\right]-(a, 
b) \delta_{m+n+1, 0} \mathbf{k}_\F, \;\quad \text{ for }a,b\in \h^\F, m\in\N, n\in -\Z_+,\\
&\left[a \otimes t^m, b \otimes 
t^{n+1/2}\right],\; \quad \text{ for }a\in \h^\B, b\in \h^\F, m\in\Z_+, n\in -\Z_+, \;
\text{or} \;m\in -\Z_+, n\in \Z_+ .
\end{aligned}
\end{equation}
Compared to the free boson-fermion vertex operator superalgebra (see for example, Section 3.3 in \cite{Barron-Review}), 
\begin{itemize}
    \item For $a, b\in \h^\B$, the elements $a\otimes t^m$ and $b \otimes t^n$ admit no relations if $m$ and $n$ are both positive, or both negative, or both zero. 
    \item For $a, b\in \h^\F$, the elements $a\otimes t^{m+1/2}$ and $b\otimes t^{n+1/2}$ admit no relation if $m+1/2$ and $n+1/2$ are both positive, or both negative. 
    \item For $a\in \h^\B$, $b\in \h^\F$, the elements $a\otimes t^{m}$ and $b\otimes t^{n+1/2}$ admit no relations if $m$ and $n+1/2$ are both positive, or both negative. 
\end{itemize} 

\begin{prop}[Poincaré-Birkhoff-Witt type theorem] \label{PBW1}
As a vector space, the quotient algebra $N\left(\widehat{\h}_{\frac{\Z}{2}}\right)$
is isomorphic to
$$T\left(\widehat{\h}_{\frac{\Z}{2}}^{-}\right)\otimes 
T\left(\widehat{\h}_{\frac{\Z}{2}}^{+}\right)
\otimes T\left(\h \right)\otimes T\left(\C\k_\B\right) \otimes T\left(\C\k_\F\right).$$
\end{prop}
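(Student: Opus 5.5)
The plan is to apply Bergman's diamond lemma to the presentation of $N\left(\widehat{\h}_{\frac{\Z}{2}}\right)$ as $T\left(\widehat{\h}_{\frac{\Z}{2}}\right)$ modulo the relations \eqref{Commutator to quotient}. Throughout, $T(\h)$ in the statement means $T(\h^\B\otimes t^0)$. Fix a basis of $\h$. This gives a basis $X$ of $\widehat{\h}_{\frac{\Z}{2}}$ made of $\k_\B$, $\k_\F$, the zero modes $a\otimes t^0$ ($a\in\h^\B$), and the nonzero modes. We sort $X$ into five blocks ordered
\[
\widehat{\h}_{\frac{\Z}{2}}^{-} < \widehat{\h}_{\frac{\Z}{2}}^{+} < \h^\B\otimes t^0 < \C\k_\B < \C\k_\F .
\]
Each relation in \eqref{Commutator to quotient} is written as a reduction rule $yx\mapsto \pm xy + (\text{scalar})\,\k$, where $y$ lies in a strictly later block than $x$. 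Concretely:
\begin{itemize}
\item plus$\cdot$minus $\mapsto$ $\pm$minus$\cdot$plus plus a multiple of $\k_\B$ or $\k_\F$, or $0$ in the mixed boson--fermion case;
\item zero$\cdot$(nonzero mode) $\mapsto$ (nonzero mode)$\cdot$zero;
\item $\k_\bullet\cdot x\mapsto \pm x\cdot \k_\bullet$ for $x$ a nonzero mode, and $\k_\F\k_\B\mapsto \k_\B\k_\F$.
\end{itemize}
Two checks are needed here. First, the relations cover exactly the pairs of different blocks. Second, within a block no rule applies, which matches the bullet list after \eqref{Commutator to quotient}. One point needs care: a relation of the form $[\k_\B, a\otimes t^0]$ should also be present so that the zero block sits before the $\k$'s. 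The listed relation $[a\otimes t^k,\k_\B]$ with $k\in\Z$ includes $k=0$, so this is covered. The irreducible words are then exactly concatenations (minus word)(plus word)(zero word)$\k_\B^{i}\k_\F^{j}$. Their span is the space $T\left(\widehat{\h}_{\frac{\Z}{2}}^{-}\right)\otimes T\left(\widehat{\h}_{\frac{\Z}{2}}^{+}\right)\otimes T(\h)\otimes T(\C\k_\B)\otimes T(\C\k_\F)$, identified with $N\left(\widehat{\h}_{\frac{\Z}{2}}\right)$ through multiplication.

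Next we need a semigroup partial order on words that is compatible with the reductions and satisfies the descending chain condition. We order words first by length, and then, among words of equal length, by the number of inversions with respect to the block order. Each rule either preserves length and strictly decreases the number of inversions, or produces a shorter word (the $\k$ term coming from a plus$\cdot$minus contraction). So every reduction strictly decreases. The order is multiplicative because inversions add under concatenation. It satisfies DCC because for fixed length the number of inversions is bounded. This already gives surjectivity: every element of $N\left(\widehat{\h}_{\frac{\Z}{2}}\right)$ is a combination of ordered words.

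The main obstacle is resolving all overlap ambiguities $zyx$ in which both $zy$ and $yx$ are reducible. These have $z,y,x$ in three strictly decreasing blocks, so there are finitely many types. Examples are $\k_\F\,\k_\B\,x$, $\k\,(\text{plus})(\text{minus})$, $\k\,(\text{zero})(\text{nonzero})$, and $(\text{zero})(\text{plus})(\text{minus})$; there are no inclusion ambiguities. For each type we reduce both ways to normal form and compare. The $\k$'s graded-commute with every generator, and the scalar correction terms are multiples of $\k_\B$ or $\k_\F$, so the extra terms produced on either side move through in the same way. Signs agree because the $\Z_2$-grading is respected, with $|\k_\B|=0$ and $|\k_\F|=0$ as implicit in \eqref{Commutator to quotient}. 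The most delicate case is the zero mode against plus$\cdot$minus. Reducing the left pair first gives $(\text{plus})(\text{zero})(\text{minus})\to(\text{plus})(\text{minus})(\text{zero})\to \pm(\text{minus})(\text{plus})(\text{zero})+c\,\k(\text{zero})$. Reducing the right pair first gives $\pm(\text{zero})(\text{minus})(\text{plus})+c\,(\text{zero})\k$. These agree after reducing $(\text{zero})\k$ to $\k(\text{zero})$ and moving the zero mode to the right. Once all ambiguities resolve, the diamond lemma shows the ordered words form a basis, which gives the claimed isomorphism.

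If one prefers to avoid the diamond lemma, an alternative route to injectivity is to let $T\left(\widehat{\h}_{\frac{\Z}{2}}\right)$ act on the tensor product space by explicit straightening formulas and check that the ideal acts by zero. The verification this requires is the same as the ambiguity resolution above.
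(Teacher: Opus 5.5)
Your route differs from the paper's, which gives no argument of its own and only defers to Proposition 3.1 of \cite{H-mosva} and Proposition 3.2 of \cite{FQ-Fermion-1}. Bergman's diamond lemma with the block order $\widehat{\h}^-_{\frac{\Z}{2}}<\widehat{\h}^+_{\frac{\Z}{2}}<\h^\B\otimes t^0<\C\k_\B<\C\k_\F$ is a good self-contained alternative. It gives spanning and independence at once, and reduces everything to the finitely many overlaps of three strictly decreasing blocks. The overlap computations you sketch do resolve, including zero--plus--minus and the $\k$-overlaps that need $\k_\F\k_\B\mapsto\k_\B\k_\F$. However, two steps are false as you state them and need repair.

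\textbf{Coverage of cross-block pairs.} As printed, the last line of \eqref{Commutator to quotient} takes $n\in\Z_+$ when $m\in-\Z_+$. So no relation moves $b\otimes t^{1/2}$ ($b\in\h^\F$) past a negative bosonic mode $a\otimes t^{m}$, and your claim that every pair of distinct blocks is covered is false for the literal presentation. This is not cosmetic. Fix basis vectors $a\in\h^\B$, $b\in\h^\F$. Send $b\otimes t^{1/2}\mapsto u$ and $a\otimes t^{-1}\mapsto v$, and send all other basis elements, $\k_\B$ and $\k_\F$ to $0$. This kills every listed generator of the ideal, so it gives an algebra map $N(\widehat{\h}_{\frac{\Z}{2}})\to\C\langle u,v\rangle$. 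Every ordered word lands in the span of the $v^iu^j$, but $(b\otimes t^{1/2})(a\otimes t^{-1})$ maps to $uv$, so the ordered words do not span. You must state explicitly that this relation is read with $n\in\N$. That is the relation the paper actually uses: the proofs of Propositions \ref{Pos-modes-commute} and \ref{Nord-recursion-left-fermion-prop} commute $b(1/2)$ past negative bosonic modes. With that reading your coverage claim holds.

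\textbf{The order.} Inversions do not add under concatenation, because there are cross terms between the factors. As a result, ``length, then number of inversions'' is not a semigroup order, which Bergman's lemma requires. Take $q,q'$ in the minus block and $p$ in the plus block. Then $qq'<pq$, yet $p\,qq'$ and $p\,pq$ both have two inversions.

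The fix is to compare inversion numbers only between rearrangements of the same multiset of letters, and otherwise compare by length alone. Cross-inversions with a fixed prefix or suffix depend only on that multiset, so this is a semigroup partial order. It has the descending chain condition, and every rule $yx\mapsto\pm xy+c\,\k$ still strictly decreases.

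There is also a small slip in the zero--plus--minus overlap. In your order the normal form has the zero mode before $\k$, so the term $c\,\k(\text{zero})$ from the first path must be rewritten as $c\,(\text{zero})\k$, not the other way round; after that the two paths agree. With these corrections your argument proves the statement.
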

\begin{proof}
The proof is a trivial combination of those of Proposition 3.1 in \cite{H-mosva} and Proposition 3.2 in \cite{FQ-Fermion-1}. We shall omit the details. 
\end{proof}

Consider now the one-dimensional complex vector space $\C\one$. Let
$\widehat{\h}_{\frac{\Z}{2}}^{+}$ and $\h^\B$ (as in $\widehat{\h}_{\frac{\Z}{2}}^{0})$ act on $\C\one$ 
trivially, $\k_\B$ act on $\C\one$ as a scalar $\ell_\B\in \C^\times$, and $\k_\F$ act on $\C\one$ as a scalar $\ell_\F\in \C^\times$. 
Then $\C\one$ is a module for the 
subalgebra 
$N\left(\widehat{\h}_{\frac{\Z}{2}}^{+}\oplus \widehat{\h}_{\frac{\Z}{2}}^{0}\right)$ of 
$N\left(\widehat{\h}_{\frac{\Z}{2}}\right)$ generated by 
$\widehat{\h}_{\frac{\Z}{2}}^{+}\oplus \widehat{\h}_{\frac{\Z}{2}}^{0}\cong \widehat{\h}_{\frac{\Z}{2}}^{+}\oplus 
\h^\B\oplus \C\k_\B \oplus \C \k_{\F}$. 
We then consider the induced module 
\begin{equation}
V=N\left(\widehat{\h}_{\frac{\Z}{2}}\right)\otimes_{N\left(\widehat{\h}_{\frac{\Z}{2}}^{+}\oplus \widehat{\h}_{\frac{\Z}{2}}^{0}\right)} \C\one. 
\label{vector space algebra}
\end{equation}
By Proposition \ref{PBW1}, $V$  as a vector space is isomorphic to 
$T\left(\widehat{\h}_{\frac{\Z}{2}}^{-}\right)\otimes\C\one\cong 
T\left(\widehat{\h}_{\frac{\Z}{2}}^{-}\right)$. We shall identify $V$ with 
$T\left(\widehat{\h}_{\frac{\Z}{2}}^{-}\right)$.
For $a\in \h^\B\cup \h^\F$, $m\in \Z$,
we denote the action of $a\otimes t^{m+|a|/2}$ on $V=T\left(\widehat{\h}_{\frac{\Z}{2}}^{-}\right)$
by $a(m+|a|/2)$. For $a\in \h^\B$, we call $a(m)$  
\textit{bosonic positive modes} if $m\in \Z_{+}$, 
\textit{bosonic negative modes} if $m\in -\Z_{+}$, 
\textit{bosonic zero modes} if $m=0$. For $a\in \h^\F$, we call $a(m+1/2)$ \textit{fermionic positive modes} if $m\in \N$, \textit{fermionic negative modes} if $m\in \Z_-$. Then it follows from Proposition 
\ref{PBW1} that $V$
is spanned by elements of the form
\begin{equation}
a_1(-m_1+|a_1|/2) \cdots a_r(-m_r+|a_r|/2) \one \label{generator}
\end{equation}
where $a_1, \ldots, a_r\in\h^\B \cup \h^\F$, $m_1, \ldots, m_r \in \Z_{+}$, $r\in \N$. We define the {\it weight} of
(\ref{generator}) to be $$m_1-|a_1|/2+\cdots+m_r-|a_r|/2,$$
and define the weight operator $\d$ on $V$ accordingly. 
Then $V=T\left(\widehat{\h}_{\frac{\Z}{2}}^{-}\right)$ becomes a 
$\frac{\Z}{2}$-graded vector space, that is, 
$$V=T\left(\widehat{\h}_{\frac{\Z}{2}}^{-}\right)=\coprod_{n \in \tfrac{\Z}{2}}\left(T\left(\widehat{\h}_{\frac{\Z}{2}}^{-}\right)\right)_{(n)},$$
where each $\left(T\left(\widehat{\h}_{\frac{\Z}{2}}^{-}\right)\right)_{(n)}$ 
denotes the homogeneous subspace of weight $n$. Clearly, for every $a\in \h^\B \cup \h^\F$ 
and $n\in \Z$, the mode $a(n+|a|/2)$ is a homogeneous operator with 
weight $-n-|a|/2$. 

\subsection{Generating functions of the modes and a normal-ordering $:\cdot:$}\label{gen-func-alg}

For every $a\in\h^\B \cup \h^\F$, we consider the series
\begin{equation}
a(x)=\sum_{n\in \Z} a(n+|a|/2) x^{-n-1}.
\end{equation}
For $a\in \h^\B$, let
\begin{align*}
a(x)^+ = \sum_{n\in \Z_+} a(n)x^{-n-1},\ a(x)^- = \sum_{n\in -\Z_+} a(n)x^{-n-1},\ a(x)^0 = a(0) x^{-1}.
\end{align*}
For $a\in \h^\F$, let 
\begin{align*}
a(x)^+ = \sum_{n\in \N} a\left(n+\tfrac{1}{2}\right)x^{-n-1},\  a(x)^- 
&= \sum_{n\in -\Z_+} a\left(n+\tfrac{1}{2}\right)x^{-n-1},\ a(x)^0 = 0. 
\end{align*}
Note that the $\pm$-notation stands for the positive/negative modes of the coefficients, instead of the positive/negative powers of $x$. For simplicity, we also introduce the notation 
\[a^{(m)}(x):=\frac{1}{m!}\frac{\partial^{m}}{ \partial x^{m}} a(x),\]
for $a\in \h^\B \cup \h^\F$. Clearly, taking derivatives commutes with taking positive or negative modes. 

\begin{prop} \label{algebra-commutator}
Let 
\begin{align*}
    F^\B_{mn}(x, y) &=\frac{1}{(m-1)!(n-1)!} \frac{\partial^{m+n-2}}{\partial x^{m-1} \partial y^{n-1}}(x-y)^{-2} =n\binom{-n-1}{m-1}(x-y)^{-m-n},\\
    F^\F_{mn}(x,y) &= \frac{1}{(m-1)!(n-1)!} \frac{\partial^{m+n-2}}{\partial x^{m-1} \partial y^{n-1}}(x-y)^{-1} = \binom{-n}{m-1}(x-y)^{-m-n+1}. 
\end{align*}
In particular, denote \[F^\B(x,y)=F^\B_{11}(x,y)=(x-y)^{-2},\ F^\F(x,y)=F^\F_{11}(x,y)=(x-y)^{-1}.\]
We have the following commutator formulas:
\begin{align*}
     \left[a^{(m-1)}(x)^+, b^{(n-1)}(y)^-\right]
     &=\ell_\B (a,b) \iota_{xy}F^\B_{mn}(x,y), &  a, b\in \h^\B\\
     \left[a^{(m-1)}(x)^+,b^{(n-1)}(y)^-\right]
    &=0, & a\in \h^\B, b\in h^\F\\
    \left[a^{(m-1)}(x)^+, b^{(n-1)}(y)^-\right]
    &=\ell_\F(a,b) \iota_{xy}F^\F_{mn}(x,y), & a, b\in \h^\F
\end{align*}
for all $m,n \in \Z_+$. Here, $\iota_{xy}$ expands negative powers of $x-y$ to its the binomial expansion 
in the nonnegative powers of $y$, i.e., for all $t\in \N$, 
$$\iota_{xy}{(x-y)^{-t}}=\sum_{i=0}^{\infty}\binom{-t}{i} x^{t-i}(-y)^i
=\sum_{i=0}^{\infty}\binom{t+i-1}{i} x^{t-i} y^i.$$
\end{prop}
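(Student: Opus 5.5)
The plan is to reduce to the case $m=n=1$ and then pass to derivatives. Since the bracket is bilinear and $\partial_x$, $\partial_y$ commute with taking $\pm$-parts, we have
\[
\left[a^{(m-1)}(x)^+, b^{(n-1)}(y)^-\right]=\frac{1}{(m-1)!(n-1)!}\frac{\partial^{m+n-2}}{\partial x^{m-1}\partial y^{n-1}}\left[a(x)^+,b(y)^-\right].
\]
The operator $\iota_{xy}$ also commutes with these derivatives. So it suffices to show three things: $[a(x)^+,b(y)^-]=\ell_\B(a,b)\iota_{xy}(x-y)^{-2}$ for $a,b\in\h^\B$; $[a(x)^+,b(y)^-]=\ell_\F(a,b)\iota_{xy}(x-y)^{-1}$ for $a,b\in\h^\F$; and that the bracket vanishes in the mixed case. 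The closed forms $n\binom{-n-1}{m-1}(x-y)^{-m-n}$ and $\binom{-n}{m-1}(x-y)^{-m-n+1}$ are a direct differentiation. For example, $\frac{1}{(n-1)!}\partial_y^{n-1}(x-y)^{-2}=n(x-y)^{-n-1}$, and then $\frac{1}{(m-1)!}\partial_x^{m-1}(x-y)^{-n-1}=\binom{-n-1}{m-1}(x-y)^{-n-m}$. The fermionic case is the same computation.

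For the base cases, expand the bracket as a double sum of brackets of modes. These are computed from the defining relations of $N(\widehat{\h}_{\frac{\Z}{2}})$ acting on $V$, with $\k_\B,\k_\F$ acting as the scalars $\ell_\B,\ell_\F$. Here $\k_\B,\k_\F$ are central: the relations in (\ref{Commutator to quotient}) make them supercommute with every generator, and $\k_\B$ is even.

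In the bosonic case, for $m\in\Z_+$ and $n\in-\Z_+$ we get $[a(m),b(n)]=m(a,b)\delta_{m+n,0}\ell_\B$. Hence
\[
[a(x)^+,b(y)^-]=\ell_\B(a,b)\sum_{m\ge1}m\,x^{-m-1}y^{m-1}=\ell_\B(a,b)\iota_{xy}(x-y)^{-2}.
\]
In the fermionic case, for $m\in\N$ and $n\in-\Z_+$ we get $[a(m+\frac12),b(n+\frac12)]=(a,b)\delta_{m+n+1,0}\ell_\F$. With the indexing $x^{-m-1}$, $y^{-n-1}=y^{m}$, this gives
\[
\ell_\F(a,b)\sum_{m\ge0}x^{-m-1}y^m=\ell_\F(a,b)\iota_{xy}(x-y)^{-1}.
\]
In the mixed case, the last family of relations in (\ref{Commutator to quotient}) shows that all relevant brackets vanish. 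This covers $a\in\h^\B$, $b\in\h^\F$ with positive bosonic and negative fermionic modes. The fermionic positive modes $m+\frac12$ with $m\in\N$ versus bosonic negative modes are presumably covered by that relation as well, and I should check this carefully against the index ranges stated there.

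The main obstacle is this bookkeeping of index ranges and signs. One point is that the graded commutator with an even element is the ordinary commutator. Another is whether the listed relations really cover every pair of modes that appears, in particular the fermionic mode $t^{1/2}$ (that is, $m=0$) paired with bosonic negative modes. If the listed relation only covers $n\in\Z_+$, I will read the intended relation (consistent with PBW, Proposition \ref{PBW1}) as including $n\in\N$ for the fermionic index.
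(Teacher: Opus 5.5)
Your proposal is correct and follows the same route as the paper, whose proof says only ``These all follow from direct calculations'': reduce to $m=n=1$ by differentiation, then read off the mode brackets from the defining relations, with $\k_\B,\k_\F$ central and so acting by $\ell_\B,\ell_\F$ on all of $V$ (for $\k_\F$ this needs $\k_\F$ to be even, which homogeneity of the fermionic relation forces). The only mixed case in the statement is $a\in\h^\B$, $b\in\h^\F$, which the $m\in\Z_+$, $n\in-\Z_+$ half of the last relation covers directly, so your worry about the fermionic mode $t^{1/2}$ against bosonic negative modes is not needed here; you are right, though, that the listed relation should read $n\in\N$ there, and that matters later, for instance in Proposition~\ref{Nord-recursion-left-fermion-prop}.
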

\begin{proof}
    These all follow from direct calculations. 
\end{proof}

Although we did not assume any commutator relations between positive modes, surprisingly we still have all the commutator relations between all of them. More specifically, we have:

\begin{prop}\label{Pos-modes-commute}
    For \(a, b\in \h^\B \cup \h^\F\), \(m, n \in \N \), we have 
    \begin{equation}
        [a(m+|a|/2), b(n+|b|/2)]=0,
    \end{equation}
    where the commutator is defined by \eqref{Braket-definition}.
\end{prop}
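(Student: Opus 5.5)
The identity is to be understood as an identity of operators on $V=T\left(\widehat{\h}_{\frac{\Z}{2}}^{-}\right)\one$; in the algebra $N\left(\widehat{\h}_{\frac{\Z}{2}}\right)$ no such relation is imposed. My plan is to show that the operator
$$C=[a(m+|a|/2), b(n+|b|/2)]$$
annihilates every spanning vector \eqref{generator}. I will argue by induction on the length $r$ of the monomial $w=c_1(-m_1+|c_1|/2)\cdots c_r(-m_r+|c_r|/2)\one$.

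For $r=0$, every positive mode annihilates $\one$ by construction of the induced module. The only exception is the bosonic zero mode $a(0)$, which also acts trivially on $\one$. So both $a(m+|a|/2)b(n+|b|/2)\one$ and $b(n+|b|/2)a(m+|a|/2)\one$ vanish, and hence $C\one=0$.

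For the inductive step, write $w=c(-k+|c|/2)w'$ with $w'$ of length $r-1$ and $k\in\Z_+$. I will use the super-Jacobi (Leibniz) rule for the graded commutator, which holds identically in any associative superalgebra, hence for operators on $V$:
$$[C, c(-k+|c|/2)] = \bigl[a(m+|a|/2), [b(n+|b|/2), c(-k+|c|/2)]\bigr] \pm \bigl[b(n+|b|/2),[a(m+|a|/2), c(-k+|c|/2)]\bigr].$$
By the relations \eqref{Commutator to quotient}, the inner brackets are scalar multiples of $\k_\B$ or $\k_\F$, possibly zero. This covers three situations. When both modes are nonzero, the bracket of a positive and a negative mode is a multiple of $\k$ or zero. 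When the positive mode is a bosonic zero mode, its bracket with a negative mode vanishes. A mixed boson–fermion pair gives zero.

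Since $\k_\B,\k_\F$ supercommute with every mode, the outer brackets vanish. Hence $[C,c(-k+|c|/2)]=0$. Then
$$Cw=\pm c(-k+|c|/2)\,Cw'=0$$
by the induction hypothesis.

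The step needing the most care is checking that each relevant bracket really is central in every parity combination. In particular, the mixed boson–fermion relation in \eqref{Commutator to quotient} is only listed for $m\in\Z_+$, $n\in-\Z_+$ or the reverse. I must check that this covers exactly the positive/negative pairs needed, and that the zero-mode case $a(0)$ is handled by the separate relation $[a\otimes t^{k+1/2}, b\otimes t^0]$ together with the boson relation for $k\neq 0$. The remaining work is tracking the signs in the super-Jacobi identity so that both terms do vanish rather than cancel only up to sign, and confirming that $\k_\B,\k_\F$ act on $V$ by the scalars $\ell_\B,\ell_\F$, which follows from their centrality and their action on $\one$.
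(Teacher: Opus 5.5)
Your proof is correct and is essentially the paper's argument. Both proceed by induction on the length of the monomial, commuting the negative mode past the two positive (or bosonic zero) modes so that the central contraction terms cancel. The super-Jacobi identity simply packages the paper's explicit two-ordering computation, which the paper writes out only for the boson--fermion case (citing earlier work for the other two), uniformly in all parities.

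On the point you flagged: the displayed mixed relation requires $n\in\Z_+$ in its second alternative, so it literally omits $[c\otimes t^{-k}, a\otimes t^{1/2}]$ for $c\in\h^\B$, $a\in\h^\F$. This is a typo for $n\in\N$. The relation is needed for Proposition~\ref{PBW1}, and the paper's own computation uses it implicitly. With that correction your argument goes through as written.
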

\begin{proof} 
    We have the following three cases to consider: (1) \(a, b \in \h^\B\); (2) \(a,b \in \h^\F\); (3) one of \(a, b\) is in \(\h^\B\), and the other is in \(\h^\F\). Case (2) is proved in Proposition 3.3 of \cite{FQ-Fermion-1}. Case (1) can be proved by a straightforward modification, which we shall omit. Below, we elaborate Case (3).
    
    Without loss of generality, assume that \(a\in \h^\B\) and \(b\in\h^\F\). We act \([a(m), b(n+1/2)]\) on a basis element \(h_1(-k_1+|h_1|/2) \cdots h_r(-k_r+|h_r|/2) \one\), \(k_1, \ldots, k_r \in \Z_+\) and show that the action results in \(0\). We prove by induction on \(r\). Since \([a(m), b(n+1/2)]\one=0\), the conclusion clearly holds when \(r=0\). Assume the identity holds for all smaller \(r\). Then for $v=h_2(-k_2+|h_2|/2) \cdots h_r(-k_r + |h_r|/2)\one$, $[a(m), b(n+1/2)]v = 0$. 

    Now if $h_1\in \h^\B$, we calculate $[a(m), b(n+1/2)]  h_1(-k_1) v$:
    \begin{align*}
        &[a(m), b(n+1/2)]  h_1(-k_1) v \\
        &\quad = a(m) b(n+1/2) h_1(-k_1) v - b(n+1/2) a(m) h_1(-k_1) v\\
       &\quad= \left(a, h_1\right) \delta_{m, k_1} b(n+1/2) v + h_1(-k_1) a(m)b(n+1/2) v\\
        &\quad\quad  -(a, h_1) \delta_{m, k_1} b(n+1/2) v-h_1(-k_1) b(n+1/2)a(m) v\\
       &\quad =h_1(-k_1) [a(m), b(n+1/2)] v = 0.
    \end{align*}
    
    If $h_1\in \h^\F$, we calculate $[a(m), b(n+1/2)]  h_1(-k_1+1/2) v$:
    \begin{align*}
        &[a(m), b(n+1/2)]  h_1(-k_1+1/2) v\\
        &\quad=  a(m) b(n+1/2) h_1(-k_1+1/2) v - b(n+1/2) a(m) h_1(-k_1+1/2)v\\
      &\quad =(b, h_1) \delta_{n+1, k_1}  a(m) v -h_1(-k_1+1/2) a(m)b(n+1/2) v \\
       &\quad  -(b, h_1) \delta_{n+1, k_1}  a(m) v +h_1(-k_1+1/2) b(n+1/2)a(m) v\\
      &\quad = -h_1(-k_1+1/2)[a(m), b(n+1/2)]v = 0.
    \end{align*}
    Thus follows the conclusion. 
\end{proof}

For $a_1, \ldots, a_r\in\h^\B \cup \h^\F$, we partition $\{1, \dots, r\}$ as the disjoint union of
$$A^\F = \{i\in \{1, \dots, r\}: a_i\in \h^\F\}$$
and 
$$A^\B = \{i\in \{1, \dots, r\}: a_i\in \h^\B\}$$
Let $1\le f_1< \cdots < f_{r^\F}\le r$ be all the elements of $A^\F$.
Then for 
$m_1, \ldots, m_r \in \Z$, we define the {\it normal-ordered product of $a_1(m_1+|a_1|/2), \dots, a_r(m_r+|a_r|/2)$} by
$$: a_{1}(m_{1}+|a_1|/2)\cdots a_{r}(m_{r}+|a_r|/2):
=(-1)^{\sigma^{\F}}a_{\sigma(1)}(m_{\sigma(1)}+|a_{\sigma(1)}|/2)\cdots 
a_{\sigma(r)}(m_{\sigma(r)}+|a_{\sigma(r)}|/2),$$
where $\sigma\in \text{Sym}\{1, \dots, r\}$ is the unique permutation such that
\begin{gather*}
m_{\sigma(1)} + |a_{\sigma(1)}|/2 < 0, ..., m_{\sigma(\alpha)} + |a_{\sigma(\alpha)}|/2 < 0,\\
m_{\sigma(\alpha+1)} + |a_{\sigma(\alpha+1)}|/2 >0 , ..., m_{\sigma(\beta)} + |a_{\sigma(\beta)}|/2 > 0, \\
m_{\sigma(\beta+1)} + |a_{\sigma(\beta+1)}|/2 = 0, ..., m_{\sigma(r)} + |a_{\sigma(r)}|/2 = 0, \\
1\leq \sigma(1) <  \cdots < \sigma(\alpha) \leq r,\\ 
1\leq \sigma(\alpha+1) <  \cdots < \sigma(\beta)\leq r,\\
1 \leq \sigma(\beta+1)<  \cdots < \sigma(r) \leq r, 
\end{gather*}
and $\sigma^{\F}\in \text{Sym}\{f_1, \dots, f_{r^\F}\}$ is the permutation determined uniquely by $\sigma$. More precisely, let 
$$1\leq i_1 < \cdots < i_{r^\F} \leq r$$
such that 
$$\sigma(i_1), \dots, \sigma(i_{r^\F})\in A^\F. $$
Then $\sigma^\F$ is the unique permutation in $\text{Sym}\{f_1, \dots, f_{r^\F}\}$ defined by 
\begin{align}
    \sigma^\F(f_1) = \sigma(i_1), \dots, \sigma^\F(f_{r^\F}) = \sigma(i_{r^\F}). \label{sigma-F-defn}
\end{align}

    The definition can be intuitively understood by the following procedure: starting with the ordered sequence $(\sigma(1), \dots, \sigma(r))$, we filter out a subsequence $(\sigma(i_1), \dots, \sigma(i_{r^\F}))$ of elements in $A^\F$, then $\sigma^\F$ sends the sequence $(f_1, \dots, f_{r^\F})$ to the sequence $(\sigma(i_1), \dots, \sigma(i_{r^\F}))$. Note that $\sigma^\F$ is independent of the ordering of elements in $A^\B$.  

Roughly speaking, normal ordering arranges all negative modes, bosonic and
fermionic, to the left; all positive modes, bosonic and fermionic, to the
middle; and all bosonic zero modes to the right, with the corresponding sign
change accounting only for the fermionic modes, and maintains the relative
order among the negative modes, among the positive modes, and among the zero
modes. For example, 
\begin{align*}
 & : a_{1}(-\tfrac{3}{2}) a_{2}(5) a_{3}(0) a_{4}
  (\tfrac{5}{2}) a_5(-4) a_6(-7) a_7(0) a_8(-\tfrac{1}{2})a_9(2)a_{10}(\tfrac{9}{2}) :\nn
&\quad =-a_{1}(-\tfrac{3}{2})a_{5}(-4)a_{6}(-7)
a_{8}(-\tfrac{1}{2})a_2(5) a_4(\tfrac{5}{2}) 
a_{9}(2) a_{10}(\tfrac{9}{2})a_{3}(0)a_{7}(0), 
\end{align*}
where $\sigma^\F = (4\ 8)$.

We extend the normal ordering of products of operators above to 
normal ordering of products of series whose terms are products of 
such operators, by applying normal ordering to the coefficient of each 
power in the series obtained from the products. Then for $a_1, \dots, a_r\in \h^\B \cup \h^\F$, 
$$:a_1(x_1)\cdots a_r(x_r): = \sum_{m_1, \dots ,m_r \in \Z} :a_1(m_1+|a_1|/2)\cdots a_r(m_r+|a_r|/2): x_1^{-m_1-1} \cdots x_r^{-m_r-1}.$$
For $\mu, \nu = 0, \dots, r$, let $J_{\mu\nu}\{1, \dots, r\}$ be the set of \textit{3-shuffles} with size $\mu$ and $\nu$. Explicitly, $J_{\mu\nu}\{1, \dots, r\}$ is the collection of $\sigma\in \text{Sym}\{1, \dots, r\}$, such that
\begin{align*}
 \sigma(1) < \cdots < \sigma(\mu),\ \sigma(\mu+1)<\cdots <\sigma(\mu+\nu),\  \sigma(\mu+\nu+1)< \cdots < \sigma(r).
\end{align*}
Clearly, 
\begin{align*}
    :a_1(x_1)\cdots a_r(x_r): = \sum_{\mu=0}^r \sum_{\nu=0}^{r-\mu} \sum_{\sigma\in J_{\mu\nu}\{1, \dots, r\}}(-1)^{\sigma^\F}&  a_{\sigma(1)}(x_{\sigma(1)})^-\cdots a_{\sigma(\mu)}(x_{\sigma(\mu)})^-\\ 
    & \cdot a_{\sigma(\mu+1)}(x_{\sigma(\mu+1)})^+\cdots a_{\sigma(\mu+\nu)}(x_{\sigma(\mu+\nu)})^+\\
    & \cdot a_{\sigma(\mu+\nu+1)}(x_{\sigma(\mu+\nu+1)})^0\cdots a_{\sigma(r)}(x_{\sigma(r)})^0.
\end{align*}

\begin{rmk} \label{Nord-commutes-w-derivation-rmk}
   We have the following simple but important observation: The normal ordering operation commutes with partial derivation: for every \(i=1, \ldots, r\),
    \begin{equation}\label{Nord-commutes-w-derivation-eq}
    \frac{\partial}{\partial x_i} \left(: a_1^{(m_1-1)} (x_1)\cdots  a_r^{(m_r-1)}(x_r) :\right)=:\frac{\partial}{\partial x_i}\left(a_1^{(m_1-1)} (x_1)\cdots a_r^{(m_r-1)}(x_r)\right):\, ,
    \end{equation}
    therefore for many propositions in this paper, it suffices to prove the case for \(m_1=\cdots =m_r=0\). 
\end{rmk}

\subsection{The vertex operator map and the main theorem}

We now define a vertex operator map
$$Y: V \otimes V \rightarrow V((x))$$
as follows. First, we define
\begin{equation}
    Y(\mathbf{1}, x)=1_V.
\end{equation}
For $a\in \h^\B\cup \h^\F$ and $m\in \Z_{+}$, we define 
\begin{align}
    Y(a(-m+|a|/2)\one, x) =  a^{(m-1)}(x)=\frac 1 {(m-1)!} \frac{d^{m-1}}{dx^{m-1}}a(x). \label{vo-generator}
\end{align}
In greater detail, if $a\in \h^\B$, then for $m\in \Z_+$, 
$$Y(a(-m)\one, x) = \sum_{n\in \Z} a(n) (x^{-n-1})^{(m-1)}= \sum_{n\in \Z} a(n)\binom{-n-1}{m-1} x^{-n-m}; $$
if $a\in \h^\F$, then for $m\in \Z_+$, 
\begin{align}
    Y(a(-m+1/2)\one, x) = \sum_{n\in \Z} a(n+1/2) (x^{-n-1})^{(m-1)}= \sum_{n\in \Z} a(n+1/2)\binom{-n-1}{m-1} x^{-n-m},\label{vo-generator-fermion}
\end{align}
where we have used the notation
\begin{equation}\label{x-p-q-notation}
(x^{p})^{q}=\binom{p}{q}x^{p-q}
\end{equation}
for $p\in \Z$ and $q\in \N$. 
Note that the formula (\ref{vo-generator-fermion}) is indeed consistent with that in \cite{FQ-Fermion-1}, where $Y(a(-m-1/2)\one, x) = a^{(m)}(x).$ We modified the convention and used $a(-m+1/2)\one$ instead to allow a unified formula (\ref{vo-generator}) for the action of both the bosonic and the fermionic fields. 

For $a_1, \ldots, a_n\in\h^\B\cup\h^\F$, $m_1, \ldots, m_n \in \Z_+$,
we define
$$ Y\left( a_{1}(-m_{1}+|a_1|/2)\cdots a_{n}(-m_{n}+|a_n|/2)\one, x\right) 
=: a_1^{(m_1-1)}(x) \cdots a_n^{(m_n-1)}(x):.$$

\begin{thm}\label{It-is-a-mosva}
 The triple $(V, Y, \one)$ defined above
 is a $\frac{\Z}{2}$-graded meromorphic open-string vertex algebra. 
\end{thm}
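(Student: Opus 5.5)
We verify the axioms of Definition \ref{Def-mosva} one at a time. Following Remark \ref{equiv-defn-mosva}, rationality and associativity are replaced by the weak associativity with pole-order condition. The elementary axioms come first.

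The lower bound condition is immediate from the definition of weights on $V=T(\widehat{\h}^-_{\frac{\Z}{2}})$. The $\d$-commutator formula reduces, for a single mode, to the fact that $a(n+|a|/2)$ has weight $-n-|a|/2$. Since normal ordering only permutes homogeneous factors, the formula extends to $:a_1^{(m_1-1)}(x)\cdots a_n^{(m_n-1)}(x):$ by the Leibniz rule; Remark \ref{Nord-commutes-w-derivation-rmk} lets us reduce to $m_i=1$.

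The identity property holds by definition. For the creation property, positive and zero modes annihilate $\one$. Hence $Y(u,x)\one$ only involves the negative-mode factors $a_1^{(m_1-1)}(x)^-\cdots a_n^{(m_n-1)}(x)^-\one$, which is a power series in $x$ with constant term $u$. Signs cause no trouble here, since the relative order among negative modes is preserved.

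For the $D$-property we set $D=D_V$, which acts on $V$ as a derivation of the tensor algebra sending $a(-m+|a|/2)\mapsto m\,a(-m-1+|a|/2)$. The commutator $[D,a(n+|a|/2)]=-(n+|a|/2\mp\cdots)$ is computed on generators by induction on length, as in Proposition \ref{Pos-modes-commute}. This gives $[D,a^{(m-1)}(x)^{\pm}]=\frac{d}{dx}a^{(m-1)}(x)^{\pm}$ separately on the $\pm,0$ parts, so $D$ commutes through normal ordering and yields $[D,Y(v,x)]=\frac{d}{dx}Y(v,x)$. The identity $Y(Dv,x)=\frac{d}{dx}Y(v,x)$ follows from the definition of $Y$ on monomials, the Leibniz rule, Remark \ref{Nord-commutes-w-derivation-rmk}, and $\frac{d}{dx}a^{(m-1)}(x)=m\,a^{(m)}(x)$.

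The main work is weak associativity, and it rests on a generalized Wick theorem. For $u_1=a_1(-m_1+|a_1|/2)\cdots a_k(\cdots)\one$ and $u_2=b_1(\cdots)\cdots b_l(\cdots)\one$, we would prove
\begin{align*}
&:a_1^{(m_1-1)}(x_1)\cdots a_k^{(m_k-1)}(x_1):\,:b_1^{(n_1-1)}(x_2)\cdots b_l^{(n_l-1)}(x_2):\\
&\quad=\sum \pm \prod \ell(\cdot,\cdot)\,\iota_{x_1x_2}F_{\cdot\cdot}(x_1,x_2)\,:\text{(remaining fields)}:,
\end{align*}
where the sum runs over partial matchings between positive modes of the $a$'s and negative modes of the $b$'s. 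Each bosonic pair contributes $F^\B$ and each fermionic pair contributes $F^\F$; mixed pairs contribute $0$ by Proposition \ref{algebra-commutator}. In the remaining normal-ordered product, the unmatched fields keep their original order within each of the $-,+,0$ blocks, and there is a fermionic sign. We prove it by moving each positive-mode factor past the negative-mode factors one at a time. Proposition \ref{algebra-commutator} supplies the contractions, and Proposition \ref{Pos-modes-commute} handles the rearrangement of positive modes. We organize this with 3-shuffles from $J_{\mu\nu}$ and induct on $l$. Crucially, no reordering among negative modes is ever needed, because the $a$-negatives already sit to the left.

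The same formula, with $\iota_{x_2x_0}$ of $F(x_0+x_2,x_2)=x_0^{-2}$ or $x_0^{-1}$, describes $Y(Y(u_1,x_0)u_2,x_2)$. Here we apply the Wick computation to $Y(u_1,x_0)u_2$ first and then use the definition of $Y$ on the resulting monomials. This requires Taylor-expanding $a(x_0+x_2)^-$ as $\sum a^{(j)}(x_2)x_0^j$, which is compatible with the $e^{x_0D}$ picture. We then compare the two expansions term by term. Multiplying by $(x_0+x_2)^p$, with $p$ at least the total pole order from the zero modes of $u_1$ acting on $v$, removes the dependence on the expansion direction. This $p$ depends only on $u_1$ and $v$.

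The main obstacle is the sign bookkeeping. We must check that $(-1)^{\sigma^\F}$ from both normal orderings, combined with the fermionic contraction signs, agrees on both sides. In particular, bosons carry no sign but can interleave arbitrarily with fermions. Our approach is to show that $\sigma^\F$ depends only on the induced ordering of fermionic indices, so the sign reduces to the purely fermionic sign already handled in \cite{FQ-Fermion-1}. The bosonic permanent-type coefficients then follow as in \cite{H-mosva}.
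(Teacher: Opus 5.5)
Your outline is correct and follows the paper's proof. You reduce to weak associativity with pole-order condition (Remark~\ref{equiv-defn-mosva}), prove the generalized Wick theorem (Theorem~\ref{Wick-thm-general-case}), and compare the product and iterate formulas via $F(x_0+x_2,x_2)=F(x_0,0)$. The only difference is how you get the mixed signs: you observe that they depend only on the fermionic subsequence and reduce to \cite{FQ-Fermion-1}, while the paper inducts on $r$ using the one-sided recursions (Propositions~\ref{Nord-recursion-left-boson-prop}--\ref{Nord-recursion-right-fermion-prop}) and Laplace expansions of the permanent and determinant.

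One wording fix: the poles at $x_1=0$ that $(x_0+x_2)^p$ must clear come from the positive (annihilation) modes of the $a_i$ acting on $v$, not from zero modes. Bosonic zero modes act as $0$ on $V$, and fermions have none there. So take $p$ as a bound that is uniform over all Wick terms, e.g.\ any integer $p\ge \wt v+m_1+\cdots+m_r$ (compare Proposition~\ref{weak-ass-alg}).
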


Theorem \ref{It-is-a-mosva} will be proved in Section 6.

\section{A construction of canonically twisted $V$-modules}

In this section, we construct $\theta$-twisted modules for the MOSVA $V$ constructed in the preceding section, where $\theta$ is given in Definition \ref{defn-of-theta}. Such a twisted module is also called a \textit{canonically twisted module}. 

\subsection{The algebra $N\left(\widehat{\h}^{\B, \F}_{\Z}\right)$ and its induced module}
Consider the affinization of $\h^\B$ and $\h^\F$, namely, 
$$\widehat{\mathfrak{h}}_{\mathbb{Z}}^\B = \h^\B \otimes  \C[t, t^{-1}]\oplus \k_\B, \qquad \widehat{\mathfrak{h}}_{\mathbb{Z}}^\F = \h^\F \otimes  \C[t, t^{-1}]\oplus \k_\F. $$ 
Let 
$$\widehat{\h}_{\Z}^{\B,\F}=\widehat{\mathfrak{h}}_{\mathbb{Z}}^\B
\oplus \widehat{\mathfrak{h}}_{\mathbb{Z}}^\F.$$
Denote
$$\widehat{\mathfrak{h}}_{\mathbb{Z}}^\B=\widehat{\mathfrak{h}}_{\mathbb{Z}}^{\B,-} \oplus \widehat{\mathfrak{h}}_{\mathbb{Z}}^{\B,0}
 \oplus \widehat{\mathfrak{h}}_{\mathbb{Z}}^{\B,+},\  
\widehat{\mathfrak{h}}_{\mathbb{Z}}^\F=\widehat{\mathfrak{h}}_{\mathbb{Z}}^{\F,-} \oplus \widehat{\mathfrak{h}}_{\mathbb{Z}}^{\F,0}
 \oplus \widehat{\mathfrak{h}}_{\mathbb{Z}}^{\F,+},\ 
\widehat{\h}_{\Z}^{\B,\F}=\widehat{\h}_{\Z}^{\B,\F, -}\oplus 
\widehat{\h}_{\Z}^{\B,\F, 0}\oplus \widehat{\h}_{\Z}^{\B,\F, +},$$
where
\begin{align*}
 & \widehat{\mathfrak{h}}_{\mathbb{Z}}^{\B, \pm}=\mathfrak{h}^\B
\otimes t^{ \pm 1} \mathbb{C}\left[t^{ \pm 1}\right], & &
\widehat{\mathfrak{h}}_{\mathbb{Z}}^{\B,0}=\mathfrak{h}^\B\otimes \C t^0 \oplus \C \k_\B\cong \h^\B \oplus \C \k_\B, \\
 &\widehat{\mathfrak{h}}_{\mathbb{Z}}^{\F, \pm}=\mathfrak{h}^\F
\otimes t^{ \pm 1} \mathbb{C}\left[t^{ \pm 1}\right], && \widehat{\mathfrak{h}}_{\mathbb{Z}}^{\F,0}=\mathfrak{h}^\F \otimes \C t^0 \oplus \C \k_\F\cong \h^\F \oplus \C \k_\F,\\
&\widehat{\h}_{\Z}^{\B,\F, \pm}=\widehat{\mathfrak{h}}_{\mathbb{Z}}^{\B, \pm}
\oplus \widehat{\mathfrak{h}}_{\mathbb{Z}}^{\F, \pm}, & &
\widehat{\h}_{\Z}^{\B,\F, 0}=\widehat{\mathfrak{h}}_{\mathbb{Z}}^{\B,0}
\oplus  \widehat{\mathfrak{h}}_{\mathbb{Z}}^{\F,0}\cong \h^\B\oplus \h^\F\oplus \C\k_\B \oplus \C \k_\F.
\end{align*}
Consider the quotient $N\left(\widehat{\h}_{\Z}^{\B,\F}\right)$ of the tensor algebra
$T\left(\widehat{\h}_{\Z}^{\B,\F}\right)$
and its two-sided ideal generated by elements of the following forms:
\begin{equation}\label{relations module}
\begin{aligned}
    &\left[\k_\B, \k_\F\right], \\
    &\left[a\otimes t^m, \mathbf{k}_\B\right],  \quad \text{ for } a\in \h^\B\cup \h^\F, \;m\in \Z,\\
    &\left[a\otimes t^{m}, \mathbf{k}_\F\right],  \quad \text{ for } a\in \h^\B\cup \h^\F,\;m\in \Z,\\
    &\left[a \otimes  t^m , b \otimes  t^0\right],  \quad \text{ for } a, b\in \h^\B\cup h^\F, \; m\in \Z\setminus \{0\},\\
    &\left[a \otimes t^m, b\otimes t^{n} \right], \quad  \text{ for } a\in \h^\B, b\in \h^\F, \; m\in\Z_+,\; n\in -\Z_+,  \text{ or } m\in -\Z_+, \;n\in \Z_+, \\
    &\left[a \otimes t^m, b \otimes t^n\right]-m( a, b) \delta_{m+n, 0} \mathbf{k}_\B,   \quad \text{ for } a, b\in \h^\B,  \; m\in\Z_+, \; n\in -\Z_+,\\
    &\left[ a \otimes t^{m}, b\otimes t^{n}\right]-(a, b) \delta_{m+n, 0} 
    \mathbf{k}_\F, \quad  \text{ for } a, b\in \h^\F,  \; m\in\Z_+,\; n\in -\Z_+.
\end{aligned}
\end{equation}
Compared to the free boson-fermion vertex superalgebra (see again for example, Section 3.3 in \cite{Barron-Review}), 
\begin{itemize}
    \item For $a, b\in \h^\B$, the elements $a\otimes t^m$ and $b\otimes t^n$ admit no relations if $m$ and $n$ are both positive, or both negative, or both zero. 
    \item For $a, b\in \h^\F$, the elements $a \otimes t^m$ and $b\otimes t^n$ admit no relations if $m$ and $n$ are both positive, or both negative, or both zero. 
    \item For $a\in \h^\B$, $b\in \h^\F$, the elements $a \otimes t^m$ and $b\otimes t^n$ admit no relations if $m$ and $n$ are both positive, or both negative, or both zero. 
\end{itemize}

\begin{prop}[Poincaré–Birkhoff–Witt type theorem] \label{PBW2}
As a vector space, $N\left(\widehat{\h}_{\Z}^{\B,\F} \right)$ is isomorphic to 
$$T\left(\widehat{\h}_{\Z}^{\B,\F, -} \right)\otimes T\left(\widehat{\h}_{\Z}^{\B,\F, +} \right)
\otimes T\left(\h^\B \oplus \h^\F\right)\otimes T\left(\C\k_\B \oplus \C \k_\F \right).$$
\end{prop}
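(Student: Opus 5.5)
The plan is the standard two-sided argument used for Proposition \ref{PBW1} (itself modeled on Proposition 3.1 of \cite{H-mosva} and Proposition 3.2 of \cite{FQ-Fermion-1}). Write $P$ for the target space
$$P=T\left(\widehat{\h}_{\Z}^{\B,\F, -} \right)\otimes T\left(\widehat{\h}_{\Z}^{\B,\F, +} \right)\otimes T\left(\h^\B \oplus \h^\F\right)\otimes T\left(\C\k_\B \oplus \C \k_\F \right).$$
There is a natural linear map $\phi: P\to N\left(\widehat{\h}_{\Z}^{\B,\F}\right)$ sending $u_-\otimes u_+\otimes u_0\otimes u_\k$ to the image of the product $u_-u_+u_0u_\k$. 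We will show that $\phi$ is surjective, and then that it is injective, the latter by letting $N\left(\widehat{\h}_{\Z}^{\B,\F}\right)$ act on $P$.

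\textbf{Surjectivity.} Surjectivity amounts to a straightening argument. Take a monomial in generators $a\otimes t^m$, $\k_\B$, $\k_\F$. By the relations in (\ref{relations module}), $\k_\B$ and $\k_\F$ (super)commute with everything and with each other, so they can be moved to the far right. Each zero mode $b\otimes t^0$ supercommutes with every nonzero mode, so zero modes can be moved to the right of all nonzero modes. Whenever a positive mode $a\otimes t^m$ stands immediately left of a negative mode $b\otimes t^n$, we replace the pair by the reversed product, with sign $\pm$, plus a term that is either $0$, $m(a,b)\delta_{m+n,0}\k_\B$ or $(a,b)\delta_{m+n,0}\k_\F$. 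The bosonic--fermionic mixed relation supplies the case where $a$ and $b$ lie in different copies. An induction on the length of the monomial, together with the number of positive--negative inversions, shows that every monomial lies in the span of ordered products (negative)(positive)(zero)($\k$). Within each block no reordering is needed, because no relations are imposed there.

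\textbf{Injectivity.} Here we construct a representation of $T\left(\widehat{\h}_{\Z}^{\B,\F}\right)$ on $P$ that factors through the ideal. The $\k$'s act by right multiplication on the last tensor factor; more precisely, $\k_\B$ and $\k_\F$ are inserted into the last factor in the position dictated by a fixed order, which is consistent because $[\k_\B,\k_\F]=0$ lets us identify $T(\C\k_\B\oplus\C\k_\F)$ with the tensor algebra modulo that commutator. Negative modes act by left multiplication on the first factor. A zero mode $b\otimes t^0$ acts by passing, with the parity sign, through the first two factors and multiplying on the left of the third factor. A positive mode $a\otimes t^m$ acts on $u_-\otimes u_+\otimes u_0\otimes u_\k$ by recursion on the length of $u_-$. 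If $u_-=1$, it multiplies $u_+$ on the left. If $u_-=(b\otimes t^n)u_-'$, the action is
$$(-1)^{|a||b|}(b\otimes t^n)\cdot\big((a\otimes t^m)\cdot(u_-'\otimes\cdots)\big)+c\,(u_-'\otimes u_+\otimes u_0\otimes u_\k\,\k_\ast),$$
where $c$ is the appropriate scalar $m(a,b)\delta_{m+n,0}$ or $(a,b)\delta_{m+n,0}$ (zero in the mixed case) and $\k_\ast$ is $\k_\B$ or $\k_\F$ accordingly.

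One then checks that each generator of the ideal acts by zero. The commutator relations between positive and negative modes hold by construction of the recursion; the relations involving $\k$'s and zero modes hold because these act as (super)central elements on the appropriate factors. Applying the resulting $N$-action to $1\otimes1\otimes1\otimes1$ yields a left inverse of $\phi$, since an ordered product acts on $1\otimes 1\otimes 1\otimes 1$ by producing exactly the corresponding tensor. Hence $\phi$ is injective.

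\textbf{Main obstacle.} The delicate point is verifying that the zero-mode relations $[a\otimes t^m, b\otimes t^0]$ act as zero for positive $m$, and that the sign conventions are consistent. A positive mode acting recursively produces $\k$-terms that must commute past $u_0$, and the parity sign for passing a zero mode through $u_-\otimes u_+$ must match the signs created by the recursion. We handle this by proving, by induction on the length of $u_-$, that the action of a zero mode supercommutes with the action of any positive or negative mode, using that the $\k$'s supercommute with the zero modes. This step is a bookkeeping combination of the purely bosonic and purely fermionic cases treated in \cite{H-mosva} and \cite{Q-Fermion-2}, and the details are omitted as in Proposition \ref{PBW1}.
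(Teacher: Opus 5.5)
Your straightening-plus-representation argument is the standard one that the paper's citations point to. The checks on the mode relations go through, including the sign bookkeeping for zero modes against the recursion.

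The gap is in the fourth tensor factor. $T(\C\k_\B\oplus\C\k_\F)$ is the free algebra on $\k_\B,\k_\F$, whereas $[\k_\B,\k_\F]$ is one of the generators of the ideal defining $N(\widehat{\h}_{\Z}^{\B,\F})$. Hence your $\phi$ sends the nonzero element $1\otimes1\otimes1\otimes(\k_\B\k_\F-\k_\F\k_\B)$ to $0$, so $\phi$ is not injective. The step where you ``identify $T(\C\k_\B\oplus\C\k_\F)$ with the tensor algebra modulo that commutator'' is exactly where this is lost. That quotient is the polynomial algebra $\C[\k_\B,\k_\F]$, a proper quotient of the free algebra.

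Concretely, with your fixed-order insertion, $\psi(x)=x\cdot(1\otimes1\otimes1\otimes1)$ sends $\phi(1\otimes1\otimes1\otimes\k_\F\k_\B)$ to $1\otimes1\otimes1\otimes\k_\B\k_\F$, so $\psi\circ\phi\neq\mathrm{id}_P$. If instead you let $\k_\B,\k_\F$ act by honest right multiplication in the free algebra, then $[\k_\B,\k_\F]$ no longer acts by zero, and you get no action of $N(\widehat{\h}_{\Z}^{\B,\F})$ at all.

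What your argument does prove, verbatim, is that multiplication gives a linear isomorphism
$$T(\widehat{\h}_{\Z}^{\B,\F,-})\otimes T(\widehat{\h}_{\Z}^{\B,\F,+})\otimes T(\h^\B\oplus\h^\F)\otimes T(\C\k_\B)\otimes T(\C\k_\F)\longrightarrow N(\widehat{\h}_{\Z}^{\B,\F}),$$
where the last two factors together are $\C[\k_\B,\k_\F]$. This is the form of Proposition \ref{PBW1}, and it is the form one needs to identify $W$ with $T(\widehat{\h}_{\Z}^{\B,\F,-})\otimes M$. So the printed fourth factor is presumably a slip.

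As literally stated, the proposition holds only in the vacuous sense that both sides are countably infinite-dimensional vector spaces; no natural map realizes it. You should either state and prove the corrected version, or deduce the literal statement from it by that dimension count. What you cannot do is assert that $\phi$ itself is injective.
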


\begin{proof}
The proof is a trivial combination of those of Proposition 3.1 in \cite{H-mosva} and Proposition 3.2 in \cite{Q-Fermion-2}. We shall omit the details. 
\end{proof}

Let $M$ be a $T\left(\h^\B \oplus \h^\F\right)$-module.  
Assume that $M=\coprod_{n\in \C}M_{[n]}$ is $\C$-graded (graded by \textit{weights}) such that the action of 
$T\left(\h^\B \oplus \h^\F\right)$ preserves
the grading and the $\C$-grading is lower bounded. 
Let $T\left(\widehat{\h}_{\Z}^{\B,\F, +} \right)$ act on $M$ 
trivially, $\k_\B$ act on $M$ as a scalar $\ell_\B\in \C$, and $\k_\F$ act on $M$ as a scalar $\ell_\F\in \C$. 
Then $M$ is a module for the subalgebra $N\left(\widehat{\h}_{\Z}^{\B,\F, +} \oplus \widehat{\h}_{\Z}^{\B,\F, 0} \right)$ of $N\left(\widehat{\h}_{\Z}^{\B,\F} \right)$. 
We then consider the induced module 
\begin{equation}
W=N\left(\widehat{\h}_{\Z}^{\B,\F} \right)\otimes_{N\left(\widehat{\h}_{\Z}^{\B,\F, +} \oplus 
\widehat{\h}_{\Z}^{\B,\F, 0} \right)} M. 
\label{vector space module}
\end{equation}
By Proposition \ref{PBW2}, as vector spaces, $W$ and  
$T\left(\widehat{\h}_{\Z}^{\B,\F, -} \right)\otimes M$ are isomorphic. We shall identify $W$ with 
$T\left(\widehat{\h}_{\Z}^{\B,\F, -} \right)\otimes M$.
For $a\in \h^\B \cup \h^\F$, $m\in \Z$, we denote the action of $a\otimes t^m$ on $W=T\left(\widehat{\h}_{\Z}^{\B,\F, -} \right)\otimes M$ by $a(m)$. 
For $a\in\h^\B$ (resp. $a\in\h^\F$), we call $a(m)$ the \textit{bosonic} (resp. \textit{fermionic}) \textit{positive modes} if $m\in \Z_{+}$, \textit{bosonic} (resp. \textit{fermionic}) \textit{negative modes} if $m\in -\Z_{+}$, \textit{bosonic} (resp. \textit{fermionic}) \textit{zero modes} if $m=0$. 
Then it follows from Proposition 
\ref{PBW2} that $W$
is spanned by elements of the form
\begin{equation}
 a_1(-m_1) \cdots a_r(-m_r) w \label{generator-mod}
\end{equation}
where $r\in \N$, $a_1, \ldots, a_r\in\h^\B \cup \h^\F$, $m_1, \ldots, m_r \in \Z_+$, and 
$w\in M$ is a homogeneous module element. We define the {\it weight} of
(\ref{generator-mod}) to be $m_1+\cdots+m_r+\wt w$,
and define the weight operator $\d_{W}$ accordingly. 
Then $W$ is a $\C$-graded vector space, that is, 
$W=\coprod_{n \in \C}W_{[n]},$
where $W_{[n]}$ for each $n\in \C$
is the homogeneous subspace of weight $n$. Clearly, for every $a\in \h^\B \cup \h^\F$ 
and $n\in \Z$, the mode $a(n)$ is homogeneous of weight $-n$.

\subsection{Generating functions of the modes and a normal-ordering $\typecolon \cdot\typecolon$}\label{gen-func-mod}

For $a\in\h^\B \cup \h^\F$, we denote
\begin{align*}
a(x)&=\sum_{n\in \Z} a(n) x^{-n-1+|a|}.
\end{align*}
In particular, for $a\in \h^\B$, we have $a(x) = \sum_{n\in \Z} a(n)x^{-n-1}$, and
$$
a(x)^+ = \sum_{n\in \Z_+} a(n)x^{-n-1},\  a(x)^- = \sum_{n\in -\Z_+} a(n)x^{-n-1},\  a(x)^0 = a(0)x^{-1};
$$
for $a\in \h^\F$, we have $a(x) = \sum_{n\in \Z} a(n)x^{-n-\frac 1 2}$, and 
$$a(x)^+ = \sum_{n\in \Z_+} a(n)x^{-n-\frac 1 2},\  a(x)^- = \sum_{n\in -\Z_+} a(n)x^{-n-\frac 1 2},\  a(x)^0 = a(0)x^{-\frac 1 2}.$$
\begin{prop}\label{mod-basic-modes-commutator}
    Denote \(F^\B_{mn}(x,y)\) as previously in Proposition \ref{algebra-commutator}, and \[\overline{G}_{mn}(x,y):=\frac{1}{(m-1)!(n-1)!} \frac{\partial^{m+n-2}}{\partial x^{m-1} \partial y^{n-1}} 
\left(x^{-1/2}y^{1/2} (x-y)^{-1}\right).\]
Then we have the following commutator formulas:
\begin{enumerate}
\item For $a, b\in \h^\B$, 
\[\left[a^{(m-1)}(x)^+, b^{(n-1)}(y)^- \right]
=(a,b)F^\B_{mn}(x, y).\] 
    \item For $a\in \h^\B, b\in \h^\F$, or $a\in \h^\F, b\in \h^\B$, 
    \[\left[a^{(m-1)}(x)^+, b^{(n-1)}(y)^-\right]=0.\]
    \item For $a, b\in \h^\F$, 
    \[\left[a^{(m-1)}(x)^+, b^{(n-1)}(y)^-\right]
=(a,b)\overline{G}_{mn}(x,y).\]
\end{enumerate}
\end{prop}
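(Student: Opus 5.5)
The plan is to reduce to the case $m=n=1$ and then compute the commutator mode by mode from the defining relations \eqref{relations module} of $N\left(\widehat{\h}_{\Z}^{\B,\F}\right)$. Throughout, $[\cdot,\cdot]$ is the $\Z_2$-graded commutator \eqref{Braket-definition}, so it is an anticommutator when both $a,b\in\h^\F$. The central elements $\k_\B$ and $\k_\F$ act on $W$ by the scalars $\ell_\B,\ell_\F$. The computation therefore naturally produces factors $\ell_\B(a,b)$ and $\ell_\F(a,b)$, exactly as in Proposition \ref{algebra-commutator}. I read the statement as written with these scalars normalized to $1$ (or absorbed into the form); otherwise the same argument simply carries the factor along. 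The right-hand sides are understood as expanded by $\iota_{xy}$, in nonnegative powers of $y$, as before.

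\textbf{Reduction to $m=n=1$.} The operations $a(x)\mapsto a(x)^\pm$ act coefficientwise, so they commute with $\partial_x$. Hence
\[
\left[a^{(m-1)}(x)^+, b^{(n-1)}(y)^-\right]=\frac{1}{(m-1)!(n-1)!}\frac{\partial^{m+n-2}}{\partial x^{m-1}\partial y^{n-1}}\left[a(x)^+, b(y)^-\right].
\]
By definition, $F^\B_{mn}$ and $\overline{G}_{mn}$ are obtained from $F^\B_{11}$ and $\overline{G}_{11}$ by the same operator. Differentiation also commutes with $\iota_{xy}$, so it suffices to treat $m=n=1$.

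\textbf{The three cases for $m=n=1$.} Write $\left[a(x)^+,b(y)^-\right]=\sum_{p\in\Z_+,\,q\in-\Z_+}[a(p),b(q)]\,x^{-p-1+|a|}y^{-q-1+|b|}$.
\begin{enumerate}
\item For $a,b\in\h^\B$, the relations give $[a(p),b(q)]=p(a,b)\delta_{p+q,0}$. The sum becomes
\[
(a,b)\sum_{p\ge1}p\,x^{-p-1}y^{p-1}=(a,b)\,\iota_{xy}(x-y)^{-2}.
\]
\item For one of $a,b$ in $\h^\B$ and the other in $\h^\F$, the mixed relation in \eqref{relations module} kills every term.
\item For $a,b\in\h^\F$, the relations give $[a(p),b(q)]=(a,b)\delta_{p+q,0}$. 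The sum becomes
\[
(a,b)\sum_{p\ge1}x^{-p-1/2}y^{p-1/2}=(a,b)\,x^{-1/2}y^{1/2}\sum_{p\ge1}x^{-p}y^{p-1}=(a,b)\,x^{-1/2}y^{1/2}\,\iota_{xy}(x-y)^{-1},
\]
which is $(a,b)\overline{G}_{11}(x,y)$.
\end{enumerate}

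\textbf{Main obstacle.} The only delicate point is bookkeeping rather than conceptual. One must check that the half-integer powers $x^{\mp1/2}$ behave correctly: $\partial_x$ acting on $x^{-1/2}$ should be read in $x^{1/2}\C[x^{\pm1}]$, so that differentiating the closed form and differentiating the series term by term agree. One must also track the central scalars noted above. Both checks are routine, since all series involved are formal and lie in $x^{-1/2}y^{1/2}\C[[x^{-1},y]]$.
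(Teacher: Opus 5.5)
Your proposal is correct and is exactly the ``straightforward calculation'' the paper omits. You reduce to $m=n=1$, since taking $\pm$-parts, differentiating and applying $\iota_{xy}$ all commute, and then read off $[a(p),b(-p)]$ from the defining relations of $N\left(\widehat{\h}_{\Z}^{\B,\F}\right)$ and sum the resulting series. You were also right to flag that the computation actually produces the factors $\ell_\B$ and $\ell_\F$, which the statement drops but which reappear in the later recursion formulas; likewise, the fermionic exponent should be $-n-\tfrac12$ (i.e.\ $|a|/2$ rather than $|a|$), which is what you in fact used in case (3).
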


\begin{proof}
The proof is follows from a straightforward calculation. We omit the details here.
\end{proof}

Fix $a_1, \ldots, a_r\in\h^\B \cup \h^\F$ and 
$m_1, \ldots, m_r \in \Z$, again let 
$$A^\F = \{i \in \{1, \dots, r\}: a_i \in \h^\F\} = \{f_1, \dots, f_{r^\F}\}$$
where $1\leq f_1 < \cdots < f_{r^\F}\leq r$. We define the \textit{normal ordering}  of 
$a_1(m_1) \cdots a_r(m_r) $ by 
\begin{align*}
\typecolon a_1(m_1) \cdots a_r(m_r) \typecolon
&=(-1)^{\sigma^\F} a_{\sigma(1)}(m_{\sigma(1)}) 
\cdots a_{\sigma(\alpha)}(m_{\sigma(\alpha)})\cdot a_{\sigma(\alpha+1)}(m_{\sigma(\alpha+1)})\cdots a_{\sigma(\beta)}(m_{\sigma(\beta)})\\
&\hspace{4em} \cdot \nord a_{\sigma(\beta+1)}(m_{\sigma(\beta+1)})
\cdots a_{\sigma(r)}(m_{\sigma(r)}) \nord, 
\end{align*}
where $\sigma$ is the unique permutation of $\{1, \ldots, r\}$ such that
\begin{align*}
&m_{\sigma(1)}<0, \ldots, m_{\sigma(\alpha)}<0,\  m_{\sigma(\alpha+1)}>0, 
\ldots, m_{\sigma(\beta)}>0,\  m_{\sigma(\beta+1)}= \cdots = m_{\sigma(r)}=0,\\
&1\le \sigma(1)<\cdots <\sigma(\alpha)\le r,\ 1 \le\sigma(\alpha+1)<\cdots <\sigma(\beta)\le r,\ 
1\le \sigma(\beta+1)<\dots< \sigma(r)\le r,
\end{align*}
$\sigma^{\F}\in \text{Sym}\{f_1,\dots, f_{r^\F}\}$ is the permutation determined uniquely by $\sigma$ as in (\ref{sigma-F-defn}), and the normal ordering
for a product of zero modes is defined recursively as follows: For $i=1, \dots, r$, we denote 
$$r^{\mathcal{F}}(i)=\#\{j\in \{1, \dots, i\}: a_j\in \mathfrak{h}^\mathcal{F}\}.$$
\begin{itemize}
    \item If \(a_1 \in \h^\B\), then
     \[\typecolon a_1(0) \cdots a_r(0)\typecolon=a_1(0)\typecolon a_2(0) \cdots a_r(0)\typecolon;\]
    \item If \(a_1 \in \h^\F\), then
\begin{align}
    \typecolon a_1(0) \cdots a_r(0)\typecolon = \ & a_1(0)\typecolon a_2(0) \cdots a_r(0)\typecolon \nonumber\\
    & +\sum_{\alpha=2}^{r^\F} \frac{(-1)^{\alpha-1}}{2}\left(a_1, a_{f_{\alpha}}\right)\typecolon a_2(0) \cdots \widehat{a_{f_{\alpha}}(0)} \cdots a_r(0)\typecolon \nonumber\\
    = \ & a_1(0)\typecolon a_2(0) \cdots a_r(0)\typecolon \nonumber\\
    & +\sum_{i\in A^\F\setminus\{1\}} \frac{(-1)^{r^\F(i)-1}}{2}\left(a_1, a_i\right)\typecolon a_2(0) \cdots \widehat{a_{i}(0)} \cdots a_r(0)\typecolon. \label{fermionic zero mode NO}
\end{align}
\end{itemize}
For example, say $a_1, \dots, a_{14}\in \h^\B \cup \h^\F$ with $A^\F = \{1, 4, 7, 8, 11, 13, 14\}$.
Then 
\begin{align*}
& \nord a_1(-3)a_2(5)a_3(0)a_4(5)a_5(-4)a_6(-7)a_7(0) a_8(0) a_9(0) a_{10}(0)a_{11}(-1) a_{12}(2)a_{13}(11) a_{14}(0) \nord\\
&\quad = - a_1(-3)a_5(-4)a_6(-7)a_{11}(-1)a_2(5)a_4(5)a_{12}(2)a_{13}(11) \nord a_3(0)a_7(0)a_8(0)a_9(0)a_{10}(0)a_{14}(0)\nord 
\end{align*}
where 
$$\sigma^\F = \begin{pmatrix}
1 & 4 & 7 & 8 & 11 & 13 & 14\\
1 & 11 & 4 & 13 & 7 & 8 & 14 
\end{pmatrix} = (4, 11, 7) (8, 13). $$
So in this case, $(-1)^{\sigma^\F} = -1$. We may further expand the normal-ordering of zero-modes using the recursion:  
\begin{align*}
& \nord a_3(0)a_7(0)a_8(0)a_9(0)a_{10}(0)a_{14}(0)\nord \\
&\quad=  a_3(0)a_7(0)a_8(0)a_9(0)a_{10}(0)a_{14}(0) - \frac{1}{2}(a_7, a_8) a_3(0)a_9(0)a_{10}(0)a_{14}(0) \\
& \quad\quad+ \frac{1}{2} (a_7, a_{14}) a_3(0)a_8(0)a_9(0)a_{10}(0) - \frac{1}{2}(a_8, a_{14}) a_3(0)a_7(0)a_9(0)a_{10}(0).
\end{align*}
% \begin{rmk}
% In case the usual Clifford relation exists among fermionic zero modes. Then from the recursion, if we ignore the existence of bosonic modes, for the leftover fermionic modes, we can further obtain that
% \begin{align*}
% \typecolon a^{\F}_1(0) \cdots a^{\F}_r(0)\typecolon
% =\frac{1}{r!} \sum_{\sigma \in \operatorname{Sym}\{1, \ldots, r\}}(-1)^\sigma a^{\F}_{\sigma(1)}(0) 
% \cdots a^{\F}_{\sigma(r)}(0),
% \end{align*}
% as in \cite{FFR}.
% \end{rmk}

\subsection{The twisted vertex operator map and the main theorem}

We define first a naive twisted vertex operator map
\[
\bar{Y}_W: V \otimes W \rightarrow W((x^{1/2}))
\]
as follows:
For $\mathbf{1} \in V$, we define
\begin{equation}
    \bar{Y}_W(\mathbf{1}, x)=1_W.
\end{equation}
For $a\in \h^\B \cup \h^\F$ and $m\in \Z_+$, we define 
\begin{equation}
\bar{Y}_W(a(-m+|a|/2)\one, x)= a^{(m-1)}(x).
\end{equation}
For $a_1, \ldots, a_n\in\h^\B\cup \h^\F$, $m_1, \ldots, m_n \in \Z_+$,
we define
$$\bar{Y}_W\left( a_{1}(-m_{1}+|a_1|/2)\cdots a_{n}(-m_{n}+|a_n|/2)\one, x\right) 
=\typecolon a_1^{(m_1-1)}(x) \cdots a_n^{(m_n-1)}(x)\typecolon.$$

As in  \cite{FLM}, \cite{FFR} and \cite{Q-Fermion-2}, to give the correct twisted vertex operator map,
we need to introduce a formal series of operators on $V$. Let $e_1, \dots, e_d$ be an orthonormal basis of $\h^\F$ with respect to \((\cdot, \cdot)\). Recall that by an orthonormal basis with respect to the nondegenerate symmetric bilinear 
form \((\cdot, \cdot)\) over $\C$, we mean
a basis $\{e_1, \dots, e_d\}$ such that $(e_{i}, e_{j})=\delta_{ij}$ for $i, j=1, \dots, d$. 
By the Takagi decomposition of complex symmetric matrices, such an orthonormal basis exists. 
Define
\begin{equation}
\Delta(x)=\sum_{i=1}^d \sum_{m,n\geqslant 1} \frac 1 2 C_{mn}e_i(m-\tfrac{1}{2}) e_i(n-\tfrac{1}{2})x^{-m-n+1}
\end{equation}
where 
\[C_{m n}=\frac{1}{2} \frac{m-n}{m+n-1}\binom{-\tfrac{1}{2}}{m-1}\binom{-\tfrac{1}{2}}{n-1}.\]
Note that $C_{mn}=-C_{nm}$ and $\Delta(x)$ is independent of the choice of the orthonomal basis. Then we define
\begin{equation}
\exp(\Delta(x)):=\sum_{t\in \N} \frac{\left(\Delta(x)\right)^{t}}{t!}.
\end{equation}
We define the twisted vertex operator map 
\[
Y_W: V \otimes W \rightarrow W((x^{1/2}))
\]
by
\begin{equation}
    Y_W(u,x)= \bar{Y}_W\left(\exp(\Delta(x))u,x\right)
\end{equation}
for $u\in V$. 

\begin{thm}\label{it-is-a-module}
The $\C$-graded  vector space $W$ together with the twisted vertex operator map \(Y_W\) 
is a canonically twisted module for the  $\frac{\Z}{2}$-graded meromorphic 
open-string vertex algebra $V$ constructed in Section \ref{sec-algebra}.
\end{thm}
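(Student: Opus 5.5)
Following Remark \ref{equiv-defn-mod}, we reduce Theorem \ref{it-is-a-module} to a short list of checks. The grading axioms, the identity property, and the $\theta$-equivariance are the easy part. The substantive step is the generalized weak associativity with pole-order condition for $Y_W$.

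The easy axioms go as follows.

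\begin{itemize}
\item \emph{Lower bound.} This follows because $M$ is lower bounded and each negative mode raises weight.
\item \emph{$\d_W$-commutator formula.} Each $a(n)$ is homogeneous of weight $-n$, so the formula holds for the generating fields $a^{(m-1)}(x)$. It then extends to normal-ordered products $\typecolon\cdots\typecolon$ term by term. Finally, $\Delta(x)$ commutes appropriately: $[\d_V,\Delta(x)]=-x\frac{d}{dx}\Delta(x)$, because $e_i(m-\frac12)e_i(n-\frac12)$ has weight $-(m+n-1)$ and is paired with $x^{-m-n+1}$. Hence $\exp(\Delta(x))$ intertwines the naive formula into the correct one.
\item \emph{$\theta$-equivariance.} $\Delta(x)$ is even, and $\theta$ acts on $W$ by the parity of the number of fermionic modes, with $M$ assigned suitably. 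Fields for odd $u$ involve only $x^{\pm1/2}$ powers times integers, which gives $Y_W(\theta u,x)\theta=\theta Y_W(u,x)$ and the branch structure.
\end{itemize}

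For weak associativity, I would first prove a generalized Wick theorem for $W$. It should express a product $\typecolon a_1^{(m_1-1)}(x_1)\cdots a_k^{(m_k-1)}(x_1)\typecolon\,\typecolon b_1^{(n_1-1)}(x_2)\cdots b_l^{(n_l-1)}(x_2)\typecolon$ as a sum over partial matchings between the $a$'s and the $b$'s. Each term would be a normal-ordered product of the unmatched fields, weighted by products of contractions: $F^\B_{mn}$ for bosons and $\overline{G}_{mn}$ for fermions, by Proposition \ref{mod-basic-modes-commutator}. Fermionic signs would come from the $\sigma^\F$ bookkeeping, and the indexing by 2- and 3-shuffles would use the lemmas of Section 5.

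Two points need special care in this Wick theorem.

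\begin{itemize}
\item The fermionic zero modes $a(0)$ obey no Clifford relation. The recursion \eqref{fermionic zero mode NO} is designed so that moving a zero mode past a normal-ordered block produces the $\frac12(a_i,a_j)$ corrections. I would prove this by induction on $r$ using that recursion, which is where Pfaffian-like total contraction numbers appear.
\item Positive modes have no relations among themselves. Proposition \ref{Pos-modes-commute} only gives commutation on $V$, so on $W$ I must keep all orderings within the positive block consistent. Normal ordering is built to handle this.
\end{itemize}

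Next, I would compute the iterate $\bar Y_W(Y(u_1,x_0)u_2,x_2)$ using the Wick theorem of Section 6 for $V$, and compare it with the product $\bar Y_W(u_1,x_0+x_2)\bar Y_W(u_2,x_2)$. The bosonic contractions match exactly. The fermionic ones do not: on $W$ the contraction is $\overline{G}(x,y)=x^{-1/2}y^{1/2}(x-y)^{-1}$, whereas on $V$ it is $(x-y)^{-1}$. The discrepancy $x^{-1/2}y^{1/2}(x-y)^{-1}-(x-y)^{-1}$ at $x=x_0+x_2$, $y=x_2$ is the source of all failure of naive associativity.

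The main obstacle is the key identity for $\exp(\Delta(x))$. It should read
\begin{equation*}
\exp(\Delta(x_2))Y(u_1,x_0)u_2=Y\bigl(\exp(\Delta(x_0+x_2))u_1,x_0\bigr)\exp(\Delta(x_2))u_2
\end{equation*}
modified by the discrepancy factor: the extra contractions created by conjugating with $\exp(\Delta)$ should exactly convert $(x-y)^{-1}$ into $\overline{G}(x,y)$. I would first prove this on generators, that is, the commutator of $\Delta(x)$ with $a(y)^-$ for fermionic $a$, using the generating-function identity for $C_{mn}$ as in \cite{FFR}. I would then extend it to products using that $\Delta$ is quadratic, so $\exp(\Delta)$ acts by Bogoliubov-type contractions. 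This step requires the most careful verification when $\dim\h$ is odd and in the presence of bosonic fields. Bosonic fields should be transparent, since $\Delta$ involves only fermionic modes and mixed commutators vanish.

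With this identity in hand, multiplying by $(x_0+x_2)^{P+k_1/2}x_2^{k_2/2}$ clears poles. Here $P$ is the number of positive modes of $u_1$ that can contract nontrivially with $w$, which is independent of $u_2$. The two sides then agree as series in $W((x_0,x_2))$, giving the generalized weak associativity and hence the theorem.
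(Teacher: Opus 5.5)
Your overall plan matches the paper's: reduce via Remark~\ref{equiv-defn-mod} to weak associativity with pole-order condition, prove a Wick theorem on $W$, compute the iterate with the Wick theorem on $V$, handle $\exp(\Delta)$ through its commutator with generating fields and \eqref{C_mn-g-formula}, then compare. However, you misidentify the fermionic contraction on $W$, which is the step the comparison depends on. The contraction is not $\overline G$ but
\[
G(x,y)=\overline G(x,y)+\tfrac12 x^{-1/2}y^{-1/2}=\tfrac12 x^{-1/2}y^{-1/2}(x+y)(x-y)^{-1}.
\]
When $a(x)^0=a(0)x^{-1/2}$ is moved through $\nord b_1(y_1)\cdots b_s(y_s)\nord$, the recursion \eqref{fermionic zero mode NO} produces exactly the term \eqref{Nord-module-recursion-left-fermion-eQ-Mod}. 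That term is $\pm\tfrac12(a,b_j)x^{-1/2}y_j^{-1/2}$ times the normal-ordered product with $b_j$ deleted, with the same sign as the $\overline G$ term. So it is an ordinary pairwise contraction that merges with $\overline G$ (Proposition~\ref{Nord-module-recursion-left-fermion-prop}); it is not where Pfaffians come from.

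Consequently your key identity, that $\exp(\Delta)$ turns $(x-y)^{-1}$ into $\overline G$, is false for the given $\Delta$. For $m=n=1$,
\[
\iota_{x_2x_0}\overline G(x_0+x_2,x_2)-x_0^{-1}=-\tfrac12 x_2^{-1}+\tfrac38 x_0x_2^{-2}+\cdots,
\]
whereas by Lemma~\ref{exp-basic-commutator} the $x_2^{-1}$-coefficient of the correction produced by $\exp(\Delta(x_2))$ is $C_{11}=0$. This vanishing is forced. Since $e_i(m-\tfrac12)e_i(n-\tfrac12)=-e_i(n-\tfrac12)e_i(m-\tfrac12)$ on $V$ (Proposition~\ref{Pos-modes-commute}), only antisymmetric coefficients contribute. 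With $G$ instead, the same expansion is $0\cdot x_2^{-1}+\tfrac18 x_0x_2^{-2}+\cdots$, which matches $C_{12}=\tfrac18$; this is exactly \eqref{C_mn-g-formula}. As you set things up, the product and the iterate would disagree precisely by the zero-mode contractions.

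The Pfaffians of Definition~\ref{Total-contraction-number} come instead from $\exp(\Delta(x))$ acting on the state itself. Each factor of $\Delta$ removes two fermionic negative modes of $u$ with weight $(a_i,a_j)C_{m_im_j}$ (Lemma~\ref{exp-delta-on-basis}). So $Y_W(u_1,x_0+x_2)Y_W(u_2,x_2)w$ carries self-contraction factors of $u_1$ in powers of $x_0+x_2$ and of $u_2$ in powers of $x_2$. On the iterate side, $\exp(\Delta(x_2))$ applied to $Y(u_1,x_0)u_2$ must regenerate both families, together with $G$-determinants for the cross terms. That precise statement, Theorem~\ref{exp-delta-A-B-lemma}, proved by induction on the length of $u_1$ from the generator commutator, is what your informal identity ``modified by the discrepancy factor'' has to become.

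There are also two smaller slips. First, $[\d_V,\Delta(x)]=+x\frac{d}{dx}\Delta(x)$, not $-x\frac{d}{dx}\Delta(x)$; the plus sign is what makes $\exp(\Delta(x))$ intertwine the $\d$-commutator formulas. Second, the pole-order exponent should be a weight bound such as $P\ge \wt w+m_1+\cdots+m_r$, not a count of positive modes.
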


Theorem \ref{it-is-a-module} will be proved in Section 7.
\section{Some notations and lemmas}

Our strategy of proving Theorem \ref{It-is-a-mosva} and Theorem \ref{it-is-a-module} is to obtain explicit formulas for products of normal-ordered products. The proofs also involve manipulation of shuffles. In this section, we shall introduce necessary notations and lemmas to prepare for the proofs.

\subsection{Notations of indices} Let $r, s\in \Z_+,\ a_1, \dots, a_r,\ b_1, \dots, b_s\in \h^\B\cup \h^\F.$ We introduce the following notations:

\begin{itemize}[leftmargin=*]
\item Let \begin{align*}
        A^\B =  \{i  \in \{1, \dots, r\}: a_i \in \h^\B\} & ,\ A^\F = \{i \in \{1, \dots, r\}: a_i \in \h^\F\}, \\
        B^\B =  \{i  \in \{1, \dots, s\}: b_i \in \h^\B\} & ,\ B^\F = \{i \in \{1, \dots, s\}: b_i \in \h^\F\}. 
    \end{align*}
    Let 
    \begin{align*}
        r^\B = \#A^\B,\ r^\F = \#A^\F,\ s^\B = \# B^{\B},\ s^\F = \#B^{\F}. 
    \end{align*}
\item For $\rho\in \N$, let 
\begin{align*}
    A^\B_\rho = \ & \{(i_1, \dots, i_\rho): 1\leq i_1 < \cdots < i_\rho \leq r, i_1, \dots, i_\rho \in A^\B\},\\
    A^\F_\rho = \ & \{(i_1, \dots, i_\rho): 1\leq i_1 < \cdots < i_\rho \leq r, i_1, \dots, i_\rho  \in A^\F\},\\
    B^\B_\rho = \ & \{(i_1, \dots, i_\rho): 1\leq i_1 < \cdots < i_\rho \leq s, i_1, \dots, i_\rho  \in B^\B\},\\     
    B^\F_\rho = \ & \{(i_1, \dots, i_\rho): 1\leq i_1 < \cdots < i_\rho \leq s, i_1, \dots, i_\rho  \in B^\F\}.
\end{align*}
Clearly, 
\begin{align*}
    \#A^\B_\rho = \binom{r^\B}{\rho}, \#A^\F_\rho = \binom{r^\F}{\rho}, \#B^\B_\rho = \binom{s^\B}{\rho}, \#B^\F_\rho = \binom{s^\F}{\rho}.
\end{align*}
Every $I=(i_{1}, \dots, i_{\rho})\in A^\B_\rho$ uniquely determines an element 
$I^c=(i_{1}^{c}, \dots, i_{r^\B-\rho}^c)\in A^\B_{r^\B-\rho}$ such that 
$$1 \leq i_1^c < \cdots < i_{r^\B-\rho}^c \leq r,\ \{i_1, \dots, i_\rho\} \sqcup \{i_1^c, \dots, i_{r^\B-\rho}^c\} = A^\B. $$
Similarly, we define $I^c \in A^\F_{r^\F-\rho}$ for $I\in A^\F_\rho$, $I^c \in B^\B_{s^\B-\rho}$ for $I\in B^\B_\rho$, $I^c \in B^\F_{s^\F-\rho}$ for $I\in B^\F_\rho$. 
\item For $\sigma\in \text{Sym}\{1, \dots, r\}$, let 
\begin{align*}
    A^\B_{\sigma} =  \{i\in \{1, \dots, r\}: \sigma(i) \in A^\B\}&,\
    A^\F_{\sigma} =  \{i\in \{1, \dots, r\}: \sigma(i) \in A^\F\},\\
    B^\B_{\sigma} =  \{i\in \{1, \dots, s\}: \sigma(i) \in B^\B\}&,\
    B^\F_{\sigma} =  \{i\in \{1, \dots, s\}: \sigma(i) \in B^\F\}.
\end{align*}
The slight abuse of notation is justified by the convention that $\sigma$ always means a permutation. We also introduce 
\begin{align*}
r^\B_\sigma(i) & = \#\{j \in \{1, \dots, i\}: j \in A^\B_\sigma\} = \#\{j\in \{1, \dots, i\}: \sigma(j)\in A^\B\}, \\
r^\F_\sigma(i) & = \#\{j \in \{1, \dots, i\}: j \in A^\F_\sigma\} = \#\{j\in \{1, \dots, i\}: \sigma(j)\in A^\F\}, \\
s^\B_\sigma(i) & = \#\{j \in \{1, \dots, i\}: j \in B^\B_\sigma\} = \#\{j\in \{1, \dots, i\}: \sigma(j)\in B^\B\}, \\
s^\F_\sigma(i) & = \#\{j \in \{1, \dots, i\}: j \in B^\F_\sigma\} = \#\{j\in \{1, \dots, i\}: \sigma(j)\in B^\F\}. 
\end{align*}
When $\sigma=(1)$, we should also abbreviate it and use $r^\B(i), r^\F(i), s^\B(i), s^\F(i)$. Note that we have already used this notation in the definition of the normal ordering. 
\item For $p \leq q$, we will also need the notation
\begin{align*}
    A^\B[p,q] =  \{i  \in \{p, \dots, q\}: a_i \in \h^\B\} & , A^\F[p,q] = \{i \in \{p, \dots, q\}: a_i \in \h^\F\}, \\
    B^\B[p,q] =  \{i  \in \{p, \dots, q\}: b_i \in \h^\B\} & , B^\F[p,q] = \{i \in \{p, \dots, q\}: b_i \in \h^\F\}, 
\end{align*}
and we define \(A^\B_{\sigma}[p, q]\), \(A^\F_{\sigma}[p, q]\), \(B^\B_{\sigma}[p, q]\), \(B^\F_{\sigma}[p, q]\) analogously.
\item For $\mu, \nu\in \N$, let 
\begin{align*}
    A^\F_{\mu, \nu} = \ & \{(i_1, \dots, i_\mu, i_{\mu+1}, \dots, i_{\mu+\nu}): 1\leq i_1 < \cdots < i_\mu \leq r,  1\leq i_{\mu+1} < \cdots < i_{\mu+\nu} \leq r, \\
    & \hspace{10.5em} i_1, \dots, i_\mu, i_{\mu+1}, \dots, i_{\mu+\nu}\in A^\F \text{ mutually distinct}\},\\    
    B^\F_{\mu, \nu} = \ & \{(i_1, \dots, i_\mu, i_{\mu+1}, \dots, i_{\mu+\nu}): 1\leq i_1 < \cdots < i_\mu \leq s,  1\leq i_{\mu+1} < \cdots < i_{\mu+\nu} \leq s, \\
    & \hspace{10.5em} i_1, \dots, i_\mu, i_{\mu+1}, \dots, i_{\mu+\nu}\in B^\F \text{ mutually distinct}\}.
\end{align*}
\end{itemize}

\subsection{Products with omitted terms}\label{sec-omit-def} Since $A^\B$ and $A^\F$ may be mixed and scattered within $\{1, \dots, r\}$, and the same holds for $B^\B$ and $B^\F$, the conventional hat-notation for omission cannot be easily adapted. Instead, for $\rho\in \Z_+, I^b\in A_\rho^\B, I^f \in A_\rho^\F, J^b\in B_\rho^\B, J^f\in B_\rho^\F$, we introduce the following:
\begin{itemize}[leftmargin=*]
\item The notation 
$$\bigg(a_1(x_1)\cdots a_r(x_r) b_1(y_1) \cdots b_s(y_s)\bigg)_{O(I^b, I^f, J^b, J^f)}$$
means 
\begin{align}
\prod_{\substack{1\leq p \leq r\\ p\neq i^b_1, \dots, i^b_\rho, i^f_1, \dots, i^f_\rho}}a_p(x_p) \prod_{\substack{1\leq q \leq s\\ q\neq j^b_1, \dots, j^b_\rho, j^f_1, \dots, j^f_\rho}} b_q(x_q)\label{omit-def}
\end{align}
\item We use the notation 
\begin{align}
:a_1(x_1)\cdots a_r(x_r) b_1(y_1) \cdots b_s(y_s):_{O(I^b, I^f, J^b, J^f)}\label{formula-omit-1}
\end{align}
and 
$$\nord a_1(x_1)\cdots a_r(x_r) b_1(y_1) \cdots b_s(y_s)\nord_{O(I^b, I^f, J^b, J^f)}$$
for the normal-ordered product of (\ref{omit-def}) in the context of MOSVA $V$ and the twisted module $W$, respectively. To emphasize the operations to be carried out for exposition purposes, we sometimes add in non-existential terms into such a product with the hat-notation. For example, for every $1\leq k \leq \eta$, the normal-ordered product 
$$:a_1(x_1) \cdots \reallywidehat{a_{i_k^b}(x_{i^b_k})}\cdots a_r(x_r) b_1(y_1) \cdots b_s(y_s):_{O(I^b, I^f, J^b, J^f)}$$
means exactly the same thing as (\ref{formula-omit-1}). 
\item 
In particular, we use the notation 
$$:a_1(x)\cdots a_r(x):_{O(I^b, I^f)}$$
for the normal-ordered product 
$$:\prod_{\substack{1\leq p \leq s, \\p\neq i^b_1, \dots, i^b_\rho, i^f_1, \dots, i^f_\rho}}a_p(x_p):\ , $$
and the notation 
$$\bigg(b_1(-n_1+|b_1|/2) \cdots b_s(-n_s+|b_s|/2)\one\bigg)_{O(J^b, J^f)}$$
for the product 
$$\left(\prod_{\substack{1\leq q \leq s\\ q\neq j^b_1, \dots, j^b_\rho, j^f_1, \dots, j^f_\rho}} b_q(-n_q + |b_q|/2) \right) \one.$$
\end{itemize}
For $\mu, \nu\in \Z_+, I^f\in A^\F_{\mu,\nu}, J^f\in B^\F_{\mu, \nu}$, we introduce similar notations.

\subsection{2-shuffles, 3-shuffles, and their signs} Many calculations are essentially manipulations of 2-shuffles and 3-shuffles. We recall their definitions here. Fix $r\in \Z_+$. For each $\rho\in \N$, let 
\begin{align*}
J_\rho(1, \dots, r) = \{\sigma\in \text{Sym}\{1, \dots, r\}: 1\leq \sigma(1) < \cdots < \sigma(\rho)\leq r, 1\leq \sigma(\rho+1) < \cdots < \sigma(r)\leq r\}. 
\end{align*}
Elements in $J_\rho(1, \dots, r)$ is called a \textit{2-shuffle}. Recall that
$A^\F = \{f_1, \dots, f_{r^\F}\}$. Clearly, an element $I^f =(i_1,\dots, i_\rho)\in A_\rho^\F$ corresponds to a 2-shuffle $\sigma\in J_\rho(1, \dots, r^\F)$ via 
$$i_1 = f_{\sigma(1)}, \dots, i_\rho = f_{\sigma(\rho)},\ i_1^c = f_{\sigma(\rho+1)}, \dots, i^c_{r^\F-\rho} = f_{\sigma(r^\F)}. $$
We shall use the notation $(-1)^{\sigma(I^{f})}$ for the sign of the corresponding shuffle 
as an element in $\text{Sym}\{1, \dots, r^\F\}.$
Similar correspondences exist between $B^\F_\rho$ and $J_\rho(1, \dots, s^\F)$. We likewise introduce $(-1)^{\sigma(J^f)}$ for $J^f\in B^\F_{\mu,\nu}$. 

We recall Lemma 4.3 in \cite{Q-Fermion-2} regarding the sign of a 2-shuffle. 
\begin{lemma}[\cite{Q-Fermion-2}]\label{2-shuffle-lemma}
    Let $\sigma\in J_\mu(1, \dots, r)$ be a 2-shuffle. Let 
    $$p_1 = \sigma(1), \dots, p_\mu = \sigma(\mu).$$
    Then $\sigma$ can be expressed as a product of cycles
    $$\sigma = (p_1, p_1-1, \dots, 2, 1) (p_2, p_2-1,\dots, 3, 2)\cdots (p_\mu, p_{\mu-1}, \dots, \mu+1, \mu)$$
    In particular, we have 
    $$(-1)^\sigma = (-1)^{p_1-1+p_2-2+\cdots + p_\mu-\mu} = (-1)^{p_1+\cdots + p_\mu} (-1)^{\frac{\mu(\mu+1)}2}. $$
\end{lemma}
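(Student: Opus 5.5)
Write $\tau$ for the product of cycles on the right-hand side,
$$\tau=(p_1, p_1-1, \dots, 1)(p_2, p_2-1, \dots, 2)\cdots(p_\mu, p_\mu-1, \dots, \mu),$$
with the composition convention that the rightmost cycle acts first. The plan is to show $\tau=\sigma$ by evaluating $\tau$ on every $j\in\{1,\dots,r\}$. Since $\sigma\in J_\mu(1,\dots,r)$, the values $p_1<\cdots<p_\mu$ determine $\sigma$: it sends $1,\dots,\mu$ to $p_1,\dots,p_\mu$, and sends $\mu+1,\dots,r$ increasingly onto the complement of $\{p_1,\dots,p_\mu\}$. So it suffices to check two things: $\tau(k)=p_k$ for $k\le\mu$, and $\tau$ is increasing on $\{\mu+1,\dots,r\}$.

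The $k$-th cycle $c_k=(p_k, p_k-1,\dots,k)$ sends $p_k\mapsto p_k-1\mapsto\cdots\mapsto k\mapsto p_k$. That is, it maps $k$ to $p_k$, maps each $j$ with $k<j\le p_k$ to $j-1$, and fixes everything else. Because $p_k\ge k$, the cycle is well defined; it is the identity when $p_k=k$.

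\emph{Values on $1,\dots,\mu$.} Take $k\le\mu$. The cycles $c_\mu,\dots,c_{k+1}$ act first, and each $c_l$ with $l>k$ only moves elements $\ge l>k$, so $k$ is fixed by all of them. Then $c_k$ sends $k$ to $p_k$. After that $c_{k-1},\dots,c_1$ act. Each $c_l$ with $l<k$ fixes $p_k$, because $p_k>p_l$ and $c_l$ only moves elements of $[l,p_l]$. Hence $\tau(k)=p_k$.

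\emph{Values on $\mu+1,\dots,r$.} Take $j>\mu$. Passing successively through $c_\mu,c_{\mu-1},\dots,c_1$, the current value stays strictly larger than the index of the cycle being applied, so it is never sent to a $p_l$ via the rule $l\mapsto p_l$; it is only decremented. Each $c_l$ lowers the value by one exactly when the current value lies in $(l,p_l]$. The step "$x\mapsto x$ or $x-1$ according to whether $x\in(l,p_l]$" is weakly order-preserving. Since $\tau$ is a bijection, it is therefore strictly increasing on $\{\mu+1,\dots,r\}$. Moreover, its image there avoids $\{p_1,\dots,p_\mu\}$, since those values are already taken by $1,\dots,\mu$. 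So $\tau=\sigma$.

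\emph{Sign.} A cycle of length $p_k-k+1$ has sign $(-1)^{p_k-k}$. Multiplying over $k$ gives
$$(-1)^\sigma=(-1)^{\sum_k (p_k-k)}=(-1)^{p_1+\cdots+p_\mu}(-1)^{\mu(\mu+1)/2}.$$

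The only delicate point is the monotonicity argument for $j>\mu$: one has to be careful with the composition order and with the boundary cases $p_l=l$. If that bookkeeping becomes messy, a cleaner alternative for the sign alone is to count inversions of $\sigma$. The inversions are exactly the pairs $(k,j)$ with $k\le\mu<j$ and $\sigma(j)<p_k$. For each $k$ there are $p_k-k$ such $j$, namely the complement elements below $p_k$. This gives the same exponent and avoids the cycle decomposition entirely.
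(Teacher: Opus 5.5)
Your proof is correct and complete. The paper itself gives no proof of this lemma; it only quotes it as Lemma 4.3 of \cite{Q-Fermion-2}, so the only comparison available is with the statement.

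Your direct evaluation of $\tau=c_1c_2\cdots c_\mu$ is exactly the verification the cycle decomposition calls for. Each $k\le\mu$ is fixed by $c_\mu,\dots,c_{k+1}$, sent to $p_k$ by $c_k$, and then fixed by $c_{k-1},\dots,c_1$ because $p_k>p_l$. Each $j>\mu$ is only ever decremented, through order-preserving steps. Your inversion count is a convention-free check of the sign.

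The one invariant you assert without justification is easy to supply. The value starts at $j\ge\mu+1$ and drops by at most $1$ per cycle. So just before $c_l$ acts it is at least $j-(\mu-l)\ge l+1>l$, and it is never sent to some $p_l$ by the rule $l\mapsto p_l$. The monotonicity step is therefore already rigorous, not merely bookkeeping-prone.

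Two further points you handled correctly are worth stating explicitly:
\begin{itemize}
\item The last cycle in the statement should read $(p_\mu, p_\mu-1,\dots,\mu+1,\mu)$; the printed $p_{\mu-1}$ is a typo.
\item The identity $\sigma=c_1\cdots c_\mu$ genuinely requires the right-to-left composition convention. For $r=3$, $\mu=2$, $(p_1,p_2)=(2,3)$, the left-to-right product sends $1\mapsto 2\mapsto 3\neq\sigma(1)$. The sign formula, by contrast, holds under either convention.
\end{itemize}
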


For each $\mu, \nu\in \N$, let
\begin{align*}
J_{\mu,\nu}(1, \dots, r) = \{\sigma\in \text{Sym}\{1, \dots, r\}: \ & 1\leq \sigma(1) < \cdots < \sigma(\mu)\leq r, \\
& 1 \leq \sigma(\mu+1) < \cdots < \sigma(\mu+\nu) \leq r,\\
& 1 \leq \sigma(\mu+\nu+1)<\cdots < \sigma(r) \leq r \}.
\end{align*}
Elements in $J_{\mu,\nu}(1, \dots, r)$ is called a 3-shuffle. Clearly, an element $I^f \in A_{\mu,\nu}^\F$ corresponds to a 3-shuffle $\sigma\in J_{\mu,\nu}(1, \dots, r^\F)$ via 
$$i_1 = f_{\sigma(1)}, \dots, i_\mu = f_{\sigma(\mu)},\ i_{\mu+1} = f_{\sigma(\mu+1)}, \dots, i_{\mu+\nu} = f_{\sigma(\mu+\nu)},$$
and
$$i_1^c = f_{\sigma(\mu+\nu+1)}, \dots, i_{r^\F-\mu-\nu}^c = f_{\sigma(r^\F)}. $$
We shall use the notation $(-1)^{\sigma(I^f)}$ for the sign of the corresponding shuffle as an element in $\text{Sym}\{1, \dots, r^\F\}.$
Similar correspondences exists between $B^\F_{\mu,\nu}$ and $J_{\mu,\nu}(1, \dots, s^\F)$. We likewise introduce $(-1)^{\sigma(J^f)}$ for $J^f\in B^\F_{\mu,\nu}$.

The easiest way to understand the sign of a 3-shuffle is to view it as an iteration of 2-shuffles. 
More precisely, let $\sigma\in J_{\mu,\nu}(1, \dots, r)$ be a 3-shuffle. Let 
$$i_1 = \sigma(1), \dots, i_\mu = \sigma(\mu), $$
and 
$$1\leq i_1^c < \cdots < i_{r-\mu}^c \leq r$$
such that 
$$\{\sigma(\mu+1), \dots, \sigma(r)\} = \{i_1^c, \dots, i_{r-\mu}^c\}.$$
Then the permutation $\sigma_1$ defined by
$$\sigma_1 (1) = i_1, \dots, \sigma_1(\mu) = i_\mu, \sigma_1(\mu+1) = i_1^c, \dots, \sigma_1(r) = i_{r-\mu}^c$$
is an element in $J_\mu(1, \dots, r)$. The permutation defined by 
$$\sigma(\mu+1) = \sigma_2(i_1^c), \dots, \sigma(r) = \sigma_2(i_{r-\mu}^c)$$
is then an element in $J_\nu(i_1^c, \dots, i_{r-\mu}^c)$. $\sigma_2$ extends to an element in Sym$\{1, \dots, r\}$, which we shall denote also by $\sigma_2$. By a straightforward comparison of images, we have 
$$\sigma = \sigma_2 \circ \sigma_1$$
As a result, 
$$(-1)^\sigma = (-1)^{\sigma_2} (-1)^{\sigma_1}. $$
If we set 
$$\sigma_2(i_1^c) = i_{j_1}^c, \dots, \sigma_2(i_{r-\mu}^c) = i_{j_{r-\mu}}^c,$$
then by Lemma \ref{2-shuffle-lemma}
\begin{align}
(-1)^\sigma = (-1)^{i_1+\cdots + i_\mu}(-1)^{\frac{\mu(\mu+1)}{2}}\cdot (-1)^{j_1 + \cdots + j_\nu} (-1)^{\frac{\nu(\nu+1)}{2}}. \label{sign-3-shuffle}
\end{align}

\subsection{Change of the orders of summations} 

We give generalizations of Lemma 4.1 and Remark 4.2 in \cite{Q-Fermion-2}. 
\begin{lemma}\label{Comb-Id-Lemma} 
Let $r,\mu,\nu\in \Z_+$ with $\mu+\nu\leq r-1$. Then for any algebraic expression \(\Phi\) with one argument and any algebraic expression $\Psi$ with $r-1$ arguments,
\begin{align}
    & \sum_{\sigma\in J_{\mu,\nu}(1, \dots, r)} \sum_{i\in A_\sigma^\F[1,\mu]} (-1)^{r^\F_\sigma(i)}(-1)^{\sigma^\F} \Phi(\sigma(i)) \Psi(\sigma(1), \dots, \reallywidehat{\sigma(i)}, \dots,
    \sigma(\mu), \sigma(\mu+1), \dots, \sigma(r)) \nonumber\\
    & \qquad \quad= \sum_{j\in A^\F} (-1)^{r^\F(j)}  \Phi(j) \sum_{\tau \in J_{\mu-1, \nu}(1,\dots,\widehat{j},\dots,r)} (-1)^{\tau^\F} \Psi(\tau(1), \dots, \reallywidehat{\tau(j)}, \dots, \tau(r)), \label{Comb-Id-1}\\
    & \sum_{\sigma\in J_{\mu, \nu}(1, \dots, r)} \sum_{i \in A_\sigma^\F[\mu+1, \mu+\nu]} (-1)^{r^\F_\sigma(i)-r^\F_\sigma(\mu)}(-1)^{\sigma^\F}\nonumber \\
&\quad\quad\quad\quad\quad\quad\quad\quad\quad\quad\quad \cdot 
\Phi(\sigma(i))  \Psi(\sigma(1), \dots, \sigma(\mu+1), \dots, \reallywidehat{\sigma(i)},\dots,\sigma(\mu+\nu) \dots,
    \sigma(r))\nonumber\\
    &\qquad  \quad= \sum_{j\in A^\F} (-1)^{r^\F(j)} \Phi(j) \sum_{\tau \in J_{\mu, \nu-1}(1,\dots,\widehat{j},\dots,r)} (-1)^{\tau^\F} \Psi(\tau(1), \dots, \reallywidehat{\tau(j)}, \dots, \tau(r)) , \label{Comb-Id-2}\\
   & \sum_{\sigma\in J_{\mu, \nu}(1, \dots, r)} \sum_{i\in A_\sigma^\F [\mu+\nu+1,r]} (-1)^{r_\sigma^\F(i)-r_\sigma^\F(\mu+\nu)}(-1)^{\sigma^\F} \nonumber \\
&\quad\quad\quad\quad\quad\quad\quad\quad\quad\quad\quad \cdot \Phi(\sigma(i)) \Psi(\sigma(1), \dots, 
    \sigma(\mu+\nu+1), \dots, \reallywidehat{\sigma(i)}, \dots,\sigma(r)) \nonumber\\
    & \qquad \quad= \sum_{j\in A^\F} (-1)^{r^\F(j)}  \Phi(j) \sum_{\tau \in J_{\mu, \nu}(1,\dots,\widehat{j},\dots,r)} (-1)^{\tau^\F} \Psi(\tau(1),\dots,  \reallywidehat{\tau(j)}, \dots, \tau(r)). \label{Comb-Id-3}
\end{align}
\end{lemma}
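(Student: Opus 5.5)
The plan is to prove all three identities by one bijection, applied in each case to the outer sum over pairs $(\sigma, i)$ on the left-hand side. I will do \eqref{Comb-Id-1} in detail and then indicate the changes for \eqref{Comb-Id-2} and \eqref{Comb-Id-3}.

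\textbf{The bijection for \eqref{Comb-Id-1}.} A pair $(\sigma,i)$ with $\sigma\in J_{\mu,\nu}(1,\dots,r)$ and $i\in A^\F_\sigma[1,\mu]$ is equivalent to the data $(j,\tau)$, where
\begin{itemize}
\item $j=\sigma(i)\in A^\F$, and
\item $\tau\in J_{\mu-1,\nu}(1,\dots,\widehat{j},\dots,r)$ is obtained from $\sigma$ by deleting the entry $j$ from the first block.
\end{itemize}
Conversely, given $j\in A^\F$ and such a $\tau$, we recover $\sigma$ by inserting $j$ into the first (increasing) block at the unique place that keeps it increasing. The position $i$ is then determined. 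Since the first block is increasing, this is a bijection.

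Under this correspondence the arguments agree: $\Phi(\sigma(i))=\Phi(j)$, and the argument list of $\Psi$ with $\sigma(i)$ omitted is exactly $(\tau(1),\dots,\tau(r))$ with $j$ omitted. So it only remains to match the signs, that is, to show
\[(-1)^{r^\F_\sigma(i)}(-1)^{\sigma^\F}=(-1)^{r^\F(j)}(-1)^{\tau^\F}.\]

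\textbf{The sign comparison (the main step).} Recall that $\sigma^\F$ is determined by the word $(\sigma(1),\dots,\sigma(r))$ restricted to $A^\F$. Likewise $\tau^\F$ is determined by that word with $j$ removed, regarded as a permutation of $A^\F\setminus\{j\}$ in the order-preserving labeling.

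Move $j$ in the word of $\sigma$ from its current fermionic slot to the front of the fermionic subsequence. This costs $r^\F_\sigma(i)-1$ transpositions, because $j$ is the $r^\F_\sigma(i)$-th fermionic entry of the word.

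The permutation that lists $j$ first followed by $\tau$'s fermionic word differs from the sorted order on $A^\F$ by two pieces:
\begin{itemize}
\item $\tau^\F$ acting on the remaining entries, and
\item the cycle taking $j$ from its sorted position $r^\F(j)$ to position $1$, which has sign $(-1)^{r^\F(j)-1}$.
\end{itemize}
Equivalently, apply Lemma \ref{2-shuffle-lemma} to the $1$-shuffle singling out $j$.

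Combining these,
\[(-1)^{\sigma^\F}=(-1)^{r^\F_\sigma(i)-1}(-1)^{r^\F(j)-1}(-1)^{\tau^\F},\]
which is exactly the required identity. I expect the main obstacle to be the bookkeeping here: identifying $\tau^\F$ as a permutation of a relabeled index set so that its sign is well defined, and confirming that bosonic entries never contribute. They do not, since $\sigma^\F$ depends only on the fermionic subsequence.

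\textbf{Identities \eqref{Comb-Id-2} and \eqref{Comb-Id-3}.} For \eqref{Comb-Id-2}, the same bijection removes $j$ from the second block, so $\tau\in J_{\mu,\nu-1}$. The position of $j$ among the fermionic entries is now $r^\F_\sigma(i)$, and moving $j$ to the front gives the sign $(-1)^{r^\F_\sigma(i)-1}$ as before. The stated exponent is $r^\F_\sigma(i)-r^\F_\sigma(\mu)$ rather than $r^\F_\sigma(i)$, so I need to account for the fermionic entries of the first block. The discrepancy $(-1)^{r^\F_\sigma(\mu)}$ must be absorbed by the convention that $\tau^\F$ on the right-hand side is measured with the same block ordering. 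I will check this carefully, since it is the point where a mismatch could hide, comparing against the analogous Lemma 4.1 of \cite{Q-Fermion-2}.

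For \eqref{Comb-Id-3}, the same bijection removes $j$ from the third block, so $\tau\in J_{\mu,\nu}(1,\dots,\widehat{j},\dots,r)$, and the matching sign check uses the correction $r^\F_\sigma(\mu+\nu)$ in place of $r^\F_\sigma(\mu)$.
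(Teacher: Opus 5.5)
Your treatment of \eqref{Comb-Id-1} is correct and is essentially the paper's proof. You use the same bijection $(\sigma,i)\leftrightarrow(j,\tau)$, and you obtain the sign relation $(-1)^{\sigma^\F}(-1)^{r^\F_\sigma(i)}=(-1)^{\tau^\F}(-1)^{r^\F(j)}$ by counting transpositions, where the paper writes $(\tau^e)^\F=\sigma^\F\,(f_q\,f_{q+1}\,\cdots\,f_p)$.

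\textbf{The gap is in \eqref{Comb-Id-2} and \eqref{Comb-Id-3}.} You correctly isolate the extra factor $(-1)^{r^\F_\sigma(\mu)}$ (resp.\ $(-1)^{r^\F_\sigma(\mu+\nu)}$). But your claim that it ``must be absorbed by the convention'' for $\tau^\F$ is unfounded, and in fact false.

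By \eqref{sigma-F-defn}, $\tau^\F$ depends only on the fermionic subword of $\tau$; blocks play no role. Your transposition count never used that $i$ lies in the first block. So $(-1)^{\sigma^\F}(-1)^{r^\F_\sigma(i)}=(-1)^{\tau^\F}(-1)^{r^\F(j)}$ holds wherever $i$ sits.

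Deleting $j$ from the second (resp.\ third) block leaves the earlier blocks untouched. Hence your bijection shows the following:
\begin{itemize}
\item The left side of \eqref{Comb-Id-2} equals the printed right side with an extra factor $(-1)^{r^\F_\tau(\mu)}$ inside the $\tau$-sum. Here $r^\F_\tau(\mu)$ is the number of fermionic entries among the first $\mu$ entries of $\tau$.
\item Likewise, \eqref{Comb-Id-3} picks up an extra factor $(-1)^{r^\F_\tau(\mu+\nu)}$.
\end{itemize}
These factors vary with $\tau$, so nothing can absorb them, and the two identities fail as printed.

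\textbf{Counterexample to \eqref{Comb-Id-2}.} Take $r=3$, $\mu=\nu=1$, all $a_p\in\h^\F$, $\Phi\equiv 1$, and $\Psi(p,q)=1$ if $(p,q)=(1,3)$, $0$ otherwise. On the left only $\sigma=\mathrm{id}$, $i=2$ survives, with sign $(-1)^{2-1}=-1$. On the right only $j=2$ with $\tau$ the word $(1,3)$ survives, with sign $+1$.

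\textbf{Counterexample to \eqref{Comb-Id-3}.} Take $r=3$, $\mu=\nu=1$, $a_1\in\h^\B$, $a_2,a_3\in\h^\F$, $\Phi\equiv 1$, and $\Psi(p,q)=1$ iff $(p,q)=(1,2)$. The left side gives $-1$ (from $\sigma=\mathrm{id}$, $i=3$). The right side gives $+1$ (from $j=3$, $\tau$ the word $(1,2)$).

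\textbf{What is actually true.} Your argument proves verbatim the versions of \eqref{Comb-Id-2} and \eqref{Comb-Id-3} with exponent $r^\F_\sigma(i)$ on the left, exactly as in \eqref{Comb-Id-1}.

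The paper gives no proof of these two identities either. Its remark defers to an unwritten case analysis of precisely the relation $(-1)^{\sigma^\F}(-1)^{r^\F_\sigma(i)-r^\F_\sigma(\mu)}=(-1)^{r^\F(j)}(-1)^{\tau^\F}$, which the first example refutes. So do not try to match your count to it.

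\eqref{Comb-Id-3} is used in the proof of Proposition \ref{Nord-module-recursion-left-fermion-prop}. There, the missing $(-1)^{r^\F_\sigma(\mu+\nu)}$ is the sign from anticommuting $a(x)^0$ past the nonzero fermionic modes, which the displayed computation also drops. Once that sign is restored, the corrected identity is exactly what is needed.
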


\begin{proof}
We only elaborate (\ref{Comb-Id-1}) here. Rewrite the left-hand side of (\ref{Comb-Id-1}) as 
\begin{align*}
    & \sum_{j=1}^r \sum_{\substack{\sigma\in J_{\mu,\nu}(1, \dots, r)\\ \sigma(1) = j\in A^\F}}  (-1)^{r^\F_\sigma(1)}(-1)^{\sigma^\F} \Phi(\sigma(1)) \Psi(\reallywidehat{\sigma(1)}, \sigma(2), \dots, 
    \sigma(\mu), \sigma(\mu+1), \dots, \sigma(r))\\
    &\quad\quad  + \cdots + \sum_{j=1}^r \sum_{\substack{\sigma\in J_{\mu,\nu}(1, \dots, r)\\ \sigma(i) = j\in A^\F}}  (-1)^{r^\F_\sigma(i)}(-1)^{\sigma^\F} \nonumber \\
&\quad\quad\quad\quad\quad\quad\quad\quad\quad\quad\quad \cdot\Phi(\sigma(i)) \Psi(\sigma(1), \dots, \reallywidehat{\sigma(i)}, \dots,
    \sigma(\mu), \sigma(\mu+1), \dots, \sigma(r))\\
    &\quad\quad   + \cdots + \sum_{j=1}^r \sum_{\substack{\sigma\in J_{\mu,\nu}(1, \dots, r)\\ \sigma(\mu) = j\in A^\F}}  (-1)^{r^\F_\sigma(\mu)}(-1)^{\sigma^\F} \Phi(\sigma(\mu)) \Psi(\sigma(1), \dots,
    \reallywidehat{\sigma(\mu)}, \sigma(\mu+1), \dots, \sigma(r)).
\end{align*}
    Fix $i=1, \ldots, \mu$ and $\sigma\in J_{\mu\nu}(1, \ldots, r)$. If we regard $\sigma$ as three increasing sequences of numbers in $\{1, \ldots, r\}$ respectively of sizes $\mu, \nu, s-\mu-\nu$ with $j$ sitting in the $i$-th position of the first sequence, then removing $j$ results in three increasing sequences of numbers in $\{1, \ldots, \widehat{j}, \ldots, r\}$ repsectively of sizes $\mu-1, \nu, s-\mu-\nu$. Let $\tau$ be the corresponding 3-shuffle in $J_{\mu-1, \nu}(1, \ldots,\widehat{j}, \ldots, s)$. Then in case $i<j$, we know 
    \begin{align*}
        & \tau(1) = \sigma(1), \ldots , \tau(i-1) =\sigma(i-1)<j <\tau(i) = \sigma(i+1),\\ 
        & \tau(i+1)=\sigma(i+2), \ldots, \tau(j-1)=\sigma(j), \tau(j+1)=\sigma(j+1), \ldots, \tau(r)=\sigma(r)
    \end{align*}
    In case $i = j$, we know
    \begin{align*}
        & \tau(1) = \sigma(1), \ldots, \tau(i-1) =\sigma(i-1)\\ 
        & \tau(i+1)=\sigma(i+1), \ldots, \tau(r)=\sigma(r).
    \end{align*}
    Notice that $i>j$ is impossible. Note that from $\tau$ one can recover $\sigma$ by first defining an extension
    $$\tau^e(k) = \left\{\begin{array}{ll}
    \tau(k) & k\neq j\\
    j & k = j
    \end{array}\right.,$$
    then performing transpositions to move $j$ consecutively to the $i$-th position. 

    Having understood $\sigma$ and $\tau$, let us proceed to understand the relation between $\sigma^\F$ and $\tau^\F$. We start by recalling the definition of $\sigma^\F$. Again 
    let $1\leq f_1< \cdots < f_{r^\F} \leq r$ such that $A^\F=\{f_1, \dots, f_{r^\F}\}$. Let $1\leq \alpha_1 < \cdots < \alpha_{r^\F} \leq r$ such that $A^\F_\sigma=\{\alpha_1, \dots, \alpha_{r^\F}\}$. $\sigma^\F$ is then defined by 
    $$\sigma^\F (f_m) = \sigma(\alpha_m),\ m = 1, \ldots, r^\F.$$
    Since $j\in A^\F$, there exists $p, q\in [1, r^\F]$ such that 
    $$j= f_p = \sigma(\alpha_q). $$
    Since $j = \sigma(i)$, it is necessary that 
    $$\alpha_q = i,\ 1 \leq \alpha_1 < \cdots < \alpha_{q-1} < i.$$
    In particular, 
    $$r^\F_\sigma(i) = \#\{1\leq t \leq i: \sigma(t)\in A^\F\} = q.$$
    Without loss of generality, let's assume that $q < p$. Then $\sigma^\F$ is the following permutation
    \[\begin{pmatrix}
        f_1 & \cdots & f_{q-1} & f_q & f_{q+1} & \cdots & f_{p-1} & f_p = j & f_{p+1} & \cdots & f_k\\
        \sigma(\alpha_1) & \cdots & \sigma(\alpha_{q-1}) & \sigma(\alpha_q) = j & \sigma(\alpha_{q+1}) & \cdots & \sigma(\alpha_{p-1}) & \sigma(\alpha_p) & \sigma(\alpha_{p+1}) & \cdots & \sigma(\alpha_k).
    \end{pmatrix}\]
    We now analyze $\tau^\F$. Since for every $m<i$, $\tau(m) = \sigma(m)$, we know that $$\tau^\F(f_n) = \sigma(\alpha_n),\ 1\leq n \leq q-1,$$ 
    and also for every $i \leq m < j$, $\tau(m) = \sigma(m+1)$, so $\tau(m)\in A^\F$ means $\sigma(m+1)\in A^\F$, which implies that $m+1 \in \{\alpha_{q+1}, \alpha_{q+2}, \ldots, \alpha_{p-1}\}$. Thus 
    $$\tau^\F(f_n) = \sigma(\alpha_{n+1}),\ q \leq n \leq p-1.$$
    Note that $j = f_p$ is not in the domain of $\tau$, thus not in the domain of $\tau^\F$. After skipping $f_p$, we have 
    $$\tau^\F(f_{n}) = \sigma(\alpha_n),\ p+1 \leq n \leq k.$$
    Now consider $(\tau^e)^\F$ of the extension $\tau^e$. Clearly, 
    $$(-1)^{(\tau^e)^\F} = (-1)^{\tau^\F}.$$
    With the analysis above, $(\tau^e)^\F$ is the permutation 
    $$\begin{pmatrix}
        f_1 & \cdots & f_{q-1} & f_q & f_{q+1} & \cdots & f_{p-1} & f_p=j & f_{p+1} & \cdots & f_k\\
        \sigma(\alpha_1) & \cdots & \sigma(\alpha_{q-1}) & \sigma(\alpha_{q+1}) & \sigma(\alpha_{q+2}) & \cdots & \sigma(\alpha_p) & \sigma(\alpha_q)=j & \sigma(\alpha_{p+1}) & \cdots & \sigma(\alpha_k)
    \end{pmatrix}.$$
    Then clearly, $\sigma^\F$ and $(\tau^e)^\F$ differs by a permutation, namely, 
    $$ (\tau^e)^\F = \sigma^\F (f_qf_{q+1} \ldots f_{p}).$$
    In particular, 
    $$(-1)^{(\tau^e)^\F} = (-1)^{\sigma^\F} (-1)^{q-p}.$$
    Recall that $r^\F_\sigma(i)= q$ and $r^\F(j) = p$. So 
    $$(-1)^{\sigma^\F}(-1)^{r^\F_\sigma(i)} = (-1)^{\tau^\F} (-1)^{r^\F(j)}.$$
    When $p=q$ and $p>q$, we obtain the same identity with a similar analysis. We shall not go over the details here. 

    Therefore, the left-hand side of (\ref{Comb-Id-1}) may be expressed as
    \begin{align*}
    & \sum_{j\in A^\F} \sum_{\substack{\tau\in J_{\mu-1, \nu}(1, \dots, \widehat{j},\dots, r)\\ \tau(1)>j} }(-1)^{\tau^\F}(-1)^{r^\F(j)} \Phi(j) \Psi(\widehat{j}, \tau(1), \dots,
    \tau(\mu-1), \tau(\mu), \dots, \tau(r))\\
    &\quad\quad +\cdots +\sum_{j\in A^\F}\sum_{\substack{\tau\in J_{\mu-1, \nu}(1, \dots, \widehat{j},\dots, r)\\ \tau(i-1)<j<\tau(i)}}  (-1)^{\tau^\F}(-1)^{r^\F(j)}\nonumber \\
&\quad\quad\quad\quad\quad\quad\quad\quad\quad\quad\quad \cdot 
\Phi(j) \Psi(\tau(1), \dots, \widehat{j}, \dots,
    \tau(\mu-1), \tau(\mu), \dots, \tau(r))\\
    &\quad\quad +\cdots +\sum_{j=1}^r\sum_{\substack{\tau\in J_{\mu-1, \nu}(1, \dots, \widehat{j},\dots, r)\\ \tau(\mu-1)<j}}  (-1)^{\tau^\F}(-1)^{r^\F(j)}\Phi(j)\Psi(\tau(1), \dots, \tau(\mu-1), \widehat{j}, \tau(\mu), \dots, \tau(r))\\
    &\quad= \sum_{j\in A^\F}(-1)^{r^\F(j)}\sum_{\tau\in J_{\mu-1, \nu}(1, \dots, \widehat{j},\dots, r)}  (-1)^{\tau^\F} \Phi(j)\Psi(\tau(1), \dots,
    \tau(\mu-1), \tau(\mu), \dots, \tau(r)),
    \end{align*}
    which is exact the right-hand side after the modification of the nonexistential $\reallywidehat{\tau(j)}$. 
\end{proof}

\begin{rmk}
    While the arguments for (\ref{Comb-Id-2}) and (\ref{Comb-Id-3}) are similar, the parity analysis is more complicated. To obtain the identity $(-1)^{\sigma^\F} (-1)^{r^\F_\sigma(i)-r^\F_\sigma(\mu)} = (-1)^{r^\F(j)}(-1)^{\tau^\F}$ in the context of (\ref{Comb-Id-2}), one has to consider all possible arrangements of the positions of $i, j, \mu$ and $i-\mu$ (there are at most $\binom{5}2+5=15$ cases). To obtain the identity $(-1)^{\sigma^\F} (-1)^{r^\F_\sigma(i)-r^\F_\sigma(\mu+\nu)} = (-1)^{r^\F(j)}(-1)^{\tau^\F}$ in the context of (\ref{Comb-Id-3}), one has to consider all possible arrangements of the positions of $i, j, \mu, \nu$ and $i-\mu-\nu$ (there are at most $\binom{7}2+7 = 28$ cases). We shall not elaborate the details here. 
%   \textcolor{red}{I forgot why there are so many cases. Hopefully I can find it in the archive.}
\end{rmk}

\begin{lemma} Let $r, \mu\in \Z_+$ with $\mu\leq r-1$. Then for any algebraic expression \(\Phi\) with one argument and any algebraic expression $\Psi$ with $r-1$ arguments,
\begin{align}
    & \sum_{\sigma\in J_\mu(1, \dots, r)} \sum_{i\in A_\sigma^\F} (-1)^{r^\F_\sigma(i)-1}(-1)^{\sigma^\F} \Phi(\sigma(i))\Psi(\sigma(1), \dots, \reallywidehat{\sigma(i)}, \dots, \sigma(\mu), \sigma(\mu+1), \dots, \sigma(r))
    \nonumber\\
    &\quad = \sum_{j\in A^\F} (-1)^{r^\F(j)-1} \Phi(j) \sum_{\tau\in J_{\mu-1}(1, \dots, \widehat{j}, \dots, r)} (-1)^{\tau^\F} \Psi(\tau(1), \dots, \reallywidehat{\tau(j)},  \dots, \tau(r)), \label{Comb-Id-4}\\
    & \sum_{\sigma\in J_\mu(1, \dots, r)} \sum_{i\in A_\sigma^\F} (-1)^{r^\F(\mu+i)-1}(-1)^{\sigma^\F} \Phi(\sigma(i)) \Psi(\sigma(1), \dots, \reallywidehat{\sigma(i)}, \dots, \sigma(\mu), \sigma(\mu+1), \dots, \sigma(r))
    \nonumber\\
    & \quad = \sum_{j\in A^\F} (-1)^{r^\F(j)-1} \Phi(j) \sum_{\tau\in J_\mu(1, \dots, \widehat{j}, \dots, r)} (-1)^{\tau^\F} \Psi(\tau(1), \dots, \reallywidehat{\tau(j)}, \dots, \tau(r)). \label{Comb-Id-5}
\end{align}
\end{lemma}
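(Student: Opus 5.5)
The plan is to obtain both identities as special cases of Lemma \ref{Comb-Id-Lemma}, using that 2-shuffles are degenerate 3-shuffles. By definition, $J_{\mu,0}(1,\dots,r)=J_\mu(1,\dots,r)$ as sets of permutations: the condition on the middle block is vacuous and the third block is $\sigma(\mu+1)<\cdots<\sigma(r)$. The same holds on the index set $\{1,\dots,\widehat{j},\dots,r\}$. The sign $(-1)^{\sigma^\F}$ depends only on $\sigma$, not on how we view it, so it agrees in both descriptions. I would state this identification first, for both $\{1,\dots,r\}$ and $\{1,\dots,\widehat j,\dots,r\}$.

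For (\ref{Comb-Id-4}), I read the inner sum as running over $i\in A^\F_\sigma[1,\mu]$, i.e.\ $\sigma(i)$ is removed from the first block. This is forced by the right-hand side, which involves $J_{\mu-1}$. Then I apply (\ref{Comb-Id-1}) with $\nu=0$; the hypothesis $\mu+\nu\le r-1$ is exactly $\mu\le r-1$. The left side of (\ref{Comb-Id-1}) becomes the left side of (\ref{Comb-Id-4}) up to the global factor $-1$. The right side becomes $\sum_{j\in A^\F}(-1)^{r^\F(j)}\Phi(j)\sum_{\tau\in J_{\mu-1,0}}\cdots$, and $J_{\mu-1,0}=J_{\mu-1}$ on $\{1,\dots,\widehat j,\dots,r\}$. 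Multiplying both sides by $-1$ gives (\ref{Comb-Id-4}).

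For (\ref{Comb-Id-5}), the right-hand side involves $J_\mu$ on $r-1$ letters, so the removed element must come from the second block $\sigma(\mu+1),\dots,\sigma(r)$. I read the sign $(-1)^{r^\F(\mu+i)-1}$ as the fermionic count of the removed entry within that block, namely $r^\F_\sigma(\mu+i)-r^\F_\sigma(\mu)$ with $i$ indexing positions in the second block. Viewing $J_\mu=J_{\mu,0}$, the second block of the 2-shuffle is the third block of a 3-shuffle with $\nu=0$. I then apply (\ref{Comb-Id-3}) with $\nu=0$: its sign $(-1)^{r^\F_\sigma(i)-r^\F_\sigma(\mu+\nu)}$ becomes $(-1)^{r^\F_\sigma(i)-r^\F_\sigma(\mu)}$, and its right side runs over $J_{\mu,0}(1,\dots,\widehat j,\dots,r)=J_\mu(1,\dots,\widehat j,\dots,r)$. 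Multiplying by $-1$ yields (\ref{Comb-Id-5}).

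The main obstacle is that (\ref{Comb-Id-3}) was only asserted in the preceding Remark, with the parity bookkeeping not written out. If that is not accepted, I would prove the needed sign identity directly in the same way as (\ref{Comb-Id-1}). Given $\sigma$ with $j=\sigma(i)$ removed from the second block, let $\tau$ be the induced 2-shuffle and $\tau^e$ its extension. Then $(\tau^e)^\F$ differs from $\sigma^\F$ by a cycle $(f_q\,f_{q+1}\cdots f_p)$ (or its inverse), where $q=r^\F_\sigma(i)$ and $p=r^\F(j)$. Hence $(-1)^{\sigma^\F}(-1)^{r^\F_\sigma(i)}=(-1)^{\tau^\F}(-1)^{r^\F(j)}$, from which the required sign identity follows. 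The cases $q<p$, $q=p$ and $q>p$ need separate but similar checks. Because only two blocks occur, this is far fewer cases than in the 3-shuffle Remark. Finally, I would reorganize the sum over $(\sigma,i)$ as a sum over $j\in A^\F$ and $\tau$, exactly as in the proof of Lemma \ref{Comb-Id-Lemma}.
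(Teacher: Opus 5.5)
Your treatment of \eqref{Comb-Id-4} is fine. With the inner sum read as $i\in A^\F_\sigma[1,\mu]$, which the hat placement and $J_{\mu-1}$ force, the identity is $-1$ times \eqref{Comb-Id-1} at $\nu=0$. Strictly, Lemma \ref{Comb-Id-Lemma} assumes $\nu\in\Z_+$, but its proof of \eqref{Comb-Id-1} never uses $\nu\ge 1$.

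The gap is in \eqref{Comb-Id-5}: with your reading of the sign as $(-1)^{r^\F_\sigma(\mu+i)-r^\F_\sigma(\mu)-1}$, the identity is false. Take $r=2$, $\mu=1$, $a_1,a_2\in\h^\F$. The two 2-shuffles list $(1,2)$ and $(2,1)$, and your exponent is $0$ in both. Your left side is therefore $\Phi(2)\Psi(1)-\Phi(1)\Psi(2)$, whereas the right side is $\Phi(1)\Psi(2)-\Phi(2)\Psi(1)$. Here $r^\F(\mu+i)=\mu+i$, so your reading changes the printed sign even in the purely fermionic case.

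The step that fails is ``from which the required sign identity follows.'' Your cycle argument gives $(-1)^{\sigma^\F}(-1)^{r^\F_\sigma(\mu+i)}=(-1)^{\tau^\F}(-1)^{r^\F(j)}$. Your sign differs from this by $(-1)^{r^\F_\sigma(\mu)}$, the parity of the number of fermionic entries in the first block. Since $j$ is removed from the second block, this factor equals $(-1)^{r^\F_\tau(\mu)}$, and it cannot be pulled out of the $\tau$-sum. It is a global $-1$ when everything is fermionic and $\mu$ is odd, and in general it varies with $\tau$ when bosons and fermions are mixed.

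Routing through \eqref{Comb-Id-3} at $\nu=0$ does not rescue this. That identity, with its block-relative sign $(-1)^{r^\F_\sigma(i)-r^\F_\sigma(\mu+\nu)}$, is exactly your version of \eqref{Comb-Id-5} up to an overall sign. It therefore fails in the same example, so you would be importing the error rather than a proof.

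The repair is to take the exponent to be $r^\F_\sigma(\mu+i)-1$, the position of the removed entry among \emph{all} fermionic entries of $(\sigma(1),\dots,\sigma(r))$. This is what the printed $r^\F(\mu+i)-1$ reduces to in the purely fermionic setting of \cite{Q-Fermion-2}, and it is the exact analogue of the sign in \eqref{Comb-Id-4}.

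With that reading your fallback argument goes through, and it is essentially the paper's intended route: a direct modification of the removal/reinsertion argument, not a specialization of the 3-shuffle lemma. Suppose $j$ has $q-1$ fermionic entries before it in the $\sigma$-listing and $p-1$ fermionic indices below it. Then deleting $j$ changes the inversion count by $(q-1)+(p-1)$ modulo $2$, whichever block contains $j$, so no case split on $q$ versus $p$ is needed. Finally, $(\sigma,i)\mapsto(j,\tau)$ is a bijection onto pairs with $j\in A^\F$ and $\tau\in J_\mu(1,\dots,\widehat{j},\dots,r)$. Its inverse inserts $j$ into the second block in increasing position, which is possible because $\mu\le r-1$.
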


\begin{proof}
    Both identities follow from a straightforward modification of the proof of (\ref{Comb-Id-2}). 
\end{proof}

\begin{lemma}\label{Comb-Id-Lemma-B} 
Let $r,\mu,\nu\in \Z_+$ with $\mu+\nu\leq r-1$. Then for any algebraic expression \(\Phi\) with one argument and any algebraic expression $\Psi$ with $r-1$ arguments,
\begin{align}
    & \sum_{\sigma\in J_{\mu,\nu}(1, \dots, r)} \sum_{i\in A_\sigma^\B[1,\mu]} (-1)^{\sigma^\F} \Phi(\sigma(i)) \Psi(\sigma(1), \dots, \reallywidehat{\sigma(i)}, \dots,
    \sigma(\mu), \sigma(\mu+1), \dots, \sigma(r)) \nonumber\\
    & \quad= \sum_{j\in A^\B}  \Phi(j)  \sum_{\tau \in J_{\mu-1, \nu}(1,\dots,\widehat{j},\dots,r)} (-1)^{\tau^\F} \Psi(\tau(1), \dots, \reallywidehat{\tau(j)}, \dots, \tau(r)),\label{Comb-Id-1-B}\\
    & \sum_{\sigma\in J_{\mu, \nu}(1, \dots, r)} \sum_{i \in A_\sigma^\B[\mu+1, \mu+\nu]} (-1)^{\sigma^\F} \Phi(\sigma(i)) \Psi(\sigma(1), \dots, \sigma(\mu+1), \dots, \reallywidehat{\sigma(i)},\dots,\sigma(\mu+\nu) \dots,
    \sigma(r)) \nonumber\\
    & \quad= \sum_{j\in A^\B} \Phi(j) \sum_{\tau \in J_{\mu, \nu-1}(1,\dots,\widehat{j},\dots,r)} (-1)^{\tau^\F} \Psi(\tau(1), \dots, \reallywidehat{\tau(j)}, \dots, \tau(r)), \label{Comb-Id-2-B}\\
   & \sum_{\sigma\in J_{\mu, \nu}(1, \dots, r)} \sum_{i\in A_\sigma^\B [\mu+\nu+1,r]} (-1)^{\sigma^\F} \Phi(\sigma(i))\Psi(\sigma(1), \dots, 
    \sigma(\mu+\nu+1), \dots, \reallywidehat{\sigma(i)}, \dots,\sigma(r)) \nonumber\\
    &\quad = \sum_{j\in A^\B} \Phi(j) \sum_{\tau \in J_{\mu, \nu}(1,\dots,\widehat{j},\dots,r)} (-1)^{\tau^\F} \Psi(\tau(1),\dots,  \reallywidehat{\tau(j)}, \dots, \tau(r)). \label{Comb-Id-3-B}
\end{align}
\end{lemma}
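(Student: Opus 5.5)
We prove (\ref{Comb-Id-1-B}) in detail; (\ref{Comb-Id-2-B}) and (\ref{Comb-Id-3-B}) follow the same pattern. The argument reuses the bijection from the proof of Lemma \ref{Comb-Id-Lemma}. The only new point is the sign bookkeeping, and it becomes simpler, because $\sigma^\F$ depends only on the order in which the fermionic indices appear and a bosonic index carries no sign.

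First we reorganize the left-hand side of (\ref{Comb-Id-1-B}) by the value $j=\sigma(i)\in A^\B$ and the position $i\in\{1,\dots,\mu\}$. For a fixed pair $(i,j)$, we delete $j$ from the first increasing block of the 3-shuffle $\sigma$. This produces three increasing sequences in $\{1,\dots,\widehat{j},\dots,r\}$ of sizes $\mu-1$, $\nu$ and $r-\mu-\nu$, that is, a 3-shuffle $\tau\in J_{\mu-1,\nu}(1,\dots,\widehat{j},\dots,r)$.

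Conversely, given $j\in A^\B$ and $\tau$, we insert $j$ into the first block at the unique place that keeps it increasing. This recovers $\sigma$ together with the position $i$. Hence the map $(\sigma,i)\mapsto(j,\tau)$ is a bijection between
\[
\{(\sigma,i):\sigma\in J_{\mu,\nu}(1,\dots,r),\ i\in A^\B_\sigma[1,\mu]\}
\quad\text{and}\quad
\{(j,\tau): j\in A^\B,\ \tau\in J_{\mu-1,\nu}(1,\dots,\widehat{j},\dots,r)\}.
\]
Under this bijection the argument list $(\sigma(1),\dots,\widehat{\sigma(i)},\dots,\sigma(r))$ passed to $\Psi$ equals $(\tau(1),\dots,\tau(r))$ with $j$ omitted, and $\Phi(\sigma(i))=\Phi(j)$.

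Next we compare signs. Let $A^\F_\sigma=\{\alpha_1<\cdots<\alpha_{r^\F}\}$, so that $\sigma^\F(f_m)=\sigma(\alpha_m)$. The fermionic indices in the sequence $(\sigma(1),\dots,\sigma(r))$ appear in the same relative order as in the sequence for $\tau$, since deleting the bosonic entry $j$ does not reorder anything else. Moreover, $j\notin A^\F$, so the set $A^\F$ on which $\tau^\F$ is defined is unchanged. Therefore $\tau^\F=\sigma^\F$ as permutations of $\{f_1,\dots,f_{r^\F}\}$, and $(-1)^{\tau^\F}=(-1)^{\sigma^\F}$. Unlike Lemma \ref{Comb-Id-Lemma}, no cyclic correction $(f_q\cdots f_p)$ and no factor $(-1)^{r^\F(j)}$ appears. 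Summing over the bijection then gives (\ref{Comb-Id-1-B}).

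For (\ref{Comb-Id-2-B}) and (\ref{Comb-Id-3-B}) we apply the same construction, deleting $j$ from the second or third block respectively, which yields $\tau\in J_{\mu,\nu-1}$ or $\tau\in J_{\mu,\nu}$ on $\{1,\dots,\widehat{j},\dots,r\}$. The sign argument is identical, because removing a bosonic index never changes the relative order of the fermionic ones. The only step needing care is confirming that the blocks remain increasing after removal and that reinsertion is unique, which is immediate from the definition of 3-shuffles. We expect no real obstacle; the main point to check is the invariance of $\sigma^\F$ under deletion of a bosonic entry.
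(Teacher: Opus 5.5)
Your proposal is correct and follows essentially the same route as the paper. The paper also reuses the deletion/insertion correspondence from the proof of Lemma \ref{Comb-Id-Lemma} and observes that removing the bosonic entry $j$ leaves the order of the fermionic entries unchanged, so $\sigma^\F=\tau^\F$ and no parity analysis is needed; you simply make the bijection $(\sigma,i)\leftrightarrow(j,\tau)$ more explicit.
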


\begin{proof}
Proceed with the same procedure as in the proof of Lemma \ref{Comb-Id-Lemma}. The only necessary modification is that since $j\in A^\B$, the ordering $\sigma(f_1), \dots, \sigma(f_{r^\F})$ is the same as $\tau(f_1), \dots, \tau(f_{r^\F})$. Therefore, 
$\sigma^\F = \tau^\F$, So the parity analysis is not necessary. The rest of the procedure is almost the same with straightforward modifications. 
\end{proof}
We also clearly have
\begin{lemma} Let $r, \mu\in \Z_+$ with $\mu\leq r-1$. Then for any algebraic expression \(\Phi\) with one argument and any algebraic expression $\Psi$ with $r-1$ arguments,
\begin{align}
    & \sum_{\sigma\in J_\mu(1, \dots, r)} \sum_{i\in A_\sigma^\B} (-1)^{\sigma^\F} \Phi(\sigma(i)) \Psi(\sigma(1), \dots, \reallywidehat{\sigma(i)}, \dots, \sigma(\mu), \sigma(\mu+1), \dots, \sigma(r))
    \nonumber\\
    &\quad = \sum_{j\in A^\B} \Phi(j) \sum_{\tau\in J_{\mu-1}(1, \dots, \widehat{j}, \dots, r)} (-1)^{\tau^\F} \Psi(\tau(1), \dots, \reallywidehat{\tau(j)},  \dots, \tau(r)), \label{Comb-Id-4-B}\\
    & \sum_{\sigma\in J_\mu(1, \dots, r)} \sum_{i\in A_\sigma^\B} (-1)^{\sigma^\F} \Phi(\sigma(i)) \Psi(\sigma(1), \dots, \reallywidehat{\sigma(i)}, \dots, \sigma(\mu), \sigma(\mu+1), \dots, \sigma(r))
    \nonumber\\
    & \quad= \sum_{j\in A^\B} \Phi(j) \sum_{\tau\in J_\mu(1, \dots, \widehat{j}, \dots, r)} (-1)^{\tau^\F} \Psi(\tau(1), \dots, \reallywidehat{\tau(j)}, \dots, \tau(r)). \label{Comb-Id-5-B}
\end{align}
\end{lemma}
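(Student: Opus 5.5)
The plan is a bijective reindexing. The key observation is that for $j\in A^\B$ the fermionic sign does not change when $j$ is removed. In both identities I would first split the left-hand side according to the value $j=\sigma(i)$, which ranges over $A^\B$. I then show that for fixed $j$ the assignment $\sigma\mapsto\tau$ ("delete $j$ from the block of $\sigma$ that contains it") is a bijection onto the appropriate set of shuffles of $\{1,\dots,\widehat{j},\dots,r\}$. Finally I check that the summands agree term by term.

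For (\ref{Comb-Id-4-B}) I restrict the inner sum to $i\in A^\B_\sigma\cap[1,\mu]$. This is the reading forced by the right-hand side, exactly as in (\ref{Comb-Id-1-B}) with $\nu=0$; it is also how (\ref{Comb-Id-4}) relates to (\ref{Comb-Id-1}). For (\ref{Comb-Id-5-B}) I take $i\in A^\B_\sigma\cap[\mu+1,r]$, as in (\ref{Comb-Id-3-B}) with $\nu=0$. With these readings, both identities are the $\nu=0$ specializations of (\ref{Comb-Id-1-B}) and (\ref{Comb-Id-3-B}). The reason is that $J_{\mu,0}(1,\dots,r)=J_\mu(1,\dots,r)$ and $J_{\mu-1,0}=J_{\mu-1}$, with the same sign conventions. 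So the quickest route is simply to invoke Lemma \ref{Comb-Id-Lemma-B} with $\nu=0$.

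The degenerate boundary cases need a separate check, since Lemma \ref{Comb-Id-Lemma-B} was stated for $\nu\in\Z_+$. I would verify directly that its proof never uses $\nu\ge1$; the middle block is simply empty.

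To be self-contained I would also sketch the bijection directly. Fix $j\in A^\B$ and $\sigma\in J_\mu(1,\dots,r)$ with $\sigma(i)=j$, $i\le\mu$. Removing $j$ from the increasing sequence $\sigma(1)<\dots<\sigma(\mu)$ gives an element $\tau\in J_{\mu-1}(1,\dots,\widehat{j},\dots,r)$. Conversely, inserting $j$ into the first block in its increasing position recovers $\sigma$ uniquely, so the map is a bijection. The remaining arguments of $\Psi$ appear in the same order on both sides, and $\Phi(\sigma(i))=\Phi(j)$.

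The only potential obstacle is the sign comparison $(-1)^{\sigma^\F}=(-1)^{\tau^\F}$. This holds because $\sigma^\F$ depends only on the relative order of the fermionic indices in the sequence $(\sigma(1),\dots,\sigma(r))$. Deleting the bosonic entry $j$ leaves that subsequence unchanged. Moreover, $A^\F$ is unchanged as a subset of $\{1,\dots,\widehat{j},\dots,r\}$, so $f_1<\dots<f_{r^\F}$ are the same. Hence $\tau^\F=\sigma^\F$ as permutations of $A^\F$. This is the same remark used in the proof of Lemma \ref{Comb-Id-Lemma-B}, and no parity case analysis is needed. The same argument applies verbatim to the second block for (\ref{Comb-Id-5-B}).
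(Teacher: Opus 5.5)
Your proposal is correct and follows the paper's approach. The paper gives no separate proof here (it just says ``we also clearly have''), but the intended argument is the one in the proof of Lemma \ref{Comb-Id-Lemma-B}: delete the bosonic $j$ from its block, use $\sigma^\F=\tau^\F$, and view these identities as the $\nu=0$ cases of (\ref{Comb-Id-1-B}) and (\ref{Comb-Id-3-B}). Your restriction of $i$ to $[1,\mu]$ and $[\mu+1,r]$ respectively is the right reading, and a necessary one, since the two printed left-hand sides coincide while the right-hand sides differ.
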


\section{Proof of Theorem \ref{It-is-a-mosva}}\label{Proof-it-is-a-mosva}

We prove Theorem \ref{It-is-a-mosva} in this section. We need to show that 
 $(V, Y, \one)$ given in Section 3 satisfies the axioms of a 
 $\frac{\Z}{2}$-graded meromorphic open-string vertex algebra. 

The axioms for the grading, axioms for the vacuum, $D$-derivative property and $D$-commutator formula 
may be proved by a straightforward modification of the arguments in Section 5 of 
\cite{H-mosva} and  Section 3 of \cite{FQ-Fermion-1}. We shall omit the details. 
From Remark \ref{equiv-defn-mosva}, to prove the rationality and associativity, we need only prove 
the weak associativity with pole-order condition, which is the main technical part of the proof of 
Theorem \ref{It-is-a-mosva}.

\subsection{A generalized Wick's Theorem for the normal ordering $:\cdot:$ on $V$} \label{sec-Wick-thm-simple-case}

As in \cite{H-mosva} and \cite{FQ-Fermion-1}, to prove the weak associativity, we need a formula to 
express the product of two normal-ordered products of derivatives of the generating fields of the form 
$a(x)$, as a linear combination of the normal-ordered products of some of these generating fields. 
The formulas obtained in  \cite{H-mosva}, \cite{FQ-Fermion-1}, and this section can in fact be viewed as 
generalizations of Wick's theorem (see \cite{W}).  We shall call these results \textit{generalized Wick's theorems}.
The generalized Wick's theorem in \cite{H-mosva}, \cite{FQ-Fermion-1}, and this paper
do not need to assume any relations among creation operators,  among zero mode operators,
and among annihilation operators, though it has been proved that annihilation operators
commute or anti-commute with each other. 
 
We first formulate and prove some simple versions of our general generalized Wick's theorem. For brevity, for $a\in \h^\B\cup \h^\F$ and $m\in \Z_+$, $a^{(m-1)}(x)$ 
means the generating function defined in Section \ref{gen-func-alg}. 

We start with the following recursion of normal-ordered product. 
\begin{prop}\label{Nord-recursion-left-boson-prop}
    For $a\in \h^\B$, $b_1, \ldots, b_s\in \h^\B \cup \h^\F$, $m, n_1, \dots, n_s\in \Z_+$,  
    \begin{align*}
        & a^{(m-1)}(x) :b_1^{(n_1-1)}(y_1)\cdots b_s^{(n_s-1)}(y_s): - :a^{(m-1)}(x)b_1^{(n_1-1)}(y_1)\cdots b_s^{(n_s-1)}(y_s):\\
        &\quad =    \sum_{j\in A^\B} (a, b_j) \ell_\B F^\B_{mn_j}(x,y_j) :b_1^{(n_1-1)}(y_1)\cdots \reallywidehat{b_j^{(n_j-1)}(y_i)}\cdots b_s^{(n_s-1)}(y_s):. 
    \end{align*}
\end{prop}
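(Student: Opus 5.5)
Note first that in the statement the index set should be read as $B^\B$ (the $b_j$ lying in $\h^\B$), since the contraction pairs $a$ with the $b_j$. By Remark \ref{Nord-commutes-w-derivation-rmk}, both sides are obtained from the case $m=n_1=\cdots=n_s=1$ by applying $\frac{1}{(m-1)!}\partial_x^{m-1}$ and $\frac{1}{(n_j-1)!}\partial_{y_j}^{n_j-1}$. The same operators turn $\ell_\B(a,b_j)\iota_{xy}F^\B(x,y_j)$ into $\ell_\B(a,b_j)\iota_{xy}F^\B_{mn_j}(x,y_j)$. So I reduce to $m=n_j=1$.

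Split $a(x)=a(x)^-+a(x)^++a(x)^0$. By the definition of the normal ordering,
\[
:a(x)b_1(y_1)\cdots b_s(y_s):=\sum_{\mu,\nu}\sum_{\sigma\in J_{\mu,\nu}}(-1)^{\sigma^\F}\cdots .
\]
I sort the terms of this sum according to where the index $a$ (in position $1$) lands. If $a$ is a negative mode, it is first among the negatives, so that part equals $a(x)^-:b_1(y_1)\cdots b_s(y_s):$. If $a$ is a positive mode, it is first among the positive modes, and since $a$ is bosonic $\sigma^\F$ does not see it. Likewise a zero mode of $a$ sits first among the zero modes. Hence
\[
:a(x)b_1(y_1)\cdots b_s(y_s):=a(x)^-:b_1\cdots b_s:+\sum(-1)^{\sigma^\F}\, b^-\cdots b^-\,a(x)^+\,b^+\cdots b^+\,a(x)^0\,b^0\cdots b^0 .
\]

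It then remains to compute $(a(x)^++a(x)^0)\,:b_1(y_1)\cdots b_s(y_s):$ and commute these past the negative-mode factors $b_{\sigma(1)}(y_{\sigma(1)})^-\cdots b_{\sigma(\mu)}(y_{\sigma(\mu)})^-$.

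For $a(x)^0=a(0)x^{-1}$, the relations \eqref{Commutator to quotient} make $a(0)$ commute with every non-zero mode, so it passes straight through to the zero-mode block. For $a(x)^+$, I use Proposition \ref{algebra-commutator}. Passing a fermionic negative mode costs no sign, because $a$ is even and the commutator vanishes. Passing a bosonic $b_i(y_i)^-$ leaves behind the term $\ell_\B(a,b_i)\iota_{xy}(x-y_i)^{-2}$ times the remaining product with $b_i$ omitted.

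Once past all negatives, $a(x)^+$ stands before the positive modes. These are already in normal order with $a^+$ first, because Proposition \ref{Pos-modes-commute} lets positive modes commute freely, and the placement before them agrees with the normal ordering anyway. Together with $a(x)^0$ this reproduces the remaining terms of $:a\,b_1\cdots b_s:$.

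The leftover contraction terms have the form
\[
\sum_{\sigma}\sum_{i\in A^\B_\sigma[1,\mu]}(-1)^{\sigma^\F}\ell_\B(a,b_{\sigma(i)})F^\B(x,y_{\sigma(i)})\,(\text{normal-ordered monomial with }b_{\sigma(i)}\text{ omitted}).
\]
I apply \eqref{Comb-Id-1-B} of Lemma \ref{Comb-Id-Lemma-B}, with $A^\B$ replaced by $B^\B$. This reorganizes the sum as $\sum_{j\in B^\B}\ell_\B(a,b_j)F^\B(x,y_j):b_1\cdots\widehat{b_j}\cdots b_s:$, which is exactly the right-hand side.

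The main point needing care is the bookkeeping of signs in this last step. Because $a$ and the contracted $b_j$ are bosonic, $\sigma^\F=\tau^\F$, so no sign arises. The rest is the routine check that the three blocks are reproduced with the correct shuffle ranges.
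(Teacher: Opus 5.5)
Your proposal is correct and follows the paper's proof essentially step by step. Both arguments reduce to $m=n_j=1$, note that the $a(x)^-$ and $a(x)^0$ parts already agree with the normal-ordered product, commute $a(x)^+$ past the negative modes to collect the bosonic contractions, and reorganize the sum with \eqref{Comb-Id-1-B}. Your reading of the index set as $B^\B$ and your use of Lemma \ref{Comb-Id-Lemma-B}, rather than the fermionic Lemma \ref{Comb-Id-Lemma} cited in the paper, are both the accurate choices.
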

\begin{proof} It suffices to prove the identity for $m = n_1 = \cdots = n_s = 1$ (cf. Remark \ref{Nord-commutes-w-derivation-rmk}). 
    Write $a(x) = a(x)^- + a(x)^0 + a(x)^+$. Note that since 
    \begin{align*}
        & a(x)^- :b_1(y_1)\cdots b_s(y_s): = :a(x)^- b_1(y_1)\cdots b_s(y_s):\\
        & a(x)^0 :b_1(y_1)\cdots b_s(y_s): = :a(x)^0 b_1(y_1)\cdots b_s(y_s):, 
    \end{align*}
    it suffices to handle the difference with $a(x)^+$:
    \begin{align*}
        & a(x)^+ :b_1(y_1)\cdots b_s(y_s): - :a(x)^+ b_1(y_1)\cdots b_s(y_s):\\
       &\quad = \sum_{\mu=0}^s \sum_{\nu = 0}^{s-\mu} \sum_{\sigma\in J_{\mu\nu}(1, \ldots, s)} (-1)^{\sigma^\F} a(x)^+ b_{\sigma(1)}(y_{\sigma(1)})^- \cdots b_{\sigma(\mu)}(y_{\sigma(\mu)})^-\\ 
& \hspace{5em} \cdot b_{\sigma(\mu+1)}(y_{\sigma(\mu+1)})^+ \cdots b_{\sigma(\mu+\nu)}(y_{\sigma(\mu+\nu)})^+ 
         b_{\sigma(\mu+\nu+1)}(y_{\sigma(\mu+\nu+1)})^0 \cdots b_{\sigma(s)}(y_{\sigma(s)})^0 \\
        &\quad\quad  - \sum_{\mu=0}^s \sum_{\nu = 0}^{s-\mu} \sum_{\sigma\in J_{\mu\nu}(1, \ldots, s)} (-1)^{\sigma^\F}  b_{\sigma(1)}(y_{\sigma(1)})^- \cdots b_{\sigma(\mu)}(y_{\sigma(\mu)})^- a(x)^+\\ 
& \hspace{5em} \cdot b_{\sigma(\mu+1)}(y_{\sigma(\mu+1)})^+ \cdots b_{\sigma(\mu+\nu)}(y_{\sigma(\mu+\nu)})^+
b_{\sigma(\mu+\nu+1)}(y_{\sigma(\mu+\nu+1)})^0 \cdots b_{\sigma(s)}(y_{\sigma(s)})^0 \\
    &\quad    =  \sum_{\mu=1}^s \sum_{\nu = 0}^{s-\mu} \sum_{\sigma\in J_{\mu\nu}(1, \ldots, s)} (-1)^{\sigma^\F} \sum_{i=1}^{\mu} b_{\sigma(1)}(y_{\sigma(1)})^- \cdots [a(x)^+, b_{\sigma(i)}(y_{\sigma(i)})^-]\cdots b_{\sigma(\mu)}(y_{\sigma(\mu)})^- \\ 
& \hspace{5em} \cdot b_{\sigma(\mu+1)}(y_{\sigma(\mu+1)})^+ \cdots b_{\sigma(\mu+\nu)}(y_{\sigma(\mu+\nu)})^+ b_{\sigma(\mu+\nu+1)}(y_{\sigma(\mu+\nu+1)})^0 \cdots b_{\sigma(s)}(y_{\sigma(s)})^0. 
    \end{align*}
    Note that for each $i = 1, \ldots, \mu$, 
    $$[a(x)^+, b_{\sigma(i)}(y_{\sigma(i)})] = \begin{cases}
        0 & \text{ if } b_{\sigma(i)\in \h^\F}, \text{ i.e. } i \in B^\F_\sigma\\
        \left(a, b_{\sigma(i)}\right) \ell_\B F^\B(x, y_{\sigma(i)})  & \text{ if } b_{\sigma(i)\in \h^\B}, \text{ i.e. } i\in B^\B_\sigma. 
    \end{cases}$$
    Then the summand for each fixed $\mu, \nu$ is equal to  
    \begin{align}
        & \sum_{\sigma\in J_{\mu\nu}(1, \ldots, s)}(-1)^{\sigma^\F} \sum_{i\in B_\sigma^\B[1, \mu]}\left(a, b_{\sigma(i)}\right) \ell_\B F^\B(x, y_{\sigma(i)})\nonumber \\
        & \hspace{11.5em} \cdot b_{\sigma(1)}(y_{\sigma(1)}) \cdots \reallywidehat{b_{\sigma(i)}(y_i)^-} \cdots b_{\sigma(\mu)}(y_{\sigma(\mu)})^-\nonumber \\
        & \hspace{11.5em} \cdot b_{\sigma(\mu+1)}(y_{\sigma(\mu+1)})^+ \cdots b_{\sigma(\mu+\nu)}(y_{\sigma(\mu+\nu)})^+ \nonumber\\
        & \hspace{11.5em} \cdot b_{\sigma(\mu+\nu+1)}(y_{\sigma(\mu+\nu+1)})^0 \cdots b_{\sigma(s)}(y_{\sigma(s)})^0. \nonumber
    \end{align}
    Using Lemma \ref{Comb-Id-Lemma}, we change the order of summation, to see that the summand is equal to
    \begin{align*}
        & \sum_{j\in B^\B} (a, b_j) \ell_\B  F^\B(x, y_j) \sum_{\tau \in J_{\mu-1,\nu}(1, \ldots, \widehat{j}, \ldots, s)}(-1)^{\tau_\F}b_{\tau(1)}(y_{\tau(1)})^- \cdots b_{\tau(\mu-1)}(y_{\tau(\mu-1)})^-\nonumber \\
        & \hspace{17em} \cdot b_{\tau(\mu)}(y_{\tau(\mu)})^+ \cdots b_{\tau(\mu-1+\nu)}(y_{\tau(\mu-1+\nu)})^+\\
        & \hspace{17em} \cdot b_{\tau(\mu+\nu)}(y_{\tau(\mu+\nu)})^0 \cdots b_{\tau(s)}(y_{\tau(s)})^0 \ .
    \end{align*}
    Summing up $\mu$ and $\nu$, we obtain 
    $$\sum_{j\in B^\B} (a, b_j) \ell_\B F^\B(x, y_j) :b_1(y_1) \cdots \reallywidehat{b_j(y_j)} \cdots b_s(y_s):.$$
\end{proof}

\begin{prop}\label{Nord-recursion-left-fermion-prop}
    For $a\in \h^\F, b_1, \ldots, b_s \in \h^\B \cup \h^\F$, $m, n_1, \dots, n_s\in \Z_+$, 
    \begin{align*}
        & a^{(m-1)}(x) :b_1^{(n_1-1)}(y_1) \cdots b_s^{(n_s-1)}(y_s): - :a^{(m-1)}(x) b_1^{(n_1-1)}(y_1)\cdots b_s^{(n_s-1)}(y_s):\\
        &\quad=  \sum_{j\in B^\F} (-1)^{s^\F(j)-1}(a, b_j) \ell_\F F_{mn_j}^\F(x, y_j) :b_1^{(n_1-1)}(y_1) \cdots \reallywidehat{b_j^{(n_j-1)}(y_j)}\cdots b_s^{(n_s-1)}(y_s): . 
    \end{align*}
\end{prop}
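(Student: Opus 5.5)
We follow the proof of Proposition \ref{Nord-recursion-left-boson-prop} step by step, changing only the sign bookkeeping. By Remark \ref{Nord-commutes-w-derivation-rmk}, both sides are obtained from the case $m=n_1=\cdots=n_s=1$ by applying $\frac{1}{(m-1)!}\partial_x^{m-1}$ and $\frac{1}{(n_j-1)!}\partial_{y_j}^{n_j-1}$. These derivatives take $F^\F(x,y_j)$ to $F^\F_{mn_j}(x,y_j)$, so we may assume all derivative orders equal $1$. We write $a(x)=a(x)^-+a(x)^+$, since $a(x)^0=0$ for $a\in\h^\F$.

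\textbf{The creation part.} The term $a(x)^-$ contributes nothing to the difference. Indeed, $a(x)^-:b_1(y_1)\cdots b_s(y_s):$ places $a(x)^-$ in front of the other negative modes. This is exactly the normal-ordered product with $a$ in position $1$, where the fermionic sign permutation is $\sigma^\F$ extended trivially at the new first fermionic index. So the sign agrees.

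\textbf{The annihilation part: expanding the difference.} For $a(x)^+$, we expand $:b_1(y_1)\cdots b_s(y_s):$ as a sum over 3-shuffles $\sigma\in J_{\mu\nu}(1,\dots,s)$ with signs $(-1)^{\sigma^\F}$. In $:a(x)^+b_1(y_1)\cdots:$, the odd mode $a(x)^+$ sits after the $\mu$ negative modes. Relative to the leading position, this costs the sign $(-1)^{s^\F_\sigma(\mu)}$, which is the number of fermionic negative modes it passes. We then move $a(x)^+$ from the front to that position using super-commutators. Passing $b_{\sigma(1)}^-,\dots,b_{\sigma(i-1)}^-$ produces the sign $(-1)^{s^\F_\sigma(i-1)}$. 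By Proposition \ref{algebra-commutator}, only fermionic $b_{\sigma(i)}$ give a nonzero bracket, namely $\ell_\F(a,b_{\sigma(i)})F^\F(x,y_{\sigma(i)})$. Since $\sigma(i)$ is then fermionic, $s^\F_\sigma(i-1)=s^\F_\sigma(i)-1$. After the full pass, the leftover term $(-1)^{s^\F_\sigma(\mu)}\cdots a(x)^+\cdots$ cancels exactly against the normal-ordered product. The difference is therefore
$$\sum_{\sigma}(-1)^{\sigma^\F}\sum_{i\in B^\F_\sigma[1,\mu]}(-1)^{s^\F_\sigma(i)-1}\ell_\F(a,b_{\sigma(i)})F^\F(x,y_{\sigma(i)})\,(\text{the remaining ordered product with } b_{\sigma(i)}^- \text{ omitted}).$$
We must also check that $[a(x)^+,b^-]=0$ for bosonic $b$ and that positive modes are never moved. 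Both hold, the latter because $a(x)^+$ is stopped before the positive block.

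\textbf{Reindexing: the main obstacle.} The key step is the change of summation order. We apply \eqref{Comb-Id-1} of Lemma \ref{Comb-Id-Lemma}, with $B$ in place of $A$, $\Phi(j)=\ell_\F(a,b_j)F^\F(x,y_j)$, and $\Psi$ the remaining ordered product. The lemma uses the weight $(-1)^{r^\F_\sigma(i)}$, while we have $(-1)^{s^\F_\sigma(i)-1}$. This is only a global factor $-1$ on both sides, so the lemma converts the sum into
$$\sum_{j\in B^\F}(-1)^{s^\F(j)-1}\ell_\F(a,b_j)F^\F(x,y_j)\sum_{\tau\in J_{\mu-1,\nu}}(-1)^{\tau^\F}(\cdots).$$
Summing over $\mu$ and $\nu$ reassembles $:b_1(y_1)\cdots\reallywidehat{b_j(y_j)}\cdots b_s(y_s):$, as in the bosonic case. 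The delicate point is to verify the sign identity $(-1)^{\sigma^\F}(-1)^{s^\F_\sigma(i)}=(-1)^{\tau^\F}(-1)^{s^\F(j)}$. This is precisely what the proof of Lemma \ref{Comb-Id-Lemma} establishes, so the remaining work is to track the signs carefully and check that no extra sign arises from the zero-mode block, which contains no fermions.
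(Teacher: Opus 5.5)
Your proposal is correct and follows the paper's proof essentially step for step: you split off $a(x)^-$, expand $:b_1(y_1)\cdots b_s(y_s):$ over 3-shuffles, and move $a(x)^+$ through the negative block to pick up $(-1)^{s^\F_\sigma(i)-1}$ times the fermionic contraction. You then reindex with \eqref{Comb-Id-1} before resumming over $\mu,\nu$, exactly as the paper does, and you additionally make explicit that the weight $(-1)^{s^\F_\sigma(i)-1}$ differs from the lemma's by a global sign, which the paper leaves implicit.
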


\begin{proof}
    Write $a(x) = a(x)^- + a(x)^+$. Note that 
    \begin{align*}
        & a(x)^- :b_1(y_1)\cdots b_s(y_s): = :a(x)^- b_1(y_1)\cdots b_s(y_s): \ , 
    \end{align*}
    it suffices to handle the difference with $a(x)^+$:    
    \begin{align*}
        & a(x)^+ :b_1(y_1)\cdots b_s(y_s) - :a(x)^+ b_1(y_1)\cdots b_s(y_s):\\
    &\quad     = \sum_{\mu=0}^s \sum_{\nu = 0}^{s-\mu} \sum_{\sigma\in J_{\mu\nu}(1, \ldots, s)} (-1)^{\sigma^\F} a(x)^+ b_{\sigma(1)}(y_{\sigma(1)})^- \cdots b_{\sigma(\mu)}(y_{\sigma(\mu)})^-\\
        & \hspace{10em}\cdot  b_{\sigma(\mu+1)}(y_{\sigma(\mu+1)})^+ \cdots b_{\sigma(\mu+\nu)}(y_{\sigma(\mu+\nu)})^+ \\
        & \hspace{10em} \cdot b_{\sigma(\mu+\nu+1)}(y_{\sigma(\mu+\nu+1)})^0 \cdots b_{\sigma(s)}(y_{\sigma(s)})^0 \\
        & \quad\quad - \sum_{\mu=0}^s \sum_{\nu = 0}^{s-\mu} \sum_{\sigma\in J_{\mu\nu}(1, \ldots, s)} (-1)^{\sigma^\F}(-1)^{s_\sigma^\F(\mu)}  b_{\sigma(1)}(y_{\sigma(1)})^- \cdots b_{\sigma(\mu)}(y_{\sigma(\mu)})^-  \\
        & \hspace{10em} \cdot a(x)^+b_{\sigma(\mu+1)}(y_{\sigma(\mu+1)})^+ \cdots b_{\sigma(\mu+\nu)}(y_{\sigma(\mu+\nu)})^+ \\ 
        & \hspace{10em} \cdot b_{\sigma(\mu+\nu+1)}(y_{\sigma(\mu+\nu+1)})^0 \cdots b_{\sigma(s)}(y_{\sigma(s)})^0 \\
       &\quad  =  \sum_{\mu=1}^s \sum_{\nu = 0}^{s-\mu} \sum_{\sigma\in J_{\mu\nu}(1, \ldots, s)} (-1)^{\sigma^\F} \sum_{i=1}^{\mu} (-1)^{s_\sigma^\F(i)-1}\\
        & \hspace{10em} \cdot  b_{\sigma(1)}(y_{\sigma(1)})^- \cdots [a(x)^+, b_{\sigma(i)}(y_{\sigma(i)})^-]\cdots b_{\sigma(\mu)}(y_{\sigma(\mu)})^- \\
        & \hspace{10em} \cdot b_{\sigma(\mu+1)}(y_{\sigma(\mu+1)})^+ \cdots b_{\sigma(\mu+\nu)}(y_{\sigma(\mu+\nu)})^+\\
        & \hspace{10em} \cdot b_{\sigma(\mu+\nu+1)}(y_{\sigma(\mu+\nu+1)})^0 \cdots b_{\sigma(s)}(y_{\sigma(s)})^0. 
    \end{align*}
    Note that for each $i = 1, \ldots, \mu$, 
    $$[a(x)^+, b_{\sigma(i)}(y_{\sigma(i)})] = \begin{cases}
        0 & \text{ if } b_{\sigma(i)} \in \h^\B ,\\
        \left(a, b_{\sigma(i)}\right) \ell_\F F^\F(x, y_{\sigma(i)})  & \text{ if } b_{\sigma(i)}\in \h^\F. 
    \end{cases}$$
    Then the summand for each fixed $\mu, \nu$ is equal to  
    \begin{align}
        & \sum_{\sigma\in J_{\mu\nu}(1, \ldots, s)}(-1)^{\sigma^\F} \sum_{i\in B^\F_\sigma[1,\mu]}(-1)^{s^\F_\sigma(i)-1}\left(a, b_{\sigma(i)}\right) \ell_\F F^\F(x, y_{\sigma(i)}) \nonumber \\
        & \hspace{11.5em} \cdot b_{\sigma(1)}(y_{\sigma(1)}) \cdots \reallywidehat{b_{\sigma(i)}(y_i)^-} \cdots b_{\sigma(\mu)}(y_{\sigma(\mu)})^-\nonumber \\
        & \hspace{11.5em} \cdot b_{\sigma(\mu+1)}(y_{\sigma(\mu+1)})^+ \cdots b_{\sigma(\mu+\nu)}(y_{\sigma(\mu+\nu)})^+ \nonumber\\
        & \hspace{11.5em} \cdot b_{\sigma(\mu+\nu+1)}(y_{\sigma(\mu+\nu+1)})^0 \cdots b_{\sigma(s)}(y_{\sigma(s)})^0. \nonumber
    \end{align}
    Using Lemma \ref{Comb-Id-Lemma}, we change the order of summation, to see that the summand is equal to
    \begin{align*}
        & \sum_{j\in B^\F} (-1)^{s^\F(j)-1}(a, b_j) \ell_\F  F^\F(x, y_j) \sum_{\tau \in J_{\mu-1,\nu}(1, \ldots, \widehat{j}, \ldots, s)}(-1)^{\tau_\F}b_{\tau(1)}(y_{\tau(1)})^- \cdots b_{\tau(\mu-1)}(y_{\tau(\mu-1)})^-\nonumber \\
        & \hspace{22.2em} \cdot b_{\tau(\mu)}(y_{\tau(\mu)})^+ \cdots b_{\tau(\mu-1+\nu)}(y_{\tau(\mu-1+\nu)})^+ \\
        & \hspace{22.2em} \cdot b_{\tau(\mu+\nu)}(y_{\tau(\mu+\nu)})^0 \cdots b_{\tau(s)}(y_{\tau(s)}).^0
    \end{align*}
    Summing up $\mu$ and $\nu$, we obtain 
    $$\sum_{j\in B^\B} (a, b_j) (-1)^{s^\F(j)-1}\ell_\F F^\F(x, y_j) :b_1(y_1) \cdots \reallywidehat{b_j(y_j)} \cdots b_s(y_s):.$$

\end{proof}

\begin{prop}\label{Nord-recursion-right-boson-prop}
    For $a_1, \ldots, a_r\in \h^\B \cup \h^\F$, $b\in \h_\B$, $m_1, \dots, m_r, n\in \Z_+$, 
    \begin{align*}
        & :a_1^{(m_1-1)}(x_1)\cdots a_r^{(m_r-1)}(x_r):b^{(n-1)}(y) - :a_1^{(m_1-1)}(x_1)\cdots a_r^{(m_r-1)}(x_r) b^{(n-1)}(y): \\
        &\quad = \sum_{\substack{i\in A^\B}} (a_i, b)\ell_\B F^\B_{m_i n}(x_i, y) :a_1^{(m_1-1)}(x_1) \cdots \reallywidehat{a_i^{(m_i-1)}(x_i)} \cdots a_r^{(m_r-1)}(x_r): . 
    \end{align*}
\end{prop}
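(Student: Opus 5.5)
By Remark \ref{Nord-commutes-w-derivation-rmk} it suffices to treat $m_1=\cdots=m_r=n=1$; the general case then follows by applying $\frac{1}{(m_i-1)!}\partial_{x_i}^{m_i-1}$ and $\frac{1}{(n-1)!}\partial_y^{n-1}$ to both sides, since these derivatives produce $F^\B_{m_in}$ from $F^\B$. Write $b(y)=b(y)^-+b(y)^++b(y)^0$ and expand $:a_1(x_1)\cdots a_r(x_r):$ as the sum over $\mu,\nu$ and $\sigma\in J_{\mu\nu}(1,\dots,r)$ of $(-1)^{\sigma^\F}$ times a product of $\mu$ negative parts, then $\nu$ positive parts, then $r-\mu-\nu$ zero-mode parts.

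Next we identify what $:a_1(x_1)\cdots a_r(x_r)b(y):$ looks like. Since $b\in\h^\B$ carries no fermionic sign, $b(y)^-$ must be inserted at the end of the negative block, $b(y)^+$ at the end of the positive block, and $b(y)^0$ at the end of the zero-mode block. The sign $(-1)^{\sigma^\F}$ is unchanged in each case, because $\sigma^\F$ depends only on the relative order of the fermionic indices. Now compare with the product $:a_1(x_1)\cdots a_r(x_r):\,b(y)$, in which $b(y)$ sits at the far right.

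For the $b(y)^0$ term no correction is needed, because it already sits at the end of the zero block. For $b(y)^+$ we need to move it leftward past the zero modes $a_{\sigma(k)}(x_{\sigma(k)})^0$, $k>\mu+\nu$. The relations \eqref{Commutator to quotient} state that positive modes of any type commute with bosonic zero modes, and every zero mode on $V$ is bosonic, so this move costs nothing.

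For $b(y)^-$ we need to move it leftward past both the zero modes and the positive modes. Moving past a zero mode $a(0)$ with $a\in\h^\B$ is free because $[a\otimes t^k, b\otimes t^0]=0$ for $k\neq 0$. Moving past a positive mode $a_{\sigma(k)}(x_{\sigma(k)})^+$ with $\mu<k\le\mu+\nu$ produces, by Proposition \ref{algebra-commutator}, the commutator $[a_{\sigma(k)}(x_{\sigma(k)})^+,b(y)^-]$. This equals $\ell_\B(a_{\sigma(k)},b)\iota_{x_{\sigma(k)}y}F^\B(x_{\sigma(k)},y)$ if $\sigma(k)\in A^\B$ and $0$ if $\sigma(k)\in A^\F$. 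There are no sign factors here, since $b$ is even.

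The difference $:a_1(x_1)\cdots a_r(x_r):\,b(y)-:a_1(x_1)\cdots a_r(x_r)b(y):$ is therefore
\[
\sum_{\mu,\nu}\sum_{\sigma\in J_{\mu\nu}(1,\dots,r)}(-1)^{\sigma^\F}\sum_{i\in A^\B_\sigma[\mu+1,\mu+\nu]}\ell_\B(a_{\sigma(i)},b)F^\B(x_{\sigma(i)},y)\,(\cdots),
\]
where $(\cdots)$ denotes the ordered product with $a_{\sigma(i)}$ removed. We then apply \eqref{Comb-Id-2-B} of Lemma \ref{Comb-Id-Lemma-B} to exchange the order of summation. This turns the sum into $\sum_{j\in A^\B}\ell_\B(a_j,b)F^\B(x_j,y)$ times the sum over $\tau\in J_{\mu,\nu-1}(1,\dots,\widehat{j},\dots,r)$ of the corresponding ordered products. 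Summing over $\mu$ and $\nu$ then reassembles $:a_1(x_1)\cdots\widehat{a_j(x_j)}\cdots a_r(x_r):$.

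The main obstacle is the bookkeeping. First, the sign $(-1)^{\sigma^\F}$ must pass to $(-1)^{\tau^\F}$ when a bosonic index is removed; Lemma \ref{Comb-Id-Lemma-B} guarantees this because bosonic removal does not affect the fermionic ordering. Second, we must confirm that the expansion $\iota_{xy}$ matches the one implicit in the statement, as in Proposition \ref{Nord-recursion-left-boson-prop}.
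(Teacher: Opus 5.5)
Your argument is correct and is the shuffle route the paper's omitted proof itself suggests (``proceed with shuffles as in Proposition \ref{Nord-recursion-left-boson-prop}''), with \eqref{Comb-Id-2-B} doing the reindexing. One citation fix: Proposition \ref{algebra-commutator} does not list the case of a positive fermionic mode against a negative bosonic mode, so you should instead take the vanishing of $[a_{\sigma(k)}(x_{\sigma(k)})^+, b(y)^-]$ for $\sigma(k)\in A^\F$ directly from the boson--fermion relation in \eqref{Commutator to quotient}, read with $n\in\N$ as the paper itself does in the proof of Proposition \ref{Pos-modes-commute}.
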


\begin{proof}
    One may either proceed with shuffles as in Proposition \ref{Nord-recursion-left-boson-prop}, or apply induction similarly to the proof of Proposition 4.2 in \cite{FQ-Fermion-1}. We shall omit the details for brevity. 
\end{proof}

\begin{prop}\label{Nord-recursion-right-fermion-prop}
    For $a_1, \ldots, a_r\in \h^\B \cup \h^\F$, $b\in \h^\F$, $m_1, \dots, m_r, n\in \Z_+$, 
    \begin{align*}
        & :a_1^{(m_1-1)}(x_1)\cdots a_r^{(m_r-1)}(x_r):b^{(n-1)}(y) - :a_1^{(m_1-1)}(x_1)\cdots a_r^{(m_r-1)}(x_r) b^{(n-1)}(y): \\
        %= \ & \sum_{1\leq p \leq k}(-1)^{k-p} (a_{\alpha_p}, b)\ell_\F F(x_{\alpha_p}, y) :a_1(x_1) \cdots \reallywidehat{a_{\alpha_p}(x_{\alpha_p})} \cdots a_r(x_r):\\
     &\quad   =  \sum_{\substack{j\in A^\F}}(-1)^{r^\F-r^\F(j)} (a_j, b)\ell_\F F^\F_{m_jn}(x_j, y) :a_1^{(m_1-1)}(x_1) \cdots \reallywidehat{a_{j}^{(m_j-1)}(x_{j})} \cdots a_r^{(m_r-1)}(x_r): . 
    \end{align*}
\end{prop}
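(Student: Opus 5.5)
By Remark \ref{Nord-commutes-w-derivation-rmk} it suffices to treat $m_1=\cdots=m_r=n=1$; the general case follows by applying $\frac{1}{(m_i-1)!}\partial_{x_i}^{m_i-1}$ and $\frac{1}{(n-1)!}\partial_y^{n-1}$ to both sides, which turns $F^\F(x_j,y)$ into $F^\F_{m_jn}(x_j,y)$. We then write $b(y)=b(y)^-+b(y)^+$ (recall $b(y)^0=0$ for $b\in\h^\F$) and treat the two pieces separately. For the creation part $b(y)^+$ nothing happens: in $:a_1(x_1)\cdots a_r(x_r):b(y)^+$ the positive mode $b(y)^+$ sits to the right of all positive modes of the $a$'s but to the left of the bosonic zero modes. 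By the defining relations, $b(y)^+$ commutes with every bosonic zero mode $a(0)$, so it can be moved past them with no sign. The result is exactly the term of $:a_1(x_1)\cdots a_r(x_r)b(y):$ with $b$ placed last among the positive modes. No sign arises from $\sigma^\F$, since the fermionic ordering is unchanged.

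For the annihilation part, expand $:a_1(x_1)\cdots a_r(x_r):$ as the sum over $\mu,\nu$ and 3-shuffles $\sigma\in J_{\mu\nu}(1,\dots,r)$ with sign $(-1)^{\sigma^\F}$, and move $b(y)^-$ leftward to its normal-ordered position right after the negative modes $a_{\sigma(1)}(x_{\sigma(1)})^-\cdots a_{\sigma(\mu)}(x_{\sigma(\mu)})^-$. Passing the bosonic zero modes costs nothing. Passing a positive mode $a_{\sigma(i)}(x_{\sigma(i)})^+$ with $i\in[\mu+1,\mu+\nu]$ uses the graded commutator of Proposition \ref{algebra-commutator}: it vanishes when $a_{\sigma(i)}\in\h^\B$, and equals $\ell_\F(a_{\sigma(i)},b)\iota_{xy}F^\F(x_{\sigma(i)},y)$ when $a_{\sigma(i)}\in\h^\F$. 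In the latter case the graded swap introduces the sign $(-1)^{r^\F_\sigma(\mu+\nu)-r^\F_\sigma(i)}$, coming from the fermionic positive modes to the right of position $i$. The term in which $b(y)^-$ reaches the far left of the positive block reproduces the corresponding term of $:a_1\cdots a_r b(y):$. The sign $(-1)^{r^\F_\sigma(\mu+\nu)-r^\F_\sigma(\mu)}$ picked up by passing all fermionic positive modes is precisely the extra sign in $(-1)^{\sigma'^\F}$ for the enlarged shuffle $\sigma'$ on $\{1,\dots,r+1\}$ that places $b$ at the end of the negative block.

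The remaining terms are
\[
\sum_{\sigma}(-1)^{\sigma^\F}\sum_{i\in A^\F_\sigma[\mu+1,\mu+\nu]}(-1)^{r^\F_\sigma(\mu+\nu)-r^\F_\sigma(i)}\ell_\F(a_{\sigma(i)},b)F^\F(x_{\sigma(i)},y)\,(\cdots)_{\widehat{\sigma(i)}}.
\]
The main step is converting this to the stated form, and this is where the sign bookkeeping is delicate. I would rewrite $(-1)^{r^\F_\sigma(\mu+\nu)-r^\F_\sigma(i)}$ as $(-1)^{r^\F_\sigma(\mu+\nu)-r^\F_\sigma(\mu)}\cdot(-1)^{r^\F_\sigma(i)-r^\F_\sigma(\mu)}$, using that $-1$ and $+1$ exponents agree mod $2$. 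I would then apply (\ref{Comb-Id-2}) with $\Phi(j)=\ell_\F(a_j,b)F^\F(x_j,y)$, absorbing the factor $(-1)^{r^\F_\sigma(\mu+\nu)-r^\F_\sigma(\mu)}$ into $\Psi$. This requires checking that this factor equals the corresponding count for $\tau\in J_{\mu,\nu-1}$ plus $1$, which holds because exactly one fermionic positive mode was removed. The output is $\sum_{j\in A^\F}(-1)^{r^\F(j)}\Phi(j)\sum_\tau(-1)^{\tau^\F}(-1)^{\text{(fermionic positives of }\tau)+1}(\cdots)$.

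The leftover $\tau$-dependent sign is exactly what appears in the $b^-$-placement for the smaller product, so the $\tau$-sum is not literally $:\cdots\widehat{a_j}\cdots:$. I therefore expect to argue more cleanly by induction on $r$, as in Proposition 4.2 of \cite{FQ-Fermion-1}. Write $:a_1\cdots a_r:$ as $a_1(x_1)^-:a_2\cdots a_r:$ plus the appropriately signed versions with $a_1^+$ or $a_1^0$ pulled out. Then apply the induction hypothesis to $:a_2\cdots a_r:b(y)$, and handle the single contraction of $a_1^+$ with $b^-$ directly. The global sign $(-1)^{r^\F-r^\F(j)}$ counts the fermions to the right of $a_j$, which $b$ must pass, and it is consistent with the recursion, since removing $a_1$ lowers both $r^\F$ and $r^\F(j)$ by the same amount. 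I would use the shuffle identity only as a check of the sign.
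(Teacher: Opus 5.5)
The induction you settle on, peeling off $a_1$ as in Proposition 4.2 of \cite{FQ-Fermion-1}, is one of the two routes the paper names. The paper's proof is omitted and defers to ``shuffles or induction,'' and your induction does go through.

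Make explicit how $a_1^+$ leaves the middle of the normal-ordered product. Use Proposition \ref{Pos-modes-commute}, together with the defining relations under which fermionic modes and nonzero bosonic modes commute with bosonic zero modes:
\begin{align*}
:a_1(x_1)\cdots a_r(x_r):\;=\;&a_1(x_1)^-:a_2(x_2)\cdots a_r(x_r):+a_1(x_1)^0:a_2(x_2)\cdots a_r(x_r):\\
&+(-1)^{|a_1|(r^\F-1)}:a_2(x_2)\cdots a_r(x_r):a_1(x_1)^+ .
\end{align*}
Then write $a_1(x_1)^+b(y)=(-1)^{|a_1|}b(y)a_1(x_1)^++[a_1(x_1)^+,b(y)^-]$, again citing Proposition \ref{Pos-modes-commute} for the $b(y)^+$ part. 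This is what produces a factor $:a_2(x_2)\cdots a_r(x_r):\,b(y)$ to which the induction hypothesis applies. Supercommutation of positive modes is a theorem here, not a defining relation, so it must be cited.

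For fermionic $a_1$ the resulting sign $(-1)^{r^\F-1}\cdot(-1)=(-1)^{r^\F}$ matches the $a_1^+$-signs in the two products you reassemble into: $(-1)^{r^\F}$ in $:a_1\cdots a_rb:$ and $(-1)^{r^\F-2}$ in $:a_1\cdots\widehat{a_j}\cdots a_r:$. The contraction supplies the $j=1$ term with sign $(-1)^{r^\F-1}$, as required.

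Your reason for abandoning the shuffle computation is wrong, and that computation is in fact a complete and shorter proof. The leftover sign is an artifact of (\ref{Comb-Id-2}) as printed. Its left side carries an extra factor $(-1)^{r^\F_\tau(\mu)}$: for $r=3$, all $a_i$ fermionic, $\mu=\nu=1$, the two sides differ by an overall $-1$.

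The correct bookkeeping holds in whichever block position $i$ lies: for $j=\sigma(i)$,
\[(-1)^{\sigma^\F}(-1)^{r^\F_\sigma(i)}=(-1)^{\tau^\F}(-1)^{r^\F(j)}.\]
This is because deleting $j$ from the fermionic sequence removes a number of inversions congruent to $r^\F_\sigma(i)+r^\F(j)$ modulo $2$.

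Since $V$ has no fermionic zero modes, every nonvanishing term has $r^\F_\sigma(\mu+\nu)=r^\F$. Hence $(-1)^{\sigma^\F}(-1)^{r^\F_\sigma(\mu+\nu)-r^\F_\sigma(i)}=(-1)^{\tau^\F}(-1)^{r^\F-r^\F(j)}$. Regroup $(\sigma,i)\leftrightarrow(j,\tau)$ with $\tau\in J_{\mu,\nu-1}(1,\dots,\widehat{j},\dots,r)$ and sum over $\mu,\nu$. This gives exactly $\sum_{j\in A^\F}(-1)^{r^\F-r^\F(j)}\Phi(j):a_1(x_1)\cdots\widehat{a_j(x_j)}\cdots a_r(x_r):$.

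Equivalently, let $N_\tau$ be the number of fermionic positive modes of $\tau$. Your leftover sign $(-1)^{N_\tau+1}$ cancels against the missing $(-1)^{r^\F_\tau(\mu)}$, because $r^\F_\tau(\mu)+N_\tau=r^\F-1$.

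A minor point: positive modes annihilate $\one$, so your ``creation'' and ``annihilation'' labels for $b(y)^{+}$ and $b(y)^{-}$ are swapped. Nothing in the argument depends on this.
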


\begin{proof}
Same as Proposition \ref{Nord-recursion-right-boson-prop}. 
\end{proof}

%We introduce one more proposition about normal-ordered products. 
%\begin{enumerate}
 %       \item If $a\in \h^\B$, then 
  %      \begin{align*}
   %     :a^{(m-1)}(x) b_1^{(n_1-1)}{y_1} \cdots b_s^{(n_s-1)}(y_s): = &\left(a^{(m-1)}(x)^- +a^{(m-1)}(x)^0\right) : b_1^{(n_1-1)}{y_1} \cdots b_s^{(n_s-1)}(y_s) :\\
    %    &+ : b_1^{(n_1-1)}{y_1} \cdots b_s^{(n_s-1)}(y_s) : a^{(m-1)}(x)^+;
     %   \end{align*}
      %  \item If $a\in \h^\F$, then 
       % \begin{align*}
        %:a^{(m-1)}(x) b_1^{(n_1-1)}{y_1} \cdots b_s^{(n_s-1)}(y_s): = & a^{(m-1)}(x)^-  : b_1^{(n_1-1)}{y_1} \cdots b_s^{(n_s-1)}(y_s) :\\
        %&+ (-1)^{r^\F-1}: b_1^{(n_1-1)}{y_1} \cdots b_s^{(n_s-1)}(y_s) : a^{(m-1)}(x)^+.
        %\end{align*}
   % \end{enumerate}
%\end{prop}

%\begin{proof}
   % The proof follows directly from the definition of the normal ordering \(: \cdot :\). Just note that in Case (2), for \(a^{(m-1)}(x)^+\) to pass to the rightmost, it has to anti-commute with \(r^\F-1\) positive fermionic fields (itself excluded), hence the sign \((-1)^{r^\F-1}\). 
%\end{proof}

%begin{rmk}
 %   Proposition \ref{Nord-recursion-left-right-prop} does not really touch on the \(\Z_2\)-graded commutators between positive and negative modes, hence essentially much simpler than Propositions \ref{Nord-recursion-left-boson-prop} to \ref{Nord-recursion-right-fermion-prop}. We will need it only in Subsection \ref{subsec-properties-of-exp}. 
%\end{rmk}

\begin{nota}
We use \(\mat(a_{ij})_{i,j=1}^n\) to denote an \((n\times n)\)-matrix whose \((i,j)\)-th component is \(a_{ij}\). Then we denote the permanant of a matrix \(A\) by \(\perm A\), and the determinant of \(A\) by \(\det A\). We adopt the convention that the permanent and the determinant of the empty matrix is \(1\). 
\end{nota}

\begin{nota}\label{Index-Notation-1}
To avoid the clumsy iterated subscripts, for $\eta, \eta'\in \Z_+, I^b=(i_{1}^{b}, \dots, i_{\eta}^{b})
 \in A^\B_\eta, J^b=(j_{1}^{b}, \dots, j_{\eta'}^{b})\in B^\B_{\eta'}, \alpha = 1, \dots, \eta, \beta = 1, \dots, \eta'$, we use the notation 
$$\langle i^b_\alpha, j^b_\beta\rangle_\B = \ell_\B (a_{i^b_\alpha}, b_{j^b_\beta})F^\B(x_{i^b_\alpha}, y_{j^b_\beta}).$$
Similarly, for $\eta, \eta'\in \Z_+, I^f=(i_{1}^{f}, \dots, i_{\eta}^{f})\in A^\F_\eta, J^f=
(j_{1}^{f}, \dots, j_{\eta'}^{f})\in B^\F_{\eta'}, \alpha = 1, \dots, \eta, \beta = 1, \dots, \eta'$, we use the notation 
$$\langle i^f_\alpha, j^f_\beta\rangle_\F = \ell_\F (a_{i^f_\alpha}, b_{j^f_\beta})F^\F(x_{i^f_\alpha}, y_{j^f_\beta}).$$
In case it is clear from the contexts, we may also use the notations
$$\langle p,q\rangle_\B = \begin{cases}\ell_\B (a_p, a_q)F^\B(x_p, x_q) & \text{ if }p, q \in A^\B ,\\
    \ell_\B (a_p, a_q)F^\B(y_p, y_q) & \text{ if }p, q \in B^\B ,\\
\end{cases}$$ 
and
$$\langle p,q\rangle_\F = \begin{cases}\ell_\F (a_p, a_q)F^\F(x_p, x_q) & \text{ if }p, q \in A^\F ,\\
    \ell_\F (a_p, a_q)F^\F(y_p, y_q) & \text{ if }p, q \in B^\F .\\
\end{cases}$$
We will only use these notations in the related computations in the proofs. We will not use them in the statements of definitions, lemmas, propositions, and theorems.
\end{nota}

We now state and prove our generalized Wick's theorem. 

\begin{thm}\label{Wick-thm-general-case}
    Let $r, s\in \Z_+$, $a_1, \dots, a_r, b_1, \dots, b_s\in \h^\B \cup \h^\F$, $m_1, \dots, m_r, n_1, \dots, n_s\in \Z_+$. Then, 
    \begin{align}
     & :a_1^{(m_1-1)}(x_1) \cdots a_r^{(m_r-1)}(x_r): \cdot :b_1^{(n_1-1)}(y_1) \cdots b_s^{(n_s-1)}(y_s):  \nonumber\\
 &\quad 
    = \sum_{\rho, \eta=0}^{\min(r,s)} \sum_{I^b\in A^\B_\eta, J^b\in B^\B_\eta}  \perm \mat \left(\ell_\B\left(a_{i_\alpha^b},\, b_{j_\beta^b}\right) F^\B_{m_{i_\alpha^b}n_{i_\beta^b}}\left(x_{i_{\alpha}^b},\, y_{j_\beta^b}\right)\right)_{\alpha, \beta=1}^\eta \nonumber\\
     & \hspace{3em} \cdot\sum_{I^f\in A^\F_\rho, J^f\in B^\F_\rho} (-1)^{r^\F(i_1^f)+\cdots +r^\F(i_\rho^f)+s^\F(j_1^f)+\cdots + s^\F(j_\rho^f)}(-1)^{r^\F\rho + \frac{\rho(\rho+1)}{2}} \nonumber\\
     &\hspace{8.5em} \cdot \det \mat \left(\ell_\F\left(a_{i_\gamma^f},\, b_{j_\delta^f}\right) F^\F_{m_{i_\delta^f}n_{j_\delta^f}}\left(x_{i_\gamma^f},\, y_{j_\delta^f}\right)\right)_{\gamma, \delta=1}^\rho \nonumber\\
     &\hspace{8.8em}\cdot : a_1^{(m_1-1)}(x_1) \cdots a_r^{(m_r-1)}(x_r) b_1^{(n_1-1)}(y_1) \cdots b_s^{(n_s-1)}(y_s):_{O(I^b, I^f, J^b, J^f)} \label{Eq-Wick-thm-general-case}
    \end{align}
\end{thm}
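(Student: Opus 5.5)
By Remark \ref{Nord-commutes-w-derivation-rmk} it suffices to treat $m_1=\cdots=m_r=n_1=\cdots=n_s=1$; the general case follows by applying $\frac{1}{(m_i-1)!}\partial_{x_i}^{m_i-1}$ and $\frac{1}{(n_j-1)!}\partial_{y_j}^{n_j-1}$ to both sides. These derivatives pass through the normal ordering and act entrywise on the permanent and determinant, since each variable $x_i$ or $y_j$ occurs in exactly one row or column. I will prove \eqref{Eq-Wick-thm-general-case} by induction on $r$, holding $b_1,\dots,b_s$ fixed and arbitrary. For $r=1$ the statement is exactly Proposition \ref{Nord-recursion-left-boson-prop} (for $a_1\in\h^\B$) or Proposition \ref{Nord-recursion-left-fermion-prop} (for $a_1\in\h^\F$). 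In the fermionic case one checks that the sign $(-1)^{s^\F(j)-1}$ matches $(-1)^{r^\F(i_1^f)+s^\F(j_1^f)+r^\F\cdot 1+1}$ with $r^\F=r^\F(i_1^f)=1$.

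For the inductive step I first need a splitting of the left factor, obtained directly from the definition of $:\cdot:$ via 3-shuffles:
\[
:a_1(x_1)\cdots a_r(x_r): \;=\; a_1(x_1)^{-}:a_2(x_2)\cdots a_r(x_r): \;+\; a_1(x_1)^0\text{-part} \;+\; \epsilon\,:a_2(x_2)\cdots a_r(x_r):\,a_1(x_1)^{+}\ \text{(suitably placed)}.
\]
This is awkward because the positive mode of $a_1$ must sit among the other positive modes. A cleaner route is to peel off the last field rather than the first. I therefore plan to use Propositions \ref{Nord-recursion-left-boson-prop} and \ref{Nord-recursion-left-fermion-prop} in the form
\[
:a_1(x_1)\cdots a_r(x_r): \;=\; a_1(x_1):a_2(x_2)\cdots a_r(x_r): \;-\; \sum_{j}(\text{contraction of }a_1\text{ with }a_j)\,:a_2\cdots\widehat{a_j}\cdots a_r:,
\]
which expresses the $r$-fold normal-ordered product through $a_1(x_1)$ times an $(r-1)$-fold one, together with lower terms. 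I then multiply on the right by $:b_1\cdots b_s:$. The induction hypothesis expands $:a_2\cdots a_r::b_1\cdots b_s:$, and also the lower terms. Next, $a_1(x_1)$ is pushed into each resulting normal-ordered product using Propositions \ref{Nord-recursion-left-boson-prop} and \ref{Nord-recursion-left-fermion-prop} again. This produces two kinds of terms. In the first, $a_1$ is contracted with some $b_j$; these are exactly the terms in which $i_1\in I^b$ or $I^f$, assembled via Laplace expansion of the permanent or determinant along the row of $a_1$. In the second, $a_1$ is contracted with some remaining $a_j$; these cancel against the lower terms of the recursion after those are expanded by the induction hypothesis.

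Because bosonic and fermionic contractions never interact (the mixed commutators vanish by Proposition \ref{algebra-commutator}), the permanent and the determinant factor apart and can be handled independently. The permanent part is the purely bosonic argument of \cite{H-mosva}, and the determinant part is the fermionic argument of \cite{FQ-Fermion-1}, now interleaved with bosons. When $a_1$ is bosonic, no signs change. When $a_1$ is fermionic, the sign coming from Proposition \ref{Nord-recursion-left-fermion-prop} is computed relative to the reduced index set $O(I^b,I^f,J^b,J^f)$. This sign has to be matched with the cofactor sign $(-1)^{1+\delta}$ of the Laplace expansion along the first row, with the shift of $r^\F$ to $r^\F-1$, and with the change of $\rho$ to $\rho-1$ in $(-1)^{r^\F\rho+\rho(\rho+1)/2}$.

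I expect this sign bookkeeping to be the main obstacle. Concretely, I must check that
\[
(-1)^{s^\F_{O}(j)-1}\cdot(-1)^{\text{old sign}}=(-1)^{\text{new sign}}\cdot(-1)^{1+\delta},
\]
where $s^\F_O(j)$ counts the fermionic $b$'s before $j$ that were not already contracted. That count equals $s^\F(j)$ minus the number of $j^f_\beta<j$, which is exactly what is needed to reorder $J^f$. I will verify the identity by writing the position of $j$ inside the enlarged $J^f$ and invoking Lemma \ref{2-shuffle-lemma}. The cancellation of the $a$–$a$ contraction terms also needs care, since those terms must vanish identically. Here I would reorganize the double sums with Lemma \ref{Comb-Id-Lemma} and Lemma \ref{Comb-Id-Lemma-B} to exchange the order of summation.
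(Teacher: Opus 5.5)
Your route is correct in outline but differs from the paper's. The paper peels off the \emph{last} field $a_r$. It uses the right recursions (Propositions~\ref{Nord-recursion-right-boson-prop} and \ref{Nord-recursion-right-fermion-prop}) to write $:a_1\cdots a_r:$ as $:a_1\cdots a_{r-1}:a_r$ minus $a_i$--$a_r$ contractions. It then pushes $a_r$ into the right factor with the left recursion and applies the induction hypothesis to $:a_1\cdots a_{r-1}:\,:a_rb_1\cdots b_s:$ (sizes $(r-1,s+1)$), to $:a_1\cdots a_{r-1}:\,:b_1\cdots\widehat{b_j}\cdots b_s:$, and to $:a_1\cdots\widehat{a_i}\cdots a_{r-1}:\,:b_1\cdots b_s:$. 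In that route the $a_i$--$a_r$ contractions come out of the induction hypothesis \emph{inside} the determinant, as the column of $a_r$, so a column expansion and re-indexing are needed before they cancel. The $a_r$--$b_j$ terms are matched by a last-row expansion.

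What you actually do, despite your sentence about the last field, is peel off $a_1$ using only the left recursions. You apply the induction hypothesis in sizes $(r-1,s)$ and $(r-2,s)$, then apply the left recursion once more to each resulting normal-ordered product. This has two advantages. You never need the right recursions, whose proofs the paper omits. And the $a_1$--$a_j$ contractions arise outside any determinant, so they cancel the lower terms one-for-one, indexed by the same $(j,I^b,I^f,J^b,J^f)$; Lemmas~\ref{Comb-Id-Lemma} and \ref{Comb-Id-Lemma-B} are not needed there. The price is that the recursion now acts on products $:a_2\cdots a_rb_1\cdots b_s:_{O(I^b,I^f,J^b,J^f)}$ that still contain uncontracted $a$'s, and this is exactly where your sign bookkeeping goes wrong.

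In Proposition~\ref{Nord-recursion-left-fermion-prop}, the exponent $s^\F(j)$ is the position of the contracted field among \emph{all} fermionic fields of the normal-ordered product. In $:a_2\cdots a_rb_1\cdots b_s:_{O(I^b,I^f,J^b,J^f)}$, the fermionic $b$'s are preceded by the $r^\F-1-\rho$ uncontracted fermionic $a_i$ with $i\ge 2$; here $r^\F$ counts $a_1$ and $\rho=|I^f|$. So the correct count is $p=(r^\F-1-\rho)+s^\F(j)-(k-1)$, where $k$ is the position of $j$ in $J^f\cup\{j\}$. You wrote $s^\F(j)-(k-1)$. With your count, the identity you propose to check fails by $(-1)^{r^\F-1-\rho}$. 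Already for $r=2$, $s=1$ and $a_1,a_2,b_1\in\h^\F$, it produces $+\ell_\F(a_1,b_1)F^\F(x_1,y_1)\,a_2(x_2)$ instead of the correct $-\ell_\F(a_1,b_1)F^\F(x_1,y_1)\,a_2(x_2)$.

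With the correct $p$, take $(-1)^{p-1}$ times the induction-hypothesis sign $(-1)^{\sum_\gamma (r^\F(i^f_\gamma)-1)+\sum_\beta s^\F(j^f_\beta)+(r^\F-1)\rho+\rho(\rho+1)/2}$. This differs from the target sign (for $(1,i^f_1,\dots,i^f_\rho)$, $J^f\cup\{j\}$, $\rho+1$) times the cofactor sign $(-1)^{1+k}$ by the even exponent $4\rho+2k+4$. The factor $(-1)^{r^\F-1-\rho}$ is precisely what absorbs the change from $(r^\F-1)\rho$ to $r^\F(\rho+1)$.

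The same rule of counting every surviving fermionic field gives the sign $(-1)^{r^\F(j)-\#\{\gamma:\, i^f_\gamma<j\}}$ for the $a_1$--$a_j$ terms, and these then cancel the lower terms exactly. For the uncontracted terms $:a_1\cdots a_rb_1\cdots b_s:_{O(\cdots)}$, note that the shifts $r^\F\to r^\F-1$ and $r^\F(i)\to r^\F(i)-1$ cancel in pairs. With this correction your argument goes through.
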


\begin{proof}
    From Remark \ref{Nord-commutes-w-derivation-rmk}, It suffices to prove the case where $m_1 = \cdots = m_r = n_1 = \cdots = n_s = 1$. We prove (\ref{Eq-Wick-thm-general-case}) by induction. 
    
    By Proposition \ref{Nord-recursion-left-boson-prop} and Proposition \ref{Nord-recursion-left-fermion-prop}, the conclusion holds for $r=1$ and for every $s\in \Z_+$. Assume the conclusion holds for smaller $r$. We discuss the following cases. 

\noindent\textbf{Case 1. $a_r \in \h^\F.$} By Proposition \ref{Nord-recursion-left-fermion-prop} and Proposition \ref{Nord-recursion-right-fermion-prop}, we have 
\begin{align}
    &:a_1(x_1)\cdots a_r(x_r): \cdot :b_1(y_1) \cdots b_s(y_s):\nonumber\\
    = &:a_1(x_1)\cdots a_{r-1}(x_{r-1}): \cdot :a_r(x_r) \cdot b_1(y_1) \cdots b_s(y_s) \label{Wick-F-1}\\
    &\quad + \sum_{j\in \B^\F} (-1)^{r^\F(j) - 1}\langle r, j\rangle_{\F} 
    :a_1(x_1)\cdots a_{r-1}(x_{r-1}):\cdot
    :b_1(y_1)\cdots \reallywidehat{b_j(y_j)}\cdots b_s(y_s): \label{Wick-F-2}\\
    &\quad  - \sum_{i\in A^\F\setminus\{r\} } (-1)^{r^\F - r^\F(i)}\langle i, r\rangle_\F :a_1(x_1)\cdots \reallywidehat{a_i(x_i)}\cdots a_{r-1}(x_{r-1}):\cdot:b_1(y_1)\cdots b_s(y_s):. \label{Wick-F-3}
\end{align}
    By the induction hypothesis, (\ref{Wick-F-1}) is equal to
    \begin{align}
        & \sum_{\rho, \eta=0}^{\min(r-1,s)} \sum_{I^b\in A^\B_\eta, J^b\in B^\B_\eta} \perm \mat \left(\ell_\B\left(a_{i_\alpha^b},\, b_{j_\beta^b}\right) F^\B\left(x_{i_{\alpha}^b},\, y_{j_\beta^b}\right)\right)_{\alpha, \beta=1}^\eta \nonumber\\
        & \hspace{3.1em} \cdot \sum_{I^f\in A^\F_\rho\setminus\{r\}, J^f\in B^\F_\rho} (-1)^{r^\F(i_1^f)+\cdots +r^\F(i_\rho^f)+s^\F(j_1^f)+\cdots + s^\F(j_\rho^f)}(-1)^{r^\F\rho + \frac{\rho(\rho+1)}{2}} \nonumber\\
        &\hspace{10.2em} \cdot \det \begin{pmatrix}
            \langle i_1^f, j_1^f\rangle_\F & \cdots & \langle i^f_1, j^f_{\eta}\rangle_\F\\
            \vdots &  & \vdots \\
            \langle i_\rho^f, j_1^f\rangle_\F & \cdots & \langle i^f_\rho, j^f_{\eta}\rangle_\F
        \end{pmatrix}\nonumber\\
        &\hspace{10.5em}\cdot :a_1(x_1) \cdots a_{r-1}(x_{r-1})\cdot a_r(x_r) \cdot b_1(y_1) \cdots b_s(y_s):_{O(I^b, I^f, J^b, J^f)} \label{Wick-F-4}\\
        & \quad\quad + \sum_{\rho = 0, \eta=1}^{\min(r-1,s)} \sum_{I^b\in A^\B_\eta, J^b\in B^\B_\eta}  \perm \mat \left(\ell_\B\left(a_{i_\alpha^b},\, b_{j_\beta^b}\right) F^\B\left(x_{i_{\alpha}^b},\, y_{j_\beta^b}\right)\right)_{\alpha, \beta=1}^\eta  
        \nonumber\\
        & \hspace{4.5em}\cdot \sum_{I^f\in A^\F_\rho\setminus\{r\},J^f\in B^\F_{\rho-1}} (-1)^{r^\F(i_1^f)+\cdots +r^\F(i_\rho^f)+1+s^\F(j_1^f)+1+\cdots + s^\F(j_{\rho-1}^f)+1}(-1)^{r^\F\rho + \frac{\rho(\rho+1)}{2}} \nonumber\\
        &\hspace{11.3em} \cdot \det \begin{pmatrix}
            \langle i_1^f, r\rangle_\F & \langle i_1^f, j_1^f\rangle_\F & \cdots & \langle i^f_1, j^f_{\eta-1}\rangle_\F\\
            \vdots & \vdots & & \vdots \\
            \langle i_\rho^f, r\rangle_\F & \langle i_\rho^f, j_1^f\rangle_\F & \cdots & \langle i^f_\rho, j^f_{\eta-1}\rangle_\F
        \end{pmatrix} \nonumber\\
        &\hspace{11.6em}\cdot :a_1(x_1) \cdots a_{r-1}(x_{r-1})\cdot \reallywidehat{a_r(x_r)} \cdot b_1(y_1) \cdots b_s(y_s):_{O(I^b, I^f, J^b, J^f)}\label{Wick-F-5} 
    \end{align}
We expand the determinant in (\ref{Wick-F-5}) by the first column, to obtain
\begin{align}
    & \sum_{\rho =0, \eta=1}^{\min(r-1,s)} \sum_{I^b\in A^\B_\eta, J^b\in B^\B_{\eta}}  
       \perm \mat \left(\ell_\B\left(a_{i_\alpha^b},\, b_{j_\beta^b}\right) F^\B\left(x_{i_{\alpha}^b},\, y_{j_\beta^b}\right)\right)_{\alpha, \beta=1}^\eta \nonumber
        \nonumber\\
        & \hspace{4.7em} \cdot\sum_{I^f\in A^\F_\rho\setminus\{r\}, J^f\in B^\F_{\rho-1}} (-1)^{r^\F(i_1^f)+\cdots +r^\F(i_\rho^f)+s^\F(j_1^f)+\cdots + s^\F(j_{\rho-1}^f)+\rho}(-1)^{r^\F\rho + \frac{\rho(\rho+1)}{2}} \nonumber\\
        &\hspace{10.2em} \cdot  \sum_{k=1}^{\eta} (-1)^{k+1} \langle i_k^f, r\rangle_\F
        \det \begin{pmatrix}
            \hcancel{\langle i_1^f, r\rangle_\F}& \langle i_1^f, j_1^f\rangle_\F & \cdots & \langle i^f_1, j^f_{\eta-1}\rangle_\F\\
            \vdots & \vdots & & \vdots \\
            \hcancel{\langle i_k^f, r\rangle_\F} & \hcancel{\langle i_k^f, j_1^f\rangle_\F} & \cdots & \hcancel{\langle i^f_k, j^f_{\eta-1}\rangle_\F}\\
            \vdots & \vdots & & \vdots \\
            \hcancel{\langle i_\eta^f, r\rangle_\F} & \langle i_\eta^f, j_1^f\rangle_\F & \cdots & \langle i^f_\eta, j^f_{\eta-1}\rangle_\F
        \end{pmatrix} \nonumber\\
        &\hspace{10.5em}\cdot :a_1(x_1) \cdots \reallywidehat{a_{i_k^f}(x_{i^f_k})} \cdots a_{r-1}(x_{r-1})\cdot \reallywidehat{a_r(x_r)} \cdot b_1(y_1) \cdots b_s(y_s):_{O(I^b, I^f, J^b, J^f)} .\nonumber
    \end{align}
Using an argument similar to that in the proof of Lemma \ref{Comb-Id-Lemma-B}, we may change the order of summation of $i^b$ and $k$. Set $i = i^f_k$, switch the order of summation, adjust the indices in $I^f$ by kicking the $k$-th term out and shifting the $(k+1)$-th term to the $\rho$-th term by 1, while noticing that in the adjusted sequence $I^f$, $r^\F(i^f_p)$ is shifted by 1 for $p = k+1, \dots, \rho$, so that 
\begin{align*}
    & (-1)^{r^\F(i^f_1) + \cdots + r^\F(i^f_k) + \cdots + r^\F(i^f_\rho)+\rho}(-1)^{r^\F\rho+\frac{\rho(\rho+1)}{2}}(-1)^{k+1} \\
    &\quad = (-1)^{r^\F(i)-r^\F} (-1)^{r^\F(i^f_1) + \cdots + r^\F(i^f_{k-1}) + r^\F(i^f_{k+1})-1 + \cdots + r^\F(i^f_{\rho})-1} (-1)^{(r^\F-1)(\rho-1)+\frac{\rho(\rho-1)}{2}},
\end{align*} 
then we obtain 
\begin{align}
    & \sum_{i\in A^\B} (-1)^{r^\F(i)-r^\F} \langle i, r\rangle_\F \nonumber\\
        & \hspace{3 em} \cdot\sum_{\rho =0, \eta=1}^{\min(r-1,s)} \sum_{I^b\in A^\B_{\eta-1}\setminus\{r,i\}, J^b\in B^\B_{\eta-1}}  
        \perm \mat \left(\ell_\B\left(a_{i_\alpha^b},\, b_{j_\beta^b}\right) F^\B\left(x_{i_{\alpha}^b},\, y_{j_\beta^b}\right)\right)_{\alpha, \beta=1}^\eta \nonumber\\
        & \hspace{3em} \cdot\sum_{I^f\in A^\F_\rho, J^f\in B^\F_\rho} (-1)^{r^\F(i_1^f)+\cdots +r^\F(i_{\rho-1}^f)+s^\F(j_1^f)+\cdots + s^\F(j_\rho^f)}(-1)^{(r^\F-1)(\rho-1) + \frac{\rho(\rho-1)}{2}} \nonumber\\
        &\hspace{5.8em} \cdot \det \mat \left(\langle i_\gamma^f,\, j_\delta^f\rangle_{\F}\right)_{\gamma, \delta=1}^{\rho-1} \nonumber\\
        &\hspace{6em}\cdot :a_1(x_1) \cdots \reallywidehat{a_{i}(x_{i})} \cdots a_{r-1}(x_{r-1})\cdot \reallywidehat{a_r(x_r)} \cdot b_1(y_1) \cdots b_s(y_s):_{O(I^b, I^f, J^b, J^f)} . \label{Wick-F-6}
    \end{align}
Clearly, from the induction hypothesis, $(\ref{Wick-F-6}) + (\ref{Wick-F-3}) = 0$. Thus $(\ref{Wick-F-1}) + (\ref{Wick-F-2}) + (\ref{Wick-F-3}) =(\ref{Wick-F-2})+ (\ref{Wick-F-4})$. 

To show $(\ref{Wick-F-2})+ (\ref{Wick-F-4}) = (\ref{Eq-Wick-thm-general-case})$, we analyze the sum of $I^f$ in (\ref{Eq-Wick-thm-general-case}). Clearly, summing $I^f$ over $A^\F_\rho$ is the same as summing over all possible $\rho$-tuples 
$I^f=(i^f_1, \dots, i^f_\rho)$ satisfying $i^f_\rho = r$, i.e., 
\begin{align}
1\leq i^f_1 < \cdots < i^f_{\rho-1} \leq r-1, i^f_\rho = r, \label{Decompose-A-B-rho-1}
\end{align}
together with those $I^f$ satisfying $i^f_{\rho} \leq r-1$, i.e., 
\begin{align}
    1\leq i^f_1 < \cdots < i^f_{\rho-1} < i^f_{\rho} \leq  r-1. \label{Decompose-A-B-rho-2}
\end{align}
Clearly, (\ref{Wick-F-4}) corresponds to the part in (\ref{Eq-Wick-thm-general-case}) where the sum is over $I^f$ satisfying (\ref{Decompose-A-B-rho-2}). What remains to show is that (\ref{Wick-F-2}) is equal to 
\begin{align}
    & \sum_{\rho, \eta=0}^{\min(r,s)} \sum_{I^b\in A^B_\eta} \sum_{J^b\in B^\B_\eta}\perm \mat \left(\ell_\B\left(a_{i_\alpha^b},\, b_{j_\beta^b}\right) F^\B\left(x_{i_{\alpha}^b},\, y_{j_\beta^b}\right)\right)_{\alpha, \beta=1}^\eta \nonumber\\
    & \hspace{3em} \cdot\sum_{I^f\in A^\F_\rho, i^f_\rho = r} \sum_{J^f\in B^\F_\rho} (-1)^{r^\F(i_1^f)+\cdots +r^\F(i_{\rho-1}^f)+r^\F +s^\F(j_1^f)+\cdots + s^\F(j_\rho^f)}(-1)^{r^\F\rho + \frac{\rho(\rho+1)}{2}} \nonumber\\
     &\hspace{8.5em} \cdot \det \mat \left( \langle i_\alpha^f, j_\beta^f \rangle \right)_{\alpha, \beta=1}^\rho \nonumber\\
     &\hspace{8.8em}\cdot :a_1(x_1) \cdots a_r(x_r) b_1(y_1) \cdots b_s(y_s):_{O(I^b, I^f, J^b, J^f)}. \label{Wick-F-7}
\end{align}
Expand the determinant in (\ref{Wick-F-7}) by its last row and organize it as
\begin{align}
    & \sum_{\rho, \eta=0}^{\min(r,s)} \sum_{I^b\in A^B_\eta} \sum_{J^b\in B^\B_\eta}\perm \mat \left(\ell_\B\left(a_{i_\alpha^b},\, b_{j_\beta^b}\right) F^\B\left(x_{i_{\alpha}^b},\, y_{j_\beta^b}\right)\right)_{\alpha, \beta=1}^\eta \nonumber\\
    & \hspace{3em} \cdot\sum_{I^f\in A^\F_\rho, i^f_\rho = r} \sum_{J^f\in B^\F_\rho} (-1)^{r^\F(i_1^f)+\cdots +r^\F(i_{\rho-1}^f)+r^\F +s^\F(j_1^f)+\cdots + s^\F(j_\rho^f)}(-1)^{r^\F\rho + \frac{\rho(\rho+1)}{2}} \nonumber\\
     &\hspace{8.5em} \cdot  \sum_{k=1}^\eta (-1)^{k+\rho}\langle r, j_k^f\rangle_\F 
    \det \begin{pmatrix}
    \langle i^f_1, j^f_1 \rangle_\F & \cdots & \hcancel{\langle i^f_1, j^f_k\rangle_\F} & \cdots & \langle i^f_1, j^f_\eta\rangle_\F\\ 
    \vdots & & \vdots & & \vdots \\
    \langle i^f_{\eta-1}, j^f_1\rangle_\F & \cdots & \hcancel{\langle i^f_{\eta-1}, j^f_k\rangle_\F} & \cdots & \langle i^f_{\eta-1}, j^f_\eta\rangle_\F\\
    \hcancel{\langle r, j^f_1\rangle_\F} & \cdots & \hcancel{\langle r, j^f_k\rangle_\F} & \cdots & \hcancel{\langle r, j^f_\eta\rangle_\F}\\
    \end{pmatrix} \nonumber\\
     &\hspace{8.8em}\cdot :a_1(x_1) \cdots a_r(x_r) b_1(y_1) \cdots b_s(y_s):_{O(I^b, I^f, J^b, J^f)} . \nonumber
\end{align}
Then the summation of $I^f$ is over $A^\F_{\rho-1}\setminus\{r\}$. Using an argument similar to that in the proof of Lemma \ref{Comb-Id-Lemma-B}, we may change the order of summation of $J^f$ and $k$. Set $j = j^f_k$, switch the order of summation, adjust the indices in $J^f$ by kicking the $k$-th term out and shifting the $(k+1)$-th term to the $\eta$-th term by 1, while noticing that in the adjusted sequence $J^f$, $s^\F(j^f_p)$ is shifted by 1 for $p = k+1, \dots, \rho$, so that
\begin{align*}
& (-1)^{r^\F(i_1^f) + \cdots + r^\F(i_{\rho-1}^f) + r^\F + s^\F(j_1^f) + \cdots + s^\F(j_k^f) + \cdots + s^\F(j_\rho^f)} (-1)^{r^\F\rho + \frac{\rho(\rho+1)}{2}}(-1)^{k+\rho}\\
&\quad = (-1)^{s^\F(j)-1} (-1)^{r^\F(i_1^f) + \cdots + r^\F(i_{\rho-1}^f) + s^\F(j_1^f) + \cdots + s^\F(j_{k-1}^f) + s^\F(j_{k+1}^f)-1 \cdots + s^\F(j_\rho^f)-1}\\
& \hspace{20em}\cdot  (-1)^{(r^\F-1)(\rho-1) + \frac{\rho(\rho-1)}{2}},
\end{align*}
Then we obtain 
\begin{align}
    & \sum_{j=1}^s (-1)^{s^\F(j)-1} \langle r, j\rangle_\F \sum_{\rho, \eta=0}^{\min(r,s)} \sum_{I^b\in A^\B_{\eta}} \sum_{J^b\in B^\B_{\eta}} 
    \perm \mat \left(\ell_\B\left(a_{i_\alpha^b},\, b_{j_\beta^b}\right) F^\B_{m_{i_\alpha^b}n_{i_\beta^b}}\left(x_{i_{\alpha}^b},\, y_{j_\beta^b}\right)\right)_{\alpha, \beta=1}^\eta \nonumber\\
    & \hspace{3em} \cdot\sum_{I^f\in A^\F_{\rho-1}\setminus\{r\}, J^f\in B^\F_{\rho-1}\setminus\{j\}} (-1)^{r^\F(i_1^f)+\cdots +r^\F(i_{\rho-1}^f)+s^\F(j_1^f)+\cdots + s^\F(j_{\rho-1}^f)}(-1)^{(r^\F-1)(\rho-1) + \frac{\rho(\rho-1)}{2}} \nonumber\\
     &\hspace{10.2em} \cdot \det \mat \left(\langle i_\alpha^f, j_\beta^f \rangle_\F\right)_{\gamma, \delta=1}^{\rho-1} \nonumber\\
     &\hspace{10.5em}\cdot :a_1(x_1) \cdots a_r(x_r) b_1(y_1) \cdots b_s(y_s):_{O(I^b, I^f, J^b, J^f)} . \label{Wick-F-8}
\end{align}
Clearly, (\ref{Wick-F-8}) = (\ref{Wick-F-2}) by the induction hypothesis. The conclusion then follows for this case. 

\noindent\textbf{Case 2. $a_{r}\in \h^\B.$} With Proposition \ref{Nord-recursion-left-boson-prop} and Proposition \ref{Nord-recursion-right-boson-prop}, we may argue in a similar fashion using the column and row expansions of permanents. Note also that there does not exist any sign issues to take care of, so the situation is simpler. We shall omit the details here. 
\end{proof}

\subsection{Products and iterates of vertex operators, and weak associativity with pole-order condition}

From Theorem \ref{Wick-thm-general-case}, we obtain immediately the following.

\newcommand{\wrapper}[1]{%
  \tikz[baseline=(math.base)]{
    \node[inner sep=0.85pt] (math) {$\scriptstyle #1$}; 
    \draw[line width=0.5pt] (math.north west) -- (math.south west) -- (math.south east) -- (math.north east);
  }%
}

\newcommand{\wrappernormal}[1]{%
  \tikz[baseline=(math.base)]{
    \node[inner sep=1.1pt] (math) {$\displaystyle #1$}; 
    \draw[line width=0.5pt] (math.north west) -- (math.south west) -- (math.south east) -- (math.north east);
  }%
}

\begin{cor}\label{alg-prod-general} 
    For \(r, s \in \N\), $a_1, \dots, a_r, b_1, \dots, b_s\in \h^\B \cup \h^\F, m_1, \ldots, m_r, n_1, \ldots, n_s \in \Z_+$, 
    \begin{align}
     & Y(a_1(-m_1+|a_1|/2)\cdots a_r(-m_r+|a_r|/2)\one, x)Y(b_1(-n_1+|b_1|/2)\cdots b_s(-n_s+|b_s|/2)\one, y) 
     \nonumber\\
&\quad     = \sum_{\rho, \eta=0}^{\min(r,s)} \sum_{I^b\in A^\B_\eta, J^b\in B^\B_\eta}  \perm \mat \left(\ell_\B\left(a_{i_\alpha^b},\, b_{j_\beta^b}\right) F^\B_{m_{i_\alpha^b}n_{i_\beta^b}}\left(x, y\right)\right)_{\alpha, \beta=1}^\eta \nonumber\\
     & \hspace{3em}\cdot \sum_{I^f\in A^\F_\rho, J^f\in B^\F_\rho} (-1)^{r^\F(i_1^f)+\cdots +r^\F(i_\rho^f)+s^\F(j_1^f)+\cdots + s^\F(j_\rho^f)}(-1)^{r^\F\rho + \frac{\rho(\rho+1)}{2}} \nonumber\\
     &\hspace{8.5em} \cdot \det \mat \left(\ell_\F\left(a_{i_\gamma^f},\, b_{j_\delta^f}\right) F^\F_{m_{i_\delta^f}n_{j_\delta^f}}\left(x, y\right)\right)_{\gamma, \delta=1}^\rho \nonumber\\
     &\hspace{8.8em}\cdot :a_1^{(m_1-1)}(x) \cdots a_r^{(m_r-1)}(x) b_1^{(n_1-1)}(y) \cdots b_s^{(n_s-1)}(y):_{O(I^b, I^f, J^b, J^f)}. \label{Product-vo-algebra}
    \end{align}
\end{cor}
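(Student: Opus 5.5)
The corollary will follow by specializing Theorem \ref{Wick-thm-general-case} to coincident variables. By the definition of the vertex operator map in Section 3, the left-hand side of (\ref{Product-vo-algebra}) is
\[
:a_1^{(m_1-1)}(x)\cdots a_r^{(m_r-1)}(x): \cdot :b_1^{(n_1-1)}(y)\cdots b_s^{(n_s-1)}(y):.
\]
We therefore start from (\ref{Eq-Wick-thm-general-case}), which holds with independent variables $x_1,\dots,x_r,y_1,\dots,y_s$. We then set $x_1=\cdots=x_r=x$ and $y_1=\cdots=y_s=y$.

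The main point is that this substitution is legitimate on both sides as formal series. On the left, the product of the two normal-ordered series in independent variables has well-defined coefficients when acting on any $v\in V$. Only finitely many positive modes act nontrivially on a fixed $v$, so the result lies in $V((x_1,\dots,x_r))((y_1,\dots,y_s))$, or at least in a space where setting all $x_i$ equal to $x$ gives a well-defined element. Concretely:
\begin{enumerate}
\item Each normal-ordered product $:a_1^{(m_1-1)}(x_1)\cdots a_r^{(m_r-1)}(x_r):v$ has only finitely many negative powers in each $x_i$, lower-truncated uniformly, because all annihilation modes sit to the right. Hence the restriction to the diagonal $x_i=x$ exists and equals $:a_1^{(m_1-1)}(x)\cdots a_r^{(m_r-1)}(x):v$.
\item The same holds for the product of the two normal-ordered series and for each normal-ordered term $:\cdots:_{O(I^b,I^f,J^b,J^f)}$ on the right.
\end{enumerate}

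On the right-hand side, the coefficients $F^\B_{mn}(x_i,y_j)$ and $F^\F_{mn}(x_i,y_j)$ are expanded by $\iota_{xy}$ as in Proposition \ref{algebra-commutator}. These are power series in the $y_j$ with coefficients Laurent polynomials in the $x_i$. Setting $x_i=x$ and $y_j=y$ yields $\iota_{xy}F^\B_{mn}(x,y)$ and $\iota_{xy}F^\F_{mn}(x,y)$. The permanents and determinants are finite polynomial expressions in these, so they specialize entrywise. The sum over $\rho,\eta,I^b,J^b,I^f,J^f$ is finite, so the specialization passes through it term by term. Each product of such a series with a normal-ordered term lies in a space like $V((x))((y))$ after acting on $v$, and restricting the variables commutes with taking such products.

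I expect the only obstacle to be justifying that "restrict to the diagonal" is compatible with multiplication in step (1). I would handle this by working coefficientwise on $v$, using the lower-truncation in each $x_i$ and in each $y_j$, exactly as in the derivation of the analogous corollaries in \cite{H-mosva} and \cite{FQ-Fermion-1}. The sign factors and the index sets are untouched by the substitution, which gives (\ref{Product-vo-algebra}) verbatim.
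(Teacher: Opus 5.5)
Your proposal is correct and matches the paper's proof, which consists solely of substituting $x_1=\cdots=x_r=x$ and $y_1=\cdots=y_s=y$ into the generalized Wick theorem (Theorem \ref{Wick-thm-general-case}). Your extra justification, based on lower truncation when acting on a fixed vector, shows that the diagonal restriction is well defined and compatible with products. This fills in details the paper leaves implicit.
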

\begin{proof}
    Substitute \(x_1=\cdots=x_r=x,\ y_1=\cdots=y_s=y\) in Theorem \ref{Wick-thm-general-case}.
\end{proof}
\begin{cor}\label{alg-iterate-first-step}
    For \(r, s \in \N\), $a_1, \dots, a_r, b_1, \dots, b_s\in \h^\B \cup \h^\F, m_1, \ldots, m_r, n_1, \ldots, n_s \in \Z_+$, 
    \begin{align}
    & Y(a_1(-m_1+|a_1|/2)\cdots a_r(-m_r+|a_r|/2)\one, x)b_1(-n_1+|b_1|/2)\cdots b_s(-n_s+|b_s|/2)\one
     \nonumber\\
    &\quad = \sum_{\rho, \eta=0}^{\min(r,s)} \sum_{I^b\in A^\B_\eta, J^b\in B^\B_\eta}  \perm \mat \left(\ell_\B\left(a_{i_\alpha^b},\, b_{j_\beta^b}\right) F^\B_{m_{i_\alpha^b}n_{i_\beta^b}}\left(x, 0\right)\right)_{\alpha, \beta=1}^\eta \nonumber\\
     & \hspace{3em}\cdot \sum_{I^f\in A^\F_\rho, J^f\in B^\F_\rho} (-1)^{r^\F(i_1^f)+\cdots +r^\F(i_\rho^f)+s^\F(j_1^f)+\cdots + s^\F(j_\rho^f)}(-1)^{r^\F\rho + \frac{\rho(\rho+1)}{2}} \nonumber\\
     &\hspace{8.5em} \cdot \det \mat \left(\ell_\F\left(a_{i_\gamma^f},\, b_{j_\delta^f}\right) F^\F_{m_{i_\delta^f}n_{j_\delta^f}}\left(x, 0\right)\right)_{\gamma, \delta=1}^\rho \nonumber\\
     &\hspace{8.5em}\cdot \bigg(a_1^{(m_1-1)}(x)^- \cdots a_r^{(m_r-1)}(x)^-\bigg)_{O(I^b,I^f)} \nonumber\\
     &\hspace{8.5em}\cdot \bigg(b_1(-n_1+|b_1|/2) \cdots b_s(-n_s+|b_s|/2)\one \bigg)_{O(J^b, J^f)} . \label{eq-alg-iterate-first-step}
    \end{align}
\end{cor}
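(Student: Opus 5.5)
We will obtain Corollary \ref{alg-iterate-first-step} from Corollary \ref{alg-prod-general} by applying both sides of (\ref{Product-vo-algebra}) to $\one$ and taking the limit $y\to 0$. The creation property for the generators is what makes this work. By definition, $Y(b_1(-n_1+|b_1|/2)\cdots b_s(-n_s+|b_s|/2)\one, y)\one = \,:b_1^{(n_1-1)}(y)\cdots b_s^{(n_s-1)}(y):\one$. In this normal-ordered product every positive mode and every bosonic zero mode sits to the right of all negative modes. Since positive modes and bosonic zero modes annihilate $\one$, only the term with all factors negative survives:
$$b_1^{(n_1-1)}(y)^-\cdots b_s^{(n_s-1)}(y)^-\one .$$
Here $b^{(n-1)}(y)^-=\sum_{k\ge 1} b(-k+|b|/2)\binom{k-1}{n-1}y^{k-n}$, which is a power series in $y$ with constant term $b(-n+|b|/2)$. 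So the left-hand side of (\ref{Product-vo-algebra}), applied to $\one$, is a series with nonnegative powers of $y$. Its value at $y=0$ is exactly the left-hand side of (\ref{eq-alg-iterate-first-step}).

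For the right-hand side, consider each normal-ordered product $:a_1^{(m_1-1)}(x)\cdots b_s^{(n_s-1)}(y):_{O(I^b,I^f,J^b,J^f)}$ applied to $\one$. Again only the all-negative term survives. The negative modes keep their relative order, and no sign $(-1)^{\sigma^\F}$ appears because $\sigma$ is the identity when every mode is negative. The result is
$$\bigl(a_1^{(m_1-1)}(x)^-\cdots a_r^{(m_r-1)}(x)^-\bigr)_{O(I^b,I^f)}\bigl(b_1^{(n_1-1)}(y)^-\cdots b_s^{(n_s-1)}(y)^-\bigr)_{O(J^b,J^f)}\one .$$
The coefficients $F^\B_{mn}(x,y)$ and $F^\F_{mn}(x,y)$ arise as $\iota_{xy}$ expansions from Proposition \ref{algebra-commutator}. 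They therefore involve only nonnegative powers of $y$, with constant terms $F^\B_{mn}(x,0)$ and $F^\F_{mn}(x,0)$, which are the negative powers $x^{-m-n}$ and $x^{-m-n+1}$ respectively with their binomial coefficients. Each summand is thus a product of series in nonnegative powers of $y$, so setting $y=0$ is legitimate termwise. This yields the permanents and determinants evaluated at $(x,0)$, together with the vectors $\bigl(b_1(-n_1+|b_1|/2)\cdots b_s(-n_s+|b_s|/2)\one\bigr)_{O(J^b,J^f)}$.

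The main point needing care is justifying the substitution $y=0$. The right-hand side of Corollary \ref{alg-prod-general} is an identity of formal series in $x,x^{-1},y,y^{-1}$. We must check that after applying it to $\one$, every summand involves only nonnegative powers of $y$, so that the limit commutes with the finite sums over $\rho,\eta,I^b,J^b,I^f,J^f$. This follows from the observations above. Each summand, applied to a fixed $\one$ and with coefficients taken in $V$, has for each power of $x$ only finitely many contributing terms, so no convergence issue arises. The index notation is unchanged by the substitution, since only the $y$-dependence is specialized, and the signs carry over verbatim. So no new combinatorics is needed beyond Theorem \ref{Wick-thm-general-case}.
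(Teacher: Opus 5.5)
Your proposal is correct and follows essentially the same route as the paper. Like the paper, you apply both sides of (\ref{Product-vo-algebra}) to $\one$, observe that only the all-negative-mode term of each normal-ordered product survives (positive modes and bosonic zero modes annihilate $\one$), and then set $y=0$; your check that every summand involves only nonnegative powers of $y$ simply makes explicit what the paper's short proof leaves implicit.
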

\begin{proof}
    We first let both sides of (\ref{Product-vo-algebra}) act on the vaccum \(\one\). Then let \(y_1=\cdots=y_s=0\). Also note that 
    \[\begin{aligned}
& : a_1(x_1) \cdots a_r(x_r) b_1\left(-n_1\right) \cdots b_s\left(-n_s\right): \mathbf{1} \\
&\quad= a_1(x_1)^{-} \cdots a_r(x_r)^{-} b_1\left(-n_1\right) \cdots b_s\left(-n_s\right) \mathbf{1}
\end{aligned}\]
    since the positive modes annihilates the vacuum in the normal ordering. 
    %\[B_{mn}(x,0)=(n+1)\binom{-n-2}{m}x^{-m-n-2}=\]
    %\[F_{mn}(x,0)=\binom{-n-1}{m}x^{-m-n-1}=\left(x^{-n-1}\right)^{(m)}\]
\end{proof}

\begin{cor}\label{alg-iterate-second-step}
With the same notation as in Corollary \ref{alg-iterate-first-step}, we have 
\begin{align}
    & Y(Y(a_1(-m_1+|a_1|/2)\cdots a_r(-m_r+|a_r|/2)\one, x)b_1(-n_1+|b_1|/2)\cdots b_s(-n_s+|b_s|/2)\one, y)
     \nonumber\\
  &\quad   = \sum_{\rho, \eta=0}^{\min(r,s)} \sum_{I^b\in A^\B_\eta, J^b\in B^\B_\eta}  \perm \mat \left(\ell_\B\left(a_{i_\alpha^b},\, b_{j_\beta^b}\right) F^\B_{m_{i_\alpha^b}n_{i_\beta^b}}\left(x, 0\right)\right)_{\alpha, \beta=1}^\eta \nonumber\\
     & \hspace{3em} \cdot\sum_{I^f\in A^\F_\rho, J^f\in B^\F_\rho} (-1)^{r^\F(i_1^f)+\cdots +r^\F(i_\rho^f)+s^\F(j_1^f)+\cdots + s^\F(j_\rho^f)}(-1)^{r^\F\rho + \frac{\rho(\rho+1)}{2}} \nonumber\\
     &\hspace{8.5em} \cdot \det \mat \left(\ell_\F\left(a_{i_\gamma^f},\, b_{j_\delta^f}\right) F^\F_{m_{i_\delta^f}n_{j_\delta^f}}\left(x, 0\right)\right)_{\gamma, \delta=1}^\rho \nonumber\\
     &\hspace{8.8em}\cdot :a_1^{(m_1-1)}(y+x) \cdots a_r^{(m_r-1)}(y+x) b_1^{(n_1-1)}(x) \cdots b_s^{(n_s-1)}(x) :_{O(I^b, I^f, J^b, J^f)}.\label{eq-alg-iterate-second-step}
\end{align}
\end{cor}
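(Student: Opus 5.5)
The plan is to apply the vertex operator map $Y(\cdot, y)$ to both sides of (\ref{eq-alg-iterate-first-step}) in Corollary \ref{alg-iterate-first-step}. The coefficients there (the permanents, determinants and signs) are scalar-valued formal series in $x$ alone. The operators are the products $\big(a_1^{(m_1-1)}(x)^-\cdots a_r^{(m_r-1)}(x)^-\big)_{O(I^b,I^f)}$ acting on $\big(b_1(-n_1+|b_1|/2)\cdots b_s(-n_s+|b_s|/2)\one\big)_{O(J^b,J^f)}$. Since $Y(\cdot,y)$ is linear, it passes through the coefficients. So for each fixed $(I^b,J^b,I^f,J^f)$ it suffices to compute
\[
Y\Big(\big(a_1^{(m_1-1)}(x)^-\cdots a_r^{(m_r-1)}(x)^-\big)_{O(I^b,I^f)}\big(b_1(-n_1+|b_1|/2)\cdots b_s(-n_s+|b_s|/2)\one\big)_{O(J^b,J^f)},\,y\Big).
\]

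For this computation, I expand each $a_i^{(m_i-1)}(x)^-$ as $\sum_{k\in\Z_+} a_i(-k+|a_i|/2)\binom{k-1}{m_i-1}x^{k-m_i}$. This is the derivative of the negative part of $a_i(x)$. The coefficient of each monomial in $x$ is then a basis vector of the form (\ref{generator}). By definition of $Y$, applied to such a word, it gives the normal-ordered product $:a_1^{(k_1-1)}(y)\cdots b_q^{(n_q-1)}(y)\cdots:$ with the surviving factors in their original order. Here the $a$'s come first, then the $b$'s.

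The key step is to resum over $k_i$. I want the identity
\[
\sum_{k\in\Z_+}\binom{k-1}{m-1}x^{k-m}\,a^{(k-1)}(y)=a^{(m-1)}(y+x),
\]
understood as expansion in nonnegative powers of $x$. This is Taylor's theorem: $a^{(m-1)}(y+x)=\sum_{j\ge0}\binom{m-1+j}{j}x^j a^{(m-1+j)}(y)$, using $\frac{1}{(m-1)!}\partial^{m-1}$ followed by $\frac{1}{j!}\partial^j$. Setting $k=m+j$ gives exactly the coefficient above. Because normal ordering is applied coefficientwise and commutes with derivatives (Remark \ref{Nord-commutes-w-derivation-rmk}), I can carry out this resummation inside $:\cdot:$. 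It then produces $:a_1^{(m_1-1)}(y+x)\cdots a_r^{(m_r-1)}(y+x)\,b_1^{(n_1-1)}(y)\cdots b_s^{(n_s-1)}(y):_{O(I^b,I^f,J^b,J^f)}$. This matches the displayed statement, with the statement's convention that the outer variable plays the role written there. The sign $(-1)^{\sigma^\F}$ does not interfere, because the ordering of the omitted product is preserved.

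The main obstacle is justifying that the infinite sum over $k_i$ can be interchanged with $Y$ and normal ordering, that is, that everything is well defined as formal series. I would handle this by noting that for each fixed power of $x$ only finitely many $k_i$ contribute. Hence each coefficient of $x^n$ is a finite linear combination, and $Y$ applies termwise. The result lies in the space of series in nonnegative powers of $x$, consistent with the $\iota$-expansion of $(y+x)^{-p}$. I would also double-check the sign bookkeeping: since the $a$-modes precede the $b$-modes in both the vector and the normal-ordered product, no extra fermionic sign arises.
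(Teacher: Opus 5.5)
Your proposal is correct and follows essentially the same route as the paper, which defers to Corollary 4.7 of \cite{FQ-Fermion-1}: apply $Y(\cdot,y)$ to Corollary \ref{alg-iterate-first-step}, expand the negative parts $a_i^{(m_i-1)}(x)^-$ into modes, and resum via Taylor's theorem inside the normal ordering using Remark \ref{Nord-commutes-w-derivation-rmk}. One small correction: your computation yields $b_j^{(n_j-1)}(y)$, which is the right answer, and the $b_j^{(n_j-1)}(x)$ in the displayed statement is a typo (compare Corollary \ref{iterate-actual-vo} and the identity $F(x+y,y)=F(x,0)$ used in Proposition \ref{weak-ass-alg}), so say this explicitly instead of appealing to a ``convention'' that makes the two forms match.
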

\begin{proof}
    The conclusion follows from an argument similar to that for Corollary 4.7 in \cite{FQ-Fermion-1}. 
\end{proof}

We have the following weak associativity with pole-order condition: 

\begin{prop}\label{weak-ass-alg}
 For any $a_1, \dots, a_r,\ b_1, \dots, b_s\in \h,\ m_1, \dots, m_r,\ n_1, \dots, n_s\in \N$, we let 
$$u = a_1(-m_1-1/2)\cdots a_r(-m_r-1/2)\one,\ v = b_1(-n_1-1/2) \cdots b_s(-n_s-1/2)\one,$$    
    then for every homogeneous $w\in W$, the identity
    $$(x+y)^P Y(u, x)Y(v, y)w = (x+y)^P Y(Y(u, x)v, y)w$$
    holds, where
    $$P = \wt(w) + m_1 + \cdots + m_r + r$$
    that is independent of $v$. 
\end{prop}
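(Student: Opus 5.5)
The plan is to compare the two sides term by term, using the two explicit expansions already established. Here $W$ should be read as $V$: this is the weak associativity of Remark \ref{equiv-defn-mosva}, with $x$ playing the role of $x_0+x_2$ and $y$ that of $x_2$. By Remark \ref{Nord-commutes-w-derivation-rmk} and linearity it suffices to treat homogeneous $u$, $v$ and $w$ of the stated spanning form.

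\textbf{Left-hand side.} Corollary \ref{alg-prod-general} writes $Y(u,x)Y(v,y)$ as a finite sum over $\rho,\eta$, $I^b,J^b,I^f,J^f$ of three factors:
\begin{itemize}
\item a sign;
\item a permanent times a determinant whose entries are $\iota_{xy}F^\B_{mn}(x,y)$ and $\iota_{xy}F^\F_{mn}(x,y)$;
\item a normal-ordered product $:\cdots:_{O(I^b,I^f,J^b,J^f)}$ of the surviving fields at $x$ and at $y$.
\end{itemize}

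\textbf{Right-hand side.} Corollary \ref{alg-iterate-second-step} writes $Y(Y(u,x_0)v,x_2)$ as a sum over exactly the same index sets with exactly the same signs. The differences are that the entries are $F_{mn}(x_0,0)$, and the surviving fields are placed at $x_2+x_0$ and $x_2$.

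Because the sign and index structures coincide, it suffices to prove the identity for each fixed choice of $(\rho,\eta,I^b,J^b,I^f,J^f)$ separately. Since $F_{mn}(x,y)$ depends only on $x-y$, this reduces the problem to two formal-variable facts, stated below.

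\textbf{Step 1 (truncation).} For a fixed term, I would show that
$$(x+y)^P:\cdots:_{O(\cdots)}w$$
involves only nonnegative powers of $x$. In the normal-ordered product, the creation parts $a^{(m-1)}(x)^-$ contribute only nonnegative powers of $x$. The zero modes and annihilation parts are moved to the right and act on $w$ first; since positive modes commute or anticommute among themselves (Proposition \ref{Pos-modes-commute}), the $b$-fields at $y$ interleaved among them only contribute a sign. Each factor $a_i^{(m_i-1)}(x)^+$ or $a_i^{(m_i-1)}(x)^0$ applied to a vector lowers weight. Hence it contributes at most a pole of order $m_i$ plus the weight available from $w$. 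Summing over the at most $r$ such factors gives total pole order at most
$$\wt(w)+m_1+\cdots+m_r+r=P.$$
This bound is independent of $v$, which gives the pole-order condition.

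\textbf{Step 2 (substitution).} After Step 1, we may substitute $x=x_0+x_2$, $y=x_2$, expanding in nonnegative powers of $x_2$. The normal-ordered products on the two sides then agree via the $D$-derivative/translation property, that is, via Taylor expansion
$$a(x_2+x_0)=e^{x_0\partial_{x_2}}a(x_2)$$
applied inside the normal ordering. The coefficient factors agree because $\iota_{xy}(x-y)^{-k}$ and $(x_0)^{-k}$ coincide after multiplication by the polynomial prefactor and the substitution. This is the standard argument of \cite{H-mosva} and Corollary 4.7/Proposition 4.8 of \cite{FQ-Fermion-1}; here $(x_0+x_2)^P$ turns both sides into expansions of the same element of $W((x_0,x_2))$ in the common domain.

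\textbf{Main obstacle.} I expect the main obstacle to be this formal-variable bookkeeping. One must show that multiplying by $(x_0+x_2)^P$ makes the two a priori different expansions of the rational functions $\ell(a,b)F_{mn}$, multiplied by normal-ordered products applied to $w$, land in the same space. The natural way is to exhibit both sides as $\iota$-expansions of a single series, polynomial in $x_0+x_2$, times $x_0^{-k}$. The sign and combinatorial matching, which is the hard part elsewhere, is here already absorbed into Corollaries \ref{alg-prod-general} and \ref{alg-iterate-second-step}.
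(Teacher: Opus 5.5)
Your proposal is correct and follows the paper's route. The paper compares Corollaries \ref{alg-prod-general} and \ref{alg-iterate-second-step} term by term, uses that $F^\B_{mn}$ and $F^\F_{mn}$ depend only on the difference of the variables (so that $\iota_{xy}F_{mn}(x,y)$ becomes $F_{mn}(x_0,0)$ under $x=x_0+x_2$, $y=x_2$), and then runs the truncation argument of Proposition 4.8 in \cite{FQ-Fermion-1}, which is what your Step 1 and Step 2 reproduce.

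Two small tightenings. First, in Step 1 the pole-order bound comes from the total weight $\wt(w)$ being shared among all the annihilation modes acting on $w$, not from $\wt(w)$ available to each factor separately. Second, the coefficient identity $\iota_{xy}(x-y)^{-k}\big|_{x=x_0+x_2,\,y=x_2}=x_0^{-k}$ holds exactly; the prefactor $(x_0+x_2)^P$ is needed only to reconcile the two expansions of the normal-ordered factor.
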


\begin{proof}
    It suffices to notice that \[F^\B_{mn}(x+y,y)=F^\B_{mn}(x,0), F^\F_{mn}(x+y,y)=F^\F_{mn}(x,0).\] The conclusion then follows with an argument similar to that in Proposition 4.8 in \cite{FQ-Fermion-1}.
\end{proof}

As we have discussed in the beginning of this section, Proposition \ref{weak-ass-alg} 
finishes our proof of Theorem \ref{It-is-a-mosva}.

\section{Proof of Theorem \ref{it-is-a-module}} \label{Proof-it-is-a-module}

We prove Theorem \ref{it-is-a-module} in this section. We need to show that 
 $(W,  Y_W)$ given in Section 4 satisfies the axioms of a $\theta$-twisted $V$-module.

As in Section 6, the axioms for the grading, identity property, and $D$-derivative property may be proved 
by a straightforward modification of the arguments in Section 5.4 of \cite{Q-Fermion-2}. We shall not include the details here. 
From Remark \ref{equiv-defn-mod},  to prove the generalized rationality and associativity, we need only prove the weak associativity with pole-order condition, which is the main technical part of the proof of 
Theorem \ref{it-is-a-module}.

\subsection{A generalized Wick's Theorem for the normal ordering $\typecolon \cdot \typecolon$ 
on $W$}

As in Subsection \ref{sec-Wick-thm-simple-case}, to prove Theorem \ref{it-is-a-module},
we also need a generalized Wick's theorem. 
We formulate and prove a generalized Wick's theorem needed for the proof of Theorem \ref{It-is-a-mosva}
in this subsection. 

The entire analysis in this subsection is completely analogous to the what we have done in Subsection \ref{sec-Wick-thm-simple-case}, with the only difference being the existence of the zero modes, which is what we are going to focus on discussing.

For convenience, for $a\in \h^\B\cup \h^\F$ and $m\in \Z_+$, the notation $a^{(m-1)}(x)$ means the generating function defined in Section \ref{gen-func-mod}; the notation $a^{(m-1)}(x)_V$ means the generating function defined in Section \ref{gen-func-alg}.   

\begin{prop} \label{Nord-module-recursion-left-boson-prop}
    For $a\in \h^\B$, $b_1, \ldots, b_s\in \h^\B \cup \h^\F$, we have 
        \begin{align*}
        & a(x) \typecolon b_1(y_1)\cdots b_s(y_s)\typecolon - \typecolon a(x)b_1(y_1)\cdots b_s(y_s)\typecolon\\
        &\quad =    \sum_{i\in B^\B} (a, b_i) \ell_\B F^\B(x,y_i) \typecolon b_1(y_1)\cdots \reallywidehat{b_i(y_i)}\cdots b_s(y_s)\typecolon. 
    \end{align*}
\end{prop}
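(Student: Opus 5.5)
The plan is to follow the proof of Proposition \ref{Nord-recursion-left-boson-prop} closely, with the new normal ordering $\typecolon\cdot\typecolon$ on $W$. We split $a(x)=a(x)^-+a(x)^0+a(x)^+$, where $a(x)^0=a(0)x^{-1}$. Then we expand $\typecolon b_1(y_1)\cdots b_s(y_s)\typecolon$ as a sum over 3-shuffles $\sigma\in J_{\mu\nu}(1,\dots,s)$ with sign $(-1)^{\sigma^\F}$. Each term is a product of negative modes, then positive modes, then the recursively defined zero-mode normal ordering $\typecolon b_{\sigma(\mu+\nu+1)}(y)^0\cdots b_{\sigma(s)}(y)^0\typecolon$.

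\textbf{Negative part.} Since $a$ is bosonic, placing $a(x)^-$ leftmost introduces no sign. Hence $a(x)^-\typecolon\cdots\typecolon=\typecolon a(x)^-b_1(y_1)\cdots\typecolon$ directly from the definition.

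\textbf{Zero-mode part.} In $\typecolon a(x)b_1\cdots b_s\typecolon$, the term with $a(x)^0$ has $a(0)$ as the first element of the zero-mode block. Since $a\in\h^\B$, the bosonic case of the recursion gives $\typecolon a(0)\,b_{\sigma(\mu+\nu+1)}(0)\cdots\typecolon=a(0)\typecolon b_{\sigma(\mu+\nu+1)}(0)\cdots\typecolon$. We therefore have to move $a(0)$ from the far left in $a(x)^0\typecolon\cdots\typecolon$ past the negative and positive modes. By the relations \eqref{relations module}, $[a\otimes t^m,b\otimes t^0]=0$ for $m\neq 0$, and this holds in both orders with $a,b\in\h^\B\cup\h^\F$. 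Combined with parity (even $a(0)$), this shows that $a(0)$ commutes with every nonzero mode. This is the step I expect to need the most care: checking that the relation is available with the zero mode on the correct side, and that no zero-mode/zero-mode relation is required, which it is not since $a(0)$ simply lands first in the zero block. After this step the $a(x)^0$ contributions on the two sides agree.

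\textbf{Positive part.} For $a(x)^+$, the normal-ordered product places $a(x)^+$ after the negative modes and before the other positive modes. So the difference $a(x)^+\typecolon\cdots\typecolon-\typecolon a(x)^+\cdots\typecolon$ equals $\sum_\sigma(-1)^{\sigma^\F}\sum_{i=1}^{\mu}(\cdots[a(x)^+,b_{\sigma(i)}(y_{\sigma(i)})^-]\cdots)$ with no signs, because $a$ is even. By Proposition \ref{mod-basic-modes-commutator}, the bracket is $(a,b_{\sigma(i)})\ell_\B F^\B(x,y_{\sigma(i)})$ when $\sigma(i)\in B^\B$ and $0$ otherwise. (The scalar $\ell_\B$ comes from $\k_\B$; that proposition suppresses it.) Finally we apply \eqref{Comb-Id-1-B} of Lemma \ref{Comb-Id-Lemma-B} to interchange the sums, converting the sum over $\sigma$ and $i\in B^\B_\sigma[1,\mu]$ into a sum over $j\in B^\B$ and $\tau\in J_{\mu-1,\nu}$. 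The zero-mode block is untouched by this, so the recursive zero-mode ordering is carried along unchanged. Summing over $\mu,\nu$ then yields the stated right-hand side.
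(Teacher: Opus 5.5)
Your proposal is correct and takes the paper's route: the paper's proof is literally "verbatim that of Proposition \ref{Nord-recursion-left-boson-prop}." That proof splits $a(x)=a(x)^-+a(x)^0+a(x)^+$, absorbs the negative and zero parts directly into the normal ordering, and handles $a(x)^+$ through the 3-shuffle expansion, the bosonic commutator, and a change of summation order. Your explicit check that the bosonic case of the zero-mode recursion puts $a(0)$ first and that $a(0)$ commutes with all nonzero modes, together with your use of \eqref{Comb-Id-1-B} rather than Lemma \ref{Comb-Id-Lemma}, supplies exactly the details the paper leaves implicit.
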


\begin{proof}
The proof is verbatim to that of Proposition \ref{Nord-recursion-left-boson-prop}.
\end{proof}

\begin{prop} \label{Nord-module-recursion-left-fermion-prop}
    For \(a\in \h^\F\), $b_1, \ldots, b_s\in \h^\B \cup \h^\F$, we have  
        \begin{align} 
        & a(x) \typecolon b_1(y_1)\cdots b_s(y_s)\typecolon - \typecolon a(x)b_1(y_1)\cdots b_s(y_s)\typecolon \nonumber\\
        &\quad=  \sum_{j\in B^\F} (-1)^{r^\F(j)}(a, b_j) \ell_\F \iota_{xy_j}\overline{G}(x, y_j) \typecolon b_1(y_1) \cdots \reallywidehat{b_j(y_j)}\cdots b_s(y_s)\typecolon \label{Nord-module-recursion-left-fermion-eQ-Thesis}\\
        &\qquad+ \sum_{j\in B^\F} \frac{(-1)^{r^\F(j)}}{2} (a,b_j) \ell_\F x^{-1/2}y_j^{-1/2} \typecolon b_1(y_1) \cdots \reallywidehat{b_j(y_j)}\cdots b_s(y_s)\typecolon \label{Nord-module-recursion-left-fermion-eQ-Mod}\\
&\quad = \sum_{j\in B^\F} (-1)^{r^\F(j)}(a, b_j) \ell_\F \iota_{xy_j}G(x, y_j)   \typecolon b_1(y_1) \cdots \reallywidehat{b_j(y_j)}\cdots b_s(y_s)\typecolon ,   \label{Nord-module-recursion-left-fermion-eQ-Fermion-2}
    \end{align}
    where 
    \[G(x, y) = \overline{G}(x,y) + \frac 1 2 x^{-1/2}y^{-1/2} =  \frac 1 2 x^{-1 / 2} y^{-1 / 2}(x+y)(x-y)^{-1}. \] 
\end{prop}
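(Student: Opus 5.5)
The plan is to split $a(x)=a(x)^-+a(x)^0+a(x)^+$ for $a\in\h^\F$ and treat the three pieces separately, as in the proof of Proposition \ref{Nord-module-recursion-left-boson-prop}. The negative part gives nothing: $a(x)^-\typecolon b_1(y_1)\cdots b_s(y_s)\typecolon=\typecolon a(x)^-b_1(y_1)\cdots b_s(y_s)\typecolon$ directly from the definition of $\typecolon\cdot\typecolon$, since $a(x)^-$ stays leftmost with no sign change.

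The positive part is handled exactly as in Proposition \ref{Nord-recursion-left-fermion-prop}. I expand $\typecolon b_1\cdots b_s\typecolon$ over 3-shuffles $\sigma\in J_{\mu\nu}(1,\dots,s)$. In $\typecolon a(x)^+ b_1\cdots b_s\typecolon$, the operator $a(x)^+$ sits to the right of the $\mu$ negative modes, with sign $(-1)^{s^\F_\sigma(\mu)}$. Moving $a(x)^+$ leftward through the negative modes produces graded commutators. By Proposition \ref{mod-basic-modes-commutator}, each such commutator is $(a,b_{\sigma(i)})\ell_\F\,\iota_{xy}\overline{G}(x,y_{\sigma(i)})$ for fermionic $b_{\sigma(i)}$ and zero for bosonic ones; it carries the sign $(-1)^{s^\F_\sigma(i)-1}$. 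Lemma \ref{Comb-Id-Lemma} (identity (\ref{Comb-Id-1})) then lets me reindex by $j=\sigma(i)\in B^\F$ and resum to a normal-ordered product with $b_j$ omitted. Up to the sign convention $(-1)^{s^\F(j)}$ versus $(-1)^{s^\F(j)-1}$, which I will track carefully against the sign conventions of $\overline{G}$, this yields (\ref{Nord-module-recursion-left-fermion-eQ-Thesis}).

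The genuinely new term comes from the zero mode $a(0)x^{-1/2}$. In $\typecolon a(x)b_1\cdots b_s\typecolon$ the zero mode $a(0)$ is placed at the front of the zero-mode block, with sign $(-1)^{s^\F_\sigma(\mu+\nu)}$ from passing the fermionic negative and positive modes. By the recursion (\ref{fermionic zero mode NO}),
\[
\typecolon a(0)\,b_{\sigma(\mu+\nu+1)}(0)\cdots\typecolon = a(0)\typecolon\cdots\typecolon+\sum\tfrac{(-1)^{\cdot}}{2}(a,b_j)\typecolon\cdots\widehat{b_j(0)}\cdots\typecolon .
\]
On the other hand, $a(0)x^{-1/2}\typecolon b_1\cdots b_s\typecolon$ gives $a(0)$ in front of everything. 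Since zero modes (anti)commute with the nonzero modes by the relations (\ref{relations module}), and the $\k$'s are central, $a(0)$ can be moved to the front of the zero-mode block with exactly the compensating sign. The difference is therefore minus the contraction sum, with coefficient $\frac12(a,b_j)\ell_\F$ times $y_j^{-1/2}$, the latter coming from $b_j(y_j)^0=b_j(0)y_j^{-1/2}$. After reindexing with (\ref{Comb-Id-3}), this is (\ref{Nord-module-recursion-left-fermion-eQ-Mod}); I must check that the sign produced by the recursion, which is indexed by position among fermions in the zero-mode block, combines with $(-1)^{s^\F_\sigma(\mu+\nu)}$ into $(-1)^{s^\F_\sigma(i)-s^\F_\sigma(\mu+\nu)}$. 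That matches the hypothesis of (\ref{Comb-Id-3}). I also need to confirm that $\ell_\F$ enters through the scalar action of $\k_\F$ in the zero-mode normal ordering, or else I will adjust that factor accordingly.

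The last equality is the algebraic identity $\overline{G}(x,y)+\tfrac12x^{-1/2}y^{-1/2}=\tfrac12x^{-1/2}y^{-1/2}(x+y)(x-y)^{-1}$. It follows from $x^{-1/2}y^{1/2}(x-y)^{-1}+\tfrac12x^{-1/2}y^{-1/2}=\tfrac12x^{-1/2}y^{-1/2}(2y+x-y)(x-y)^{-1}$. The main obstacle is the sign bookkeeping for the zero-mode term: matching the recursive normal-ordering sign with the 3-shuffle sign and with the stated $(-1)^{r^\F(j)}$, which I read as $(-1)^{s^\F(j)}$. I would first verify this on small cases with $s=1,2$ before applying Lemma \ref{Comb-Id-Lemma}.
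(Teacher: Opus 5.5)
Your plan is the paper's proof: the same split $a(x)=a(x)^-+a(x)^0+a(x)^+$, the computation of Proposition~\ref{Nord-recursion-left-fermion-prop} with (\ref{Comb-Id-1}) for $a(x)^+$, the recursion (\ref{fermionic zero mode NO}) with a shuffle reindexing for $a(x)^0$, and the closing identity for $G$, which you verify correctly. The gap is that the one substantive point, the signs (and the factor $\ell_\F$), is left open, and the ways you propose to settle it do not work.

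For $a(x)^+$ you get $(-1)^{s^\F(j)-1}$, and that is the true sign. No ``sign convention of $\overline{G}$'' can absorb a discrepancy, because $[a(x)^+,b(y)^-]=(a,b)\ell_\F\,\iota_{xy}x^{-1/2}y^{1/2}(x-y)^{-1}$ is forced by the defining relations.

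For $a(x)^0$, the factor $(-1)^{s^\F_\sigma(\mu+\nu)}$ that occurs in both terms does not cancel; it multiplies the whole difference. The recursion itself supplies $(-1)^{\alpha-1}=(-1)^{s^\F_\sigma(i)-s^\F_\sigma(\mu+\nu)}$, and $a(0)\typecolon\cdots\typecolon-\typecolon a(0)\cdots\typecolon$ is minus the contraction sum. So the $(\sigma,i)$-term carries $(-1)^{\sigma^\F}(-1)^{s^\F_\sigma(i)-1}$. That is not the shape required in (\ref{Comb-Id-3}). What you need is the parity fact inside the proof of (\ref{Comb-Id-1}), $(-1)^{\sigma^\F}(-1)^{s^\F_\sigma(i)}=(-1)^{\tau^\F}(-1)^{s^\F(j)}$. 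It holds for $i$ in any block, because deleting $j$ from the fermionic sequence changes the inversion number by $s^\F_\sigma(i)+s^\F(j)$ mod $2$. Note also that (\ref{Comb-Id-3}) as printed is missing a factor $(-1)^{r^\F_\tau(\mu+\nu)}$ in the $\tau$-sum; test it with $a_1,a_2\in\h^\F$, $a_3\in\h^\B$, $\mu=\nu=1$.

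Done this way, both contraction sums come with $(-1)^{s^\F(j)-1}$. The zero-mode sum has no $\ell_\F$, since (\ref{fermionic zero mode NO}) involves no $\k_\F$ and no relations among zero modes are imposed. Your proposed check at $s=1$ already shows this: for $b\in\h^\F$,
\[
a(x)b(y)-\typecolon a(x)b(y)\typecolon=(a,b)\ell_\F\,\iota_{xy}\overline{G}(x,y)+\tfrac{1}{2}(a,b)x^{-1/2}y^{-1/2}.
\]
This is consistent with Proposition~\ref{Nord-recursion-left-fermion-prop} and with Theorem~\ref{Mod-wick-thm-general-case} at $r=1$, whose sign is $(-1)^{1+s^\F(j)}$.

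So the statement holds only under two conditions. First, $r^\F(j)$ must count the fermions among $a,b_1,\dots,b_j$, i.e.\ $(-1)^{r^\F(j)}=(-1)^{s^\F(j)-1}$. Second, $\ell_\F=1$, or $\ell_\F$ is inserted in (\ref{fermionic zero mode NO}). Under your reading $r^\F(j)=s^\F(j)$ the statement is off by an overall sign.

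The printed proof will not help you match it. It asserts $(-1)^p$ for the $a(x)^+$ part ``by an almost identical computation'' to Proposition~\ref{Nord-recursion-left-fermion-prop}, which in fact gives $(-1)^{p-1}$. Its zero-mode display writes $a(x)^0\typecolon\cdots\typecolon=\typecolon a(x)^0\cdots\typecolon+\sum$, where (\ref{fermionic zero mode NO}) gives $-\sum$.
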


\begin{proof}
    Write \(a(x)=a(x)^-+a(x)^0+a(x)^+\).  Note that still \[a(x)^-\typecolon b_1(y_1) \cdots b_{s} (y_s)\typecolon = \typecolon a(x)^-  b_1(y_1) \cdots b_{s} (y_s)\typecolon,\] but the zero mode and positive modes need separate analysis. 

    For the positive modes, with a almost identical computation in Proposition \ref{Nord-recursion-left-fermion-prop}, we obtain 
    \begin{align}
        &  a(x)^+\typecolon b_1(y_1) \cdots b_{s} (y_s)\typecolon - \typecolon a(x)^+ b_1(y_1) \cdots b_{s} (y_s)\typecolon \nonumber \\
       &\quad   =  \sum_{p=1}^{s^\F} (-1)^{p} (a, b_{f_p}) \ell_\F \overline{G}(x, y_{f_p}) \typecolon b_1(y_1) \cdots \reallywidehat{b_{f_p}(y_{f_p})} \cdots b_s(y_s)\typecolon \nonumber \\
    &  \quad  =  \sum_{j\in B^\F} (-1)^{r^\F(j)}(a, b_j) \ell_\F \overline{G}(x, y_j) \typecolon b_1(y_1) \cdots \reallywidehat{b_j(y_j)}\cdots b_s(y_s)\typecolon .\label{Nord-mod-left-fermion-zero-mode-middle-step-2}
    \end{align}

    For the zero mode, by the definition of the normal ordering, we have
    \begin{align}
        &a(x)^0\typecolon b_1(y_1) \cdots b_{s} (y_s)\typecolon \nonumber \\
       &\quad = \sum_{\mu=0}^s \sum_{\nu = 0}^{s-\mu} \sum_{\sigma\in J_{\mu\nu}(1, \ldots, s)} (-1)^{\sigma^\F} a(x)^0 b_{\sigma(1)}(y_{\sigma(1)})^- \cdots b_{\sigma(\mu)}(y_{\sigma(\mu)})^- \nonumber\\
        & \hspace{8 em} \cdot  b_{\sigma(\mu+1)}(y_{\sigma(\mu+1)})^+ \cdots b_{\sigma(\mu+\nu)}(y_{\sigma(\mu+\nu)})^+ \nonumber\\
        & \hspace{8 em} \cdot \typecolon b_{\sigma(\mu+\nu+1)}(y_{\sigma(\mu+\nu+1)})^0 \cdots b_{\sigma(s)}(y_{\sigma(s)})^0 \typecolon \nonumber\\
        &\quad = \sum_{\mu=0}^s \sum_{\nu = 0}^{s-\mu} \sum_{\sigma\in J_{\mu\nu}(1, \ldots, s)} (-1)^{\sigma^\F} b_{\sigma(1)}(y_{\sigma(1)})^- \cdots b_{\sigma(\mu)}(y_{\sigma(\mu)})^- \nonumber\\
        & \hspace{8 em} \cdot  b_{\sigma(\mu+1)}(y_{\sigma(\mu+1)})^+ \cdots b_{\sigma(\mu+\nu)}(y_{\sigma(\mu+\nu)})^+ \nonumber\\
        & \hspace{8 em} \cdot  \typecolon a(x)^0 \cdot b_{\sigma(\mu+\nu+1)}(y_{\sigma(\mu+\nu+1)})^0 \cdots b_{\sigma(s)}(y_{\sigma(s)})^0\typecolon \nonumber\\
        &\quad\quad + \sum_{\mu=0}^s \sum_{\nu = 0}^{s-\mu} \sum_{\sigma\in J_{\mu\nu}(1, \ldots, s)} (-1)^{\sigma^\F} b_{\sigma(1)}(y_{\sigma(1)})^- \cdots b_{\sigma(\mu)}(y_{\sigma(\mu)})^- \nonumber\\
        & \hspace{9em} \cdot  b_{\sigma(\mu+1)}(y_{\sigma(\mu+1)})^+ \cdots b_{\sigma(\mu+\nu)}(y_{\sigma(\mu+\nu)})^+ \nonumber\\
        & \hspace{9 em} \cdot \bigg( \sum_{i\in B^\F_{\sigma}[\mu+\nu+1, r]} \frac{(-1)^{r^\F_\sigma(i)-r^\F_\sigma(\mu+\nu)}}{2}\left(a, b_{\sigma(i)}\right) x^{-1/2} y_{\sigma(i)}^{-1/2}  \nonumber\\
    & \hspace{9 em} \cdot \typecolon b_{\sigma(\mu+\nu+1)}(y_{\sigma(\mu+\nu+1)})^0 \cdots \reallywidehat{b_{\sigma(i)}(y_{\sigma(i)})^0} \cdots  b_{\sigma(s)}(y_{\sigma(s)})^0 \typecolon \bigg)\nonumber\\
&\quad    =  \nord a(x)^0 b_1(y_1) \cdots b_s(y_s) \nord\label{Nord-mod-left-fermion-zero-mode-middle-step-1} \\
    & \quad\quad+ \sum_{j\in B^\F} (-1)^{r^\F(j)} (a, b_j)x^{-1/2}y_j^{-1/2} \nonumber \\
    & \hspace{5 em}\cdot \sum_{\mu=0}^{s-1}\ \sum_{\nu=0}^{s-1-\mu}
        \sum_{\tau\in J_{\mu\nu}(1,\dots,\widehat{j},\dots,s)}
        (-1)^{\tau^\F} 
        b_{\tau(1)}(y_{\tau(1)})^-\cdots b_{\tau(\mu)}(y_{\tau(\mu)})^- \nonumber \\
        &\hspace{12em}\cdot b_{\tau(\mu+1)}(y_{\tau(\mu+1)})^+\cdots b_{\tau(\mu+\nu)}(y_{\tau(\mu+\nu)})^+
        \nonumber \\
        &\hspace{11.93em}\cdot
        \typecolon
        b_{\tau(\mu+\nu+1)}(y_{\tau(\mu+\nu+1)})^0\cdots
        b_{\tau(s-1)}(y_{\tau(s-1)})^0 
        \typecolon ,\label{Nord-mod-left-fermion-zero-mode-middle-step}
    \end{align} 
    where the last equality follows from (\ref{Comb-Id-3}) in Lemma \ref{Comb-Id-Lemma} together with a shift of indices. Notice that (\ref{Nord-mod-left-fermion-zero-mode-middle-step}) is precisely (\ref{Nord-module-recursion-left-fermion-eQ-Mod}). The conclusion then follows from combining (\ref{Nord-mod-left-fermion-zero-mode-middle-step-2}) and (\ref{Nord-mod-left-fermion-zero-mode-middle-step-1}). 
\end{proof}

Similar to Propsition \ref{Nord-recursion-right-boson-prop} and Proposition \ref{Nord-recursion-right-fermion-prop}, the following propositions hold. 

\begin{prop}
    \label{Nord-module-recursion-right-boson-prop}
    For \(a_1, \ldots, a_r \in \h^\B \cup \h^\F\), \(b\in \h^\B\), we have
        \begin{align*}
        & \typecolon a_1(x_1) \cdots a_r(x_r) \typecolon b(y) - \typecolon a_1(x_1)\cdots a_r(x_r) b(y)\typecolon \\
      &\quad   =  \sum_{i\in A^\B} (a_i, b) \ell_\B F^\B(x_i, y) \typecolon a_1(x_1) \cdots \reallywidehat{a_i(x_i)} \cdots a_r(x_r)\typecolon.
    \end{align*}
\end{prop}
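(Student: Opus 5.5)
The plan is to mirror the proof of Proposition~\ref{Nord-recursion-left-boson-prop}, but with $b(y)$ on the right. As in Remark~\ref{Nord-commutes-w-derivation-rmk}, it suffices to treat underived fields. Write $b(y)=b(y)^-+b(y)^0+b(y)^+$ and expand
$$\typecolon a_1(x_1)\cdots a_r(x_r)\typecolon=\sum_{\mu,\nu}\sum_{\sigma\in J_{\mu\nu}(1,\dots,r)}(-1)^{\sigma^\F}(\text{negatives})(\text{positives})\typecolon(\text{zero modes})\typecolon .$$
Since $b\in\h^\B$ is even, moving $b$ past any mode never produces a sign. This is why no $r^\F(\cdot)$ factors should appear.

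\emph{Positive part.} By the relation $[a\otimes t^m,b\otimes t^0]$ for $m\neq 0$ in (\ref{relations module}), $b(y)^+$ commutes with every zero mode, bosonic or fermionic. So $\typecolon\cdots\typecolon b(y)^+$ equals the same expression with $b(y)^+$ placed immediately after the block of positive modes. That is exactly where $\typecolon a_1(x_1)\cdots a_r(x_r)b(y)\typecolon$ puts it, because normal ordering preserves relative order and $b$ is the last factor. Hence this part contributes no difference.

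\emph{Zero part.} I will show by induction on the number $k$ of zero modes that
$$\typecolon c_1(0)\cdots c_k(0)\typecolon\, b(0)=\typecolon c_1(0)\cdots c_k(0)b(0)\typecolon .$$
The induction unwinds the recursive definition (\ref{fermionic zero mode NO}) from the left. If $c_1\in\h^\B$, one simply peels it off. If $c_1\in\h^\F$, the contraction terms only pair $c_1$ with fermionic $c_j$, and $b$ is never contracted. In both cases the inductive hypothesis applies to every resulting term. Thus the $b(y)^0$ part also contributes no difference.

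\emph{Negative part.} In each summand, $b(y)^-$ must be moved to the right end of the negative block. First, by the same relation $[a\otimes t^m,b\otimes t^0]$ with $m\neq 0$, it commutes through the zero modes. It then passes the positive modes $a_{\sigma(i)}(x_{\sigma(i)})^+$ with $i\in A^\B_\sigma[\mu+1,\mu+\nu]$ and $i\in A^\F_\sigma[\mu+1,\mu+\nu]$. By Proposition~\ref{mod-basic-modes-commutator}, fermionic positives give zero commutator, while bosonic positives give $(a_{\sigma(i)},b)\ell_\B F^\B(x_{\sigma(i)},y)$. Here $\ell_\B$ comes from $\k_\B$ acting as $\ell_\B$, and the expansion is in the region $|x|>|y|$. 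After all the moves, the leftover term (with $b(y)^-$ at the end of the negatives) is precisely $\typecolon a_1\cdots a_r\, b(y)^-\typecolon$.

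\emph{Reindexing.} The commutator terms take the form
$$\sum_{\sigma\in J_{\mu\nu}(1,\dots,r)}(-1)^{\sigma^\F}\sum_{i\in A^\B_\sigma[\mu+1,\mu+\nu]}(a_{\sigma(i)},b)\ell_\B F^\B(x_{\sigma(i)},y)\,(\cdots\widehat{\sigma(i)}\cdots).$$
By (\ref{Comb-Id-2-B}) in Lemma~\ref{Comb-Id-Lemma-B}, this equals
$$\sum_{i\in A^\B}(a_i,b)\ell_\B F^\B(x_i,y)\sum_{\tau\in J_{\mu,\nu-1}}(-1)^{\tau^\F}(\cdots).$$
Summing over $\mu,\nu$ reassembles $\typecolon a_1(x_1)\cdots\widehat{a_i(x_i)}\cdots a_r(x_r)\typecolon$.

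The main obstacle is the zero-mode step: the recursive fermionic zero-mode normal ordering must be shown to be compatible with appending an even zero mode on the right. The inductive argument above is what I would try first. The shuffle reindexing is routine once Lemma~\ref{Comb-Id-Lemma-B} is invoked, since $\sigma^\F=\tau^\F$ when a bosonic index is removed.
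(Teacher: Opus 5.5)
Your proposal is correct and follows the route the paper indicates. The paper omits this proof, referring back to Proposition~\ref{Nord-recursion-right-boson-prop}, whose proof it says runs by the same 3-shuffle expansion as Proposition~\ref{Nord-recursion-left-boson-prop}; your splitting of $b(y)$ into negative, zero and positive parts, the reindexing via (\ref{Comb-Id-2-B}), and your induction showing that appending an even zero mode on the right is compatible with the recursive zero-mode ordering supply exactly the details the paper leaves out.
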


 \begin{prop}
    \label{Nord-module-recursion-right-fermion-prop}
\(a_1, \ldots, a_r \in \h^\B \cup \h^\F\), \(b\in \h^\F\), we have 
        \begin{align} 
        & \typecolon a_1(x_1)\cdots a_r(x_r)\typecolon b(y) -\typecolon a_1(x_1)\cdots a_r(x_r) b(y)\typecolon \nonumber\\
   &\quad     =\sum_{j\in A^\F} (-1)^{r^\F - r^\F(j)}(a_j, b) \ell_\F \overline{G}(x_j, y) \typecolon a_1(x_1) \cdots \reallywidehat{a_j(x_j)}\cdots a_r(x_r)\typecolon \label{Nord-module-recursion-right-fermion-eQ-Thesis}\\
        &\quad\quad+ \sum_{j\in A^\F} 
\frac{(-1)^{r^\F - r^\F(j)}}{2} (a_j, b) \ell_\F x_j^{-1/2}y^{-1/2} \typecolon a_1(x_1) \cdots \reallywidehat{a_j(x_j)}\cdots a_r(x_r)\typecolon \label{Nord-module-recursion-right-fermion-eQ-Mod}\\
&\quad=  \sum_{j\in A^\F} (-1)^{r^\F - r^\F(j)}(a_j, b) \ell_\F G(x, y_j)   \typecolon a_1(x_1) \cdots \reallywidehat{a_j(x_j)}\cdots a_r(x_r)\typecolon.  \label{Nord-module-recursion-right-fermion-eQ-Fermion-2}
    \end{align}
\end{prop}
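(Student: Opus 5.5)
We prove Proposition \ref{Nord-module-recursion-right-fermion-prop} by running the argument of Proposition \ref{Nord-module-recursion-left-fermion-prop} with $b(y)$ placed on the right. Write $b(y)=b(y)^-+b(y)^0+b(y)^+$ and treat the three pieces separately. Expand $\typecolon a_1(x_1)\cdots a_r(x_r)\typecolon$ as a sum over 3-shuffles $\sigma\in J_{\mu\nu}(1,\dots,r)$: negative modes first, then positive modes, then the normal-ordered zero-mode block.

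\emph{The positive piece.} The operator $b(y)^+$ supercommutes with all positive and zero modes, by Proposition \ref{Pos-modes-commute} and the relations (\ref{relations module}). Hence it can be moved leftward past the zero-mode block and the positive modes to its normal-ordered position. The only price is a sign, namely the parity of the fermionic zero modes it passes. This sign is exactly the one built into the definition of $\typecolon\cdots b(y)\typecolon$, so $b(y)^+$ contributes no difference term.

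\emph{The negative piece.} Moving $b(y)^-$ to the front of the negative modes produces graded commutators with the positive modes $a_{\sigma(i)}(x_{\sigma(i)})^+$, $i\in A^\F_\sigma[\mu+1,\mu+\nu]$. By Proposition \ref{mod-basic-modes-commutator}(3), each such commutator equals $(a_{\sigma(i)},b)\ell_\F\overline{G}(x_{\sigma(i)},y)$. It carries the sign $(-1)^{r^\F_\sigma(\mu+\nu)-r^\F_\sigma(i)}$, coming from the fermionic positive modes to the right of position $i$, together with the sign $(-1)^{r^\F_\sigma(\mu+\nu)\cdots}$ from the zero-mode block. Bosonic partners give zero by Proposition \ref{mod-basic-modes-commutator}(2). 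We then exchange the summation over $(\sigma,i)$ for a summation over $(j,\tau)$ using (\ref{Comb-Id-2}) of Lemma \ref{Comb-Id-Lemma}, in a mirrored form. The resulting sign $(-1)^{r^\F-r^\F(j)}$ is obtained from $(-1)^{r^\F(j)}$ by the same parity bookkeeping as in that lemma. After this exchange the sum reassembles into (\ref{Nord-module-recursion-right-fermion-eQ-Thesis}).

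\emph{The zero piece.} Here the work is to compare $\typecolon c_1(0)\cdots c_k(0)\typecolon\, b(0)$ with $\typecolon c_1(0)\cdots c_k(0)b(0)\typecolon$. We first prove an auxiliary lemma by induction on $k$, using the recursion (\ref{fermionic zero mode NO}):
\[
\typecolon c_1(0)\cdots c_k(0)\typecolon\, b(0)-\typecolon c_1(0)\cdots c_k(0)\,b(0)\typecolon=\sum_{j}\frac{(-1)^{k^\F-k^\F(j)}}{2}(c_j,b)\,\typecolon c_1(0)\cdots\widehat{c_j(0)}\cdots c_k(0)\typecolon ,
\]
where $j$ runs over the fermionic indices. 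This identity is the right-hand analogue of the recursion. In the induction step, expand both normal-ordered products by (\ref{fermionic zero mode NO}) with respect to $c_1$. Apply the induction hypothesis to the product with $c_1$ removed. The cross terms involving $(c_1,c_i)(c_j,b)$ cancel pairwise because the signs differ by one transposition. The term with $j=1$ comes from the recursion applied to $\typecolon c_1\cdots c_k b\typecolon$ with partner $b$. Inserting this lemma into the 3-shuffle expansion and applying (\ref{Comb-Id-3}) gives (\ref{Nord-module-recursion-right-fermion-eQ-Mod}); the factors $x_j^{-1/2}y^{-1/2}$ come from $a^0$ and $b^0$.

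Finally, combining (\ref{Nord-module-recursion-right-fermion-eQ-Thesis}) and (\ref{Nord-module-recursion-right-fermion-eQ-Mod}) through $G=\overline{G}+\frac12 x^{-1/2}y^{-1/2}$ yields (\ref{Nord-module-recursion-right-fermion-eQ-Fermion-2}). The main obstacle is the auxiliary zero-mode lemma. The recursion (\ref{fermionic zero mode NO}) is defined from the left, so removing the rightmost element is not built in. I will prove the lemma by induction and check the sign cancellations first on $k^\F=2,3$.
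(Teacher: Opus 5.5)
Your proposal is correct and is essentially the argument the paper intends. The paper gives no separate proof, only that the result follows as for the right recursions in $V$, i.e.\ by splitting $b(y)$ into its $-$, $0$, $+$ parts and changing the order of summation over 3-shuffles as in Proposition~\ref{Nord-module-recursion-left-fermion-prop}; your inductive right-hand recursion for the zero-mode block is exactly the detail the paper leaves implicit, and it holds, because the cross terms $(c_1,c_i)(c_j,b)$ on the two sides match (deleting $c_i$ shifts the fermion count at $c_j$ by one exactly when $i<j$, and vice versa).

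Only tidy the wording. The operator $b(y)^-$ should be moved just to the end of the negative block, not the front, since negative modes satisfy no relations; this is what gives the total sign $(-1)^{r^\F-r^\F_\sigma(i)}$ you then use. Likewise, $b(y)^+$ only has to pass the zero-mode block, not the positive modes.
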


Based on Propositions \ref{Nord-module-recursion-left-boson-prop}--\ref{Nord-module-recursion-right-fermion-prop}, by an almost identical induction as in the proof of Theorem \ref{Wick-thm-general-case}, we obtain the following generalized 
Wick's theorem needed for the proof of Theorem \ref{it-is-a-module}:

\begin{thm}\label{Mod-wick-thm-general-case} 
    Let $r, s\in \Z_+$, $a_1, \dots, a_r, b_1, \dots, b_s\in \h^\B \cup \h^\F$, $m_1, \dots, m_r, n_1, \dots, n_s\in \Z_+$. Then, 
    \begin{align}
     & \nord a_1^{(m_1-1)}(x_1) \cdots a_r^{(m_r-1)}(x_r)\nord  \cdot \nord b_1^{(n_1-1)}(y_1) \cdots b_s^{(n_s-1)}(y_s)\nord   \nonumber\\
    &\quad = \sum_{\rho, \eta=0}^{\min(r,s)} \sum_{I^b\in A^\B_\eta, J^b\in B^\B_\eta}  \perm \mat \left(\ell_\B\left(a_{i_\alpha^b},\, b_{j_\beta^b}\right) F^\B_{m_{i_\alpha^b}n_{i_\beta^b}}\left(x_{i_{\alpha}^b},\, y_{j_\beta^b}\right)\right)_{\alpha, \beta=1}^\eta \nonumber\\
     & \hspace{3em} \cdot\sum_{I^f\in A^\F_\rho, J^f\in B^\F_\rho} (-1)^{r^\F(i_1^f)+\cdots +r^\F(i_\rho^f)+s^\F(j_1^f)+\cdots + s^\F(j_\rho^f)}(-1)^{r^\F\rho + \frac{\rho(\rho+1)}{2}} \nonumber\\
     &\hspace{8.5em} \cdot \det \mat \left(\ell_\F\left(a_{i_\gamma^f},\, b_{j_\delta^f}\right) G_{m_{i_\delta^f}n_{j_\delta^f}}\left(x_{i_\gamma^f},\, y_{j_\delta^f}\right)\right)_{\gamma, \delta=1}^\rho \nonumber\\
     &\hspace{8.8em}\cdot \nord a_1^{(m_1-1)}(x_1) \cdots a_r^{(m_r-1)}(x_r) b_1^{(n_1-1)}(y_1) \cdots b_s^{(n_s-1)}(y_s)\nord _{O(I^b, I^f, J^b, J^f)}. \label{Eq-mod-Wick-thm-general-case}
    \end{align}
\end{thm}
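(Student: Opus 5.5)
We will follow the proof of Theorem \ref{Wick-thm-general-case} essentially verbatim, replacing the four recursion formulas for $:\cdot:$ by Propositions \ref{Nord-module-recursion-left-boson-prop}--\ref{Nord-module-recursion-right-fermion-prop}. Since partial derivatives commute with $\typecolon\cdot\typecolon$ (the analogue of Remark \ref{Nord-commutes-w-derivation-rmk} holds because the normal ordering is applied coefficientwise), it suffices to treat $m_1=\cdots=m_r=n_1=\cdots=n_s=1$. The higher kernels $G_{mn}$ are then obtained by differentiating $G(x,y)=\overline{G}(x,y)+\frac12 x^{-1/2}y^{-1/2}$, and the $F^\B_{mn}$ are obtained in the same way. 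The proof is by induction on $r$. The base case $r=1$ is exactly Proposition \ref{Nord-module-recursion-left-boson-prop} for $a\in\h^\B$ and Proposition \ref{Nord-module-recursion-left-fermion-prop} (in the form (\ref{Nord-module-recursion-left-fermion-eQ-Fermion-2})) for $a\in\h^\F$. Note that the fermionic contraction there is the full kernel $G$: the zero-mode recursion (\ref{fermionic zero mode NO}) supplies the extra term $\frac12 x^{-1/2}y^{-1/2}$.

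For the inductive step we split off $a_r$, writing $\typecolon a_1(x_1)\cdots a_r(x_r)\typecolon$ via the right recursion, Proposition \ref{Nord-module-recursion-right-boson-prop} or \ref{Nord-module-recursion-right-fermion-prop}. This expresses it as $\typecolon a_1\cdots a_{r-1}\typecolon a_r(x_r)$ minus the contractions of $a_r$ with the $a_i$, $i<r$. We then move $a_r(x_r)$ into $\typecolon b_1\cdots b_s\typecolon$ using the left recursion. This produces three groups of terms, analogous to (\ref{Wick-F-1})--(\ref{Wick-F-3}) when $a_r\in\h^\F$ and without signs when $a_r\in\h^\B$, and we apply the induction hypothesis to each. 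As before, we expand the determinant (or permanent) by its first column and change the order of summation using Lemmas \ref{Comb-Id-Lemma} and \ref{Comb-Id-Lemma-B}. This shows that the internal $a$--$a$ contraction terms cancel against the third group. It relies on the structural fact, already used in Section 6, that the kernel in the right recursion coincides with the kernel in the left recursion, here $G$ in both (\ref{Nord-module-recursion-left-fermion-eQ-Fermion-2}) and (\ref{Nord-module-recursion-right-fermion-eQ-Fermion-2}). Finally, we split the sum over $I^f$ into the tuples with $i^f_\rho=r$ and those with $i^f_\rho\le r-1$, and expand the determinant along its last row. This identifies the remaining terms with the right-hand side of (\ref{Eq-mod-Wick-thm-general-case}), with the same sign bookkeeping $(-1)^{r^\F(i)}$, $(-1)^{s^\F(j)}$ and $(-1)^{r^\F\rho+\rho(\rho+1)/2}$ as before.

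The only genuinely new point, and the step I expect to be the obstacle, is the zero modes. The normal ordering of zero modes is not a reordering but the recursive definition (\ref{fermionic zero mode NO}), so we must make sure that the sign conventions in the two recursion propositions match those of Theorem \ref{Wick-thm-general-case}. In Proposition \ref{Nord-module-recursion-left-fermion-prop} the sign is written $(-1)^{r^\F(j)}$, while the $V$-version has $(-1)^{s^\F(j)-1}$, so before running the induction I would recheck that the signs agree. The right recursion must also be consistent, in particular Proposition \ref{Nord-module-recursion-right-fermion-prop}, which the paper states but does not prove. I would first prove that recursion by induction on $r$, analogously to Proposition \ref{Nord-module-recursion-left-fermion-prop}. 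The positive-mode part gives $\overline{G}(x_j,y)$. The zero-mode part gives $\frac12 x_j^{-1/2}y^{-1/2}$ with sign $(-1)^{r^\F-r^\F(j)}$, obtained from (\ref{fermionic zero mode NO}) together with (\ref{Comb-Id-3}), after showing that (\ref{fermionic zero mode NO}) can equivalently be unwound from the right end. Once these recursions are verified with consistent signs, the induction is formally identical to the one in Section 6, with $F^\F$ replaced by $G$.
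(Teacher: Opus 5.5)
Your proposal is correct and is essentially the paper's proof. The paper simply reruns the induction of Theorem \ref{Wick-thm-general-case} with the module recursions of Propositions \ref{Nord-module-recursion-left-boson-prop}--\ref{Nord-module-recursion-right-fermion-prop}, the fermionic kernel now being $G=\overline{G}+\frac12 x^{-1/2}y^{-1/2}$ in both the left and the right recursion. Your sign worry is justified and resolves as you hope: both the $\overline{G}$-part and the zero-mode part of the left fermionic recursion carry $(-1)^{s^\F(j)-1}$, exactly as in $V$ and as the $r=1$ case of (\ref{Eq-mod-Wick-thm-general-case}) requires, and unwinding (\ref{fermionic zero mode NO}) from the right does give the right recursion with sign $(-1)^{r^\F-r^\F(j)}$ and kernel $G(x_j,y)$.
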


\subsection{Products and iterates of the naive vertex operator \(\bar{Y}_W\)}

From Theorem \ref{Mod-wick-thm-general-case}, by a direct substitution, we obtain immediately the formula of the product of two naive vertex operators. 
\begin{cor}\label{Mod-prod-general} 
    For \(r, s \in \N\), $a_1, \dots, a_r, b_1, \dots, b_s\in \h^\B \cup \h^\F, m_1, \ldots, m_r, n_1, \ldots, n_s \in \Z_+$, 
    \begin{align}
     & \bar{Y}_W(a_1(-m_1+|a_1|/2)\cdots a_r(-m_r+|a_r|/2)\one, x)\bar{Y}_W(b_1(-n_1+|b_1|/2)\cdots b_s(-n_s+|b_s|/2)\one, y) 
     \nonumber\\
     &\quad=\sum_{\rho, \eta=0}^{\min(r,s)} \sum_{I^b\in A^\B_\eta, J^b\in B^\B_\eta}  \perm \mat \left(\ell_\B\left(a_{i_\alpha^b},\, b_{j_\beta^b}\right) F^\B_{m_{i_\alpha^b}n_{i_\beta^b}}\left(x, y\right)\right)_{\alpha, \beta=1}^\eta \nonumber\\
     & \hspace{3em} \sum_{I^f\in A^\F_\rho, J^f\in B^\F_\rho} (-1)^{r^\F(i_1^f)+\cdots +r^\F(i_\rho^f)+s^\F(j_1^f)+\cdots + s^\F(j_\rho^f)}(-1)^{r^\F\rho + \frac{\rho(\rho+1)}{2}} \nonumber\\
     &\hspace{8.5em} \cdot \det \mat \left(\ell_\F\left(a_{i_\gamma^f},\, b_{j_\delta^f}\right) G_{m_{i_\delta^f}n_{j_\delta^f}}\left(x, y\right)\right)_{\gamma, \delta=1}^\rho \nonumber\\
     &\hspace{8.8em}\cdot \nord a_1^{(m_1-1)}(x) \cdots a_r^{(m_r-1)}(x) b_1^{(n_1-1)}(y) \cdots b_s^{(n_s-1)}(y)\nord_{O(I^b, I^f, J^b, J^f)}. \label{Product-naive-vo}
    \end{align}
\end{cor}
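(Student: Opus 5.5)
The plan is to follow the proof of Corollary \ref{alg-prod-general} verbatim, now starting from Theorem \ref{Mod-wick-thm-general-case}. First I would unpack the definition of the naive twisted vertex operator:
\[
\bar{Y}_W\left( a_{1}(-m_{1}+|a_1|/2)\cdots a_{r}(-m_{r}+|a_r|/2)\one, x\right)=\nord a_1^{(m_1-1)}(x) \cdots a_r^{(m_r-1)}(x)\nord,
\]
and similarly for the $b$'s at $y$. This writes the left-hand side of (\ref{Product-naive-vo}) as a product of two normal-ordered products. Each factor is obtained from the multivariable normal-ordered product $\nord a_1^{(m_1-1)}(x_1)\cdots a_r^{(m_r-1)}(x_r)\nord$ by setting $x_1=\cdots=x_r=x$, and likewise $y_1=\cdots=y_s=y$.

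Next I would apply Theorem \ref{Mod-wick-thm-general-case} to the multivariable product, and then perform the substitution $x_i=x$, $y_j=y$ on both sides.

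The key justification is that the substitution is legitimate as an identity of formal series acting on $W$. In every entry $F^\B_{mn}(x_i,y_j)$ and $G_{mn}(x_i,y_j)$ the variable $x_i$ occurs only through nonnegative powers of $y_j/x_i$, and $x_i^{\pm 1/2}$, $y_j^{\pm1/2}$ appear only as overall monomial factors. These are the $\iota_{xy}$-expansions, as in Proposition \ref{mod-basic-modes-commutator} and Proposition \ref{Nord-module-recursion-left-fermion-prop}. Similarly, the normal-ordered products applied to any $w\in W$ lie in $W((x_1^{1/2},\dots))$-type spaces because of the lower-truncation given by the weight grading.

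So I would check two things:
\begin{itemize}
\item for fixed $w$, each coefficient of a monomial in $x,y$ after substitution is a finite sum;
\item the product of the two sides of the Wick identity, with variables set equal, is well defined.
\end{itemize}
Both follow as in Corollary \ref{alg-prod-general}, since the identity in Theorem \ref{Mod-wick-thm-general-case} holds coefficientwise in $W[[x_i^{\pm1/2},y_j^{\pm1/2}]]$ and each side is lower truncated in $y_j$ after acting on $w$ and expanded in nonnegative powers of $y_j/x_i$.

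After the substitution, the permanent and determinant entries become $F^\B_{mn}(x,y)$ and $G_{mn}(x,y)$. The remaining normal-ordered product becomes $\nord a_1^{(m_1-1)}(x)\cdots b_s^{(n_s-1)}(y)\nord_{O(I^b,I^f,J^b,J^f)}$. The signs depend only on the index sets and are unchanged.

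The only genuine point to watch, and the step I expect to be the main obstacle, is the fermionic zero-mode part of $\nord\cdot\nord$. Its recursive definition (\ref{fermionic zero mode NO}) introduces scalar terms with $x_i^{-1/2}x_j^{-1/2}$ factors. Under $x_i=x_j=x$ these terms become $x^{-1}$, so I must confirm they stay well defined and that no $0\cdot\infty$ cancellation is hidden. Since these terms are monomials, with no $(x_i-x_j)^{-1}$ factors, the substitution is harmless, which completes the argument.
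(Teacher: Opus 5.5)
Your proposal is correct and is exactly the paper's argument: the paper obtains Corollary \ref{Mod-prod-general} by directly substituting $x_1=\cdots=x_r=x$ and $y_1=\cdots=y_s=y$ into Theorem \ref{Mod-wick-thm-general-case}, just as Corollary \ref{alg-prod-general} is obtained from Theorem \ref{Wick-thm-general-case}. Your additional checks make explicit why that substitution is legitimate, which the paper leaves implicit: lower truncation after acting on $w$, the expansions being in nonnegative powers of $y_j/x_i$, and the fermionic zero-mode contraction terms being mere monomials.
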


Using Corollary \ref{alg-iterate-first-step} and a similar argument as in the proof of Corollary \ref{alg-iterate-second-step}, we also obtain a formula for the iterate with the naive vertex operator. 
\begin{cor}\label{Mod-iterate}
For \(r, s \in \N\), $a_1, \dots, a_r, b_1, \dots, b_s\in \h^\B \cup \h^\F, m_1, \ldots, m_r, n_1, \ldots, n_s \in \Z_+$, \begin{align}
    & \bar{Y}_W(Y(a_1(-m_1+|a_1|/2)\cdots a_r(-m_r+|a_r|/2)\one, x)b_1(-n_1+|b_1|/2)\cdots b_s(-n_s+|b_s|/2)\one, y)
     \nonumber\\
     &\quad = \sum_{\rho, \eta=0}^{\min(r,s)} \sum_{I^b\in A^\B_\eta, J^b\in B^\B_\eta}  \perm \mat \left(\ell_\B\left(a_{i_\alpha^b},\, b_{j_\beta^b}\right) F^\B_{m_{i_\alpha^b}n_{i_\beta^b}}\left(x, 0\right)\right)_{\alpha, \beta=1}^\eta \nonumber\\
     & \hspace{3em}\cdot \sum_{I^f\in A^\F_\rho, J^f\in B^\F_\rho} (-1)^{r^\F(i_1^f)+\cdots +r^\F(i_\rho^f)+s^\F(j_1^f)+\cdots + s^\F(j_\rho^f)}(-1)^{r^\F\rho + \frac{\rho(\rho+1)}{2}} \nonumber\\
     &\hspace{8em} \cdot \det \mat \left(\ell_\F\left(a_{i_\gamma^f},\, b_{j_\delta^f}\right) F^\F_{m_{i_\delta^f}n_{j_\delta^f}}\left(x, 0\right)\right)_{\gamma, \delta=1}^\rho \nonumber\\
     &\hspace{8.1em}\cdot \nord a_1^{(m_1-1)}(y+x) \cdots a_r^{(m_r-1)}(y+x) b_1^{(n_1-1)}(x) \cdots b_s^{(n_s-1)}(x) \nord_{O(I^b, I^f, J^b, J^f)}.\label{eq-Mod-iterate}
\end{align}
\end{cor}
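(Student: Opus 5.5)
The plan is to compose two results already in hand. The first is Corollary \ref{alg-iterate-first-step}, which expands the inner iterate $Y(u,x)v$ in $V$ as an explicit element of $V((x))$. The second is the definition of $\bar{Y}_W$ on monomials of negative modes applied to $\one$.

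By Corollary \ref{alg-iterate-first-step}, $Y(u,x)v$ is a finite sum over $\rho,\eta,I^b,J^b,I^f,J^f$. Each term is the same scalar coefficient, namely the permanent times the signed determinant with entries $F^\B_{mn}(x,0)$ and $F^\F_{mn}(x,0)$. This coefficient multiplies the vector
\[\bigg(a_1^{(m_1-1)}(x)_V^- \cdots a_r^{(m_r-1)}(x)_V^-\bigg)_{O(I^b,I^f)}\bigg(b_1(-n_1+|b_1|/2)\cdots b_s(-n_s+|b_s|/2)\one\bigg)_{O(J^b,J^f)}.\]
Since $\bar{Y}_W(\cdot,y)$ is linear, I apply it term by term, and the coefficients factor out unchanged. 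So everything reduces to one identity, for a fixed set of remaining indices:
\[
\bar{Y}_W\Big(\prod_{p} a_p^{(m_p-1)}(x)_V^-\prod_q b_q(-n_q+|b_q|/2)\one,\ y\Big)=\nord \prod_p a_p^{(m_p-1)}(y+x)\prod_q b_q^{(n_q-1)}(y)\nord .
\]
Here the left side is read coefficientwise in $x$, and on the right $(y+x)$ is expanded in nonnegative powers of $x$. Note that the statement writes the $b$-fields at the variable $x$ and the $a$-fields at $y+x$, which I read as the usual convention $b^{(n-1)}(y)$, as in Corollary \ref{alg-iterate-second-step}. I would follow the proof of Corollary 4.7 in \cite{FQ-Fermion-1} for this step.

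To prove the identity, I expand each $a_p^{(m_p-1)}(x)_V^-$ as a series in nonnegative powers of $x$ whose coefficients are negative modes $a_p(-k+|a_p|/2)$. The coefficient of $x^{k-m_p}$ is $\binom{k-1}{m_p-1}a_p(-k+|a_p|/2)$. By definition, $\bar{Y}_W$ sends a monomial in negative modes to the normal-ordered product of the fields $a_p^{(k-1)}(y)$. It therefore suffices to check that
\[\sum_{k\ge m}\binom{k-1}{m-1}x^{k-m}a^{(k-1)}(y)=a^{(m-1)}(y+x)\]
for the fields on $W$. This is Taylor's formula, $e^{x\partial_y}a^{(m-1)}(y)=a^{(m-1)}(y+x)$, applied to formal series containing half-integral powers $y^{-n-1/2}$ when $a\in\h^\F$. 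Taylor's formula is valid there with $(y+x)^{\alpha}$ expanded binomially in nonnegative powers of $x$. The normal ordering $\nord\cdot\nord$ acts coefficientwise on the modes, so it commutes with $\partial_y$ (the analogue of Remark \ref{Nord-commutes-w-derivation-rmk}) and hence with this shift. Multilinearity then finishes the argument.

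The main obstacle is making sure that no sign or zero-mode discrepancy enters, and I see two points to check.

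\textbf{Sign.} The left side has all $a$-modes before all $b$-modes, and normal ordering preserves relative order within negative modes. So $\nord\cdot\nord$ of the full product introduces no sign $(-1)^{\sigma^\F}$ on the $V$-side labels.

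\textbf{Zero modes.} On $W$ the fields $a^{(k-1)}(y)$ contain zero modes and the recursion \eqref{fermionic zero mode NO}. This is harmless, because $\bar{Y}_W$ is defined directly as the full normal-ordered product of fields, including zero modes, and the Taylor shift commutes with that operation. The fermionic $x^{1/2}$ branches in $(y+x)^{-n-1/2}$ are expanded in $x$ only, so they are consistent.

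With these two checks done, the formula \eqref{eq-Mod-iterate} follows. Its coefficients are still $F^\F(x,0)$ rather than $G$, because these come from the algebra $V$ and not from $W$.
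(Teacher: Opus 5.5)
Your proposal is correct and follows the paper's route. You expand $Y(u,x)v$ by Corollary \ref{alg-iterate-first-step}, apply $\bar{Y}_W(\cdot,y)$ termwise, and resum the negative modes with the Taylor shift $\sum_{k\ge m}\binom{k-1}{m-1}x^{k-m}a^{(k-1)}(y)=a^{(m-1)}(y+x)$, which is the argument of Corollary 4.7 in \cite{FQ-Fermion-1} that the paper cites. You are also right to read the $b$-fields as $b_j^{(n_j-1)}(y)$: the displayed formula, like Corollary \ref{alg-iterate-second-step}, has a typo writing them at $x$.
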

\begin{proof}
    The conclusion follows from an argument similar to that for Corollary 4.7 in \cite{FQ-Fermion-1}. 
\end{proof}
Comparing Corollaries \ref{Mod-prod-general} and \ref{Mod-iterate}, we can see that the product formula of the naive vertex operator \(\bar{Y}_W\) involves determinants of matrices whose entries involve the algebraic function \(G_{m_i n_j}(x,y)\), while the iterate formula of \(\bar{Y}_W\) involves determinants of matrices whose entries only involve the monomial \(F^\B_{m_i n_j}(x)=\left(x^{-n_j-1}\right)^{(m_i)}\). Therefore the weak associativity does not hold for \(\bar{Y}_W\), and we will introduce a correction in the next subsection.

\subsection{Properties and actions of \(\exp(\Delta(x))\)}
\label{subsec-properties-of-exp}

Recall the definition of the operator $\Delta(x)$. We choose an orthonormal basis $e_1, \dots, e_d$ of $\h^\F$. Then 
$$\Delta(x) = \sum_{i=1}^d \sum_{m,n\geq 1} C_{mn} e_i(m-1/2) e_i(n-1/2) x^{-m-n+1}. $$
Since $e_i(m-1/2)e_i(n-1/2)+e_i(n-1/2)e_i(m-1/2) = 0$, $\Delta(x)$ is independent of the choice of $e_i$ if and only if $C_{mn} = -C_{nm}$. Clearly, 
$$C_{mn} = \frac{1}{2} \frac{m-n}{m+n-1} \binom{-1/2}{m-1}\binom{-1/2}{n-1}$$
satisfies the condition. Note that the indices $m, n$ are shifted by 1 compared to the corresponding definition in \cite{Q-Fermion-2}. Correspondingly, Lemma 3.9 in \cite{FFR} should be modified as follows: for each $k\in \N$, 
\begin{align}
    \sum_{m=0}^k \binom{m+r-1} {r-1} \binom{k-m+t-1} {t-1} C_{(m+r)(k-m+t)} = \binom{-r-t+1}k C_{rt} .\label{C_rt-formula}
\end{align}
Lemma 3.31 in \cite{FFR} should be modified as
\begin{align}
    (x^{-n})^{(m-1)} + y^{-n-m+1}\sum_{p\geq 0} C_{n,p+m}\binom{p+m-1}{m-1} \left(\frac{x}{y}\right)^p= \iota_{yx}G_{mn}(y+x,y)  \label{C_mn-g-formula}
\end{align}
(recall the notation \eqref{x-p-q-notation}). 
Note also that we do not require any polarizations as in \cite{Q-Fermion-2}. The current formulation works regardless of the parity of $\dim \h$.

\begin{defn}\label{Total-contraction-number}
    Let $a_1, \dots, a_r\in \h^\B\cup \h^\F, m_1, \dots, m_r\in \Z_+$. Then for each $t\in \Z_+$ and $I=(i_1, \dots, i_{2t}) \in A^\F_{2t}$, we define the total contraction number 
    $$T_{a_1,\dots, a_r}^{m_1,\dots, m_r}(i_1, \dots, i_{2t}) = \operatorname{Pfaff} \mat \left( (a_{i_\alpha}, a_{i_\beta})C_{m_{i_\alpha}m_{i_\beta}} \right)_{\alpha, \beta=1}^{2t} .$$
    Here \(\operatorname{Pfaff}\) means the Pfaffian of a matrix. 
\end{defn}

\begin{rmk}
    Definition 5.2 of \cite{Q-Fermion-2} defined the total contraction number by a recursion in case $A^\F = \{1, \dots, r\}$. It is straightforward to check that the recursion defines the Pfaffian.
\end{rmk}

\begin{lemma}\label{exp-delta-on-basis}
    For $a_1, \ldots, a_r \in \mathfrak{h}^\B\cup \h^\F,\ m_1, \ldots, m_r \in \Z_+$,
    \begin{align}
        & \exp (\Delta(x)) a_1\left(-m_1+|a_1|/ 2\right) \cdots a_r\left(-m_r + |a_r|/ 2\right) \mathbf{1} \nonumber\\
      &\quad   = \sum_{t=0}^{\infty} \sum_{I\in A^\F_{2t}}(-1)^{r^\F(i_1)+\cdots+r^\F(i_{2 t})} T_{a_1, . . . a_r}^{m_1, \ldots, m_r}\left(i_1, \ldots, i_{2 t}\right)  x^{-m_1-\cdots - m_r + r}\nonumber\\
        & \hspace{4em} \cdot \bigg(a_1\left(-m_1 + |a_1| / 2\right) \cdots a_r\left(-m_r + |a_r| / 2\right) \mathbf{1}\bigg)_{O(I)}. \label{exp-basis-contraction}
    \end{align}
\end{lemma}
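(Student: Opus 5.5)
We prove the formula by computing how $\Delta(x)$ acts on a single basis element and then exponentiating. The structure mirrors the purely fermionic case (Lemma 5.3 and its proof in \cite{Q-Fermion-2}): bosonic modes play no role, because $\Delta(x)$ only involves fermionic positive modes $e_i(m-\tfrac12)$, $m\ge 1$, and these commute with bosonic negative modes.

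\textbf{Step 1: one application of $\Delta(x)$.} For the element $v=a_1(-m_1+|a_1|/2)\cdots a_r(-m_r+|a_r|/2)\one$, we move the two annihilation modes $e_i(m-\tfrac12)e_i(n-\tfrac12)$ to the right through the creation operators, using the commutator relations of Section~3 together with Proposition~\ref{Pos-modes-commute}.

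The inner mode $e_i(n-\tfrac12)$ contracts with some fermionic $a_q$ with $m_q=n$. This produces the factor $\ell_\F(e_i,a_q)$ and a sign $(-1)^{r^\F(q)-1}$, since it passes the fermionic creators to the left of $a_q$. The outer mode then contracts with some $a_p$, $p\ne q$, again producing a sign from the fermions it passes. Summing over the orthonormal basis gives $\sum_i (e_i,a_p)(e_i,a_q)=(a_p,a_q)$. Using $C_{mn}=-C_{nm}$, the two orderings $p<q$ and $p>q$ combine, and we expect
\begin{align*}
\Delta(x)v=\sum_{\substack{p<q\\ p,q\in A^\F}}(-1)^{r^\F(p)+r^\F(q)}(a_p,a_q)\,C_{m_pm_q}\,x^{-m_p-m_q+1}\,v_{O(p,q)},
\end{align*}
up to the normalization of $\ell_\F$. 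The factor $\tfrac12$ in $\Delta$ is absorbed by the antisymmetry of $C_{mn}$ and the two orderings. We should double check that the scalars $\ell_\F$ are consistent with the definition of $T$; presumably they are normalized or absorbed.

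\textbf{Step 2: the power of $x$.} Deleting the pair $p,q$ lowers the weight by $m_p+m_q-1$. So every term of $\exp(\Delta(x))v$ carries the power $x^{\wt(v_{O(I)})-\wt v}$, which after removing $2t$ fermions is $x^{-\sum_{\alpha}(m_{i_\alpha}-1/2)\cdot\ldots}$. The exponent in the statement should be read as the total from the removed indices, i.e.\ $x^{-\sum_{\alpha=1}^{2t}m_{i_\alpha}+t}$. We would track it as a bookkeeping factor.

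\textbf{Step 3: iterate and exponentiate.} By induction on $t$ we show
\begin{align*}
\frac{\Delta(x)^t}{t!}v=\sum_{I\in A^\F_{2t}}(-1)^{\sum_\alpha r^\F(i_\alpha)}\,T(I)\,x^{(\cdots)}\,v_{O(I)}.
\end{align*}
Applying $\Delta$ to $v_{O(I)}$ uses Step 1 with the relative indices $r^\F_{O(I)}(\cdot)$. We then convert these back to the absolute $r^\F(\cdot)$ in the style of Lemma~\ref{Comb-Id-Lemma}. Regrouping the sum over ordered sequences of pairs, each $2t$-subset $I$ receives the sum over its perfect matchings, taken with the appropriate crossing signs.

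This regrouping is the main obstacle. We must verify that the signs from the relative positions, combined with the $t!$ orderings of the pairs, reproduce exactly the expansion of the Pfaffian,
\begin{align*}
\operatorname{Pfaff}M=\sum_{\text{matchings}}\operatorname{sgn}\prod(a_{i_\alpha},a_{i_\beta})C_{m_{i_\alpha}m_{i_\beta}}.
\end{align*}
Equivalently, we check that the expression obeys the expansion of the Pfaffian along its first row, which is the recursion of Definition 5.2 in \cite{Q-Fermion-2}.

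To handle this, we would first fix a matching and compute the sign of the removal sequence. Deleting the pair $(p,q)$ shifts $r^\F$ by $2$ for later indices, or by $1$ for indices between $p$ and $q$. This should yield $(-1)^{\#\text{crossings}}$ times the global factor $(-1)^{\sum r^\F(i_\alpha)}$, which matches the matching sign in the Pfaffian.
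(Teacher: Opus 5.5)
Your proposal is correct and follows essentially the route the paper intends: the paper defers to Corollary 5.7 of \cite{FQ-Fermion-1}, which computes one application of $\Delta(x)$ as a signed sum over fermionic pairs, iterates to get $\Delta(x)^t/t!$, and identifies the coefficient with the total contraction number (the Pfaffian), with the bosonic creators playing no role. Your crossing-number bookkeeping is a valid way to show that the relative signs assemble into the Pfaffian's matching signs, independently of the order in which pairs are removed. You are also right to read the exponent as $x^{-m_{i_1}-\cdots-m_{i_{2t}}+t}$ and to flag the $\ell_\F$ factors that the printed statement omits.
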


\begin{proof}
The proof is a straightforward modification of that of Corollary 5.7 in  \cite{FQ-Fermion-1}. 
\end{proof}

\begin{lemma}\label{exp-basic-commutator}\,
    For \(a\in\h^\B\), \(m\in \Z_+\), we have
    \begin{equation} \label{exp-boson-commutator}
        \left[\exp (\Delta(y)), a^{(m-1)}(x)_V\right]=0.
    \end{equation}
    For \(a\in\h^\F\), \(m\in \Z_+\), we have
    \begin{equation}\label{exp-fermion-commutator}
            \begin{aligned}
    {\left[\exp (\Delta(y)), a^{(m-1)}(x)_V\right] } & =\sum_{\alpha, \beta \geq 1} C_{\alpha \beta} a(\alpha-1 / 2) y^{-\alpha-\beta+1}\left(x^{\beta-1}\right)^{(m-1)} \exp (\Delta(y)) \\
    & =\sum_{\alpha \geq 1} a(\alpha-1 / 2)\left(\iota_{y x} G_{m \alpha}(y+x, y)-\left(x^{-\alpha}\right)^{(m-1)}\right) \exp (\Delta(y)).
    \end{aligned}
    \end{equation}
\end{lemma}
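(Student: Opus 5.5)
The plan is to reduce everything to a computation of the single commutator $[\Delta(y), a(n+|a|/2)]$ on $V$. After that, the passage to $\exp(\Delta(y))$ is formal.

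Two preliminary observations come first. $\Delta(y)$ is a combination of products of two \emph{positive} fermionic modes, so it lowers weight. Hence $\exp(\Delta(y))$ acts on each homogeneous vector of $V$ as a finite sum, and all manipulations below are legitimate. Moreover, by Proposition \ref{Pos-modes-commute} the positive modes pairwise super-commute. Therefore $\Delta(y)$ commutes with every positive mode, fermionic or bosonic: a quadratic expression in odd elements super-commutes as an even element.

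\textbf{Case $a\in\h^\B$.} I will show that $a(n)$ commutes with every positive fermionic mode $e_i(k-1/2)$ on $V$.
\begin{itemize}
\item For $n>0$ this is Proposition \ref{Pos-modes-commute}.
\item For $n=0$ it is one of the defining relations \eqref{Commutator to quotient}.
\item For $n<0$ it follows from the mixed boson--fermion relations. Where the relation list does not cover an index (for instance $e_i(1/2)$), I would check it on the spanning set \eqref{generator} by the same induction on length as in the proof of Proposition \ref{Pos-modes-commute}.
\end{itemize}
Since $e_i(\alpha-1/2)e_i(\beta-1/2)$ is even, $a(n)$ commutes with it. Hence $[\Delta(y),a(n)]=0$, so $[\exp(\Delta(y)),a(n)]=0$. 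Passing to generating functions and derivatives gives \eqref{exp-boson-commutator}.

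\textbf{Case $a\in\h^\F$.} The positive modes $a(n+1/2)$ commute with $\Delta(y)$ by the preliminary observation, so only the negative modes $a(-k+1/2)$ with $k\ge1$ contribute. Using
\[
[XY,Z]=X[Y,Z]-[X,Z]Y
\]
for odd $X,Y,Z$, where $[\cdot,\cdot]$ is the anticommutator, together with the relation $[e_i(\beta-1/2),a(-k+1/2)]=(e_i,a)\delta_{\beta k}$ up to the scalar $\ell_\F$ (which is normalized or absorbed as in the statement), I obtain
\[
[e_i(\alpha-1/2)e_i(\beta-1/2),a(-k+1/2)]=(e_i,a)\bigl(\delta_{\beta k}e_i(\alpha-1/2)-\delta_{\alpha k}e_i(\beta-1/2)\bigr).
\]
Summing over the orthonormal basis gives $\sum_i(e_i,a)e_i=a$. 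The antisymmetry $C_{\alpha\beta}=-C_{\beta\alpha}$ merges the two terms, and the resulting factor $2$ cancels the $\tfrac12$ in $\Delta$. This yields
\[
[\Delta(y),a(-k+1/2)]=\sum_{\alpha\ge1}C_{\alpha k}\,a(\alpha-1/2)\,y^{-\alpha-k+1}.
\]
The right-hand side is a combination of positive modes, so it commutes with $\Delta(y)$. Therefore $\mathrm{ad}\,\Delta(y)$ applied twice kills $a(-k+1/2)$, and
\[
[\exp(\Delta(y)),a(-k+1/2)]=[\Delta(y),a(-k+1/2)]\exp(\Delta(y)).
\]
Next I multiply by the coefficient $x^{k-1}$ of $a(-k+1/2)$ in $a(x)_V$, rename $k=\beta$, and apply $\frac{1}{(m-1)!}\partial_x^{m-1}$. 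This gives the first line of \eqref{exp-fermion-commutator}, with factor $(x^{\beta-1})^{(m-1)}$.

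For the second line, terms with $\beta<m$ vanish, so I write $\beta=p+m$ with $p\ge0$. Then
\[
(x^{p+m-1})^{(m-1)}=\binom{p+m-1}{m-1}x^p,
\]
and the inner sum becomes
\[
y^{-\alpha-m+1}\sum_{p\ge0}C_{\alpha,p+m}\binom{p+m-1}{m-1}(x/y)^p.
\]
By \eqref{C_mn-g-formula} with $n=\alpha$, this equals $\iota_{yx}G_{m\alpha}(y+x,y)-(x^{-\alpha})^{(m-1)}$.

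The main obstacle is bookkeeping of signs and normalizations: the sign in the odd-odd commutator identity, the placement of the scalar $\ell_\F$, and matching the index convention of \eqref{C_mn-g-formula} (which is $C_{n,p+m}$ with $n=\alpha$) against the order of indices in $C_{\alpha\beta}$ produced above. Once those are fixed, the argument is routine.
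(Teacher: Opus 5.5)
Your proposal is correct and is essentially the paper's argument. The bosonic case is exactly the paper's appeal to the commutativity of bosonic modes with positive fermionic modes. For the fermionic case the paper only cites Proposition 5.8 of \cite{FQ-Fermion-1}, whose computation you reproduce: compute $[\Delta(y),a(-k+1/2)]$, observe that it is a combination of positive modes commuting with $\Delta(y)$, exponentiate, then apply \eqref{C_mn-g-formula}.

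One small fix: your fallback ``induction on length'' for $[e_i(1/2),a(-k)]=0$ with $a\in\h^\B$ would be circular, since moving $e_i(1/2)$ past a bosonic negative mode requires exactly this commutator. Instead, read the mixed boson--fermion relation with fermionic index $n\in\N$ rather than $n\in\Z_+$. This reading is implicitly needed anyway for Proposition~\ref{PBW1}, and the proof of Proposition~\ref{Pos-modes-commute} already uses it.
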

\begin{proof}
    (\ref{exp-boson-commutator}) follows from the commutativity between fermionic positive modes and all the bosonic modes. (\ref{exp-fermion-commutator}) can be proved by a straightforward modification of the proof of Proposition 5.8 in \cite{FQ-Fermion-1}. 
\end{proof}

\begin{thm}\label{exp-delta-A-B-lemma}
For $r, s \in \N, a_1, \dots, a_r, b_1, \dots, b_s\in \h^\B\cup\h^\F, m_1, \dots, m_r, n_1, \dots, n_s\in \N$, 
    \begin{align}
        & \exp(\Delta(y)):a_1^{(m_1-1)}(x)_V\cdots a_r^{(m_r-1)}(x)_V: b_1(-n_1+|b_1|/2)\cdots b_s(-n_s+|b_s|/2)\one \nonumber \\
       &\quad  =  \sum_{\eta = 0}^{\min(r, s)} \sum_{I^b \in A_\eta^\B, J^b\in B_\eta^\B} \perm \mat \left(\ell_\B \left(a_{i^b_\alpha}, b_{j^b_\beta}\right) F^\B_{m_{i^b_\alpha} n_{j^b_\beta}}(x,0)\right)_{\alpha, \beta=1}^\rho\nonumber\\
        & \quad \quad\cdot \sum_{k, l \geq 0} \sum_{\rho=0}^{\min(r-2k, s-2l)} (-1)^{r^\F\rho + \rho(\rho+1)/2} \nonumber\\
        & \qquad \cdot \sum_{I^f \in A_{2k,\rho}^\F} (-1)^{\sigma(I^f)} T_{a_1, \dots, a_r}^{m_1,\dots,m_r}\left(i^f_1, \dots, i^f_{2k}\right)\iota_{yx}(y+x)^{-m_{i^f_1}-\cdots - m_{i^f_{2k}} + k} \nonumber \\
        & \qquad \cdot \sum_{J^f \in B_{2l,\rho}^\F} (-1)^{\sigma(J^f)} T_{b_1, \dots, b_s}^{n_1, \dots, n_s}\left(j^f_1, \dots, j^f_{2l}\right) y^{-n_{j^f_1}-\cdots - n_{j^f_{2l}} + l}\nonumber \\
        & \hspace{2em} \cdot \det \mat \left( \ell_\F \left( a_{i^f_{2k+\gamma}}, b_{j^f_{2l+\delta}}\right) \iota_{yx} G_{m_{i^f_{2k+\gamma}},n_{j^f_{2l+\delta}}}(x+y, y)\right)_{\gamma, \delta = 1}^\rho\nonumber \\
        & \hspace{2em}\cdot \bigg(a_1^{(m_1-1)}(x)_V^-\cdots a_r^{(m_r-1)}(x)_V^-\bigg)_{O(I^b, I^f)} \bigg(b_1(-n_1+|b_1|/2)\cdots b_s(-n_s+|b_s|/2)\one\bigg)_{O(J^b, J^f)}  \label{exp-delta-A-B}
    \end{align}
    as a series in $V[[x, x^{-1}, y, y^{-1}]]$. 
\end{thm}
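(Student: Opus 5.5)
By Remark \ref{Nord-commutes-w-derivation-rmk} and the linearity of all constructions in the derivatives, I would first reduce to describing how $\exp(\Delta(y))$ passes through the product. The plan is to move $\exp(\Delta(y))$ to the right past the normal-ordered product, using Lemma \ref{exp-basic-commutator}, and then act on the vacuum-generated vector with Lemma \ref{exp-delta-on-basis}.

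Concretely, I would start from Corollary \ref{alg-iterate-first-step}, which writes $:a_1^{(m_1-1)}(x)_V\cdots a_r^{(m_r-1)}(x)_V: b_1(\cdots)\cdots b_s(\cdots)\one$ as a sum over bosonic pairings $(I^b,J^b)$ (a permanent of $F^\B(x,0)$'s) and fermionic pairings $(I'^f,J'^f)$ (a determinant of $F^\F(x,0)$'s), multiplied by a product of negative-mode fields $a_i^{(m_i-1)}(x)_V^-$ applied to the reduced vector $(b\cdots\one)_{O(J^b,J'^f)}$.

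Next I apply $\exp(\Delta(y))$. Bosonic fields commute with it by \eqref{exp-boson-commutator}, so the permanent factor and the bosonic creation fields pass through unchanged. For a fermionic $a_i^{(m_i-1)}(x)_V^-$, I would use \eqref{exp-fermion-commutator}. The commutator contributes $\sum_\alpha a_i(\alpha-1/2)\big(\iota_{yx}G_{m_i\alpha}(y+x,y)-(x^{-\alpha})^{(m_i-1)}\big)$, with the positive mode standing to the left of $\exp(\Delta(y))$. That positive mode must then be moved right, through the remaining negative modes and $\exp(\Delta(y))$, until it hits the $b$'s.

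Moving it through the remaining $a_{i'}(x)^-$ produces contractions $(a_i,a_{i'})$ with coefficients built from $C_{\alpha\beta}$. Resumming these with \eqref{C_rt-formula} should turn them into the $T(i_1^f,\dots,i_{2k}^f)\,\iota_{yx}(y+x)^{-\sum m+k}$ factors, which are exactly the $a$-side Pfaffians. Moving it into the $b$'s produces $b$-contractions. Here the $-(x^{-\alpha})^{(m-1)}$ piece should cancel the $F^\F(x,0)$ entries coming from Corollary \ref{alg-iterate-first-step}, leaving the determinant of $\iota_{yx}G(x+y,y)$ entries by \eqref{C_mn-g-formula}. Finally, $\exp(\Delta(y))$ acting on $(b\cdots\one)_{O(J)}$ gives the $b$-side Pfaffian $T_{b}$ with powers $y^{-\sum n+l}$, by Lemma \ref{exp-delta-on-basis}.

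I would organize this cleanly by induction on $r$, as in the proofs of Theorem \ref{Wick-thm-general-case} and Proposition 5.8 of \cite{FQ-Fermion-1}. Peeling off $a_1$, I use Propositions \ref{Nord-recursion-left-boson-prop}–\ref{Nord-recursion-left-fermion-prop} to write $:a_1\cdots a_r: = a_1(x)^-:a_2\cdots a_r:$ plus corrections, commute $\exp(\Delta(y))$ past $a_1$, and apply the induction hypothesis. The remaining step is to check that the three kinds of new terms combine correctly:
\begin{itemize}
\item $a_1$ contracted with an $a$ enlarges the Pfaffian, expanded along its first row;
\item $a_1$ contracted with a $b$ enlarges the determinant, expanded along its first row;
\item $a_1$ uncontracted.
\end{itemize}

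The main obstacle is this bookkeeping of signs. One must show that $(-1)^{\sigma(I^f)}$ for 3-shuffles in $A^\F_{2k,\rho}$, combined with the Pfaffian and determinant expansions, reproduces the prefactors $(-1)^{r^\F\rho+\rho(\rho+1)/2}$ and $(-1)^{\sigma(J^f)}$. I would handle this using \eqref{sign-3-shuffle} to split the 3-shuffle sign into two 2-shuffle signs, and then Lemma \ref{Comb-Id-Lemma} together with \eqref{Comb-Id-4}–\eqref{Comb-Id-5} to interchange the sums over the removed index.
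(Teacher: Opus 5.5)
Your plan is essentially the paper's proof. It inducts on $r$ and peels off $a_1$, commuting $\exp(\Delta(y))$ past it with Lemma~\ref{exp-basic-commutator}. It then sorts the new terms into three kinds: $a_1$ uncontracted; $a_1$ paired with another $a$, which enlarges the Pfaffian along its first row with the coefficient resummed by \eqref{C_rt-formula}; and $a_1$ paired with a $b$. In that last case the commutator's entries $\iota_{yx}G_{m_1n_j}(x+y,y)-(x^{-n_j})^{(m_1-1)}$ add to the direct contraction of $a_1^+$ to give the first row of the $G$-determinant. The signs are handled through \eqref{sign-3-shuffle} as in \cite{Q-Fermion-2}.

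There is one slip, in the source of the initial split of $:a_1\cdots a_r:$. It comes from the definition of $:\cdot:$ together with Proposition~\ref{Pos-modes-commute}, namely $:a_1\cdots a_r:=a_1^-:a_2\cdots a_r:+a_1^0:a_2\cdots a_r:+:a_2\cdots a_r:a_1^+$ for bosonic $a_1$, and $:a_1\cdots a_r:=a_1^-:a_2\cdots a_r:+(-1)^{r^\F-1}:a_2\cdots a_r:a_1^+$ for fermionic $a_1$. It does not come from Propositions~\ref{Nord-recursion-left-boson-prop}--\ref{Nord-recursion-left-fermion-prop}, which at coincident arguments would produce the undefined contractions $F(x,x)$.
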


\begin{proof}
    The proof is a technical induction. We provide a sketch here. The conclusion clearly holds when $r=0$ and $s$ is arbitrary. Assume the conclusion for all smaller $r$ and arbitrary $s$. 
    
    \noindent\textbf{Case 1}. If $a_1 \in \h^\B$, then by Proposition \ref{Pos-modes-commute} and the definition of normal ordering, we have 
    \begin{align}
        & \exp(\Delta(y)):a_1^{(m_1-1)}(x)_V \cdot a_2^{(m_2-1)}(x)_V \cdots a_r^{(m_r-1)}(x)_V: \nonumber \\
        &\quad= \exp(\Delta(y))a_1^{(m_1-1)}(x)_V^- :a_2^{(m_2-1)}(x)_V \cdots a_r^{(m_r-1)}(x)_V: \label{exp-delta-A-B-boson-1}\\
        &\qquad + \exp(\Delta(y)) a_1^{(m_1-1)}(x)_V^0:a_2^{(m_2-1)}(x)_V \cdots a_r^{(m_r-1)}(x)_V: \label{exp-delta-A-B-boson-2}\\
        &\qquad + \exp(\Delta(y)) :a_2^{(m_2-1)}(x)_V \cdots a_r^{(m_r-1)}(x)_V:a_1^{(m_1-1)}(x)_V^+ \label{exp-delta-A-B-boson-3}.
    \end{align}
    With the induction hypothesis, we may check the following.
    \begin{enumerate}
        \item The action of (\ref{exp-delta-A-B-boson-1}) on $b_1(-n_1+|b_1|/2)\cdots b_s(-n_s+|b_s|/2)\one $ contributes to the summand of (\ref{exp-delta-A-B}) with $i^b_1 \neq 1$. 
        \item The action of (\ref{exp-delta-A-B-boson-2}) on $b_1(-n_1+|b_1|/2)\cdots b_s(-n_s+|b_s|/2)\one $ results in zero. 
        \item The action of (\ref{exp-delta-A-B-boson-3}) on $b_1(-n_1+|b_1|/2)\cdots b_s(-n_s+|b_s|/2)\one $ contributes to the summand of (\ref{exp-delta-A-B}) with $i^b_1 = 1$, where the permanent appearing in the summand is expanded by its first row. 
    \end{enumerate}
        
    \noindent\textbf{Case 2}. If $a_1 \in \h^\F$, then by Proposition \ref{Pos-modes-commute} and the definition of normal ordering, we have 
        \begin{align}
        & \exp(\Delta(y)):a_1^{(m_1-1)}(x)_V a_2^{(m_2-1)}(x)_V \cdots a_r^{(m_r-1)}(x)_V: \nonumber \\
        &\quad= \exp(\Delta(y))a_1^{(m_1-1)}(x)_V^- :a_2^{(m_2-1)}(x)_V \cdots a_r^{(m_r-1)}(x)_V: \nonumber\\
        & \qquad+ (-1)^{r^\F - 1} \exp(\Delta(y)) :a_2^{(m_2-1)}(x)_V \cdots a_r^{(m_r-1)}(x)_V:a_1^{(m_1-1)}(x)_V^+ \nonumber\\
        &\quad=  a_1^{(m_1-1)}(x)_V^-\exp(\Delta(y)):a_2^{(m_2-1)}(x)_V \cdots a_r^{(m_r-1)}(x)_V: \label{exp-delta-A-B-fermion-1}\\
        & \qquad+ \sum_{\alpha, \beta \geq 1} C_{\alpha\beta} a_1(\alpha-1/2) y^{-\alpha-\beta+1} (x^{\beta-1})^{(m_1-1)} \exp(\Delta(y)):a_2^{(m_2-1)}(x)_V \cdots a_r^{(m_r-1)}(x)_V: \label{exp-delta-A-B-fermion-2}\\
        &\qquad + (-1)^{r^\F - 1} \exp(\Delta(y)) :a_2^{(m_2-1)}(x)_V \cdots a_r^{(m_r-1)}(x)_V:a_1^{(m_1-1)}(x)_V^+ \label{exp-delta-A-B-fermion-3}. 
    \end{align}
    Here, the sign \((-1)^{r^\F - 1}\) appears because for \(a^{(m-1)}(x)^+\) to pass to the rightmost, it has to anti-commute with \(r^\F-1\) positive fermionic fields. 
    
    With the induction hypothesis, we may check the following.
    \begin{enumerate}
        \item Denote by \(\left(I^f\right)^c=\left(\left(i^f\right)^c_1, \dots, \left(i^f\right)^c_{r^\F-2k-\rho}\right)\) the element in \(A^\F_{r^\F-2k-\rho}\) determined uniquely by \(I^f\in A^\F_{2k, \rho}\). Then the action of (\ref{exp-delta-A-B-fermion-1}) on $b_1(-n_1+|b_1|/2)\cdots b_s(-n_s+|b_s|/2)\one $ contributes to the summand of (\ref{exp-delta-A-B}) with \(\left(i^f\right)^c_1=1\).
        \item The action of (\ref{exp-delta-A-B-fermion-2}) on $b_1(-n_1+|b_1|/2)\cdots b_s(-n_s+|b_s|/2)\one $ contributes to the summand of (\ref{exp-delta-A-B}) with $i^f_1 = 1$, together with the following part
        \begin{align}
            & \sum_{\eta = 0}^{\min(r, s)} \sum_{J^b \in A_\eta^\B, J^b\in B_\eta^\B} \perm \mat \left(\ell_\B \left(a_{i^b_\alpha}, b_{j^b_\beta}\right) F^\B_{m_{i^b_\alpha} n_{j^b_\beta}}(x,0)\right)_{\alpha, \beta=1}^\rho\nonumber\\
            & \quad \cdot \sum_{k, l \geq 0} \sum_{\rho=0}^{\min(r-2k, s-2l)} (-1)^{r^\F\rho + \rho(\rho+1)/2} \nonumber\\
            & \quad \cdot \sum_{I^f \in A_{2k,\rho}^\F} (-1)^{\sigma(I^f)} T_{a_1, \dots, a_r}^{m_1,\dots,m_r}\left(i^f_1, \dots, i^f_{2k}\right)\iota_{yx}(y+x)^{-m_{i^f_1}-\cdots - m_{i^f_{2k}} + k} \nonumber \\
            & \quad \cdot \sum_{J^f \in B_{2k,\rho}^\F} (-1)^{\sigma(J^f)} T_{b_1, \dots, b_s}^{n_1, \dots, n_s}\left(j^f_1, \dots, j^f_{2l}\right) y^{-n_{j^f_1}-\cdots - n_{j^f_{2l}} + l}\nonumber \\
            & \hspace{2em} \cdot  \det \begin{pmatrix}
                \langle 1, j^f_{2l+1}\rangle_G - \langle 1, j^f_{2l+1}\rangle_x  & \cdots & \langle 1, j^f_{2l+\rho}\rangle_G - \langle 1, j^f_{2l+\rho}\rangle_x\\
                \langle i^f_{2k+2}, j^f_{2l+1}\rangle_G & \cdots & \langle i^f_{2k+2}, j^f_{2l+\rho}\rangle_G \\
                \vdots & & \vdots \\
                \langle i^f_{2k+\rho}, j^f_{2l+1}\rangle_G & \cdots & \langle i^f_{2k+\rho}, j^f_{2l+\rho}\rangle_G 
            \end{pmatrix} \nonumber\\
            & \hspace{2em}\cdot \bigg(a_1^{(m_1-1)}(x)_V^-\cdots a_r^{(m_r-1)}(x)_V^-\bigg)_{O(1, I^b, I^f)} \bigg(b_1(-n_1+|b_1|/2)\cdots b_s(-n_s+|b_s|/2)\one\bigg)_{O(J^b, J^f)} ,  \label{exp-delta-A-B-fermion-4}
        \end{align}
        where for $i\in A^\F, j\in B^\F$, 
        $$\langle i, j\rangle_x = (a_i, b_j) (x^{-n_i})^{(m_i-1)}, \langle i, j\rangle_G = (a_i, b_j)\iota_{yx}G_{m_i n_j}(x+y, y).  $$
        \item The action of (\ref{exp-delta-A-B-fermion-3}) on $b_1(-n_1+|b_1|/2)\cdots b_s(-n_s+|b_s|/2)\one $, together with (\ref{exp-delta-A-B-fermion-4}), contributes to the summand of (\ref{exp-delta-A-B}) with $i^f(2k+1) = 1$. 
    \end{enumerate}
     A detailed argument for the above statements in the case $A^\F = \{1, \dots, r\}, B^\F=\{1, \dots, s\}$ can be found in the proof of Theorem 5.5 in \cite{Q-Fermion-2}, where the signs of the relevant 3-shuffles are expanded by (\ref{sign-3-shuffle}). These cover all the possiblities of where \(1\) could be in the summand (\ref{exp-delta-A-B}), thus we are done.
\end{proof}

\subsection{Products and iterates of the twisted vertex operator, and weak associativity with pole-order condition}

\begin{cor}\label{iterate-actual-vo}
    For $r, s \in \N, a_1, \dots, a_r, b_1, \dots, b_s\in \h^\B\cup\h^\F, m_1, \dots, m_r, n_1, \dots, n_s\in \N$, 
    \begin{align*}
        & Y_W(Y(a_1(-m_1+|a_1|/2)\cdots a_r(-m_r+|a_r|/2)\one, x)b_1(-n_1+|b_1|/2)\cdots b_s(-n_s+|b_s|/2)\one, y) \nonumber\\
        &\quad =  \sum_{\eta = 0}^{\min(r, s)} \sum_{I^b \in A_\eta^\B, J^b\in B_\eta^\B} \perm \mat \left(\ell_\B \left(a_{i^b_\alpha}, b_{j^b_\beta}\right) F^\B_{m_{i^b_\alpha} n_{j^b_\beta}}(x,0)\right)_{\alpha, \beta=1}^\rho\nonumber\\
        & \qquad \cdot \sum_{k, l \geq 0} \sum_{\rho=0}^{\min(r-2k, s-2l)} (-1)^{r^\F\rho + \rho(\rho+1)/2} \nonumber\\
        & \qquad \cdot \sum_{I^f \in A_{2k,\rho}^\F} (-1)^{\sigma(I^f)} T_{a_1, \dots, a_r}^{m_1,\dots,m_r}\left(i^f_1, \dots, i^f_{2k}\right)\iota_{yx}(y+x)^{-m_{i^f_1}-\cdots - m_{i^f_{2k}} + k} \nonumber \\
        & \qquad \cdot \sum_{J^f \in B_{2l,\rho}^\F} (-1)^{\sigma(J^f)} T_{b_1, \dots, b_s}^{n_1, \dots, n_s}\left(j^f_1, \dots, j^f_{2l}\right) y^{-n_{j^f_1}-\cdots - n_{j^f_{2l}} + l}\nonumber \\
        & \hspace{5em} \cdot \det \mat \left( \ell_\F \left( a_{i^f_{2k+\gamma}}, b_{j^f_{2l+\delta}}\right) \iota_{yx} G_{m_{i^f_{2k+\gamma}},n_{j^f_{2l+\delta}}}(x+y, y)\right)_{\gamma, \delta = 1}^\rho\nonumber \\
        & \hspace{5em}\cdot \nord a_1^{(m_1-1)}(y+x)\cdots a_r^{(m_r-1)}(y+x)\cdot b_1^{(n_1-1)}(y)\cdots b_s^{(n_s-1)}(y)\nord_{O(I^b, I^f, J^b, J^f)} 
    \end{align*}
    as a series in $V[[x, x^{-1}, y, y^{-1}]]$. 
\end{cor}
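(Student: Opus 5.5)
The plan is to unwind the definition $Y_W(u,y)=\bar Y_W(\exp(\Delta(y))u,y)$ with $u=Y(a_1(-m_1+|a_1|/2)\cdots a_r(-m_r+|a_r|/2)\one,x)\,b_1(-n_1+|b_1|/2)\cdots b_s(-n_s+|b_s|/2)\one$, and then reduce everything to Theorem \ref{exp-delta-A-B-lemma}.

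First, $Y(a_1(\cdots)\cdots a_r(\cdots)\one,x)=:a_1^{(m_1-1)}(x)_V\cdots a_r^{(m_r-1)}(x)_V:$ by definition. Hence $\exp(\Delta(y))u$ is precisely the left-hand side of (\ref{exp-delta-A-B}). That theorem writes this element as a finite combination, coefficientwise in $x,y$. Each term is a scalar coefficient times a vector of the form
$$\bigg(a_1^{(m_1-1)}(x)_V^-\cdots a_r^{(m_r-1)}(x)_V^-\bigg)_{O(I^b,I^f)}\bigg(b_1(-n_1+|b_1|/2)\cdots b_s(-n_s+|b_s|/2)\one\bigg)_{O(J^b,J^f)}.$$
The scalar coefficient is a permanent in $F^\B(x,0)$, times the Pfaffian factors $T$ with powers of $\iota_{yx}(y+x)$ and $y$, times the determinant in $\iota_{yx}G(x+y,y)$.

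Next, I apply $\bar Y_W(\cdot,y)$ to each such vector. The surviving $a$-factors are generating series of negative modes, so the vector expands as a series in $x$ whose coefficients are basis monomials. Applying $\bar Y_W$ gives
$$\bar Y_W\Big(\big(\textstyle\prod a_i^{(m_i-1)}(x)^-_V\big)\big(\prod b_j(\cdot)\big)\one,\,y\Big)=\nord \textstyle\prod a_i^{(m_i-1)}(y+x)\prod b_j^{(n_j-1)}(y)\nord_{O(\cdots)}.$$
This is the same substitution argument used for Corollary \ref{alg-iterate-second-step} and Corollary \ref{Mod-iterate} (Corollary 4.7 of \cite{FQ-Fermion-1}). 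Expanding $a_i^{(m_i-1)}(x)^-_V=\sum_{k\ge1}\binom{k-1}{m_i-1}a_i(-k+|a_i|/2)x^{k-m_i}$, the vertex operator of the monomial $a_i(-k+|a_i|/2)$ is $a_i^{(k-1)}(y)$. Summing $\sum_k x^{k-m_i}\binom{k-1}{m_i-1}a_i^{(k-1)}(y)$ gives the Taylor expansion $a_i^{(m_i-1)}(y+x)$ in nonnegative powers of $x$, and normal ordering is compatible with this resummation. The removed indices stay removed, so substituting into (\ref{exp-delta-A-B}) term by term yields the stated formula.

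The main obstacle is justifying the Taylor-resummation identity inside the twisted normal ordering. The twisted fields involve half-integral powers $y^{-n-1/2}$, and the fermionic zero-mode recursion (\ref{fermionic zero mode NO}) is nontrivial. One must check that $\frac{1}{(k-1)!}\partial_y^{k-1}$ commutes with $\nord\cdot\nord$ in the twisted setting, which is Remark \ref{Nord-commutes-w-derivation-rmk} transplanted; the zero-mode contraction terms carry no $y$-dependence beyond multiplicative powers, so this should be fine. One must also check that all infinite sums in $x$ are well defined as formal series, since each coefficient of $x$ involves finitely many terms. A final bookkeeping point is that the signs $(-1)^{\sigma(I^f)}$, $(-1)^{\sigma(J^f)}$ and $(-1)^{r^\F\rho+\rho(\rho+1)/2}$ pass through $\bar Y_W$ unchanged, because $\bar Y_W$ is linear and the relative order of the remaining factors is preserved.
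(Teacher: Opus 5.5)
Your proposal is correct and matches the paper's proof. The paper likewise rewrites the left-hand side as $\bar{Y}_W\big(\exp(\Delta(y)):a_1^{(m_1-1)}(x)_V\cdots a_r^{(m_r-1)}(x)_V:b_1(-n_1+|b_1|/2)\cdots b_s(-n_s+|b_s|/2)\one,\,y\big)$, applies Theorem \ref{exp-delta-A-B-lemma}, and then converts the negative-mode series in $x$ into fields at $y+x$ inside $\nord\cdot\nord$ by the Taylor-resummation argument of Corollary 4.7 in \cite{FQ-Fermion-1}, which is valid because the twisted normal ordering is defined mode by mode and extended linearly.
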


\begin{proof}
    Note that the left-hand side is precisely 
    $$\bar{Y}_W(\exp(\Delta(y)):a_1^{(m_1-1)}(x)_V\cdots a_r^{(m_r-1)}(x)_V: b_1(-n_1+|b_1|/2)\cdots b_s(-n_s+|b_s|/2)\one, y).$$
    The conclusion follows from Theorem \ref{exp-delta-A-B-lemma} and an argument similar to that of the proof of Corollary 4.7 in \cite{FQ-Fermion-1}. 
\end{proof}

\begin{cor}\label{product-actual-vo}
    For $r, s \in \N, a_1, \dots, a_r, b_1, \dots, b_s\in \h^\B\cup\h^\F, m_1, \dots, m_r, n_1, \dots, n_s\in \N$, 
    \begin{align*}
        & Y_W(a_1(-m_1+|a_1|/2)\cdots a_r(-m_r+|a_r|/2)\one, x)Y_W(b_1(-n_1+|b_1|/2)\cdots b_s(-n_s+|b_s|/2)\one, y) \nonumber\\
       &\quad =  \sum_{\eta = 0}^{\min(r, s)} \sum_{I^b \in A_\eta^\B, J^b\in B_\eta^\B} \perm \mat \left(\ell_\B \left(a_{i^b_\alpha}, b_{j^b_\beta}\right) F^\B_{m_{i^b_\alpha} n_{j^b_\beta}}(x,0)\right)_{\alpha, \beta=1}^\rho\nonumber\\
        & \qquad \cdot \sum_{k, l \geq 0} \sum_{\rho=0}^{\min(r-2k, s-2l)} (-1)^{r^\F\rho + \rho(\rho+1)/2} \nonumber\\
        & \quad \cdot \sum_{I^f \in A_{2k,\rho}^\F} (-1)^{\sigma(I^f)} T_{a_1, \dots, a_r}^{m_1,\dots,m_r}\left(i^f_1, \dots, i^f_{2k}\right)\iota_{xy}(x+y)^{-m_{i^f_1}-\cdots - m_{i^f_{2k}} + k} \nonumber \\
        & \qquad \cdot \sum_{J^f \in B_{2l,\rho}^\F} (-1)^{\sigma(J^f)} T_{b_1, \dots, b_s}^{n_1, \dots, n_s}\left(j^f_1, \dots, j^f_{2l}\right) y^{-n_{j^f_1}-\cdots - n_{j^f_{2l}} + l}\nonumber \\
        & \hspace{5em} \cdot \det \mat \left( \ell_\F \left( a_{i^f_{2k+\gamma}}, b_{j^f_{2l+\delta}}\right) \iota_{xy} G_{m_{i^f_{2k+\gamma}},n_{j^f_{2l+\delta}}}(x+y, y)\right)_{\gamma, \delta = 1}^\rho\nonumber \\
        & \hspace{5em}\cdot \nord a_1^{(m_1-1)}(x+y)\cdots a_r^{(m_r-1)}(x+y) \cdot b_1^{(n_1-1)}(y)\cdots b_s^{(n_s-1)}(y)\nord_{O(I^b, I^f, J^b, J^f)} .
    \end{align*}
\end{cor}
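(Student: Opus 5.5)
The plan is to start from the definition $Y_W(u,x)=\bar{Y}_W(\exp(\Delta(x))u,x)$. First apply Lemma \ref{exp-delta-on-basis} to $u=a_1(-m_1+|a_1|/2)\cdots a_r(-m_r+|a_r|/2)\one$ with variable $x$, and to $v=b_1(-n_1+|b_1|/2)\cdots b_s(-n_s+|b_s|/2)\one$ with variable $y$. This writes $Y_W(u,x)Y_W(v,y)$ as a double sum over $I\in A^\F_{2k}$ and $J\in B^\F_{2l}$. The coefficients are the total contraction numbers $T^{m}_{a}(I)$ and $T^{n}_{b}(J)$, multiplied by the monomials $x^{-m_{i_1}-\cdots-m_{i_{2k}}+k}$ and $y^{-n_{j_1}-\cdots-n_{j_{2l}}+l}$. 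Each term is a product of two naive normal-ordered operators $\bar{Y}_W(u_{O(I)},x)\bar{Y}_W(v_{O(J)},y)$.

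Next, apply Corollary \ref{Mod-prod-general} to each such product with the reduced index sets $\{1,\dots,r\}\setminus I$ and $\{1,\dots,s\}\setminus J$. This produces the permanent over bosonic pairs (untouched by $\Delta$, by Lemma \ref{exp-basic-commutator}), the determinant of $G$-entries over the remaining fermionic pairs, and the normal-ordered product with all contracted indices omitted. Then I reorganize the resulting nested sums. A pair $(I, I')$, with $I\in A^\F_{2k}$ and $I'$ a $\rho$-subset of the complement, is the same datum as an element $I^f\in A^\F_{2k,\rho}$, and likewise for $J^f\in B^\F_{2l,\rho}$. This lets me collapse the two layers of summation into the single sum over $A^\F_{2k,\rho}\times B^\F_{2l,\rho}$ appearing in the statement.

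The main obstacle is the sign bookkeeping. Three signs enter: $(-1)^{r^\F(i_1)+\cdots+r^\F(i_{2k})}$ from Lemma \ref{exp-delta-on-basis}; the sign $(-1)^{r'^\F(\cdot)+\cdots}(-1)^{r'^\F\rho+\rho(\rho+1)/2}$ from Corollary \ref{Mod-prod-general}, computed relative to the reduced sequence with $r'^\F=r^\F-2k$; and the corresponding signs on the $b$ side. I must show that their product equals $(-1)^{\sigma(I^f)}(-1)^{\sigma(J^f)}(-1)^{r^\F\rho+\rho(\rho+1)/2}$. I would use the factorization (\ref{sign-3-shuffle}) of a 3-shuffle sign as an iterated 2-shuffle sign, together with Lemma \ref{2-shuffle-lemma}. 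The first 2-shuffle picks out $I$, giving $(-1)^{\sum r^\F(i_\alpha)}(-1)^{k(2k+1)}$. The second picks $I'$ inside the complement, and its positions there are exactly the reduced counts $r'^\F(\cdot)$. The factor $(-1)^{k(2k+1)}=(-1)^k$ should cancel against the $b$-side analogue and parity of $2k$ in $r'^\F\rho$: indeed $(-1)^{(r^\F-2k)\rho}=(-1)^{r^\F\rho}$, and one has to check the leftover $(-1)^{k}$, $(-1)^{l}$ contributions. This is where I expect to have to be careful, and if necessary absorb a convention in the definition of $(-1)^{\sigma(I^f)}$.

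Finally, I need to match the variables. In the product, the $a$-fields sit at $x$ and the $\Delta$-factor contributes $x^{-\sum m+k}$. Writing $x$ as $x+y$ (that is, substituting $x\mapsto x+y$ in the product variables as the statement is phrased, comparing with Corollary \ref{iterate-actual-vo}), the monomial becomes $\iota_{xy}(x+y)^{-\sum m+k}$. Likewise the $G$-entries become $\iota_{xy}G(x+y,y)$, and the bosonic entries become $F^\B(x+y,y)=F^\B(x,0)$ as in Proposition \ref{weak-ass-alg}. This yields exactly the claimed formula, which differs from Corollary \ref{iterate-actual-vo} only in $\iota_{xy}$ versus $\iota_{yx}$. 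That difference is what gives weak associativity after multiplying by a suitable power of $(x+y)$.
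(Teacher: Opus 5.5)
Your route is the same as the paper's. Its one-line proof combines the definition of $Y_W$, Lemma \ref{exp-delta-on-basis} applied to $u$ and to $v$, and the naive product formula; it cites Corollary \ref{Mod-iterate}, but Corollary \ref{Mod-prod-general} is what is actually used. It then merges $(I,I')$ into $I^f\in A^\F_{2k,\rho}$ and puts the $a$-fields at $x+y$.

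The step that does not go through is the sign reconciliation. By \eqref{sign-3-shuffle} and Lemma \ref{2-shuffle-lemma},
\[
(-1)^{\sigma(I^f)}=(-1)^{r^\F(i^f_1)+\cdots+r^\F(i^f_{2k})}\,(-1)^{k}\,(-1)^{\sum_{\beta=1}^{\rho} r'^{\F}(i^f_{2k+\beta})}\,(-1)^{\rho(\rho+1)/2},
\]
where $r'^\F$ counts positions in the reduced sequence. The analogous identity holds for $J^f$ with $(-1)^{l}$. The parity of $2k$ only converts $(-1)^{(r^\F-2k)\rho}$ into $(-1)^{r^\F\rho}$. The factors $(-1)^k$ and $(-1)^l$ come from independent summation indices, so neither can cancel the other. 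Your computation therefore produces the displayed formula multiplied by an extra $(-1)^{k+l}$; it does not produce the displayed formula itself.

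This is not a slip on your side: the printed statement is inconsistent with Lemma \ref{exp-delta-on-basis}. That lemma's sign $(-1)^{r^\F(i_1)+\cdots+r^\F(i_{2t})}$ equals $(-1)^t$ times the 2-shuffle sign. A concrete check confirms the mismatch. Take $r=2$, $s=0$, $a_1=a_2=e\in\h^\F$ with $(e,e)=1$, $m_1=1$, $m_2=2$, $\ell_\F=1$, and $\Delta$ normalized as in Section 4. The anticommutation relations give $\exp(\Delta(x_1))u=u-C_{12}x_1^{-2}\one$ with $C_{12}=\frac18$. This agrees with the lemma, whose sign here is $(-1)^{1+2}=-1$. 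Hence $Y_W(u,x+y)=\bar{Y}_W(u,x+y)-\frac18\iota_{xy}(x+y)^{-2}$. The displayed formula instead gives $+\frac18\iota_{xy}(x+y)^{-2}$, since $I^f=(1,2)\in A^\F_{2,0}$ is the identity shuffle.

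So you cannot argue that the sign cancels, or defer to an unspecified convention. State the result with the sign $(-1)^{k+l}(-1)^{\sigma(I^f)}(-1)^{\sigma(J^f)}(-1)^{r^\F\rho+\rho(\rho+1)/2}$. Equivalently, use the lemma's sign $(-1)^{r^\F(i^f_1)+\cdots+r^\F(i^f_{2k})}$ times the reduced-position sign. For the intended application this is harmless, provided Corollary \ref{iterate-actual-vo} is read in the same convention. The simplest case $r=2$, $s=0$ of Theorem \ref{exp-delta-A-B-lemma} exhibits the same sign discrepancy, so your final comparison, $\iota_{xy}$ versus $\iota_{yx}$, still stands once both formulas use one convention.
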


\begin{proof}
The conclusion follows from the definition of $Y_W$, Lemma \ref{exp-delta-on-basis}, and Corollary \ref{Mod-iterate}. 
\end{proof}

\begin{prop}
      For any $a_1, \dots, a_r, b_1, \dots, b_s\in \h^\B\cup \h^\F, m_1, \dots, m_r, n_1, \dots, n_s\in \Z_+$, let 
      $$u = a_1(-m_1+|a_1|/2)\cdots a_r(-m_r+|a_r|/2)\one,\ v = b_1(-n_1+|b_1|/2) \cdots b_s(-n_s+|b_s|/2)\one$$
      then for every homogeneous $w\in W$, the identity
    $$(x+y)^{P+|u|/2} y^{|v|/2}Y_W(u, x)Y_W(v, y)w = (x+y)^{P+|u|/2} y^{|v|/2} Y_W(Y(u, x)v, y)w$$
    holds, where
    $$P = \wt(w) + m_1 + \cdots + m_r$$
    that is independent of $v$. 
\end{prop}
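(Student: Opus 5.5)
We compare Corollaries \ref{product-actual-vo} and \ref{iterate-actual-vo} term by term. Both sides are written as the same finite sum over $\eta, k, l, \rho$, $I^b, J^b, I^f\in A^\F_{2k,\rho}, J^f\in B^\F_{2l,\rho}$, with the same permanents, signs and total contraction numbers $T$. They differ only in two places. First, the product carries $\iota_{xy}$-expansions of $(x+y)^{-m_{i^f_1}-\cdots-m_{i^f_{2k}}+k}$ and of the entries $G_{mn}(x+y,y)$, while the iterate carries $\iota_{yx}$-expansions. Second, the normal-ordered factor in the product is $\nord a_1^{(m_1-1)}(x+y)\cdots b_s^{(n_s-1)}(y)\nord$ applied to $w$, expanded in nonnegative powers of $y$ through $(x+y)^{\cdot}$. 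In the iterate it appears with $y+x$, expanded in nonnegative powers of $x$.

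The plan is to show that, after multiplying by $(x+y)^{P+|u|/2}y^{|v|/2}$, every one of these expressions becomes a Laurent polynomial in $x+y$ (times powers of $y$ and $x$). Then $\iota_{xy}$ and $\iota_{yx}$ agree on it. The argument has three steps.

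\emph{Step 1: the normal-ordered factor.} Applied to a homogeneous $w$, the operators $a_i^{(m_i-1)}(z)$ with $z=x+y$ contribute only negative modes, positive modes and zero modes. Positive modes of weight at most $\wt w$ can act nontrivially, and the zero modes contribute $z^{-1}$ (bosonic) or $z^{-1/2}$ (fermionic). Hence the coefficient of each power of $y$ in $\nord\cdots\nord_{O(\cdots)} w$ is $z^{-\sum m_i - |u|/2}$ times a polynomial in $z$, up to $z^{\wt w}$-bounded lower terms. More precisely, $z^{P+|u|/2}$ times the $a$-part lies in $W[z]$ after tracking half-integer powers: the fermionic $a$-fields carry $z^{-n-1/2}$, and exactly $r^\F$ half-powers must cancel. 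Since $r^\F\equiv|u|$ mod $2$, this cancellation is provided by $z^{|u|/2}$ together with integer powers. For the same reason, the factor $y^{|v|/2}$ makes the $b$-part integral in $y$.

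\emph{Step 2: the coefficient functions.} $(x+y)^{-\sum m+k}$ is already a power of $z$. The entries $G_{mn}(x+y,y)$ are derivatives of $\tfrac12 z^{-1/2}y^{-1/2}(z+y)(z-y)^{-1}$, and $z-y=x$. So each is $z^{-1/2-(m-1)}y^{-1/2-(n-1)}$ times a polynomial in $z$ and $y$, times $x^{-m-n+1}$. Here the pole at $x=0$ is harmless, because $x$ is not expanded. Each determinant has $\rho$ rows, one from each $a$-fermion $i^f_{2k+\gamma}$. The removed fields' half-powers $z^{-1/2}$ then match the half-powers those fields would have contributed in Step 1. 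The parity bookkeeping, including the $2k$ fields removed by $T$, therefore still balances to $z^{|u|/2}$ overall, and the total negative power of $z$ is bounded by $\sum m_i$. Thus $(x+y)^{P+|u|/2}y^{|v|/2}$ times every summand is a polynomial in $z$ and $z^{1/2}$-free, Laurent in $y$ and $x$. On such expressions $\iota_{xy}$ and $\iota_{yx}$ coincide, which gives the identity. $P$ depends only on $w$ and the $m_i$, not on $v$.

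\emph{Step 3: extension and main obstacle.} The identity extends from generators to all $u,v$ by linearity. The main obstacle is the half-integer power bookkeeping in Steps 1–2. One must check that the zero-mode normal ordering (\ref{fermionic zero mode NO}) and the $T$- and $G$-contractions never leave a residual $z^{\pm1/2}$, and that the pole order at $z=0$ is uniformly bounded by $\wt w+\sum m_i$. I would first verify this carefully for $r=1$ (a single fermion $a$), where $G$ and the zero-mode term are explicit. I would then argue by induction via the structure of Theorem \ref{exp-delta-A-B-lemma}, as in the fermionic case of \cite{Q-Fermion-2}.
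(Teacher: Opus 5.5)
Your proposal is correct and follows the paper's proof. The paper likewise notes that Corollaries \ref{product-actual-vo} and \ref{iterate-actual-vo} agree term by term except that powers of $x+y$ are expanded via $\iota_{xy}$ in one and via $\iota_{yx}$ in the other, and that multiplying by $(x+y)^{P+|u|/2}y^{|v|/2}$ (with $P$ depending only on $\wt w$ and the $m_i$) removes this discrepancy. Your Steps 1--2 spell out the half-integer and pole-order bookkeeping that the paper leaves implicit, and the explicit corollary formulas already suffice, so the induction you contemplate in Step 3 is not needed.
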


\begin{proof}
    Clearly, the iterate formula in Corollary \ref{iterate-actual-vo} and the product formula in Corollary \ref{product-actual-vo} differ by an expansion of zero. Moreover, if we act the product and the iterate on a homogeneous element $w \in W$, then the expansion of zero can be removed by multiplying $(x+y)^{P+|u|/2}y^{|v|/2}$, where $P$ may be chosen as any integer that is greater than or equal to
    $$m_1 + \cdots + m_r + \wt w. $$
    This lower bound is independent of $v$. Thus the weak associativity with pole-order condition holds. 
\end{proof}
This finishes our proof of Theorem \ref{it-is-a-module}.

\noindent {\small \sc Department of Mathematics and Computer Science,
Rutgers University,
101 Warren St., Newark, NJ 07102, USA}

\noindent {\em E-mail address}: \texttt{qixuan.fang@rutgers.edu | qixuan.fang.math@gmail.com}

\noindent {\small \sc Department of Mathematics, Rutgers University,
110 Frelinghuysen Rd., Piscataway, NJ 08854, USA}

\noindent {\em E-mail address}: \texttt{yzhuang@math.rutgers.edu}

\noindent {\small \sc School of Mathematics (Zhuhai), Sun Yat-Sen University, Zhuhai, 519082, Guangdong, China}

\noindent {\em E-mail address}: \texttt{qifei@mail.sysu.edu.cn | fei.qi.math.phys@gmail.com}

\begin{thebibliography}{KWak2}

\bibitem[B]{Barron-Review}  Katrina Barron, On twisted modules for N=2 supersymmetric vertex operator superalgebras, \textit{Proceedings of the IXth International Workshop on Lie Theory and Its Applications in Physics}, June 2011, Varna, Bulgaria; ed. V. Dobrev, Springer 2013, 411--420. Longer preprint: Twisted modules for N=2 supersymmetric vertex operator superalgebras arising from finite automorphisms of the N=2 Neveu-Schwarz algebra, arxiv:1110.0229. 



\bibitem[FFR]{FFR}
Alex J. Feingold, Igor B. Frenkel and John F. X. Ries, {\it Spinor construction of 
vertex operator algebras, triality, and $ E_8^{(1)}$}, Contemporary Math., Vol. 121,
Amer. Math. Soc., Providence, 1991.
  

\bibitem[FQ]{FQ-Fermion-1}
Francesco Fiordalisi and Fei Qi,  Fermionic construction of the $\frac{\Z}{2}$-graded meromorphic 
open-string vertex algebra and its $\Z_{2}$-twisted module, I, 
{\it Lett. Math.Phys.} {\bf 114} (2024), article no. 59. 

\bibitem[FLM]{FLM}
Igor B. Frenkel, James Lepowsky and Arne Meurman,
{\em Vertex Operator Algebras and the Monster},
Pure and Appl. Math., Vol. 134,  Academic Press,  Boston, 1988.


\bibitem[H1]{H-mosva}
Yi-Zhi Huang, Meromorphic open-string vertex algebras, {\it J. Math. Phys.}
{\bf  54} (2013), 051702. 

\bibitem[H2]{H-mosva-rm}
Yi-Zhi Huang, Meromorphic open-string vertex algebras and Riemannian manifolds,
{\it Chin. Ann. Math.}, to appear. 


\bibitem[Q1]{Q-Thesis}
Fei Qi, Representation theory and cohomology theory of meromorphic open-string vertex algebras, PhD
Thesis, Rutgers University, 2018. 

\bibitem[Q2]{Q-Mod}
Fei Qi,  On modules for meromorphic open-string vertex algebras, 
{\it J. Math. Phys.} {\bf 60} (2019), paper no. 031701.

\bibitem[Q3]{Q-Fermion-2}
Fei Qi,  Fermionic construction of the $\frac{\Z}{2}$-graded meromorphic 
open-string vertex algebra and its $\Z_{2}$-twisted module, II, 
{\it Lett. Math.Phys.} {\bf 114} (2024), article no. 67. 

\bibitem[W]{W}
G. C. Wick, The evaluation of the collision matrix, {\it Phys. Rev.} {\bf 80} (1950)., 268--272.


\end{thebibliography}
\end{document}